\newtheorem{theorem}{Theorem}[section]
\newtheorem{proposition}[theorem]{Proposition}
\newtheorem{lemma}[theorem]{Lemma}
\newtheorem{corollary}[theorem] {Corollary}
\newtheorem*{claim*}{Claim}
\theoremstyle{remark}
\newtheorem{remark}[theorem]{Remark}
\theoremstyle{definition}
\newtheorem{definition}[theorem]{Definition}
\def\Z{\mathbb Z}
\def\R{\mathbb R}
\def\N{\mathbb N}
\def\coker{{\mathrm{Coker}}}
\def\hom{{\mathrm{Hom}}}
\def\co{\colon}
\def\rewriting{\mathfrak{R}}
\newcommand\abs[1]{\lvert#1\rvert}
\newcommand\sg[1]{\mathfrak{S}_{#1}}
\newcommand\presby[2]{\langle#1\mid #2\rangle}
\newcommand\ramp[2]{\rho(#1,#2)}
\DeclareMathOperator{\im}{im}
\title[A $3$-skeleton for a $K(\sg{n},1)$]{A $3$-skeleton for a classifying space for the symmetric group}
\author{Matthew B. Day and Trevor Nakamura}   
\date{\today}
\begin{document}

\begin{abstract}
We construct a $3$-dimensional cell complex that is the $3$-skeleton for an Eilenberg--MacLane classifying space for the symmetric group $\mathfrak{S}_n$.
Our complex starts with the presentation for $\mathfrak{S}_n$ with $n-1$ adjacent transpositions with squaring, commuting, and braid relations, and adds seven classes of $3$-cells that fill in certain $2$-spheres bounded by these relations.
We use a rewriting system and a combinatorial method of K. Brown to prove the correctness of our construction.
Our main application is a computation of the second cohomology of $\mathfrak{S}_n$ in certain twisted coefficient modules; we use this computation in a companion paper to study splitting of extensions related to braid groups.
As another application, we give a concrete description of the third homology of $\mathfrak{S}_n$ with untwisted coefficients in $\mathbb{Z}$. 
\end{abstract}

	\maketitle

\section{Introduction}
Let $n$ be a positive integer and let $\sg{n}$ be the symmetric group on $\{1,2,\dotsm,n\}$.
It is well known that $\sg{n}$ is given by the presentation 
\begin{equation}\label{eq:Snpresentation} \sg{n}=\presby{s_1,s_2,\dotsc,s_{n-1}}{R},
\end{equation}
with relations $R$ consisting of:
\begin{itemize}
\item $s_i^2=1$ for $i=1,\dotsc,n-1$,
\item $s_is_{i+1}s_i=s_{i+1}s_is_{i+1}$ for $i=1,\dotsc,n-2$, and
\item $s_is_j=s_js_i$ for $1\leq i,j\leq n-1$ with $\abs{i-j}\geq 2$.
\end{itemize}
Here each $s_i$ represents the transposition $(i,i+1)$.
From a topological perspective, (using the Seifert-Van Kampen theorem) this tells us that $\sg{n}$ is the fundamental group of a cell complex $X$ whose $1$-skeleton is a rose with $n-1$ edges indexed by the $s_i$, and whose $2$-cells are glued in with their boundaries following the three classes of relations given above.
For homological computations, it is useful to describe the $3$-cells in a complex $X$ with the same $2$-skeleton, but with $X$ being $2$-connected; such a description is the main theorem of this paper.

There are several presentations for $\sg{n}$ to choose from.
We use the presentation above because it comes from a convenient rewriting system, and this rewriting system gives us a methodical way to verify our $3$-skeleton, following an algorithm of Brown~\cite{BrownRewriting}.
We describe this rewriting system in Section~\ref{Section: Rewriting system for Sn} below.

The applications of this construction are computations of specific homology and cohomology groups.
Our main interest came from studying the second cohomology of $\sg{n}$ in specific twisted modules that are related to extensions of $\sg{n}$ that arise in the study of the braid group.
These modules are examples of permutation modules of Young tableaux with two rows, and subquotients of these modules, working over the integers.
We explore this connection in Day--Nakamura~\cite{DN1}; in that paper we describe the low-dimensional cohomology of $\sg{n}$ in Specht modules for the partitions $(n-1,1)$ and $(n-2,2)$.

The homology and cohomology of the symmetric group with untwisted coefficients is fairly well understood.
Working over the field $\mathbb{F}_p$ of prime order $p$, there are algorithms to give bases for $H_k(\sg{n};\mathbb{F}_p)$ and $H^k(\sg{n};\mathbb{F}_p)$ for any $k$.
This goes back to Nakaoka~\cite{Nakaoka}, and a modern reference is Chapter~VI of Adem--Milgram~\cite{AMCFG}.
We use these ideas in a limited way to show that our construction of $H_3(\sg{n};\Z)$ is complete; see Section~\ref{ss:untwisted3rd} below.
There are only a few references discussing homology and cohomology of $\sg{n}$ in twisted modules; these include Burichenko--Kleschev--Martin~\cite{BKM}, Nagpal~\cite{Nagpal}, and Weber~\cite{Weber}.

\subsection{Statement of results}
In Section~\ref{ss:cxdefn} below, we construct a $3$-dimensional chain complex $P_*$ of $\Z \sg{n}$-modules.
This is the cellular chain complex of the universal cover $\widetilde X$ of a $3$-dimensional cell complex $X$ that we define at the same time as $P_*$.
Our main theorem:
\begin{theorem}\label{th:main}
    The complex $P_*$ is a $3$-dimensional chain complex of $\Z \sg{n}$-modules with an augmentation map $\epsilon\co P_0\to \Z$, and with $H_i(P_*)=0$ for $i\in\{0,1,2\}$.
In particular, $P_*$ is the $3$-dimensional truncation of a free resolution for $\Z$ as a $\Z \sg{n}$-module, and $X$ is the $3$-skeleton of a $K(\sg{n},1)$-space.
\end{theorem}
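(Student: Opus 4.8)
The plan is to combine the complete rewriting system $\rewriting$ for $\sg n$ from Section~\ref{Section: Rewriting system for Sn} with the topological form of the Anick--Groves--Squier theorem due to Brown~\cite{BrownRewriting}. Brown's method produces, from a finite complete (terminating and confluent) rewriting system for a group $G$, a $K(G,1)$-complex having one $0$-cell, one $1$-cell per generator, one $2$-cell per rewriting rule, one $3$-cell per minimal overlap of two left-hand sides of rules, and higher cells recording higher overlaps; in particular its $3$-skeleton already has $2$-connected universal cover. Equivalently --- and this is the form I would actually use --- Brown's analysis shows that $\pi_2$ of the presentation $2$-complex is generated, as a $\Z\sg n$-module, by the ``critical pair'' $2$-spheres: for each overlap word there are two reduction paths down to its common normal form, and the disk caught between them is a $2$-sphere in the $2$-complex. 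So it suffices to verify that $\rewriting$ is complete, to enumerate these critical-pair $2$-spheres, and to check that each is a $\Z\sg n$-linear combination of the attaching spheres of the seven classes of $3$-cells built into $X$ in Section~\ref{ss:cxdefn}.

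Carrying this out, I would first verify that $\rewriting$ terminates --- by exhibiting a well-founded order on words (a shortlex or recursive path order) that strictly decreases under every rule --- and is confluent, by computing all of its critical pairs and checking that each resolves to a common normal form. Then I would classify these critical pairs: the overlaps among the squaring rules, the braid-type rules, and the commuting-type rules (together with any longer rules present in $\rewriting$) fall into a small number of combinatorial families, distinguished by which pair of rules overlaps and by how the generator indices are arranged (two adjacent indices, three consecutive indices, three pairwise-distant indices, and so on). For each family I would exhibit the corresponding $3$-cell of $X$ and check that it is glued exactly along that critical-pair $2$-sphere (or a controlled modification of it), thereby matching the families of critical pairs with the seven classes of $3$-cells. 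Given this matching, $P_*$ agrees through dimension $3$ with the cellular chain complex of the universal cover of Brown's $K(\sg n,1)$, and the required vanishing follows: $H_0(P_*)$ and $H_1(P_*)$ come essentially for free from the fact that \eqref{eq:Snpresentation} is a presentation of $\sg n$ (so the $2$-skeleton of $X$ has fundamental group $\sg n$ and simply connected universal cover), and the only real content is $H_2(P_*)=0$, i.e.\ $\pi_2(X)=0$.

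The step I expect to be the main obstacle is precisely this last matching: showing that the seven families are \emph{exactly} what is needed --- that no essential critical pair has been omitted (an omission would leave $H_2(P_*)\ne 0$) and that the relations imposed among $2$-spheres via $\partial_3$ are consistent (so that $\partial_2\partial_3=0$ continues to hold). Brown's resolution is not minimal, so a naive critical-pair count can produce inessential or redundant $3$-cells, and conversely one must be sure all the essential ones are accounted for; pinning this down requires a careful, somewhat lengthy case analysis over the families of overlaps, with explicit tracking of the two reduction sequences in each case. To keep the argument self-contained I would phrase the verification as the construction of an explicit partial contracting homotopy for $P_*$ in degrees $\le 2$, built by using $\rewriting$ to push chains toward normal form; the contracting-homotopy identity in degree $2$ is the crux, and is exactly where completeness of $\rewriting$ and the seven-family case analysis do their work.
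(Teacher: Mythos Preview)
Your approach is essentially the same as the paper's: use Brown's collapsing scheme for the rewriting system $\rewriting$ to get a free resolution $Q_*$ generated by essential cells, and then compare $P_*$ to $Q_*$ in degrees $\leq 3$. The differences are in how the comparison is organized, and in one place your expectations are a bit off.

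First, the paper does not build a contracting homotopy on $P_*$; instead it constructs explicit $\Z\sg n$-chain maps $\psi\colon Q_*\to P_*$ and $\phi\colon P_*\to Q_*$ through degree $2$ with $\psi_2\circ\phi_2=\mathrm{id}$, and then shows that $\psi$ extends to degree $3$ by checking, for every essential $3$-cell $\sigma\in Q_3$, that $\psi_2(\partial^Q\sigma)$ is a boundary in $P_2$. The vanishing $H_2(P_*)=0$ then follows formally: a cycle $c\in P_2$ pushes to a cycle $\phi_2(c)\in Q_2$, which bounds some $f\in Q_3$ since $Q_*$ is exact, and $\partial\psi_3(f)=\psi_2\phi_2(c)=c$. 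This is cleaner than the ``matching of $2$-spheres'' you describe, because it lets one work entirely with boundary formulas in $P_2$ rather than with identifications of disks in $\widetilde X$.

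Second, the counting does not line up the way you suggest. Because $\rewriting$ has three types of rules on positive words (one of which, $\rewriting 3$, has left-hand side $s_j\rho(i,j)$ of unbounded length), the essential $3$-cells of $Q_*$ fall into $3^2=9$ families, not seven, and there is no one-to-one correspondence with the seven families in $P_3$. In particular the family $[s_j\mid\rho(i,j)\mid\rho(k,j)]$ (an $\rewriting 3$--$\rewriting 3$ overlap) requires expressing $\psi_2(\partial\sigma)$ as a boundary of a sum of many $c^{33}$-, $c^{34}$-, $c^{32}$-, and $c^{37}$-cells, organized in the paper as a filled ``cuboid'' (Propositions~\ref{pr:boundcellbbcase1}--\ref{pr:boundcellbbcase3}). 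So the step you flag as the main obstacle is indeed where the work lies, but you should plan for nine families of essential $3$-cells and for some of the fillings to be substantially more intricate than a single $3$-cell of $X$.
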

So for any $\Z \sg{n}$-module $M$, we may compute low-dimensional homology and cohomology by
\[H_i(\sg{n};M)=H_i(P_*\otimes_{\Z \sg{n}} M)\quad\text{ and }\quad H^i(\sg{n};M)=H^i(\hom_{\Z \sg{n}}(P_*,M))\]
for $i\in\{0,1,2\}$.

Fix a commutative ring $R$ and an integer $k$ with $1\leq k\leq n/2$, and consider the permutation module $M^k_R$, the free $R$-module whose basis is indexed by the set of $k$-subsets of $\{1,2,\dotsc,n\}$ (see Definition~\ref{de:permmod}).
This is an $\sg{n}$-module by the usual action of $\sg{n}$ on the basis.
We use the notation $R[2]$ for the $2$-torsion of $R$.
\begin{proposition}
We have the following computations:
\begin{itemize}
\item
For $k\geq 2$, $H_1(\sg{n};M^k)\cong (R/2R)^2$.
\item
Again for $k\geq 2$, $H^1(\sg{n};M^k)\cong R[2]^2$.
\item
For $k\geq 4$, $H_2(\sg{n};M^k)\cong R[2]^2\oplus (R/2R)^3$.
\item
Again for $k\geq 4$, $H^2(\sg{n};M^k)\cong (R/2R)^2\oplus R[2]^3$.
\end{itemize}
Further, there are explicit descriptions of cycles and cocycles that generate these modules in terms of $P_*$.
\end{proposition}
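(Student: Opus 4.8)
The plan is to run everything through the explicit finite free $\Z\sg n$-complex $P_*$. Since $P_i$ is free with basis the $i$-cells of $X$, applying $-\otimes_{\Z\sg n}M^k$ and $\hom_{\Z\sg n}(P_*,-)$ turns $P_*$ into a (co)chain complex of $R$-modules whose term in degree $i$ is $(M^k)^{\oplus r_i}$, where $r_i$ is the number of $i$-cells of $X$, with (co)boundary maps given by block matrices whose entries are the group-ring elements appearing in the attaching maps, acting on the permutation module $M^k$. All of these entries are integral, so it is enough to compute over $\Z$ and then pass to a general commutative ring $R$ by universal coefficients (or simply to repeat the computation over $R$); by Theorem~\ref{th:main} the resulting (co)homology in degrees $\le 2$ is $H_i(\sg n;M^k)$ and $H^i(\sg n;M^k)$. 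It pays to record the answer abstractly first: because $M^k_R=R[\sg n/(\sg k\times\sg{n-k})]\cong\operatorname{Ind}_{\sg k\times\sg{n-k}}^{\sg n}R$, Shapiro's lemma (the index is finite, so induction and coinduction agree) gives $H_*(\sg n;M^k_R)\cong H_*(\sg k\times\sg{n-k};R)$ and likewise in cohomology, and then the Künneth theorem together with the classical values $H_1(\sg m;\Z)=\Z/2$ for $m\ge 2$ and $H_2(\sg m;\Z)=\Z/2$ for $m\ge 4$ (it is $0$ for $m\le 3$), plus universal coefficients, yields exactly the four groups claimed. The hypotheses $k\ge 2$ and $k\ge 4$ are precisely the ranges in which $H_1$, respectively $H_2$, of \emph{both} factors $\sg k$ and $\sg{n-k}$ have stabilized (recall $n-k\ge k$); this also fixes the rank of each piece one must exhibit: two classes in degree $1$, five in degree $2$, on each of the homology and cohomology sides.

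The real work is to locate these classes inside $P_*\otimes_{\Z\sg n}M^k$ and $\hom_{\Z\sg n}(P_*,M^k)$. For $H_1$ I would describe $\ker(\partial_1\otimes 1)$ as the tuples $(m_i)$ with $\sum_i(s_i-1)m_i=0$, compute the image of $\partial_2\otimes 1$ coming from the squaring, braid, and commuting $2$-cells, and pick out two cycles generating the quotient, keeping track of the relations that show they have the stated $2$-torsion orders (the difference between $R/2R$ and $R[2]$). For $H^1$ one dually presents a $1$-cocycle as an edge-indexed tuple subject to the cocycle conditions imposed by the three families of $2$-cells and writes down two generating cocycles. For $H_2$ and $H^2$ the same strategy applies one degree up, now computing $\im(\partial_3\otimes 1)$ — where all seven classes of $3$-cells contribute — and checking that $\ker(\partial_2\otimes 1)$ modulo this image is $R[2]^2\oplus(R/2R)^3$, with five explicit generators, and symmetrically for cohomology. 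To keep the bookkeeping finite and uniform in $n$ one reorganizes $M^k$ by how a $k$-subset of $\{1,\dots,n\}$ meets the small window of strands actually moved by a given relator or $3$-cell, so that the relevant action on each summand factors through a bounded symmetric group; the stabilization in $k$ above is exactly the statement that enlarging the complementary set of untouched strands does not change the count.

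The main obstacle is the degree-$2$ computation. The level-$2$ chain group is large — the commuting relations alone contribute on the order of $n^2$ summands — and $\partial_3$ is given by the involved attaching data of the seven families of $3$-cells, so computing $\ker(\partial_2\otimes 1)/\im(\partial_3\otimes 1)$ by hand requires either the windowing reduction above or a careful argument that is uniform in $n$. In addition, for the last sentence of the proposition one must verify that the chosen cycles and cocycles are genuine (co)cycles, are linearly independent in (co)homology, and generate; the reliable way to be sure nothing is missed or double-counted is to match their number and their torsion orders against the abstract computation via Shapiro's lemma and Künneth carried out above.
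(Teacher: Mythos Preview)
Your abstract computation is exactly what the paper does: identify $M^k$ as induced/coinduced from the trivial module on $\sg{n-k,k}=\sg{n-k}\times\sg{k}$, apply Shapiro's lemma, and then K\"unneth together with the classical values of $H_1$ and $H_2$ of symmetric groups gives the four groups in the stated ranges of $k$. So the isomorphism types are handled identically.

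Where you diverge from the paper is in the explicit generators. You propose to compute $\ker/\im$ directly in $P_*\otimes_{\Z\sg n}M^k$ and $\hom_{\Z\sg n}(P_*,M^k)$, organizing $M^k$ by how a $k$-subset meets the support of each cell. That would work in principle, but as you yourself note, the degree-$2$ computation is large and you have not carried it out; what you have written is a plan, not a proof. The paper sidesteps this entirely. For cohomology it writes down candidate cocycles on $P_*$ directly (e.g.\ $\alpha^k_r$, $\hat\alpha^k_r$, $\beta^k_r$, $\hat\beta^k_r$, $\tilde\beta^k_r$ on $2$-cells, defined via conditions like ``$i,i+1\in S$'' or ``exactly two of $i,i+1,j,j+1$ lie in $S$''), checks the cocycle condition by evaluating on the seven families of $3$-cell boundaries, and then---this is the key step you are missing---pushes these cocycles through the \emph{explicit} Shapiro isomorphism (restriction to $\sg{n-k,k}$ followed by projection $M^k\to R$ onto the coordinate of a chosen basepoint $v_T$) and recognizes the images as the known generators of $H^*(\sg{n-k,k};R)$ coming from $\pi_1^*$, $\pi_2^*$, and the mixed map $\pi_3^*$. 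For homology the paper does the dual: it writes an explicit chain map from the tensor product resolution of $\sg{n-k}\times\sg{k}$ into $P_*$ (sending $e_i\otimes e_j\mapsto -d_{i,n-k+j}$, etc.) and transports the known homology generators across. In both directions this reduces the verification to a handful of evaluations rather than a kernel/image computation on a complex with $O(n^2)$ summands.

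So your framework is sound, but the paper's insight is that once you know the answer via Shapiro, you should produce the generators by \emph{transporting} the known ones through an explicit Shapiro map, not by recomputing the (co)homology of the big complex from scratch. Your ``windowing'' intuition is in fact what guides the paper's choice of cocycle formulas, but the paper uses it to guess cocycles and then certifies them via Shapiro, whereas you propose to use it to organize a direct attack that you have not actually executed.
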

\begin{proof}
The cohomology statements are in Proposition~\ref{pr:Mkcohomology} below, and the homology statements are in Proposition~\ref{pr:Mkhomology} below. 
Both propositions include the explicit generators.
\end{proof}
These computations are essential for the application in Day--Nakamura~\cite{DN1}.

Our last application is a description of the third homology of $\sg{n}$ with untwisted coefficients in $\Z$.
\begin{theorem}\label{th:untwistedthird}
Let $n\geq 6$.
We have 
\[H_3(\sg{n})\cong(\Z/2\Z)^2\oplus (\Z/3\Z) \oplus (\Z/4\Z).\]
We have explicit descriptions of cycles representing generators, in terms of $P_*$.
\end{theorem}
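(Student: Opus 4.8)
The plan is to extract the group together with explicit generators from the complex $P_*$ of Theorem~\ref{th:main}, and to certify completeness against the known mod-$p$ homology of $\sg{n}$. First I would form $C_*=P_*\otimes_{\Z\sg{n}}\Z$, the cellular chain complex of $X$ with trivial coefficients; it is a complex $C_3\to C_2\to C_1\to C_0$ of finitely generated free abelian groups, so Smith normal form returns $H_0=\Z$, $H_1=H_2=\Z/2\Z$ (a check against the sign abelianization and the Schur multiplier) together with the free abelian group of $3$-cycles $Z_3:=\ker(\partial_3\co C_3\to C_2)$ and an explicit basis. Because $P_*$ is only the $3$-skeleton of a resolution, $Z_3$ merely \emph{surjects} onto $H_3(\sg{n})$, the kernel being the subgroup $B_3\subseteq Z_3$ that is the image of the submodule $\ker(\partial_3\co P_3\to P_2)\subseteq P_3$ under $P_3\twoheadrightarrow P_3\otimes_{\Z\sg{n}}\Z=C_3$. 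So it suffices to compute a finite $\Z\sg{n}$-module generating set for $\ker(\partial_3\co P_3\to P_2)$, either by pushing Brown's construction~\cite{BrownRewriting} one dimension further or directly; from this $B_3$, and hence the honest presentation $H_3(\sg{n};\Z)=Z_3/B_3$, is computed by a final Smith normal form, yielding a concrete finite abelian group with named $3$-cycles representing generators of each cyclic summand.

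Next I would pin down the abstract isomorphism type, which is where Nakaoka~\cite{Nakaoka} and Adem--Milgram~\cite{AMCFG} enter ``in a limited way.'' Homological stability for the symmetric groups and the hypothesis $n\ge 6$ reduce the question to the stable value, i.e.\ to $\sg{\infty}$. A transfer argument confines $H_3(\sg{n};\Z)$ to the primes $2$ and $3$, since $H_*(\sg{n};\Z)$ has no $p$-torsion below degree $2p-3\ge 7$ when $p\ge 5$. At $p=3$ one has $H_1(\sg{n};\mathbb{F}_3)=H_2(\sg{n};\mathbb{F}_3)=0$ and $\dim_{\mathbb{F}_3}H_3(\sg{n};\mathbb{F}_3)=1$, so by universal coefficients $H_3(\sg{n};\Z)_{(3)}$ is cyclic; it has exponent $3$ (it is detected on a $\Z/3\Z$ in a Sylow $3$-subgroup, where the relevant degree-$3$ homology is $\Z/3\Z$), hence is $\Z/3\Z$. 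At $p=2$ the polynomial structure of $H_*(\sg{\infty};\mathbb{F}_2)$ gives $\dim_{\mathbb{F}_2}H_3(\sg{n};\mathbb{F}_2)=4$; combined with $H_1=H_2=\Z/2\Z$ and universal coefficients this forces exactly three $2$-primary cyclic summands, and running the Bockstein spectral sequence (a nonzero $d^2$ into degree $3$) shows one of them is $\Z/4\Z$, so $H_3(\sg{n};\Z)_{(2)}\cong(\Z/2\Z)^2\oplus\Z/4\Z$. Assembling the primary parts yields $(\Z/2\Z)^2\oplus(\Z/3\Z)\oplus(\Z/4\Z)$.

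The last step is to reconcile the two computations: the finite group produced from $P_*$ must be isomorphic to the one produced from the literature. This agreement is precisely what makes the construction \emph{complete} --- had a module generator of $\ker(\partial_3\co P_3\to P_2)$ been overlooked, the $P_*$-side would have come out strictly larger --- and it lets us record explicit cycle representatives for the generators. I expect the real obstacle to be exactly the completeness of that module-generating set, i.e.\ capturing all of $B_3$; the comparison with the $\mathbb{F}_2$- and $\mathbb{F}_3$-computations is what settles this. A secondary subtlety is that distinguishing the $\Z/4\Z$ summand from $(\Z/2\Z)^3$ genuinely needs Bockstein (or $\Z/4\Z$-coefficient) information rather than $\mathbb{F}_2$-dimensions alone; alternatively, the explicit $\Z/4\Z$-representative could be certified by pairing it with a cohomology class or by restricting along $\sg{k}\hookrightarrow\sg{n}$ for small $k$, which sidesteps having to compute $B_3$ exactly.
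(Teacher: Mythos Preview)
Your strategy is sound and matches the paper's at the highest level: squeeze $H_3(\sg{n})$ between an upper bound extracted from $P_*$ and a lower bound read off from the mod-$p$ homology in the literature. The execution differs in two places worth noting.

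For the upper bound, you propose computing a $\Z\sg{n}$-module generating set for $\ker(\partial_3\co P_3\to P_2)$, possibly by pushing Brown's algorithm to dimension~$4$. The paper avoids this entirely. It uses the same observation that $H_3(\sg{n})=\coker\bigl(H_3(\widetilde X)\to H_3(X)\bigr)$, computes a basis for $H_3(X)$ by hand, and then exhibits a handful of explicit cycles in $P_3$ (before coinvariants) whose images kill the right things downstairs. A ``partial prism operator'' reduces the many subscript choices to a single one per generator class, and then four or five short boundary checks cut the surviving group down to a quotient of $(\Z/2\Z)^2\oplus(\Z/3\Z)\oplus(\Z/4\Z)$. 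This is much less work than producing the full $4$-skeleton, at the cost of not knowing in advance that you have found \emph{enough} relations---which is exactly why the mod-$p$ comparison is needed.

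For detecting the $\Z/4\Z$ summand, you suggest the Bockstein spectral sequence. The paper instead runs a transfer argument: it embeds $D_8$ in $\sg{n}$, writes down an explicit $\Z/4\Z$-valued $3$-cocycle $\chi$ on $D_8$, and shows that $\chi$ pairs to a unit with the transfer of an explicit cycle $\iota_*[\bar c]$. This is closer in spirit to your final ``alternatively'' remark (pair with a cohomology class, or restrict to a small subgroup) than to the Bockstein route. Either approach works; the paper's has the virtue of producing a concrete order-$4$ element without any spectral-sequence bookkeeping, while yours would be more systematic if one wanted higher-degree information as well.
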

\begin{proof}
This is proven in Corollary~\ref{co:tightH3}, which also gives the generating cycles.
\end{proof}
Certainly $H_3(\sg{n};\mathbb{F}_p)$ has been previously described for all primes $p$ (as explained above), but we believe that the explicit description of generators in terms of a cell complex to be new.

\subsection{The definition of the $3$-complex}\label{ss:cxdefn}
We give some reminders about basic definitions in cohomology of groups in Section~\ref{se:preliminaries1}; some readers will find it helpful to review that section before reading this one.
We include this section here because we thought it would helpful to include the definition of our $3$-complex in the introduction.

The starting point for our construction is presentation~\eqref{eq:Snpresentation} for $\sg{n}$.
The chain complex is the cellular chain complex of the universal cover of a $3$-dimensional cell complex $X$ with $\pi_1(X)=\sg{n}$.
Then our chain complex is $P_*=C_*(\widetilde X)$.
The deck transformations of $\widetilde X\to X$ make $P_*$ into a chain complex of $\Z \sg{n}$-modules.

Instead of describing the gluing maps for the cells of $X$, we give the cellular boundaries in $\widetilde X$; this is equivalent to describing $P_*$ by generators and boundaries.
The interested reader can construct the gluing maps using these boundary formulas and Figures~\ref{fi:2cells}, \ref{fi:most3cells}, and~\ref{fi:cellc37}.

\begin{figure}[h!]
\includegraphics{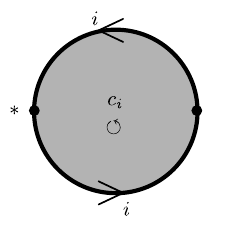}
\includegraphics{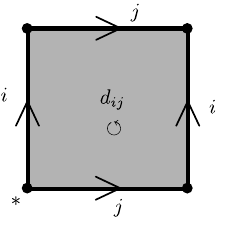}
\includegraphics{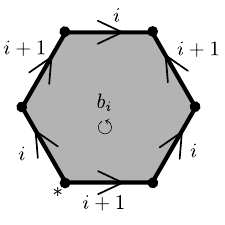}
\caption{The three kinds of $2$-cells in $X$.  Here we label edges with ``$i$" instead of ``$e_i$", etc.  The condition that $i<j$ is part of the definition for $d_{ij}$.}\label{fi:2cells}
\end{figure}

\begin{definition}[The complex $P_*$]\label{de:Snres}
Let $P_0$ be the free $\Z \sg{n}$-module on a single generator $*$.
Let $P_1$ be the free $\Z \sg{n}$-module on $n-1$ generators $e_1,\dotsc, e_{n-1}$.
For all $i$, $\partial e_i=(s_i-1)*$; this defines the chain boundary map $\partial\co P_1\to P_0$.
Now $P_2$ will be a free $\Z \sg{n}$-module on the following set of three classes of free generators.
We define $\partial\co P_2\to P_1$ simultaneously by giving the boundaries of the generators.
\begin{itemize}
    \item For each $i$ from $1$ to $n-1$, we have a generator $c_i$ for $P_2$, with $\partial c_i=(s_i+1)e_i$.
    \item For each $i$ from $1$ to $n-2$, we have a generator $b_i$ for $P_2$, with 
    \[\partial b_i= (1-s_i+s_{i+1}s_i)e_{i+1}-(1-s_{i+1}+s_is_{i+1})e_i.\]
    \item For each $i$ and $j$ with $1\leq i<j\leq n-1$ and $j\geq i+2$, we have a generator $d_{ij}$ for $P_2$, with $\partial d_{ij}=(s_j-1)e_i-(s_i-1)e_j.$
\end{itemize}

Similarly, $P_3$ is a free $\Z \sg{n}$-module on seven classes of generators, and we give free generators and their boundaries here.
\begin{itemize}
    \item For each $i$ from $1$ to $n-1$, there is a generator $c^{3,1}_i$ with $\partial c^{3,1}_i=(s_i-1)c_i$.
    \item For all $i$ and $j$ with $1\leq i, j\leq n-1$ and $\abs{j-i}\geq 2$, there is a generator $c^{3,2}_{ij}$ with 
    \[\partial c^{3,2}_{ij}=
    \left\{
    \begin{array}{cc}
    (s_j-1)c_i-(s_i+1)d_{ij} & \text{if $i<j$} \\
    (s_j-1)c_i+(s_i+1)d_{ji} & \text{if $j<i$}
    \end{array}
    \right.\]
    \item For all $i,j,k$ with $1\leq i<j<k\leq n-1$, with $j>i+1$ and $k>j+1$, there is a generator $c^{3,3}_{ijk}$ with
    \[\partial c^{3,3}_{ijk}=(s_i-1)d_{jk}-(s_j-1)d_{ik}+(s_k-1)d_{ij}.\]
    \item For all $i$ and $j$ with $1\leq 1,j\leq n-1$ and $j<i-1$ or $j>i+2$, there is a generator $c^{3,4}_{ij}$ with 
    \[\partial c^{3,4}_{ij}=
    \left\{
    \begin{array}{cc}
    (s_j-1)b_i-(1-s_i+s_{i+1}s_i)d_{i+1,j} &  \\
    \qquad+(1-s_{i+1}+s_is_{i+1})d_{ij} & \text{if $i+1<j$} \\
    (s_j-1)b_i+(1-s_i+s_{i+1}s_i)d_{j,i+1} & \\
    \qquad -(1-s_{i+1}+s_is_{i+1})d_{ji} & \text{if $j<i$} 
    \end{array}
    \right.
    \]
    \item For all $i$ with $1\leq i\leq n-2$, there is a generator $c^{3,5}_i$ with
    \[\partial c^{3,5}_i=(s_{i+1}+1)b_i - c_{i+1} + s_is_{i+1}c_i.\]
    \item For all $i$ with $1\leq i\leq n-2$, there is a generator $c^{3,6}_i$ with
    \[\partial c^{3,6}_i=(s_is_{i+1}-1)b_i + (s_{i+1}-1)(s_ic_{i+1}+c_i).\]
    \item For all $i$ with $1\leq i\leq n-3$, there is a generator $c^{3,7}_i$ with
    \[
    \begin{split}
    \partial c^{3,7}_i = &(1-s_{i+1}+s_is_{i+1}-s_{i+2}s_is_{i+1}+s_{i+2}s_{i+1}+s_{i+1}s_{i+2}s_is_{i+1})d_{i,i+2}    \\
    & +(s_{i+2}-1+s_is_{i+1}s_{i+2}-s_{i+1}s_{i+2})b_i \\
    &+(s_i-1+s_{i+2}s_{i+1}s_i-s_{i+1}s_i)b_{i+1}. \\
    \end{split}
    \]
\end{itemize}
\end{definition}

\begin{remark}
The generators $e_1,\dotsc,e_{n-1}$ are lifts of the edges in $X$, which are the generators in the presentation~\eqref{eq:Snpresentation}.
The generators for $P_2$ are lifts of the $2$-cells in $X$ glued in by the relations in the presentation; this means that the boundaries are describing loops in $\widetilde X$ that are lifts of the relations in the presentation.
We note that with the generators $d_{ij}$, we require that $i<j$ (for other objects indexed by a pair in this paper, we do not require this).

Each generator for $P_3$ describes the lift of some $3$-cell in $X$, and these $3$-cells are chosen to fill in $2$-spheres in the presentation complex for $\sg{n}$.
It is possible to visualize the cells; see Figures~\ref{fi:2cells}, \ref{fi:most3cells}, and~\ref{fi:cellc37}.
Each $c^{3,1}_i$ is a ball, each $c^{3,2}_{ij}$ and $c^{3,5}_i$ is a tube, each $c^{3,3}_{ijk}$ is a cube, each $c^{3,4}_{i,j}$ is a hexagonal prism, and each $c^{3,7}_i$ is a truncated octahedron.
The cells of the form $c^{3,6}_i$ are harder to describe; each one is a hexagonal pillow with four bigon faces.
It is worth noting that the $b_i$-cells and $c^{37}_i$-cells are fillings of Cayley graphs of $\sg{3}$ and $\sg{4}$.
\end{remark}

\subsection{Acknowledgments}
We would like to thank the following people for helpful conversations: Wade Bloomquist, Susan Hermiller, Dan Margalit, Daniel Minahan, Peter Patzt, Andrew Putman, and Nick Salter.

\begin{figure}[hp]
\begin{tabular}{ccc}
$c^{31}_i$ & $c^{32}_{ij}$, front faces & $c^{32}_{ij}$, back faces \\
\includegraphics{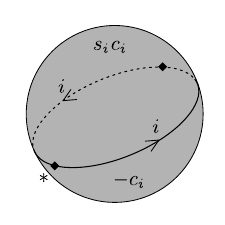} & 
\includegraphics{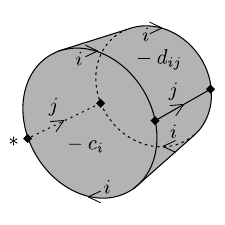} & 
\includegraphics{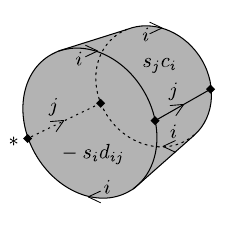}  
\end{tabular}
\begin{tabular}{ccc}
$c^{33}_{ijk}$, front faces & $c^{33}_{ijk}$, back faces & $c^{34}_{ij}$, front faces \\
\includegraphics{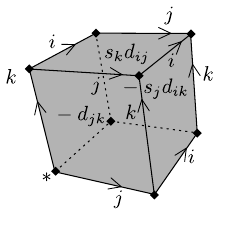} & 
\includegraphics{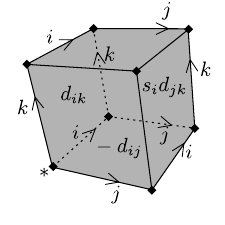} & 
\includegraphics{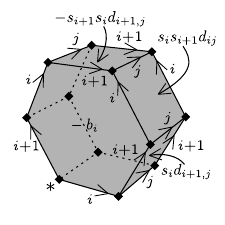}  
\end{tabular}
\begin{tabular}{ccc}
$c^{35}_{i}$, front faces & $c^{35}_{i}$, back faces & $c^{34}_{ij}$, back faces \\
\includegraphics{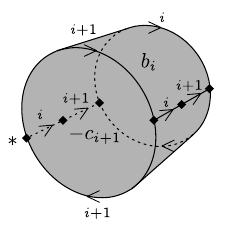} & 
\includegraphics{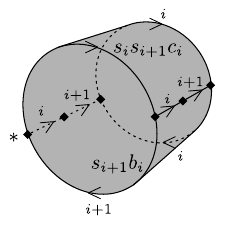} & 
\includegraphics{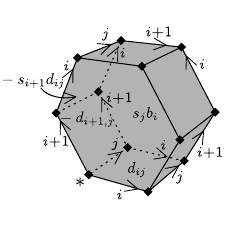}  
\end{tabular}
\begin{tabular}{cc}
$c^{36}_{i}$, front faces & $c^{36}_{i}$, back faces \\
\includegraphics{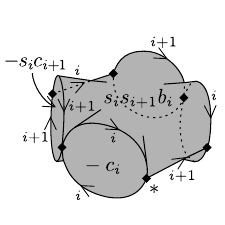} & 
\includegraphics{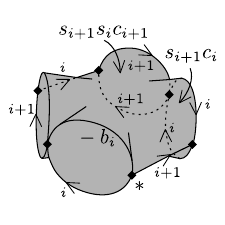}   
\end{tabular}
\caption{The $3$-cells of $X$, except for $c^{37}_i$, which is in another figure.
Again we label edges with ``$i$" instead of ``$e_i$".
The condition that $i<j<k$ is part of the definition for $c^{33}_{ijk}$.
The cells $c^{32}_{ij}$ and $c^{34}_{ij}$ are illustrated with the assumption that $i<j$, but there are also versions of these cells where the reverse inequality holds.}\label{fi:most3cells}
\end{figure}

\begin{figure}[htbp]
\begin{tabular}{cc}
$c^{37}_{i}$, front faces & $c^{37}_{i}$, back faces \\
\includegraphics[scale=0.8]{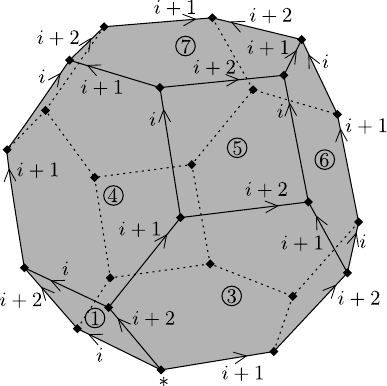} & 
\includegraphics[scale=0.8]{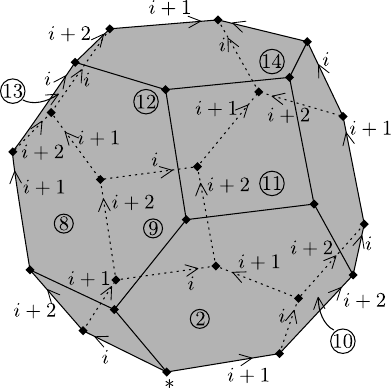}
\end{tabular}
\begin{tabular}{|cc|cc|}
\hline
label & face & label & face \\
\hline
1 & $d_{i,i+2}$ & 8  & $s_i b_{i+1}$\\
2 & $-b_i$ & 9  & $s_is_{i+1}d_{i,i+2}$ \\
3 & $-b_{i+1}$ & 10  & $-s_{i+1}d_{i,i+2}$ \\
4 & $s_{i+2}b_i$ & 11  & $-s_{i+1}s_ib_{i+1}$ \\
5 & $s_{i+2}s_{i+1}d_{i,i+2}$ & 12  & $s_is_{i+1}s_{i+2}b_i$ \\
6 & $-s_{i+1} s_{i+2} b_i$ & 13  & $-s_{i+2}s_is_{i+1}d_{i,i+2}$ \\
7 & $s_{i+2}s_{i+1} s_i b_{i+1}$ & 14  & $s_{i+1}s_{i+2}s_is_{i+1}d_{i,i+2}$ \\
\hline
\end{tabular}

\caption{The $3$-cell $c^{37}_i$, with faces labeled by the key here.  Again we label edges with ``$i$" instead of ``$e_i$".}\label{fi:cellc37}
\end{figure}

\section{Preliminaries to the $3$-complex}\label{se:preliminaries1}
\subsection{Group homology}
We start by reviewing the basics of homology of groups; Brown~\cite{Brown} is a standard reference.
Recall that, given a group $G$ and $\Z G$-module $A$, there are several ways to compute the homology groups $H_n(G;A)$ and cohomology groups $H^n(G;A)$.
First of all, if $X$ is an Eilenberg-Mac Lane classifying space for $G$, denoted a $K(G,1)$ space, then $H_n(G;A)$ and $H^n(G;A)$ may be identified with the topological homology and cohomology of $X$ using a local coefficient system associated with $A$.
(A $K(G,1)$ space is a connected topological space $X$ with $\pi_1(X)=G$ and with contractible universal cover.)

A second approach to group homology is to recognize $H_n(G;A)$ as $\mathrm{Tor}^{\Z G}_n(\Z,A)$ and $H^n(G;A)$ as $\mathrm{Ext}^n_{\Z g}(\Z, A)$, where $\Z$ is the untwisted $\Z G$-module.
Then computation is done using projective resolutions of $\Z$ as a $\Z G$-module.
This dovetails with the topological approach, since if $X$ is a cellular $K(G,1)$-space, then the cellular cochains of the universal cover of $X$ form a free resolution of $\Z$ as a $\Z G$-module.
However, there are many choices of projective resolution, and there are constructions with better functorial properties than resolutions from topology.
One such construction is the \emph{bar resolution}.

\subsection{Bar resolutions} \label{section: resolutions}
We review the construction of the normalized bar resolution and the cellular chain complex obtained from constructing the Cayley complex. The interested reader is referred to Chapter I.5 of Brown's text, \cite{Brown}, for a complete description of the normalized bar resolution. 

As above, $G$ is a group. Topologically, construct a simplicial complex, $BG$ where the vertices are group elements and $G$ acts on the vertices by left translation with each finite subset of $G$ a simplex of $BG$. (This $BG$ is an infinite-dimensional simplex with vertex set $G$.)
For each nonnegative integer $t$, we let $F_t$ denote the simplicial $t$-chains of $BG$.
Then $F_t$ is a free $\Z$-module, freely generated by the $t$-dimensional simplices, which are $(t+1)$-tuples of the form $(g_{0},g_{1},\ldots,g_{t})$ with $g\cdot(g_{0},g_{1},\ldots, g_{t})=(g\cdot g_{0},g\cdot g_{1},\ldots, g\cdot g_{t})$. 
We give $F_*$ the standard boundary operator, $\partial F_{t}\to F_{t-1}$ as $\sum_{i=0}^{t}(-1)^i d_{i}$ where $d_{i}(g_{0},\ldots,g_{t})=(g_{0},\ldots,\hat{g}_{i},\ldots g_{t})$.
We give $F_*$ the augmentation $\epsilon\co F_0\to \Z$ by $\epsilon(g)=1$.
This $F_*$ is the \emph{bar resolution} and is a free resolution of $\Z$ over $\Z G$.

The generators of $F_{t}$ are usually written using \emph{bar notation}:
\[[g_{1}\mid g_{2}\mid\cdots\mid g_{t}]=(1,g_{1}, g_{1}g_{2},\ldots, g_{1}\cdots g_{t}).\]
 The boundary operator that we gave above becomes $\partial_{t}^{F}([g_{1}\mid\cdots\mid g_{t}])=\sum\limits_{i=0}^{t}(-1)^i d_{i}$ where $d_{i}$ is defined by:
$$d_{i}([g_{1}\mid\cdots\mid g_{t}])=\begin{cases}
g_{1}\cdot[g_{1}\mid\cdots\mid g_{t}]\hspace{1em}&i=0\\
[g_{1}\mid\cdots\mid g_{i-1}\mid g_{i}g_{i+1}\mid g_{i+2}\mid\cdots\mid g_{t}]\hspace{1em}&0<i<t\\
[g_{1}\mid\cdots\mid g_{t-1}]\hspace{1em}&i=t.
\end{cases}$$
To obtain the \textit{normalized bar resolution}, take the quotient of the bar resolution by the submodule generated by elements $[g_{1}\mid\cdots\mid g_{t}]$ such that $g_{i}=1$ for some $1\leq i\leq t$.  
It is an exercise to show that this submodule is closed under the boundary map, so that this quotient is actually a subcomplex.
We will denote the normalized bar resolution by $\bar{F}_{*}$.
 Topologically, $\bar{F}_{*}$ is simplicial chain complex of the quotient of the space $BG$ given by collapsing those degenerate simplices $(g_0,g_1,\dotsc,g_n)$ in which there is some $i$ with $g_i=g_{i+1}$.

\subsection{Resolutions from presentations}
Suppose $G$ has presentation $\presby{S}{R}$. 
Build a cell complex $X$ as follows: there is single $0$-cell $*$, the $1$-cells are oriented and indexed by $S$, and the $2$-cells are indexed by $R$, and for each $r\in R$, the $2$-cell of $r$ is glued in with its boundary following the directed edges for the letters in $S\cup S^{-1}$ in sequence as they appear in $r$.
This complex is the \emph{presentation complex}.
The \emph{Cayley complex} is the universal cover $\widetilde X$ of the presentation complex.
To describe the Cayley complex, we choose a $0$-cell $*$ and use the left action of $G$ by deck transformations to describe the other cells.
The action of $G$ on the $0$-cells is faithful and transitive, so we have a $0$-cell $g*$ in $\widetilde X$ for each $g\in G$.
For each $s\in S$, let $x_s$ denote the $1$-cell in $\widetilde X$ that starts at $*$ and descends to the $1$-cell indexed by $s$ in $X$.
Again, the action by deck transformations is faithful, and is transitive on the preimages of each $1$-cell in $X$, so we have $1$-cells $g x_s$ in $\widetilde X$ for each $g\in G$ and $s\in S$, and every $1$-cell in $\widetilde X$ has exactly one such label.
Finally we describe the $2$-cells in $\widetilde X$.
A relation $r$ in $R$ can be given as a word $r=s_1^{b_1}s_1^{b_2}\dotsm s_n^{b_n}$ for some $s_1,\dotsc,s_n\in S$ and $b_1,\dotsc,b_n\in \{1,-1\}$.
By the construction of the presentation complex, there is a unique $2$-cell in $X$ labeled by this relation and glued in with its boundary following the path in the $1$-skeleton indicated by $s_1^{b_1}s_1^{b_2}\dotsm s_n^{b_n}$.
Let $e_r$ denote the lift in $\widetilde X$ of this $2$-cell that is glued in starting at $*$.
Then the $2$-cells of $\widetilde X$ are uniquely labeled as $g e_r$ for $g\in G$ and $r\in R$.

The action of $G$ by deck transformations on $\widetilde X$ gives the cellular homology chains of $\widetilde X$ the structure of a chain complex of $\Z G$-modules.
This chain complex is free in each dimension; it is freely generated by the basepoint $*$ in dimension $0$, by the $1$-cells $\{x_s\}_{s\in S}$ in dimension $1$, and by the $2$-cells $\{e_r\}_{r\in R}$ in dimension $2$.
It is immediate from the construction that boundaries are given by $\partial *=0$, $\partial x_s=s*-*$, and if $r=s_1^{b_1}s_1^{b_2}\dotsm s_n^{b_n}$, then
$\partial e_r= y_1+s_1^{b_1}y_2+s_1^{b_1}s_2^{b_2}y_3+\dotsm+s_1^{b_1}s_2^{b_2}\dotsm s_{n-1}^{b_{n-1}}y_n$, where
\[y_i=
\left\{
\begin{array}{cc}
x_{s_i} & \text{ if $b_i=1$} \\
-s_i^{-1} x_{s_i} & \text{ if $b_i=-1$.} 
\end{array}
\right.
\]

\subsection{Normal forms, rewriting systems, and collapsing schemes}
\label{Section: Collapsing Schemes}
We describe the application of rewriting systems Brown used to compute the homology of groups and monoids. 
The interested reader is referred to Brown's paper~\cite{BrownRewriting}. 
The majority of results required for this paper are found in sections~3 and~5. 

Let $G$ be a group with presentation $\presby{S}{R}$ and let $F(S)$ denote the free group generated by $S$. 
A set of \textit{normal forms} for $G$ is a subset $I\subseteq F(S)$ which maps bijectively onto $G$ under the canonical projection $\pi\co F(S)\to G$.
A \emph{rewriting rule} is an ordered pair $(r_1,r_2)$, written as $r_1\to r_2$, of words $r_1, r_2\in F(S)$, such that $r_1=r_2$ in $G$.
A \emph{rewriting system} is a set of rewriting rules.
Given a rewriting system, we write $w_1\to w_2$ to mean that $w_1$, $w_2$ are words in $F(S)$, and there is a rewriting rule $r_1\to r_2$ in our rewriting system such that there are words $u,v\in F(S)$ with $w_1=ur_1v$ and $w_2=ur_2v$, as reduced products.
If $w_1\to w_2$, we say that $w_2$ is a \emph{reduction} or \emph{rewriting} of $w_1$.
Furthermore, a word $w\in F(S)$ is called \textit{reducible} if  $w\to w'$ for some $w'\in F(S)$. 
Otherwise, $w$ is called \textit{irreducible}.
\begin{definition}\label{de:complete}
    A rewriting system, $\to$, on $G$ is called \textit{complete} if it satisfies the following two conditions:
    \begin{enumerate}
        \item The set of irreducible words, $I$, is a set of normal forms for $G$.
        \item There is no infinite chain $w\to w_{1}\to w_{2}\to\cdots$ of reductions.
    \end{enumerate}
    Since $S$ is a generating set for $G$, we also assume the following condition for simplicity:
    \begin{enumerate}
        \setcounter{enumi}{2}
        \item Every $s\in S$, considered as a word of length 1, is irreducible. 
    \end{enumerate}
\end{definition}

Following Brown, we use our normal forms to label the generators of the normalized bar resolution.
Let $X$ denote the chain complex whose cells are the generators of the normalized bar resolution $\bar{F}_*$ (so $X$ is a quotient of $BG$ as described in section \ref{section: resolutions}).
Our goal is define a \emph{collapsing scheme} on $X$.
This consists of a partitioning of the cells of $X$ into \emph{essential}, \emph{redundant}, and \emph{collapsible} cells, and a \emph{collapsing function} for $X$ that selects, for each redundant cell, a collapsible cell that it is a face of.
We think of the collapsing scheme as instructions for retracting $X$ onto the subcomplex of essential cells by pulling the redundant cells across the collapsible cells.

As usual, each basis element $[g_{1}\mid g_{2}\mid\cdots\mid g_{t}]$ of $F_{t}$ corresponds to a $t$-cell in $X$.
We label the cells of $X$ as \emph{essential}, \emph{collapsible}, or \emph{redundant}, as follows:
\begin{definition} \label{de:essential cells}
These definitions are made with respect to the complete rewriting system $\to$ on $G$ on the generators $S$.
Let $\tau=[g_{1}\mid g_{2}\mid\cdots\mid g_{t}]$ be a simplex of the bar resolution of $G$, with each $g_i\neq 1$.
Assume each $g_i$ is given as an irreducible word in $S$.
Then $\tau$ is \emph{essential} if 
\begin{itemize}
\item $g_1\in S$,
\item $g_ig_{i+1}$ is reducible for each $i$ with $1\leq i<t$, and
\item every proper initial subword of $g_ig_{i+1}$ is irreducible, for each $i$ with $1\leq i<t$.
\end{itemize}
We say $\tau$ is \emph{collapsible} if 
\begin{itemize}
\item $g_1\in S$, and
\item for the first $i$ such that $[g_1|g_2|\dotsm|g_{i+1}]$ not essential, we have that $g_ig_{i+1}$ is irreducible.
\end{itemize}
Finally, we say $\tau$ is \emph{redundant} if
\begin{itemize}
\item $g_1\notin S$, or
\item $g_1\in S$, and for the first $i$ with $[g_1|g_2|\dotsm|g_{i+1}]$ not essential, we have that $g_ig_{i+1}$ has a proper initial subword that is reducible.
\end{itemize}
\end{definition}
Note that every cell of $X$ satisfies exactly one of these conditions.

\begin{remark}
    We have subtly altered the classification of essential, redundant, and collapsible cells from Brown~\cite{BrownRewriting} to use a left action instead of a right action. The natural change is to write words in reverse order, however we altered the classification setup to maintain the ordering on the normalized bar resolution and classify essential cells with the same ordering as Brown.
\end{remark}

\begin{remark}\label{remark:lowdessential}
We can characterize the low-dimensional simplices as follows.
First, the $0$-simplex $[\hspace{.5em}]$ is essential.
For all $s\in S$ we label the cells corresponding to the bar chains $[s]$ as essential and all other $1$-simplices are redundant.
The redundant $1$-simplices may be considered as $[g]$ where $g$ is a nontrivial, irreducible word in $F(S)$ with $g\not\in S$. 
Thus $g$ admits a nontrivial factorization $g=su$ where $s\in S$ and $u$ is an irreducible word in $F(S)$. 
Therefore $[g]$ is the face of a $2$-simplex $[s\mid u]$. 
The $2$-simplices of the form $[s\mid u]$ are collapsible.

In the $2$-skeleton, we have already described the collapsible simplices. 
The remaining $2$-simplices are of the form $[g_{1}\mid g_{2}]$ where $g_{1}\not\in S$ or $[s\mid g]$ where $s\in S$ and $sg$ is reducible. 
If $g_{1}\not\in S$, then $g_{1}=sg_{1}'$ is a nontrivial factorization and $[g_{1}\mid g_{2}]$ is a face of the $3$-simplex $[s\mid g_{1}'\mid g_{2}]$. 
Therefore, in this case, we classify $[g_{1}\mid g_{2}]$ as redundant and the corresponding $3$-simplex $[s\mid g_{1}'\mid g_{2}]$ as collapsible. 
Now, suppose $sg$ is reducible with factorization $sg=sg'g''$ where $sg'$ is reducible and $g''$ is a possibly empty word. 
If $g''$ is a nonempty word, we label $[s\mid g]$ redundant and can obtain $[s\mid g]$ as the face of the collapsible cell $[s\mid g'\mid g'']$. 
If $g''$ is empty, then we label $[s\mid g]$ as essential. 
Alternatively, $[s\mid g]$ is essential if and only if $sg$ is a rewriting rule.
\end{remark}

A set of normal forms corresponding to some complete rewriting system is often referred to as a set of \textit{good normal forms}. 
We restate a Theorem~2 of Brown~\cite{BrownRewriting} for groups to obtain a quotient complex of the normalized bar resolution.

\begin{theorem}[Brown]\label{Theorem: Quotient Complex} 
Let $G$ be a group with a set of good normal forms and let $\bar{F}_{*}$ be the normalized bar resolution of $G$. 
Let $Q_*$ be the quotient of $F_*$ by the subcomplex generated by the collapsible simplices.
Then:
\begin{itemize}
\item the images of the essential simplices are a free basis for $Q_*$ as a $\Z G$-module,
\item $Q_*$ is a free resolution of $\Z$ as a $\Z G$-module, and 
\item the quotient map $q\co F_*\to Q_*$ is an augmentation-preserving chain homotopy equivalence.
\end{itemize}
\end{theorem}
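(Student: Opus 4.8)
The plan is to follow the classical discrete-Morse-theory / collapsing-scheme argument, in the form Brown gives it in~\cite{BrownRewriting}, adapted to the left-action conventions fixed above. The three bullet points are really one statement with three faces: once we exhibit an augmentation-preserving chain homotopy equivalence $q\co \bar F_*\to Q_*$ whose value on essential simplices is a free $\Z G$-basis, the ``free resolution'' conclusion is automatic since $\bar F_*$ is one. So the work is to build the chain contraction realizing the collapse.

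First I would verify that the collapsible simplices span a $\Z G$-subcomplex $D_*\subseteq \bar F_*$, i.e.\ that $D_*$ is closed under $\partial$; equivalently, that in the boundary of a collapsible simplex every non-collapsible face appears with coefficient $0$ after cancellation. This is the combinatorial heart of the matched-pair bookkeeping: using the trichotomy of Definition~\ref{de:essential cells} (every simplex is exactly one of essential/collapsible/redundant) and Brown's collapsing function $c$, which assigns to each redundant $\rho$ a collapsible $c(\rho)$ having $\rho$ as a codimension-one face, one shows the pairing $\rho\leftrightarrow c(\rho)$ is a bijection between redundant and collapsible simplices, matched faces of a collapsible simplex, and that the ``acyclic matching'' condition holds — there is no closed zig-zag of face/matched-face moves. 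Here I would use the length of the normal-form words $g_ig_{i+1}$ and the position $i$ of the first non-essential prefix as the decreasing quantity that forbids infinite descent, exactly the role played by condition~(2) of Definition~\ref{de:complete} (no infinite chain of reductions); this is what makes the matching acyclic. Granting acyclicity, a standard argument (cf.\ Brown, or Kozlov's algebraic Morse theory) produces a $\Z G$-linear chain map $q\co\bar F_*\to Q_*=\bar F_*/D_*$ together with a $\Z G$-linear chain homotopy inverse $\iota$ and homotopy $h$ with $\iota q - \mathrm{id} = \partial h + h\partial$ and $q\iota=\mathrm{id}$, all built by induction on dimension by repeatedly pushing a redundant cell across its matched collapsible cell and collecting the lower-order terms. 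Because the basepoint $[\,]$ is the unique essential $0$-simplex and every $0$- and $1$-simplex is handled in Remark~\ref{remark:lowdessential}, $q$ is augmentation-preserving.

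Next, freeness of $Q_*$ on the images of essential simplices: the quotient $\bar F_t/D_t$ is the free $\Z G$-module on those generators of $\bar F_t$ not lying in $D_t$, and by the trichotomy those are precisely the essential and the redundant $t$-simplices. The redundant ones die in $Q_*$ as well — not because they are in $D_*$ but because the contraction $h$ exhibits each redundant generator as a boundary-plus-lower-order combination of essential ones after passing to the quotient; concretely one checks that modulo $D_*$ the class of a redundant $\rho$ equals $\pm\partial(c(\rho))$ minus classes of redundant simplices of strictly smaller ``size'', so by induction the essential classes alone span $Q_*$ freely. Then, since $\bar F_*$ is a free (hence projective) resolution of $\Z$ and $q$ is an augmentation-preserving chain homotopy equivalence, $Q_*$ is a complex of free $\Z G$-modules chain homotopy equivalent to a resolution of $\Z$, hence itself a free resolution of $\Z$.

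The main obstacle is the acyclicity of the matching — proving that the face/matched-face relation on simplices admits a rank function that strictly decreases, so that no infinite regress and no closed cycle occurs; everything downstream (existence of $q$, $\iota$, $h$, freeness of $Q_*$) is then formal. I would isolate this as a lemma: order simplices by the pair (total word length of the concatenations $g_1\cdots g_t$, or a suitable weighted sum) together with a tie-breaking use of the first non-essential position, and show each elementary move in the zig-zag strictly decreases it, invoking completeness of the rewriting system (Definition~\ref{de:complete}) to guarantee termination of the reductions that underlie the moves.
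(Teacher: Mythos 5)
The paper does not prove this theorem at all: it is quoted verbatim (with conventions adjusted) from Theorem~2 of Brown's rewriting paper, so there is no in-paper argument to compare against. Your sketch is a reconstruction of Brown's own proof --- the collapsing-scheme / algebraic-discrete-Morse argument with the redundant--collapsible matching $\rho\leftrightarrow c(\rho)$ and termination supplied by condition~(2) of Definition~\ref{de:complete} --- and in outline that is the right architecture.

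There is, however, one concretely false step. You propose to ``verify that the collapsible simplices span a $\Z G$-subcomplex $D_*$ closed under $\partial$; equivalently, that in the boundary of a collapsible simplex every non-collapsible face appears with coefficient $0$ after cancellation.'' This fails already in the lowest dimension: for a collapsible $2$-simplex $[s\mid u]$ (with $su$ irreducible, $u\notin S$) one has $\partial[s\mid u]=s[u]-[su]+[s]$, where $[s]$ is essential and $[su]$, $[u]$ are non-collapsible $1$-cells, all with coefficient $\pm1$ and no cancellation. So the $\Z G$-span of the collapsible cells is not $\partial$-closed, and this ``verification'' cannot be carried out. The correct reading of the theorem is that $D_*$ is the \emph{subcomplex generated by} the collapsible simplices --- hence it contains their boundaries by fiat --- and the substance of the claim is that the relations $\partial(c(\rho))\equiv 0$ so introduced do exactly two things: they rewrite each redundant cell as a $\Z G$-combination of essential cells (by the terminating recursion the paper describes after the theorem statement), and they impose no relations among the essential cells. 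Your second paragraph in fact works with this correct picture (``not because they are in $D_*$ but because\dots''), so the error is an internal inconsistency rather than a collapse of the whole argument; but as written the first step is wrong and should be replaced by the statement that $D_*$ is $\partial$-closed by construction, with the real work being the freeness and acyclicity claims you address afterwards. With that repair, and with the acyclicity/termination lemma fleshed out (the decreasing quantity being, as in the paper's discussion, the first position at which the prefix $[g_1\mid\dots\mid g_{i-1}]$ fails to be essential, together with well-foundedness of the rewriting order), your sketch matches Brown's argument.
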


To finish describing the collapsing scheme, we define the \emph{collapsing function} $\tau\mapsto c(\tau)$.
\begin{definition}
For each non-negative integer $t$, let $c$ be the function from the set of redundant $t$-simplices in $F_t$ to the set of collapsible $(t+1)$-simplices in $F_{t+1}$, defined as follows.
Let $\tau=[g_{1}\mid g_{2}\mid\cdots\mid g_{t}]\in F_t$ be redundant, with each $g_i$ given by its irreducible representative.
\begin{itemize}
\item
If $g_1\notin S$, factor $g_1$ as $g_{1}=sg_{1}'$ with $s\in S$.
Then 
\[c(\tau)=[s\mid g_{1}'\mid g_{2}\mid\cdots\mid g_{t}].\]
\item
If $g_{1}\in S$, let $i$ be the largest integer such that $[g_{1}\mid g_{2}\mid\cdots \mid g_{i-1}]$ is an essential $(i-1)$-cell.
Since $\tau$ is redundant, $g_{i-1}g_{i}$ contains a proper initial subword which is reducible.
Let $g'$ and $g''$ be the subwords such that we have a factorization $g_{i-1}g_{i}=g_{i-1}g'g''$ and $g_{i-1}g'$ is the smallest reducible initial subword of $g_{i-1}g_i$.
Then 
\[c(\tau)=[g_{1}\mid\cdots \mid g_{i-1}\mid g'\mid g''\mid g_{i+1}\mid\cdots g_{t}].\]
\end{itemize}
\end{definition}
In either case, $c(\tau)$ is collapsible, and one of the faces of $c(\tau)$ is $\pm\tau$.

Our next goal is to explain how to express a redundant $t$-cell $\tau$ in $Q_*$ as a linear combination of essential cells.
Since $c(\tau)$ is collapsible, $q(c(\tau))=0$. Furthermore, since $\tau$ is the $i^{th}$ face of $c(\tau)$, taking the quotient of $\partial c(\tau)$ we obtain:
$$0=\pm\tau+\sum\limits_{\substack{j=0\\ j\neq i}}^{t}(-1)^{j}d_{j}c(\tau).$$
Therefore we may determine $\tau$ as a finite $\Z G$-linear combination of essential, redundant, and collapsible cells:
$$\tau= \mp\sum\limits_{\substack{j=0\\ j\neq i}}^{t}(-1)^{j}d_{j}c(\tau).$$
Since the quotient of collapsible cells are $0$, $\tau$ is a $\Z G$-linear combination of essential or redundant cells. Furthermore, the redundant cells can be identified as the face of another collapsible cell. 
We note that the redundant cells resulting from this replacement are closer to being essential, in the following sense: the largest $i$ such that $[g_1\mid g_1\mid\dotsm\mid g_{i-1}]$ is essential is greater for the resulting cells than the original one.
Since the rewriting system is complete, each redundant cell can be written as a $\Z G$-linear combination of essential cells in finitely many application of this process. We include the following lemmas to reduce the computations in Section~\ref{Section: Computation of Essential cells}.
\begin{lemma}\label{Lemma: irreducible tuples quotients to 0}
Let $\tau=[g_{1}\mid g_{2}\mid\cdots\mid g_{t}]$ be a $t$-cell for some $t\geq 2$. If $g_{1}g_{2}$ is irreducible, then $q(\tau)=0$.
\end{lemma}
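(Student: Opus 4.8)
The plan is to prove this by induction on the word length $\abs{g_1}$ of the first entry, pushing $\tau$ out of $Q_*$ by means of the collapsing function $c$ and Theorem~\ref{Theorem: Quotient Complex}. The one structural observation I would isolate first is the following: \emph{any} normalized simplex $[h_1\mid h_2\mid\cdots\mid h_u]$ (all entries nontrivial, written in irreducible form) with $h_1\in S$ and with the concatenated word $h_1h_2$ irreducible is collapsible, hence maps to $0$ under $q$. This is immediate from Definition~\ref{de:essential cells}: a length-$2$ initial segment $[h_1\mid h_2]$ can only be essential when $h_1h_2$ is reducible, so here the first non-essential initial segment occurs at index $1$, where $h_1h_2$ is irreducible --- which is precisely the collapsibility condition.

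For the base case, suppose $g_1\in S$. Then $\tau$ itself satisfies the hypotheses of that observation (its $g_1g_2$ is irreducible by assumption), so $\tau$ is collapsible and $q(\tau)=0$. For the inductive step, suppose $g_1\notin S$, so that $\tau$ is redundant. Factor $g_1=sg_1'$ with $s\in S$ and $g_1'$ a nonempty irreducible word, as in the definition of $c$; then $\abs{g_1'}<\abs{g_1}$ and $c(\tau)=[s\mid g_1'\mid g_2\mid\cdots\mid g_t]$ is collapsible, so $q(c(\tau))=0$ and hence $q(\partial c(\tau))=0$ since $q$ is a chain map. Expanding $\partial c(\tau)=\sum_{j=0}^{t+1}(-1)^jd_jc(\tau)$, the face $d_1c(\tau)$ equals $\tau$ up to sign, while $d_0c(\tau)=s\cdot[g_1'\mid g_2\mid\cdots\mid g_t]$, where the inner simplex has its first two entries concatenating to $g_1'g_2$; since $g_1'g_2$ is a subword of the irreducible word $g_1g_2=sg_1'g_2$ it is irreducible, so the inductive hypothesis gives $q([g_1'\mid g_2\mid\cdots\mid g_t])=0$ and therefore $q(d_0c(\tau))=0$ by $\Z\sg{n}$-linearity of $q$. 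Once I check that the remaining faces $d_jc(\tau)$ with $2\le j\le t+1$ also map to $0$, the relation $0=q(\partial c(\tau))$ collapses to $\pm q(\tau)=0$, finishing the induction.

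The main thing requiring care --- and really the only non-bookkeeping point --- is verifying that each $d_jc(\tau)$ with $j\ge 2$ is collapsible. Each such face is again a simplex beginning with $s\in S$, and I would dispatch it using the observation from the first paragraph: when $j=2$ its first two entries concatenate to $g_1g_2$, which is irreducible by hypothesis; when $3\le j\le t+1$ its first two entries are still $s$ and $g_1'$, which concatenate to $g_1$, a normal form and hence irreducible. The small subtleties to confirm along the way are that the product of the first two entries is genuinely the \emph{concatenated} word (so that one may invoke ``subwords of irreducible words are irreducible''), that $g_1'$ is truly nontrivial, and that any entries merging to the identity simply produce the zero chain in the normalized resolution, which is harmless.
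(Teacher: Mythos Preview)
Your proof is correct and follows essentially the same approach as the paper: induction on the length of $g_1$, with the base case being collapsibility when $g_1\in S$, and the inductive step expanding $q(\partial c(\tau))=0$ to express $q(\tau)$ in terms of faces whose first two entries still concatenate to an irreducible word. Your isolated ``structural observation'' is exactly the base case of the induction, and your treatment of the faces $d_jc(\tau)$ for $j\ge 2$ matches the paper's; you are simply a bit more explicit about why those faces are collapsible rather than invoking the inductive hypothesis on them.
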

\begin{proof}
    We prove this by induction on the length of the irreducible word $g_{1}$. If $g_{1}$ is of length $1$, then $g_{1}\in S$ and $\tau$ is collapsible by Definition~\ref{de:essential cells}.
    Therefore, we may assume $g_{1}=sg'$ for some $s\in S$ and some irreducible word $g'$ with length one less than $g_{1}$.    
    Then $\tau$ is a redundant cell with the corresponding collapsible simplex $c(\tau)=[s\mid g'\mid g_{2}\mid\cdots\mid g_{t}]$.
    Therefore applying the method above:
    $$\tau=s[g'\mid g_{2}\mid\cdots\mid g_{t}] +[s\mid g'g_{2}\mid g_{3}\mid\cdots \mid g_{t}]+\sum\limits_{i=3}^{t}(-1)^{i}d_{i}c(\tau).$$
    For $i\geq 3$, $d_i(c(\tau))$ satisfies the assumption that the product of the first two entries is irreducible.
    Similarly, so do $[g'\mid g_{2}\mid\cdots\mid g_{t}]$ and $[s\mid g'g_{2}\mid g_{3}\mid\cdots \mid g_{t}]$ since $g_1g_2$ is irreducible and $g_1g_2=sg'g_2$.  Therefore $q(\tau)=0$.
\end{proof}
Lemma \ref{Lemma: irreducible tuples quotients to 0} is a restatement of Proposition 2 from Section 5 of Brown~\cite{BrownRewriting}. 
\begin{lemma}\label{lemma: rewriting [w|v] for w not a generator}
    Suppose $w=s_{i_{1}}s_{i_{2}}\cdots s_{i_{k}}$ is irreducible and $uv\to r$ with $wu$ irreducible. Let $\tau=[wu\mid v\mid g_{3}\mid\cdots\mid g_{t}]$, then:  
\[
\begin{split}
q(\tau)=&w q([u\mid v\mid g_{3}\mid\cdots\mid g_{t}])\\
&+\sum\limits_{j=0}^{k-1}s_{i_{1}}s_{i_{2}}\cdots s_{i_{j}}q([s_{i_{j+1}}\mid s_{i_{j+2}}\cdots s_{i_{k}}r\mid g_{3}\mid\cdots\mid g_{t}]).
\end{split}
\]
    Furthermore, if $s_{i_{j}}\cdots s_{i_{k}}r$ is irreducible for all $1\leq j\leq k$, then $q(\tau)=wq([u\mid v\mid g_{3}\mid\cdots\mid g_{t}])$.
\end{lemma}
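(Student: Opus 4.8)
The plan is to prove Lemma~\ref{lemma: rewriting [w|v] for w not a generator} by induction on the length $k$ of the irreducible word $w = s_{i_1}s_{i_2}\cdots s_{i_k}$, peeling off one generator at a time and applying the collapsing-function machinery described above. The base case $k=0$ (so $w$ is empty) is trivial: the asserted formula reads $q(\tau) = q([u\mid v\mid g_3\mid\cdots\mid g_t])$, which is a tautology. The base case $k=1$ is the heart of the matter: here $w = s_{i_1}\in S$, so $wu = s_{i_1}u$ is an irreducible word not lying in $S$ (assuming $u$ nonempty; the degenerate case where $u$ is trivial must be handled, but then $wu = s_{i_1}\in S$ contradicts $wu\notin S$ unless one checks $uv\to r$ forces $u$ nonempty, or one simply notes the formula holds trivially). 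Thus $\tau = [s_{i_1}u\mid v\mid g_3\mid\cdots\mid g_t]$ has first entry not in $S$, so it is redundant, and its collapsing cell is $c(\tau) = [s_{i_1}\mid u\mid v\mid g_3\mid\cdots\mid g_t]$. Writing out $0 = q(\partial c(\tau))$ and solving for $q(\tau)$ exactly as in the displayed computation preceding Lemma~\ref{Lemma: irreducible tuples quotients to 0}, one obtains
\[
q(\tau) = s_{i_1}\, q([u\mid v\mid g_3\mid\cdots\mid g_t]) - q([s_{i_1}\mid uv\mid g_3\mid\cdots\mid g_t]) + (\text{higher faces}),
\]
and I would invoke Lemma~\ref{Lemma: irreducible tuples quotients to 0} to kill the higher face terms: each face $d_j c(\tau)$ for $j\geq 3$ has its first two entries multiplying to $s_{i_1}u$, which is irreducible by hypothesis, hence quotients to $0$. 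Since $uv\to r$, the term $[s_{i_1}\mid uv\mid\cdots]$ equals $[s_{i_1}\mid r\mid\cdots]$ in the quotient (the rewriting rule identifies $uv$ and $r$ as group elements, and normal forms are irreducible representatives), giving the $k=1$ case.

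For the inductive step, suppose the formula holds for all irreducible words of length less than $k$, and write $w = s_{i_1} w'$ with $w' = s_{i_2}\cdots s_{i_k}$ of length $k-1$. Apply the $k=1$ case with the generator $s_{i_1}$ and with ``$u$'' replaced by the word $w'u$ (note $s_{i_1}(w'u) = wu$ is irreducible by hypothesis, and the product $(w'u)\cdot v$... here I need to be slightly careful: the $k=1$ case as I set it up rewrites the product of the second entry with... actually the cleanest route is to apply the $k=1$ case directly to $\tau = [s_{i_1}(w'u)\mid v\mid\cdots]$, treating $w'u$ as the new ``$u$'' — this requires $s_{i_1}(w'u)\notin S$, true since it has length $\geq k\geq 2$, and that $(w'u)v\to r'$ for some $r'$; but $(w'u)v = w'(uv)$ and $uv\to r$ need not immediately give that $w'(uv)\to$ something without inserting an intermediate). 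The correct bookkeeping is instead: apply the $k=1$ case to peel off $s_{i_1}$, which yields
\[
q(\tau) = s_{i_1}\, q([w'u\mid v\mid g_3\mid\cdots\mid g_t]) + q([s_{i_1}\mid w'uv\mid g_3\mid\cdots\mid g_t]),
\]
where I have used that $s_{i_1}(w'u)$ is irreducible so the higher faces vanish, and then — crucially — the second term already has first entry in $S$ but $w'uv$ may need further analysis; however since $uv\to r$, in the quotient $w'uv$ and $w'r$ represent the same element. Then apply the inductive hypothesis to $q([w'u\mid v\mid g_3\mid\cdots\mid g_t])$ (with $w'$ of length $k-1$, and $s_{i_2}\cdots s_{i_j}r$ irreducible for $2\leq j\leq k$ as a sub-hypothesis of the main one), and separately expand the term $q([s_{i_1}\mid w'uv\mid\cdots]) = q([s_{i_1}\mid s_{i_2}\cdots s_{i_k}r\mid\cdots])$ — this is precisely the $j=0$ summand of the claimed formula. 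Reindexing and collecting the prefix $s_{i_1}$ in front of the inductive-hypothesis expansion produces exactly the stated sum $\sum_{j=0}^{k-1} s_{i_1}\cdots s_{i_j} q([s_{i_{j+1}}\mid s_{i_{j+2}}\cdots s_{i_k} r\mid g_3\mid\cdots\mid g_t])$.

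For the final sentence of the lemma, if moreover $s_{i_j}\cdots s_{i_k} r$ is irreducible for all $1\leq j\leq k$, then in each summand $[s_{i_{j+1}}\mid s_{i_{j+2}}\cdots s_{i_k} r\mid g_3\mid\cdots\mid g_t]$ the product of the first two entries is $s_{i_{j+1}} s_{i_{j+2}}\cdots s_{i_k} r$, which is irreducible by assumption, so Lemma~\ref{Lemma: irreducible tuples quotients to 0} gives that each such term quotients to $0$, leaving only $q(\tau) = w\, q([u\mid v\mid g_3\mid\cdots\mid g_t])$.

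The main obstacle I anticipate is the careful tracking of \emph{which} products of adjacent entries are irreducible at each stage, so that Lemma~\ref{Lemma: irreducible tuples quotients to 0} applies to exactly the terms that should vanish and not to terms that should survive. In particular, one must verify that when peeling off $s_{i_1}$ and writing $w'uv$ as $w'r$ via the rewriting rule, no new reducibility is accidentally created or destroyed — i.e. one uses only that $uv$ and $r$ are equal \emph{in $G$}, hence have the same image in $Q_*$ after replacing by irreducible normal forms, and this is legitimate because $q$ factors through the identification of group elements. A secondary subtlety is the degenerate cases (empty $u$, empty $v$, empty $r$, or entries becoming trivial after rewriting); since we are in the normalized bar resolution, any $[\cdots\mid 1\mid\cdots]$ is zero, so these cases either vanish outright or reduce to lower-dimensional instances, and I would dispatch them with a remark rather than separate arguments.
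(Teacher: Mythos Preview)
Your approach is essentially the same as the paper's: induction on $k$, peeling off $s_{i_1}$ by recognizing $\tau$ as redundant with $c(\tau)=[s_{i_1}\mid w'u\mid v\mid g_3\mid\cdots\mid g_t]$, using Lemma~\ref{Lemma: irreducible tuples quotients to 0} to kill the faces $d_j c(\tau)$ for $j\geq 3$ (since each has first two entries $s_{i_1}$ and $u$ or $w'u$, whose product is irreducible), then applying the inductive hypothesis to $q([w'u\mid v\mid\cdots])$ and recognizing $q([s_{i_1}\mid w'uv\mid\cdots])=q([s_{i_1}\mid w'r\mid\cdots])$ as the $j=0$ summand.

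One slip to fix: in your first displayed formula for the $k=1$ case you wrote $-q([s_{i_1}\mid uv\mid\cdots])$, but the sign should be $+$. Since $\tau=d_1 c(\tau)$ appears in $\partial c(\tau)$ with sign $-1$, solving $0=q(\partial c(\tau))$ for $q(\tau)$ gives $q(\tau)=q(d_0)+q(d_2)-q(d_3)+\cdots$, so the $d_2$ term enters with a plus. You already use the correct $+$ sign in your inductive-step display, so this is just a typo, but it is worth correcting since the whole lemma depends on that sign. The rest of your worries (degenerate $u$, passage from $w'uv$ to $w'r$) are handled exactly as you say: entries of bar simplices are group elements, so equal elements give equal simplices in $\bar F_*$, and trivial entries vanish in the normalized resolution.
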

\begin{proof}
We induct on $k$. If $k=1$, then $\tau=[s_{i}u\mid v\mid g_{3}\mid\cdots\mid g_{t}]$ is redundant and the corresponding $(t+1)$-cell $c(\tau)=[s_{i}\mid u\mid v\mid g_{3}\mid\cdots\mid g_{t}]$ has boundary:
$$\partial c(\tau)=s_{i}[u\mid v\mid g_{3}\mid\cdots\mid g_{t}]-\tau+[s_{i}\mid r\mid g_{3}\mid\cdots\mid g_{t}]+\sum\limits_{j=3}^{t}(-1)^{j}d_{j}c(\tau).$$
Since $s_{i+1}u$ is irreducible, $q(d_{j}c(\tau))=0$ for all $j\geq 3$ by Lemma \ref{Lemma: irreducible tuples quotients to 0}. Thus:
$$q(\tau)=s_{i}q([u\mid v\mid g_{3}\mid\cdots\mid g_{t}])+q([s_{i}\mid r\mid g_{3}\mid\cdots\mid g_{t}]).$$
Now, suppose $w=s_{i_{1}}s_{i_{2}}\cdots s_{i_{k}}$ for some $k>1$. Let $w'=s_{i_{2}}s_{i_{3}}\cdots s_{i_{k}}$, then $\tau$ is redundant with the corresponding collapsible $(t+1)$-cell $c(\tau)=[s_{i_{1}}\mid w'u\mid v\mid g_{3}\mid\cdots\mid g_{t}]$, where boundary:
$$\partial c(\tau)=s_{i_{1}}[w'u\mid v\mid g_{3}\mid\cdots\mid g_{t}]-\tau+[s_{i_{1}}\mid w'r\mid g_{3}\mid\cdots\mid g_{t}]+\sum\limits_{j=3}^{t}(-1)^{j}d_{j}c(\tau).$$
Since $wu$ is irreducible, $q(d_{j})c(\tau)=0$ for all $j\geq 3$ by Lemma \ref{Lemma: irreducible tuples quotients to 0}. Applying the inductive hypothesis to $[w'u\mid v\mid g_{3}\mid\cdots \mid g_{t}]$ completes the first statement. To obtain the second statement, apply Lemma \ref{Lemma: irreducible tuples quotients to 0} to $q([s_{i_{j+1}}\mid s_{i_{j+2}}\cdots s_{i_{k}}r\mid g_{3}\mid\cdots\mid g_{t}])$.
\end{proof}

\section{Boundaries in the $3$-complex}

\subsection{Rewriting in the symmetric group}\label{Section: Rewriting system for Sn}
Our proof of Theorem~\ref{th:main} has three parts.
First, we define a rewriting system for our presentation of $\sg{n}$, and prove that it is complete.
In the second step, we consider the complex $Q_*$ given by taking the quotient of the normalized bar resolution $\bar F_*$ by the subcomplex spanned by inessential simplices.
We take the boundaries of the essential simplices of dimension $2$ and $3$ in $Q_*$, and rewrite these as sums of essential simplices.
In the third step, we build chain maps between $P_*$ and $Q_*$, and quote Brown's theorem to deduce that $P_*$ is a truncation of a free resolution of $\Z$ as a $\Z\sg{n}$-module.

As above, $S=\{s_1,s_2,\dotsc, s_{n-1}\}$ is our generating set for $\sg{n}$.
One of our rewriting rules is easier to state with the following definition, which we also use to describe our normal form.
For $i$ and $j$ with $1\leq i\leq j\leq n-1$, define the \emph{ramp} from $s_i$ to $s_j$ to be 
\[\rho(i,j)=s_i s_{i+1} s_{i+2}\dotsm s_j.\]

\begin{definition}\label{de:symmetricrewriting}
Our rewriting system is defined as follows:
\begin{itemize}
\item $\rewriting 0$: for each $i$ with $1\leq i\leq n-1$, we have $s_i^{-1}\to s_i$.
\item $\rewriting 1$: for each $i$ with $1\leq i\leq n-1$, we have $s_i^2\to 1$.
\item $\rewriting 2$: for each $i$ and $j$, with $1\leq i< j-1$ and $j\leq n-1$, we have
$s_is_j\to s_js_i$.
\item $\rewriting 3$: for each $i$ and $j$, with $1\leq i<j\leq n-1$, we have
\[s_j\rho(i,j) \to \rho(i,j)s_{j-1},\]
\end{itemize}
\end{definition}

We note that these rules are valid in $\sg{n}$, meaning that for each rule, the right and left words are equal in $\sg{n}$.
This is immediate in all cases except that $s_j\rho(i,j) = \rho(i,j)s_{j-1}$, which is a consequence of a single braid relation and $j-i-1$ commuting relations.

From this definition of a rewriting system, we get the following characterization of the irreducible elements (the normal forms).
\begin{lemma}\label{le:normalform}
Let $w\in F(S)$ be a nontrivial reduced word in the free group on $S$.
Then $w$ is irreducible if and only if there is a positive integer $m$ and sequences $j_1,j_2,\dotsc,j_m$ and $k_1, k_2,\dotsc,k_m$, with $\{k_i\}_i$ strictly decreasing, and $1\leq j_i\leq k_i\leq n-1$ for each $i$, such that
\[w=\rho(j_1,k_1)\rho(j_2,k_2)\rho(j_3,k_3)\dotsm\rho(j_m,k_m).\] 
\end{lemma}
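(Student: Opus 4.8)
The plan is to prove both implications by analyzing how the rewriting rules $\rewriting 0$--$\rewriting 3$ interact with words of the stated ``staircase'' form. For the \emph{if} direction, I would suppose $w=\rho(j_1,k_1)\dotsm\rho(j_m,k_m)$ with $k_1>k_2>\dotsm>k_m$ and each $1\le j_i\le k_i\le n-1$, and check directly that no left-hand side of a rewriting rule occurs as a subword. Rules $\rewriting 0$ and $\rewriting 1$ are handled immediately: a product of ramps is a positive word in the $s_i$ with no letter $s_i^{-1}$, and it contains no subword $s_i s_i$ because within a single ramp the indices strictly increase, and at a junction $\rho(j_t,k_t)\rho(j_{t+1},k_{t+1})$ the last letter is $s_{k_t}$ and the first of the next ramp is $s_{j_{t+1}}$ with $j_{t+1}\le k_{t+1}<k_t$, so they differ. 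For $\rewriting 2$ (a subword $s_i s_j$ with $j>i+1$): inside one ramp consecutive indices differ by exactly $1$, so any such subword must straddle a junction, forcing $s_{k_t}s_{j_{t+1}}$ with $j_{t+1}>k_t+1$; but $j_{t+1}\le k_{t+1}<k_t$, a contradiction. For $\rewriting 3$ (a subword $s_j\rho(i,j)=s_j s_i s_{i+1}\dotsm s_j$ with $i<j$): I would argue that such a subword begins with a ``descent'' $s_j$ followed immediately by $s_i$ with $i<j$, which again can only happen at a junction; after the descent the word must read $s_i s_{i+1}\dotsm s_j$, i.e. a full ramp from $s_i$ up to $s_j$, but the ramp $\rho(j_{t+1},k_{t+1})$ starting at that point has top index $k_{t+1}<k_t=j$, so it cannot climb back up to $s_j$. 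This exhausts the rules, so $w$ is irreducible.

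For the \emph{only if} direction I would show every nontrivial reduced word in $F(S)$ that is \emph{not} of staircase form is reducible; equivalently, that the staircase words are the only candidates for irreducibility. I would proceed by a greedy/inductive decomposition: given irreducible $w$, it is a positive word (else $\rewriting 0$ or, after cancellation in the free group combined with $\rewriting 0$, $\rewriting 1$ applies — here I use that $w$ is reduced as a free-group word, and irreducibility under $\rewriting 0$ forces all exponents positive). Write $w=s_{a_1}s_{a_2}\dotsm s_{a_\ell}$ with all $a_p\ge 1$. Reading left to right, group the letters into maximal ascending runs of consecutive integers: a new block starts exactly when $a_{p+1}\ne a_p+1$. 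Each block is then a ramp $\rho(j_t,k_t)$. It remains to see that irreducibility forces the tops $k_t$ to be \emph{strictly decreasing}. Suppose at some junction we have a block ending at $s_{k_t}$ followed by a block starting at $s_{j_{t+1}}$. Because the run is maximal, $j_{t+1}\ne k_t+1$. If $j_{t+1}>k_t+1$ then $s_{k_t}s_{j_{t+1}}$ is a $\rewriting 2$-redex, contradiction; so $j_{t+1}\le k_t$. Now I claim $k_{t+1}<k_t$: if instead $k_{t+1}\ge k_t$, then the block $\rho(j_{t+1},k_{t+1})$, climbing from $s_{j_{t+1}}$ (with $j_{t+1}\le k_t\le k_{t+1}$) up through $s_{k_t}$, exhibits the subword $s_{k_t}\,s_{j_{t+1}}s_{j_{t+1}+1}\dotsm s_{k_t}=s_{k_t}\rho(j_{t+1},k_t)$, which is exactly a $\rewriting 3$-redex (with $i=j_{t+1}$, $j=k_t$, noting $j_{t+1}<k_t$ since $j_{t+1}\le k_t$ and $j_{t+1}=k_t$ would make the preceding letter $s_{k_t}$ start a $\rewriting 1$-redex $s_{k_t}s_{k_t}$). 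Hence $k_{t+1}<k_t$, and the tops strictly decrease, giving $w$ the asserted form.

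The \textbf{main obstacle} I anticipate is the bookkeeping in the \emph{only if} direction, specifically pinning down exactly \emph{which} redex one extracts at a junction where the staircase condition first fails, and confirming that the competing cases ($j_{t+1}>k_t+1$ vs.\ $j_{t+1}=k_t$ vs.\ $j_{t+1}<k_t$ with $k_{t+1}\ge k_t$) are genuinely exhaustive and each produces one of $\rewriting 1$, $\rewriting 2$, or $\rewriting 3$ without overlap issues. A secondary subtlety is the passage from ``reduced word in $F(S)$'' to ``positive word'': one must be careful that $\rewriting 0$ only rewrites $s_i^{-1}$, so an inverse letter in a freely reduced word is always a redex, and there is no interference from free cancellation because the word is already reduced. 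Once positivity is established, the argument is purely combinatorial on index sequences and should go through cleanly. I would also double-check the edge cases $m=1$ (a single ramp, trivially irreducible) and the boundary indices $i$ or $j$ equal to $1$ or $n-1$, where the rules degenerate.
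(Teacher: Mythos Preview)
Your proposal is correct and follows essentially the same route as the paper. The only cosmetic differences are that the paper proves the ``if'' direction by contrapositive (assume a redex exists, show the ramp-heights cannot be strictly decreasing) rather than your direct check, and it phrases the maximal-run decomposition as ``write $w$ as a product of as few ramps as possible''; the resulting case analysis at a junction ($\rewriting 1$ if $j_{t+1}=k_t$, $\rewriting 2$ if $j_{t+1}>k_t+1$, $\rewriting 3$ if $j_{t+1}<k_t$ and $k_{t+1}\ge k_t$, and the $j_{t+1}=k_t+1$ case ruled out by minimality/maximality) is identical to yours.
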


\begin{remark}\label{remark:creepandjump}
Lemma~\ref{le:normalform} says that $w$ is in normal form if and only if it is a product of ramps of strictly decreasing height.
Another characterization is that $w$ is normal form if and only if
\begin{itemize}
\item only positive generators appear in $w$ (there are no inverse generators $s_i^{-1}$),
\item for a subword $s_is_j$ of length $2$ in $w$, we must have $j=i+1$ or $j<i$ (we can creep up or jump down, but we can't jump up or stay level), and
\item if $s_i$ appears as the top of a ramp, then any $s_j$ with $j\geq i$ cannot appear to the right of this $s_i$.
\end{itemize}
We think of this third condition as a ``no valleys" condition: if the indices of the generators decrease and then increase, it cannot increase to the same height or higher than it started.

Here are some examples of irreducible words:
\begin{itemize}
\item For $s_i\in S$, we have $s_i=\rho(i,i)$, so $s_i$ is irreducible.
\item The word $s_1s_2s_3s_2s_1=\rho(1,3)\rho(2,2)\rho(1,1)$ is in normal form because the heights of the ramps are decreasing.
\item The word $s_2s_3s_4s_5s_6s_1s_2s_3s_4s_3s_1s_2=\rho(2,6)\rho(1,4)\rho(3,3)\rho(1,2)$ might be a more typical example of a word in normal form; in particular, there are no restrictions on the starting points of the ramps, and the only restriction on the heights of the ramps is that they be strictly decreasing.
\end{itemize}
\end{remark}

\begin{proof}[Proof of Lemma~\ref{le:normalform}]
Let $w$ be a nontrivial reduced word in $F(S)$.
First suppose that $w$ is reducible.
Then $w$ has a subword that is the left side of one of the reduction rules in Definition~\ref{de:symmetricrewriting}.
Suppose for contradiction that $w$ is written as a product of ramps with decreasing height.
If $\rewriting0$ applies to $w$, then $w$ has an inverse generator and cannot be written as a product of ramps.
If $\rewriting1$ applies to $w$ then $w$ contains a product of the form $\rho(j,i)\rho(i,k)$, and since $i\leq k$, this product does not have decreasing height.
Similarly, if $\rewriting2$  applies to $w$, with some $i<j-1$, then $w$ contains a product of the form $\rho(k,i)\rho(j,l)$, and this does not have decreasing height.
If $\rewriting3$ applies for some $i<j$, then $w$ contains a product of the form $\rho(k,j)\rho(i,l)$ with $j\leq l$, which does not have decreasing height.
In each case we have a contradiction, proving this direction.

Now suppose that $w$ is irreducible.
Then none of the rewriting rules apply to $w$, and in particular, $w$ contains no inverse generators.
So $w$ can be written as a product of ramps, and we do this in a way that uses as few ramps as possible.
Suppose for contradiction that the ramps are not decreasing in height.
So we have a subword $\rho(i,j)\rho(k,l)$ with $j\leq l$.
If $k<j$, then $\rewriting3$ applies.
If $k=j$, then $\rewriting1$ applies.
If $k=j+1$, then $\rho(i,j)\rho(k,l)=\rho(i,l)$, which contradicts our assumption that we have expressed $w$  as a product of as few ramps as possible.
If $k>j+1$, then $\rewriting2$ applies.
In each case, we get a contradiction, so $w$ can be written as a product of ramps of decreasing height.
\end{proof}

The following theorem is a necessary condition for using Brown's rewriting method for chains.
\begin{theorem}\label{th:rwscomplete}
The rewriting system from Definition~\ref{de:symmetricrewriting} is complete.
\end{theorem}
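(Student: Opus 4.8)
The plan is to verify the three conditions of Definition~\ref{de:complete}. Condition~(3) is immediate: $s_i=\ramp{i}{i}$, so each generator is irreducible by Lemma~\ref{le:normalform} (equivalently, no rule of Definition~\ref{de:symmetricrewriting} has a left-hand side that is a single positive generator). The substance is in conditions~(1) and~(2), and I would treat~(2) first since I will use it to deduce~(1).

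For condition~(2) (no infinite reduction chain), I would exhibit a well-founded measure on $F(S)$ that strictly decreases at every single rewrite. Write a reduced word as $w=s_{a_1}^{\varepsilon_1}\dotsm s_{a_\ell}^{\varepsilon_\ell}$ with each $\varepsilon_k=\pm1$ and $1\leq a_k\leq n-1$, and set $\mu(w)=(\mu_0(w),\sigma(w),N(w))\in\N^3$ with the lexicographic order, where $\mu_0(w)$ is the number of letters of $w$ that are inverse generators, $\sigma(w)=\sum_{k=1}^\ell a_k$ is the sum of the indices of the letters, and $N(w)=\#\{(k,k')\mid k<k',\ a_k<a_{k'}\}$ is the number of ascending pairs of positions. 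Then one checks rule by rule: $\rewriting0$ strictly decreases $\mu_0$; $\rewriting1$ fixes $\mu_0$ and decreases $\sigma$ by $2i>0$; $\rewriting3$ fixes $\mu_0$ and decreases $\sigma$ by exactly $1$, since the left side $s_j\ramp{i}{j}$ has letter-index multiset $\{i,i+1,\dotsc,j-1,j,j\}$ while the right side $\ramp{i}{j}s_{j-1}$ has multiset $\{i,i+1,\dotsc,j-2,j-1,j-1,j\}$; and $\rewriting2$ fixes both $\mu_0$ and $\sigma$ while decreasing $N$ by exactly $1$, because replacing an adjacent ascending pair $s_is_j$ ($i<j$) by $s_js_i$ turns one ascending pair into a descending one and leaves the ascending/descending status of every pair involving a letter outside the swap unchanged (that contribution is symmetric in $\{i,j\}$). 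Since $\N^3$ with the lexicographic order is well-founded, no infinite reduction chain exists; I would also note that any free reduction applied after a rewrite only lowers $\mu$, as it deletes a cancelling pair $s_k^{\pm1}s_k^{\mp1}$.

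For condition~(1), I would count the irreducible words exactly using Lemma~\ref{le:normalform}. By that lemma, the nonempty irreducible words are precisely the products $\ramp{j_1}{k_1}\dotsm\ramp{j_m}{k_m}$ with $n-1\geq k_1>k_2>\dotsb>k_m\geq1$ and $1\leq j_i\leq k_i$; such a product determines $(m,(j_i),(k_i))$ uniquely, because (by Remark~\ref{remark:creepandjump}) the ramp boundaries are exactly the positions where the index fails to increase by one. Hence an irreducible word amounts to a subset $T\subseteq\{1,\dotsc,n-1\}$ of ramp heights together with a starting index in $\{1,\dotsc,k\}$ for each $k\in T$ (with $T=\emptyset$ the empty word), so the number of irreducible words is $\sum_{T\subseteq\{1,\dotsc,n-1\}}\prod_{k\in T}k=\prod_{k=1}^{n-1}(1+k)=n!$. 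Now let $I$ be the set of irreducible words and $\pi\co F(S)\to\sg{n}$ the canonical projection. The restriction $\pi|_I$ is surjective: any $g\in\sg{n}$ equals $\pi(w)$ for some $w$, and by condition~(2) $w$ reduces to some $w'\in I$ with $\pi(w')=\pi(w)=g$, since every rule (and every free reduction) preserves the group element. As $I$ and $\sg{n}$ are finite of the same cardinality $n!$, the surjection $\pi|_I$ is a bijection, so $I$ is a set of normal forms for $\sg{n}$; this is condition~(1), and the rewriting system is complete.

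The main obstacle is finding the termination measure: the point that makes it go is realizing that, although $\rewriting2$ and $\rewriting3$ preserve word length, the rule $\rewriting3$ strictly lowers the sum of the generator indices, which is precisely the invariant that lets a short lexicographic measure succeed. (A naive position-weighted sum of indices fails for $\rewriting3$ when $j-i$ is large and the rule is applied near the start of a word, so some genuine care is needed here.) Everything else — the rule-by-rule checks and, for condition~(1), the uniqueness of the ramp decomposition — is routine bookkeeping given Lemma~\ref{le:normalform} and Remark~\ref{remark:creepandjump}.
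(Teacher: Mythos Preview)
Your proof is correct, and both of your main arguments differ from the paper's in instructive ways. For termination, the paper uses a complexity function valued in $\N^{n+1}$: after the inverse-generator count, it records the number of occurrences of each $s_k$ separately (ordered from high index to low), and only then the ascending-pair count. Your observation that $\rewriting3$ strictly decreases the \emph{sum} of letter indices collapses those $n-1$ coordinates into one, yielding a cleaner $\N^3$-valued measure; the paper's measure is more granular but no stronger for this purpose. For condition~(1), the paper argues uniqueness directly by showing that the leftmost ramp $\ramp{j_1}{k_1}$ of an irreducible word is determined by the permutation (it is the unique ramp sending $k_1+1$ to $j_1$, where $k_1+1$ is the largest point moved) and then inducts on length. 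Your route---counting irreducible words via the unique ramp decomposition to get $\prod_{k=1}^{n-1}(1+k)=n!$ and concluding bijectivity from surjectivity---is a genuine alternative: it trades the explicit inverse description for a short cardinality argument, at the cost of being nonconstructive about which permutation a given normal form represents. Both approaches rely on Lemma~\ref{le:normalform}, and both are complete.
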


\begin{proof}
We need to verify the three conditions in Definition~\ref{de:complete}.
As explained Remark~\ref{remark:creepandjump}, each generator $s_i$ is a ramp, and is irreducible by Lemma~\ref{le:normalform}.
The other conditions are less trivial to show.

Our next goal is to show that $\to$ has no infinite sequences of reductions.
We define the following complexity function $C\co F(S)\to \N^{n+1}$.
For a word $w\in F(S)$, $C(w)$ is the $(n+1)$-tuple of natural numbers such that:
\begin{itemize}
\item the first entry of $C(w)$ is the number of inverse generators in $w$,
\item for each $i$ from $2$ to $n$, the $i$th entry of $C(w)$ is the number of instances of $s_{n-i+1}$ in $w$, and
\item the last entry of $C(w)$ is the number of pairs of indices $i<j$ such that the $i$th letter in $w$ is $s_k$, the $j$th letter of $w$ is $s_l$, and $k<l$.
\end{itemize}
We lexicographically order $\N^{n+1}$ and apply rewriting rules to reduce $C(w)$.
This means that our first priority is to remove the inverse generators from $w$; our second priority is to reduce the number of instances of generators $s_i$ with high values of $i$, and our last priority is to reorder elements to replace increasing indices with decreasing indices.
We note that applying any rewriting rule decreases $C(w)$, so that the naive algorithm of applying rules freely will always move $w$ closer to normal form.
Specifically:
\begin{itemize}
\item $\rewriting0$ reduces $C(w)$ since it reduces the first entry in the tuple;
\item $\rewriting1$ reduces $C(w)$ since it reduces entry $n-i+1$ in the tuple (by decreasing the count of $s_i$ generators), but leaves the preceding entries the same;
\item with $i<j$, $\rewriting3$ reduces $C(w)$ since it reduces entry $n-j+1$ in the tuple (by decreasing the count of $s_j$ generators), but  leaves the preceding entries the same (the count of $s_{j-1}$ is a subsequent entry, so it is okay that it increases);
\item with $i<j-1$, $\rewriting2$ reduces $C(w)$ since it reduces the final entry, but keeps all the other entries the same.
\end{itemize}
Since $\N^{n+1}$ is well ordered under the lexicographical ordering, any sequence of applications of rewriting rules must have finite length.

Next, we claim that each permutation has at least one irreducible representative.
Given a permutation $g\in\sg{n}$, we express it as a product of generators from $S$ in any way whatsoever, to get a word $w\in F(S)$ that maps to $g$.
Then we apply rewriting moves to $w$ repeatedly. 
(If there is a choice of different moves at some point in this reduction process, it doesn't ultimately matter which move we choose, because of the uniqueness that we explain in the next paragraph.)
By the previous paragraph, $C(w)$ decreases with each move, so eventually we arrive at an irreducible form for $w$.
Since the rewriting moves are valid, this is an irreducible word that represents $g$.

Finally, we want to show that the irreducible representatives are unique.
Suppose $w$ is a product of ramps in normal form: $w=\rho(j_1,k_1)\rho(j_2,k_2)\dotsm\rho(j_m,k_m)$.
Then $s_{k_1}$ appears exactly once in $w$, and no generators with indices higher than $k_1$ appear.
In particular, considered as an element of $\sg{n}$, $w$ sends $k_1+1$ to $j_1$.
(This uses our convention that permutations compose like functions and act on the left.)
So $w$ acts nontrivially, and therefore the unique irreducible representative of $(1)\in\sg{n}$ is $1\in F(S)$.
Further, the left-most ramp in $w$ is determined by its action on $\{1,2,\dotsc,n\}$.
An induction argument on the length of $w$ then shows that $w$ is the unique irreducible representative of its image in $\sg{n}$.
\end{proof}

\subsection{Rewriting boundaries as sums of essential cells}
\label{Section: Computation of Essential cells}
In Section \ref{Section: Rewriting system for Sn} we constructed a complete rewriting system which yields a collapsing scheme on the simplicial complex described by $F_{*}$. 
As in Brown's theorem, let $Q_*$ be the quotient of the normalized bar resolution $F_*$ by the subcomplex spanned by collapsible cells.
We have the following map of resolutions:
$$\begin{tikzcd}
F_{3}\arrow{r}{\partial^{F}_{3}}\arrow{d}{q_{3}}&F_{2}\arrow{r}{\partial^{F}_{3}}\arrow{d}{q_{2}}&F_{1}\arrow{r}{\partial_{1}}\arrow{d}{q_{1}}&F_{0}\arrow{r}{\epsilon}\arrow{d}{q_{0}}&\Z\arrow{r}\arrow{d}&1\\
Q_{3}\arrow{r}{\partial_{3}^{Q}}&Q_{2}\arrow{r}{\partial_{2}^{Q}}&Q_{1}\arrow{r}{\partial_{1}^{Q}}&Q_{0}\arrow{r}&\Z\arrow{r}&1
\end{tikzcd}$$
The boundary maps $\partial_{i}^{Q}$ are defined by the composition $q_{i-1}\circ\partial_{i}^{F}$. 
So the usual boundary in $F_*$ determines the boundary in $Q_*$, and we can write boundaries in $Q_*$ by referring to their images under $q$.
However, but it is better to understand the boundary in terms of the free basis for $Q_*$ given by the essential cells.
Thus, to describe the boundary we must rewrite the boundaries of essential cells entirely in terms of essential cells, modulo the redundant and collapsible cells.
For $0\leq i\leq 2$, the essential $i$-cells have already been explicitly described in Remark~\ref{remark:lowdessential}.
Unpacking Definition~\ref{de:essential cells} to the rewriting system for $\sg{n}$ described above, we describe the essential $t$-cells with the following proposition.
\begin{proposition}\label{pr: g_{i}g_{i+1} is a rewriting rule}
    For $t\geq 1$, the essential cells of $F_{t}$ which generate $Q_{i}$ are of the form $[g_{1}\mid g_{2}\mid\cdots\mid g_{t}]$ where $g_{1}\in S$ and $g_{i}g_{i+1}$ has precisely the form of the reducible word on the left side of $\rewriting1$, $\rewriting2$, or $\rewriting3$, for all $1\leq i\leq t-1$. 
\end{proposition}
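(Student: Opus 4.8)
The goal is to characterize the essential $t$-cells of the normalized bar resolution, for $t \geq 1$, under the collapsing scheme induced by the complete rewriting system of Definition~\ref{de:symmetricrewriting}.

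My plan is to unpack Definition~\ref{de:essential cells} directly, using the specific structure of our rewriting system. Recall that a cell $\tau = [g_1 \mid g_2 \mid \cdots \mid g_t]$ (with each $g_i$ irreducible and nontrivial) is essential precisely when (a) $g_1 \in S$, (b) $g_ig_{i+1}$ is reducible for each $1 \leq i < t$, and (c) every proper initial subword of $g_ig_{i+1}$ is irreducible for each $1 \leq i < t$. The first step is to observe that condition (a) is already exactly the condition $g_1 \in S$ in the proposition, so there is nothing to do there. The heart of the argument is to show that conditions (b) and (c) together are equivalent to the statement that $g_ig_{i+1}$ \emph{is} (not merely contains) the left-hand side of one of the rules $\rewriting1$, $\rewriting2$, or $\rewriting3$ --- noting that $\rewriting0$ cannot be relevant since each $g_i$ is an irreducible word and hence (by Lemma~\ref{le:normalform}) contains no inverse generators, so no subword of $g_ig_{i+1}$ matches the left side of $\rewriting0$.

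The key claim to establish is: for irreducible nontrivial words $g, h$, the product $gh$ is reducible with every proper initial subword irreducible if and only if $gh$ is literally the left side of $\rewriting1$, $\rewriting2$, or $\rewriting3$. One direction is easy: the left sides $s_i^2$, $s_is_j$ (with $i < j-1$), and $s_j\rho(i,j)$ all have the property that their proper initial subwords --- $s_i$, $s_i$, and $s_j, s_js_i, s_js_is_{i+1}, \ldots, s_j\rho(i,j-1)$ respectively --- are ramps or products of ramps of strictly decreasing height, hence irreducible by Lemma~\ref{le:normalform}, while the full word is reducible. For the converse, suppose $gh$ is reducible but every proper initial subword is irreducible. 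Since $gh$ is reducible, it contains as a subword the left side of some rule; because every proper \emph{initial} subword is irreducible, that occurrence of a rule's left side must extend all the way to the last letter of $gh$. I then argue that in fact it must occupy \emph{all} of $gh$: if the matched left side started strictly inside $g$, then since $g$ itself is irreducible, the matched left side would be a subword of $gh$ that starts within $g$ but extends past it; examining each rule ($\rewriting1, \rewriting2$ have length $2$; $\rewriting3$ has the form $s_j\rho(i,j)$), one checks using the "no valleys''/decreasing-ramp description from Remark~\ref{remark:creepandjump} that a proper initial subword of $gh$ strictly containing $g$ and containing the matched left side would already be reducible, contradicting irreducibility of the proper initial subwords, or else that $g$ would fail to be irreducible. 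Finally, once $gh$ equals the left side of a rule exactly, the factorization of $gh$ as $g \cdot h$ with both factors irreducible and nontrivial is automatically consistent (e.g.\ $s_i \cdot s_i$ for $\rewriting1$, $s_i \cdot s_j$ for $\rewriting2$, and for $\rewriting3$ the word $s_j\rho(i,j) = s_j s_i s_{i+1}\cdots s_j$ splits as $g \cdot h$ at some internal position, each side being a product of decreasing ramps).

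The main obstacle I expect is the bookkeeping in the converse direction: ruling out the possibility that a rule's left side is matched by a subword of $gh$ straddling the boundary between $g$ and $h$ in a way that is not caught by the "proper initial subword irreducible'' hypothesis. This requires a careful case analysis on which of $\rewriting1, \rewriting2, \rewriting3$ is matched and where the match begins relative to the $g$/$h$ split, leaning on the combinatorial normal-form description (only positive generators; "creep up or jump down''; no valleys) from Remark~\ref{remark:creepandjump} to derive a contradiction in each case. Everything else --- the forward direction, the handling of $\rewriting0$, the identification of $g_1 \in S$ --- is routine. I would organize the proof as: first dispatch $g_1 \in S$ and the absence of $\rewriting0$; then prove the key claim about a single product $g_ig_{i+1}$ as a standalone sub-lemma (forward direction, then the straddling case analysis); then conclude by applying the claim to each consecutive pair.
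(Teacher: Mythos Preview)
The paper gives no proof of this proposition, treating it as an immediate unpacking of Definition~\ref{de:essential cells}. Your plan is in the same spirit, but your central ``key claim'' is false as stated, and this is a genuine gap.

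You assert: for irreducible nontrivial $g,h$, the product $gh$ is reducible with every proper initial subword irreducible if and only if $gh$ is literally the left-hand side of one of $\rewriting1$, $\rewriting2$, $\rewriting3$. Counterexample: take $g=\rho(j,i)$ with $j<i$ and $h=s_k$ with $k>i+1$. Every proper initial subword of $gh=s_j s_{j+1}\cdots s_i\, s_k$ is a ramp, hence irreducible by Lemma~\ref{le:normalform}; the full word is reducible since its last two letters $s_i s_k$ match $\rewriting2$; yet $gh$ is not the left-hand side of any rule. This is not an artificial example: the pair $(g_2,g_3)=(\rho(j,i),s_k)$ is exactly what appears in the essential $3$-cell $[s_i\mid\rho(j,i)\mid s_k]$ of Remark~\ref{remark:Q3gens}, so your argument would wrongly exclude a genuine essential cell. (Read literally, the proposition has the same defect; Remark~\ref{remark:Q3gens} makes the intended meaning clear.)

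What is actually true, and what the paper is using, is this: in an essential cell every $g_i$ is a single ramp, and if $g_i$ ends in $s_a$ then $g_{i+1}$ is one of $s_a$, or $s_k$ with $k>a+1$, or $\rho(c,a)$ with $c<a$ --- equivalently, $s_a\cdot g_{i+1}$ (not $g_i\cdot g_{i+1}$) is the left-hand side of a rule. The case analysis you sketch when $g\in S$ is correct and proves exactly this. The missing ingredient is an induction on $i$: from $g_1\in S$ and the fact that each $g_{i+1}$ (as just described) is again a single ramp, you get that every $g_i$ is a ramp, which is what lets you reduce the general pair $g_ig_{i+1}$ to the case where only the last letter of $g_i$ matters. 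Your attempt to handle $\rewriting3$ by allowing ``$s_j\rho(i,j)$ splits as $g\cdot h$ at some internal position'' is where the argument goes off the rails --- that flexibility is precisely what produces the counterexample above.
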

We include the following proposition to summarize the essential cells described in Section \ref{Section: Collapsing Schemes}.
\begin{proposition}\label{prop: essential 1 and 2 cells}
For $S_{n}$ with the rewriting system described by $\rewriting0-\rewriting3$, there is one 0-cell, $[\hspace{.25em}]$, $n-1$ 1-cells denoted by $[s_{i}]$ with $\partial_{1}^{Q}[s_{i}]=(s_{i}-1)[\hspace{.5em}]$. Furthermore, there are three classes of 2-cells described by:
\begin{itemize}
    \item $[s_{i}\mid s_{i}]$ for all $1\leq i\leq n-1$ with $\partial_{2}^{Q}[s_{i}\mid s_{i}]=(s_{i}+1)[s_{i}]$.
    \item $[s_{i}\mid s_{j}]$ for all $1\leq i<j\leq n-1$ with $i\leq j-2$ with $\partial_{2}^{Q}[s_{i}\mid s_{j}]=(s_{i}-1)[s_{j}]-(s_{j}-1)[s_{i}]$.
    \item $[s_{j}\mid \ramp{i}{j}]$ for all $1\leq i<j\leq n-1$ with:
    $$\partial_{2}^{Q}[s_{j}\mid \ramp{i}{j}]=(s_{j}-1)\sum\limits_{\ell=i}^{j}\ramp{i}{\ell-1}[s_{\ell}]+[s_{j}]-\ramp{i}{j}[s_{j-1}].$$
\end{itemize}
\end{proposition}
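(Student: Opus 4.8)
The goal is to compute the boundary maps $\partial_2^Q$ on the three classes of essential $2$-cells listed in Proposition~\ref{prop: essential 1 and 2 cells}, using the collapsing scheme and the rewriting lemmas established above. The plan is as follows.

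\textbf{Setup.} Recall that $\partial_2^Q = q_1 \circ \partial_2^F$, so for each essential $2$-cell $[a \mid b]$ I would first write down the standard bar boundary $\partial_2^F[a\mid b] = a[b] - [ab] + [a]$, and then apply the quotient map $q_1$ to each of the three terms. The terms $a[a]$ or $a[b]$ and $[a]$ are already essential or are $\Z\sg{n}$-multiples of essential $1$-cells (since $a, b \in S$ and $[s_i]$ is the essential $1$-cell for each $s_i$), so the only work is to rewrite $q_1([ab])$, where $ab$ is the reducible word on the left side of one of $\rewriting1,\rewriting2,\rewriting3$. Since $\rewriting0$ removes inverses and these essential cells only involve positive generators, I can ignore $\rewriting0$. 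The key tool is the procedure described after the collapsing function: a redundant cell $\tau$ with collapsing cell $c(\tau)$ satisfies $0 = \pm\tau + \sum_{j\neq i}(-1)^j d_j c(\tau)$ in $Q_*$, together with Lemma~\ref{Lemma: irreducible tuples quotients to 0} and Lemma~\ref{lemma: rewriting [w|v] for w not a generator} to kill or re-expand the terms that appear.

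\textbf{The three cases.} For $[s_i\mid s_i]$, the word $s_i s_i \to 1$ via $\rewriting1$, so $q_1([s_i s_i]) = q_1([1])$; in the normalized bar resolution $[1] = 0$, giving $\partial_2^Q[s_i\mid s_i] = s_i[s_i] + [s_i] = (s_i+1)[s_i]$. For $[s_i\mid s_j]$ with $i \leq j-2$, the word $s_i s_j \to s_j s_i$ via $\rewriting2$, and $s_j s_i$ is irreducible (it is a product of two height-$1$ ramps of decreasing height), so by Lemma~\ref{le:normalform} the cell $[s_j s_i]$ is a redundant $1$-cell; its normal form factors as $s_j \cdot s_i$, so $q_1([s_j s_i])$ is computed by the $1$-dimensional collapsing: $[s_j s_i]$ is a face of the collapsible $2$-cell $[s_j \mid s_i]$, and applying the collapse identity (or directly using that $q_1([s_js_i]) = s_j[s_i] + [s_j] - \partial(\text{collapsible}) \equiv s_j q_1([s_i]) + q_1([s_j])$ modulo a collapsible term which is itself essential $[s_j\mid s_i]$ — wait, $[s_j\mid s_i]$ is collapsible so it maps to $0$ in $Q_2$, but we are in $Q_1$). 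More carefully: from $0 = -[s_j s_i] + s_j[s_i] + [s_j]$ in $Q_*$ (the boundary of the collapsible cell $[s_j\mid s_i]$, whose image is $0$), we get $q_1([s_j s_i]) = s_j[s_i] + [s_j] = (s_j+1)[s_i] - [s_i] + \dotsb$; substituting back yields $\partial_2^Q[s_i\mid s_j] = (s_i-1)[s_j] - (s_j-1)[s_i]$ after collecting terms. The most substantial case is $[s_j \mid \rho(i,j)]$, where $s_j\rho(i,j) \to \rho(i,j)s_{j-1}$ via $\rewriting3$; here $\rho(i,j)s_{j-1}$ has normal form with left-most generator $s_i$ (a ramp of height $j$ followed by a single generator of height $j-1 < j$), so it is highly redundant and requires iterating the collapse. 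I expect to apply Lemma~\ref{lemma: rewriting [w|v] for w not a generator} with $w = \rho(i,j-1) = s_i s_{i+1}\dotsm s_{j-1}$, $u = s_j$, $v = $ (appropriate tail), peeling off the ramp one generator at a time; this should produce exactly the telescoping sum $(s_j-1)\sum_{\ell=i}^{j}\rho(i,\ell-1)[s_\ell]$ together with the terms $[s_j]$ and $-\rho(i,j)[s_{j-1}]$.

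\textbf{Main obstacle.} The routine cases $\rewriting1$ and $\rewriting2$ are quick; the real work is the $\rewriting3$ case, and specifically getting the bookkeeping of the iterated collapse exactly right so that the sum $\sum_{\ell=i}^{j}\rho(i,\ell-1)[s_\ell]$ emerges with the correct group-element coefficients and signs. The cleanest route is probably to identify $\rho(i,j) = \rho(i,\ell-1)\rho(\ell,j)$ and recognize that rewriting the redundant $1$-cell $[\rho(i,j)s_{j-1}]$ amounts to successively stripping generators $s_i, s_{i+1}, \dotsc$ off the front — each strip contributing one term $\rho(i,\ell-1)[s_\ell]$ to $q_1$ of a $1$-cell whose remaining word is $\rho(\ell,j)s_{j-1}$, which (being a ramp times a strictly-lower generator) stays irreducible until the front ramp is exhausted — and then Lemma~\ref{Lemma: irreducible tuples quotients to 0} handles the higher-dimensional error terms. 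I would present the $\rewriting3$ computation as an induction on $j - i$, checking the base case $j = i+1$ (a single braid relation, where $\rho(i,i+1)s_i = s_is_{i+1}s_i \to s_{i+1}s_is_{i+1}$ and the sum has two terms) by hand and then propagating.
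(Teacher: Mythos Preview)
Your approach is correct and matches the paper's in outline: compute $\partial_2^F$, then rewrite the middle term via the collapsing scheme. The first two cases are handled exactly as the paper does.

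For the $\rewriting3$ case, however, you are making more work for yourself than necessary. The key observation you have not quite isolated is that the word $\rho(i,j)s_{j-1}$ is \emph{already in normal form}: it is the product of the ramp $\rho(i,j)$ of height $j$ followed by the ramp $\rho(j-1,j-1)$ of height $j-1<j$, so by Lemma~\ref{le:normalform} it is irreducible. Consequently both $[\rho(i,j)]$ and $[\rho(i,j)s_{j-1}]$ are redundant $1$-cells whose normal forms are already known, and the elementary $1$-cell rewriting (as in Remark~\ref{remark:lowdessential}: peel generators off the front) gives directly
\[
q_1([\rho(i,j)]) = \sum_{\ell=i}^{j}\rho(i,\ell-1)[s_\ell],
\qquad
q_1([\rho(i,j)s_{j-1}]) = \sum_{\ell=i}^{j}\rho(i,\ell-1)[s_\ell] + \rho(i,j)[s_{j-1}].
\]
Subtracting these (with the factor $s_j$ on the first) yields the claimed formula in one line; no induction on $j-i$ is needed, and there are no ``higher-dimensional error terms'' to kill with Lemma~\ref{Lemma: irreducible tuples quotients to 0}. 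This is exactly what the paper does (it invokes Lemma~\ref{lemma: rewriting [w|v] for w not a generator} somewhat loosely here, since that lemma is stated for $t\geq 2$, but the $t=1$ analogue is the trivial iterated collapse just described). Your inductive plan would certainly work, but it obscures the simplicity of the computation.
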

\begin{proof}
    The boundaries of the 1-cells and $[s_{i}\mid s_{i}]$ are determined directly by the definition of the boundary for the normalized bar resolution. Suppose $1\leq j-2$, then:
    $$\partial_{2}^{F}[s_{i}\mid s_{j}]=s_{i}[s_{j}]-[s_{j}s_{i}]+[s_{i}].$$
    Let $\tau=[s_{j}s_{i}]$, then $c(\tau)=[s_{j}\mid s_{i}]$ and $\partial^{F}c(\tau)=s_{j}[s_{i}]-\tau+[s_{i}]$. Therefore $q(\tau)=s_{j}[s_{i}]+[s_{j}]$ and the boundary of $[s_{i}\mid s_{j}]$ follows. 
    
    Now, suppose $i<j$, then: $$\partial_{2}^{F}[s_{j}\mid \ramp{i}{j}]=s_{j}[\ramp{i}{j}]-[\ramp{i}{j}s_{j-1}]+[s_{j}].$$
    Applying Lemma \ref{lemma: rewriting [w|v] for w not a generator} to $[\ramp{i}{j}]$ and $[\ramp{i}{j}s_{j-1}]$ we have:
    \begin{align*}
        \partial_{2}^{F}[s_{j}\mid \ramp{i}{j}]&=s_{j}\sum\limits_{\ell=i}^{j}\ramp{i}{\ell-1}[s_{\ell}]-\ramp{i}{j}[s_{j-1}]-\sum\limits_{\ell=i}^{j}\ramp{i}{\ell-1}[s_{\ell}]+[s_{j}]\\
        &=(s_{j}-1)\sum\limits_{\ell=i}^{j}\ramp{i}{\ell-1}[s_{\ell}]+[s_{j}]-\ramp{i}{j}[s_{j-1}]. 
    \end{align*}
\end{proof}
\begin{remark} \label{remark:Q3gens}
Since there are three kinds of rewriting rules for words of length greater than one, there are $3^{t-1}$ types of essential $t$-cell. Each type of $t$-cell is determined by the rewriting rules applied to the product $g_{i}g_{i+1}$ for all $1\leq i\leq t-1$.

We record the nine classes of essential $3$-cell, giving the general form and the rewriting rules.  We then explain how to express their boundaries using essential $2$-cells in several propositions over the following pages.
\begin{itemize}
\item Using $\rewriting1$ and $\rewriting1$: $[s_i\mid s_i\mid s_i]$.
\item Using $\rewriting1$ and $\rewriting2$: $[s_i\mid s_i\mid s_j]$, with $i<j-1$.
\item Using $\rewriting1$ and $\rewriting3$: $[s_i\mid s_i\mid \ramp{j}{i}]$, with $j<i$.
\item Using $\rewriting2$ and $\rewriting1$: $[s_i\mid s_j\mid s_j]$, with $i<j-1$.
\item Using $\rewriting2$ and $\rewriting2$: $[s_i\mid s_j\mid s_k]$, with $i<j-1$ and $j<k-1$.
\item Using $\rewriting2$ and $\rewriting3$: $[s_i\mid s_j\mid \ramp{k}{j}]$, with  $i<j-1$ and $k<j$.
\item Using $\rewriting3$ and $\rewriting1$: $[s_i\mid \ramp{j}{i}\mid s_i]$, with $j<i$.
\item Using $\rewriting3$ and $\rewriting2$: $[s_i\mid \ramp{j}{i}\mid s_k]$, with $j<i$ and $i<k-1$.
\item Using $\rewriting3$ and $\rewriting3$: $[s_i\mid \ramp{j}{i}\mid \ramp{k}{i}]$, with $j<i$ and $k<i$.
\end{itemize}
\end{remark}
\begin{proposition}\label{prop: boundary of s,s}
    For each $1\leq i\leq n-1$,  $\partial_{3}^{Q}[s_{i}\mid s_{i}\mid s_{i}]=(s_{i}-1)[s_{i}\mid s_{i}]$.
\end{proposition}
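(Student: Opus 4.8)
The plan is to compute directly in the normalized bar resolution and then push the result forward under the quotient map. Recall that by definition $\partial^Q_3 = q_2\circ\partial^F_3$, so it suffices to evaluate $\partial^F_3[s_i\mid s_i\mid s_i]$ and then apply $q_2$.

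First I would write out the alternating sum of face maps for the normalized bar resolution:
\[
\partial^F_3[s_i\mid s_i\mid s_i] = s_i[s_i\mid s_i] - [s_i^2\mid s_i] + [s_i\mid s_i^2] - [s_i\mid s_i].
\]
The key observation is that the two middle terms are degenerate: since $s_i^2 = 1$ in $\sg{n}$, we have $[s_i^2\mid s_i] = [1\mid s_i]$ and $[s_i\mid s_i^2] = [s_i\mid 1]$, both of which are zero in the normalized bar resolution. (This is precisely the point of working with the normalization: no application of the collapsing function $c$ is needed here, in contrast to the other essential $3$-cells.) Hence $\partial^F_3[s_i\mid s_i\mid s_i] = (s_i - 1)[s_i\mid s_i]$.

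It then remains to apply $q_2$. The $2$-cell $[s_i\mid s_i]$ is essential — it is exactly the essential $2$-cell obtained from a single application of $\rewriting1$, as recorded in Proposition~\ref{prop: essential 1 and 2 cells} — so $q_2$ fixes it. Since $q_2$ is a map of $\Z\sg{n}$-modules, we conclude
\[
\partial^Q_3[s_i\mid s_i\mid s_i] = q_2\bigl((s_i - 1)[s_i\mid s_i]\bigr) = (s_i - 1)[s_i\mid s_i],
\]
as claimed. I do not anticipate any real obstacle: this is the simplest of the nine classes of essential $3$-cell listed in Remark~\ref{remark:Q3gens}, and the only subtlety worth flagging is the normalization convention that kills the two degenerate faces, which is what makes the computation immediate.
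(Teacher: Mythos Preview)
Your proof is correct and takes essentially the same approach as the paper: compute $\partial^F_3$ directly, use $s_i^2=1$ to kill the two degenerate faces, and observe that the surviving cell $[s_i\mid s_i]$ is essential so $q_2$ fixes it. The paper's proof is slightly terser but identical in substance.
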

\begin{proof}
    We have:
    \begin{align*}
        \partial_{3}^{F}[s_{i}\mid s_{i}\mid s_{i}]&=s_{i}[s_{i}\mid s_{i}]-[s_{i}^{2}\mid s_{i}]+[s_{i}\mid s_{i}^{2}]-[s_{i}\mid s_{i}]\\
        &=(s_{i}-1)[s_{i}\mid s_{i}]
    \end{align*}
    since $s_{i}^{2}\to 1$ by $\rewriting1$ and both $[1\mid s_{i}]$ and $[s_{i}\mid 1]$ are degenerate cells. 
\end{proof}
\begin{proposition}\label{prop: boundary of s,c}
    For each $1\leq i<j-1\leq n-1$, $\partial_{3}^{F}[s_{i}\mid s_{i}\mid s_{j}]=(s_{i}+1)[s_{i}\mid s_{j}]+(s_{j}-1)[s_{i}\mid s_{i}]$.
\end{proposition}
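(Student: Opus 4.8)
The plan is to mimic the proof of Proposition~\ref{prop: boundary of s,s}: compute $\partial_3^F[s_i\mid s_i\mid s_j]$ from the normalized bar differential $\partial^F=\sum_k(-1)^kd_k$, and then express its image under the quotient map $q$ in terms of essential $2$-cells. The bar differential gives
\[\partial_3^F[s_i\mid s_i\mid s_j]=s_i[s_i\mid s_j]-[s_i^2\mid s_j]+[s_i\mid s_is_j]-[s_i\mid s_i].\]
Three of these four terms are immediate: since $s_i^2=1$ the simplex $[s_i^2\mid s_j]=[1\mid s_j]$ is degenerate, hence zero in $\bar F_*$; and because $i<j-1$, the word $s_is_j$ is exactly the left side of $\rewriting2$, so $[s_i\mid s_j]$ is an essential cell, while $[s_i\mid s_i]$ is essential by $\rewriting1$. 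So all of the work is in computing $q([s_i\mid s_is_j])$.

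For that term I would first rewrite the group element: by $\rewriting2$ we have $s_is_j=s_js_i$ in $\sg n$, so $[s_i\mid s_is_j]=[s_i\mid s_js_i]$ with both entries now in normal form. This $2$-cell is redundant; its collapsing cell is $c=[s_i\mid s_j\mid s_i]$, and expanding $\partial^F c$ and using $q(c)=0$ expresses $q([s_i\mid s_js_i])$ through $q(s_i[s_j\mid s_i])$, $q([s_is_j\mid s_i])$ and $q([s_i\mid s_j])$. Here $[s_j\mid s_i]$ is collapsible, since $j>i$ makes $s_js_i$ irreducible, so it dies under $q$; $[s_i\mid s_j]$ is essential; and $[s_is_j\mid s_i]=[s_js_i\mid s_i]$ is again redundant with collapsing cell $[s_j\mid s_i\mid s_i]$, whose boundary (after one more application of $q$, the leftover faces being a degenerate cell, the collapsible cell $[s_j\mid s_i]$, and the essential cell $[s_i\mid s_i]$) yields $q([s_is_j\mid s_i])=s_j[s_i\mid s_i]$. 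Assembling these identities gives $q([s_i\mid s_is_j])=[s_i\mid s_j]+s_j[s_i\mid s_i]$, and plugging back into the displayed formula produces $(s_i+1)[s_i\mid s_j]+(s_j-1)[s_i\mid s_i]$.

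I do not expect a genuine obstacle here: this is a short, forced descent through the collapsing scheme of Section~\ref{Section: Collapsing Schemes}, with no choices to make. The only points requiring care are the sign bookkeeping in $\partial^F=\sum_k(-1)^kd_k$ and correctly identifying which face of each collapsing cell $c$ is the redundant cell being rewritten, so that it appears with the right sign $\pm1$ when we set $q(\partial^F c)=0$. It is also worth confirming that the hypothesis $i<j-1$ is used precisely where needed — it is what makes $s_is_j$ reducible (so that $[s_i\mid s_is_j]$ is not already essential) and what keeps $s_js_i$ irreducible (so that $[s_j\mid s_i]$ is collapsible) — rather than the weaker hypothesis $i\neq j$.
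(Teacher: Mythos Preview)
Your proposal is correct and follows essentially the same route as the paper's proof. The only cosmetic difference is that where you unpack the rewriting of $[s_js_i\mid s_i]$ directly via its collapsing cell $[s_j\mid s_i\mid s_i]$, the paper instead invokes Lemma~\ref{lemma: rewriting [w|v] for w not a generator} (with $w=s_j$, $u=v=s_i$) to obtain $q([s_js_i\mid s_i])=s_j[s_i\mid s_i]$; the underlying computation is identical.
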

\begin{proof}
    Evaluating the boundary:
    \begin{align*}
        \partial_{3}^{F}[s_{i}\mid s_{i}\mid s_{j}]&=s_{i}[s_{i}\mid s_{j}]-[s_{i}^{2}\mid s_{j}]+[s_{i}\mid s_{i}s_{j}]-[s_{i}\mid s_{i}]\\
        &=s_{i}[s_{i}\mid s_{i}]+[s_{i}\mid s_{j}s_{i}]-[s_{i}\mid s_{i}]
    \end{align*}
    since $s_{i}^{2}\to 1$ and $[1\mid s_{j}]$ is degenerate. Let $\tau=[s_{i}\mid s_{j}s_{i}]$, then $\tau$ is redundant with $[s_{i}\mid s_{j}\mid s_{i}]$ the corresponding 3-cell. Taking $\partial_{3}^{F}c(\tau)$ we obtain:
    $$\tau= [s_{i}\mid s_{j}]+[s_{j}s_{i}\mid s_{i}]-s_{i}[s_{j}\mid s_{i}].$$
    Applying Lemma \ref{Lemma: irreducible tuples quotients to 0} and Lemma \ref{lemma: rewriting [w|v] for w not a generator} we have $\tau=[s_{i}\mid s_{j}]+s_{j}[s_{i}\mid s_{i}]$. 
\end{proof}
\begin{proposition}\label{prop: boundary of s,b}
    For each $1\leq i<j\leq n-1$,  
\[\partial_{3}^{Q}[s_{j}\mid s_{j}\mid \ramp{i}{j}]=(s_{j}+1)[s_{j}\mid \ramp{i}{j}]+\ramp{i}{j}[s_{j-1}\mid s_{j-1}]-[s_{j}\mid s_{j}].\]
\end{proposition}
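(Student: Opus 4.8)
Our plan is to mirror the method of Propositions~\ref{prop: boundary of s,s}--\ref{prop: boundary of s,c}: first expand the ordinary bar boundary, simplify the terms using the rewriting rules $\rewriting1$ and $\rewriting3$, and then apply the quotient map $q$, replacing the one redundant cell that appears by a combination of essential cells via its collapsing cell. Concretely, we begin with
\[\partial_3^F[s_j\mid s_j\mid \ramp{i}{j}]=s_j[s_j\mid \ramp{i}{j}]-[s_j^2\mid \ramp{i}{j}]+[s_j\mid s_j\ramp{i}{j}]-[s_j\mid s_j].\]
Since $s_j^2\to 1$ by $\rewriting1$, the cell $[s_j^2\mid \ramp{i}{j}]=[1\mid \ramp{i}{j}]$ is degenerate and vanishes; and since $s_j\ramp{i}{j}\to\ramp{i}{j}s_{j-1}$ by $\rewriting3$, the third term is the cell $\tau:=[s_j\mid \ramp{i}{j}s_{j-1}]$. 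By Lemma~\ref{le:normalform} the word $\ramp{i}{j}s_{j-1}=\ramp{i}{j}\ramp{j-1}{j-1}$ is a product of two ramps of strictly decreasing height, hence irreducible, so $\tau$ is written in its correct normal form. The cells $[s_j\mid \ramp{i}{j}]$ and $[s_j\mid s_j]$ are essential by Proposition~\ref{prop: essential 1 and 2 cells}, so $q$ fixes them, and the only remaining work is to compute $q(\tau)$.

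To compute $q(\tau)$, we identify $\tau$ as redundant: its first entry $s_j$ lies in $S$, and the word $s_j\ramp{i}{j}s_{j-1}$ has the proper initial subword $s_j\ramp{i}{j}$, which is the left side of $\rewriting3$ and hence reducible (while every shorter initial subword $s_j\ramp{i}{\ell}$ with $\ell<j$ is irreducible). Thus the collapsing cell is $c(\tau)=[s_j\mid \ramp{i}{j}\mid s_{j-1}]$, which is collapsible since $[s_j\mid \ramp{i}{j}]$ is essential while $[s_j\mid \ramp{i}{j}\mid s_{j-1}]$ is not essential (the product $\ramp{i}{j}s_{j-1}$ being irreducible). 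Then $q(c(\tau))=0$, so $q(\partial_3^F c(\tau))=0$; expanding,
\[\partial_3^F c(\tau)=s_j[\ramp{i}{j}\mid s_{j-1}]-[\ramp{i}{j}s_{j-1}\mid s_{j-1}]+[s_j\mid \ramp{i}{j}s_{j-1}]-[s_j\mid \ramp{i}{j}],\]
where the middle face uses $s_j\ramp{i}{j}=\ramp{i}{j}s_{j-1}$. Now $q([\ramp{i}{j}\mid s_{j-1}])=0$ by Lemma~\ref{Lemma: irreducible tuples quotients to 0}, since $\ramp{i}{j}s_{j-1}$ is irreducible; and Lemma~\ref{lemma: rewriting [w|v] for w not a generator}, applied with $w=\ramp{i}{j}$, $u=v=s_{j-1}$ and $uv\to 1$ by $\rewriting1$ (the ``furthermore'' clause applies because every tail $s_\ell s_{\ell+1}\cdots s_j$ of $\ramp{i}{j}$ is a ramp, hence irreducible), gives $q([\ramp{i}{j}s_{j-1}\mid s_{j-1}])=\ramp{i}{j}[s_{j-1}\mid s_{j-1}]$. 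Hence $q(\partial_3^F c(\tau))=0$ reads $0=-\ramp{i}{j}[s_{j-1}\mid s_{j-1}]+q(\tau)-[s_j\mid \ramp{i}{j}]$, so $q(\tau)=[s_j\mid \ramp{i}{j}]+\ramp{i}{j}[s_{j-1}\mid s_{j-1}]$.

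Substituting this into the image of the first display under $q$ gives
\[\partial_3^Q[s_j\mid s_j\mid \ramp{i}{j}]=s_j[s_j\mid \ramp{i}{j}]+q(\tau)-[s_j\mid s_j]=(s_j+1)[s_j\mid \ramp{i}{j}]+\ramp{i}{j}[s_{j-1}\mid s_{j-1}]-[s_j\mid s_j],\]
which is the claimed formula. We expect the only real obstacle to be the bookkeeping in the middle step: correctly pinning down $\tau$ as redundant, identifying its collapsing cell $c(\tau)$, verifying that $\ramp{i}{j}s_{j-1}$ is in normal form, and checking the hypotheses of Lemmas~\ref{Lemma: irreducible tuples quotients to 0} and~\ref{lemma: rewriting [w|v] for w not a generator} before invoking them. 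Once these are in place, the rest is a routine unwinding of the bar differential, exactly as in the preceding propositions.
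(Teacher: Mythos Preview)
Your proof is correct and follows essentially the same approach as the paper's: expand $\partial_3^F$, discard the degenerate term, identify $\tau=[s_j\mid\ramp{i}{j}s_{j-1}]$ as redundant with collapsing cell $[s_j\mid\ramp{i}{j}\mid s_{j-1}]$, and reduce $q(\tau)$ using Lemmas~\ref{Lemma: irreducible tuples quotients to 0} and~\ref{lemma: rewriting [w|v] for w not a generator}. The only cosmetic difference is that you invoke the ``furthermore'' clause of Lemma~\ref{lemma: rewriting [w|v] for w not a generator} directly (since every tail $\ramp{\ell}{j}\cdot 1$ of $\ramp{i}{j}$ is irreducible), whereas the paper writes out the sum $\sum_{k=0}^{j-i-1}\ramp{i}{i+k-1}q([s_{i+k}\mid\ramp{i+k+1}{j}])$ explicitly and then kills each term with Lemma~\ref{Lemma: irreducible tuples quotients to 0}.
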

\begin{proof}
    We evaluate the boundary:
    \begin{align*}
    \partial_{3}^{F}[s_{j}\mid s_{j}\mid \ramp{i}{j}]=&s_{j}[s_{j}\mid \ramp{i}{j}]-[s_{j}^{2}\mid \ramp{i}{j}]+[s_{j}\mid s_{j}\ramp{i}{j}]-[s_{j}\mid s_{j}]\\
    =&s_{j}[s_{j}\mid \ramp{i}{j}]+[s_{j}\mid \ramp{i}{j}s_{j-1}]-[s_{j}\mid s_{j}].
    \end{align*}
    Let $\tau=[s_{j}\mid \ramp{i}{j}s_{j-1}]$; then $\tau$ is redundant with the corresponding collapsible 3-cell $[s_{j}\mid \ramp{i}{j}\mid s_{j-1}]$. Evaluating $\partial c(\tau)$, we have:
    $$\tau= [s_{j}\mid \ramp{i}{j}]+[\ramp{i}{j}s_{j-1}\mid s_{j-1}]-s_{j}[\ramp{i}{j}\mid s_{j-1}].$$
    Applying Lemma \ref{Lemma: irreducible tuples quotients to 0} and Lemma \ref{lemma: rewriting [w|v] for w not a generator}: 
    $$q(\tau)=[s_{j}\mid \ramp{i}{j}]+\ramp{i}{j}[s_{j-1}\mid s_{j-1}]+\sum\limits_{k=0}^{j-i-1}\ramp{i}{i+k-1}q([s_{i+k}\mid \ramp{i+k+1}{j}]).$$
    But $\ramp{i+k}{j}$ is irreducible for all $0\leq k\leq j-i$, therefore $q([s_{i+k}\mid \ramp{i+k+1}{j}])=0$ by Lemma \ref{Lemma: irreducible tuples quotients to 0} and the proposition is complete. 
\end{proof}
\begin{proposition}\label{prop: boundary of c,s}
    For each $1\leq i<j-1\leq n-1$, we have
    \[\partial_{3}^{Q}[s_{i}\mid s_{j}\mid s_{j}]=(s_{i}-1)[s_{j}\mid s_{j}]-(s_{j}+1)[s_{i}\mid s_{j}].\]
\end{proposition}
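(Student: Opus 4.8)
The plan is to follow the same template as the preceding propositions of this subsection: evaluate the bar differential $\partial_3^F[s_i\mid s_j\mid s_j]$, discard the degenerate summand, and then rewrite the single remaining non-essential summand as a $\Z\sg{n}$-linear combination of essential $2$-cells, using the collapsing function described after Theorem~\ref{Theorem: Quotient Complex} together with Lemmas~\ref{Lemma: irreducible tuples quotients to 0} and~\ref{lemma: rewriting [w|v] for w not a generator}.

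\textbf{Key steps.} Concretely, the bar differential gives
\[\partial_3^F[s_i\mid s_j\mid s_j]=s_i[s_j\mid s_j]-[s_is_j\mid s_j]+[s_i\mid s_j^2]-[s_i\mid s_j],\]
and since $s_j^2\to 1$ by $\rewriting1$ the cell $[s_i\mid s_j^2]=[s_i\mid 1]$ is degenerate, hence zero in $\bar F_*$. The cells $[s_j\mid s_j]$ and $[s_i\mid s_j]$ are already essential (they are the $\rewriting1$- and $\rewriting2$-type $2$-cells of Proposition~\ref{prop: essential 1 and 2 cells}), so the only work is to rewrite $[s_is_j\mid s_j]$. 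Because $i<j-1$, the word $s_is_j$ is reducible via the commuting rule $\rewriting2$, with normal form $s_js_i$, so we replace $[s_is_j\mid s_j]$ by $[s_js_i\mid s_j]$. Now $s_js_i\notin S$, so $\tau=[s_js_i\mid s_j]$ is redundant with collapsible partner $c(\tau)=[s_j\mid s_i\mid s_j]$; expanding $q(\partial_3^F c(\tau))=0$ and carrying the alternating signs expresses $q([s_js_i\mid s_j])$ in terms of $q([s_i\mid s_j])$ (essential), $q([s_j\mid s_i])$ (which vanishes by Lemma~\ref{Lemma: irreducible tuples quotients to 0}, since $s_js_i$ is already irreducible), and $q([s_j\mid s_js_i])$. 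This last cell needs one more pass: it is redundant with collapsible partner $[s_j\mid s_j\mid s_i]$, and applying Lemma~\ref{Lemma: irreducible tuples quotients to 0} to its faces leaves only $q([s_j\mid s_js_i])=[s_j\mid s_j]$. Substituting back gives $q([s_is_j\mid s_j])=s_j[s_i\mid s_j]+[s_j\mid s_j]$, and assembling the four pieces yields $\partial_3^Q[s_i\mid s_j\mid s_j]=(s_i-1)[s_j\mid s_j]-(s_j+1)[s_i\mid s_j]$, as claimed.

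\textbf{Main obstacle.} The computation is entirely mechanical, so the one genuine difficulty is bookkeeping. Unlike the $\rewriting1$-rewritings used in the earlier propositions, applying $\rewriting2$ to the offending cell does not land directly in an essential cell, so one must descend through a \emph{second} collapsible $3$-cell before stabilizing, and the alternating signs in $\partial_3^F$ must be tracked carefully through both descents. One can instead package the first descent via Lemma~\ref{lemma: rewriting [w|v] for w not a generator} applied to $[s_js_i\mid s_j]$ with $w=s_j$ and $uv=s_is_j\to s_js_i$, which immediately reduces matters to $q([s_j\mid s_js_i])$; but since $s_js_i$ is not the trivial word, this residual cell still requires the direct collapsing-function argument above, so either route involves the same two-level descent.
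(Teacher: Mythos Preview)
Your argument is correct and matches the paper's proof essentially step for step: the paper computes $\partial_3^F$, drops the degenerate term, applies Lemma~\ref{lemma: rewriting [w|v] for w not a generator} to $[s_js_i\mid s_j]$ (your ``alternative'' packaging) to get $s_j[s_i\mid s_j]+q([s_j\mid s_js_i])$, and then handles $[s_j\mid s_js_i]$ via the collapsible cell $[s_j\mid s_j\mid s_i]$ exactly as you do. The only cosmetic difference is that the paper invokes the lemma for the first descent rather than unwinding $c(\tau)=[s_j\mid s_i\mid s_j]$ by hand, but as you note these are the same computation.
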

\begin{proof}
We compute $\partial_{3}^{F}$ and remove degenerate cells after reducing each element of $\sg{n}$ to normal form:
$$\partial_{3}^{F}[s_{i}\mid s_{j}\mid s_{j}]=s_{i}[s_{j}\mid s_{j}]-[s_{j}s_{i}\mid s_{j}]-[s_{i}\mid s_{j}].$$
Applying Lemma \ref{lemma: rewriting [w|v] for w not a generator} to $[s_{j}s_{i}\mid s_{j}]$, we have:
$$q([s_{j}s_{i}\mid s_{j}])=s_{j}[s_{i}\mid s_{j}]+q([s_{j}\mid s_{j}s_{i}]).$$
Let $\tau=[s_{j}\mid s_{j}s_{i}]$, the corresponding collapsible 3-cell is $c(\tau)=[s_{j}\mid s_{j}\mid s_{i}]$. Therefore $\tau= s_{j}[s_{j}\mid s_{i}]+[s_{j}\mid s_{j}]$ and by Lemma \ref{Lemma: irreducible tuples quotients to 0} $q(\tau)=[s_{j}\mid s_{j}]$.
\end{proof}
\begin{proposition}\label{prop: boundary of c,c}
    For each $1\leq i<j-1<k-1\leq n-1$, we have
    \[\partial_{3}^{Q}[s_{i}\mid s_{j}\mid s_{k}]=(s_{i}-1)[s_{j}\mid s_{k}]-(s_{j}-1)[s_{i}\mid s_{k}]+(s_{k}-1)[s_{i}\mid s_{j}].\]
\end{proposition}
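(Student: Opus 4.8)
The plan is to follow the same template as the preceding propositions (in particular Proposition~\ref{prop: boundary of c,s}): expand $\partial_3^F[s_i\mid s_j\mid s_k]$ with the bar differential, keep the terms that are already essential, and rewrite the remaining terms as $\Z\sg{n}$-linear combinations of essential $2$-cells using the collapsing function together with Lemmas~\ref{Lemma: irreducible tuples quotients to 0} and~\ref{lemma: rewriting [w|v] for w not a generator}. Unpacking the hypothesis, $1\leq i<j-1<k-1\leq n-1$ means $i\leq j-2$ and $j\leq k-2$, so $s_is_j$, $s_js_k$, and $s_is_k$ are all left sides of $\rewriting2$. Then
\[\partial_3^F[s_i\mid s_j\mid s_k]=s_i[s_j\mid s_k]-[s_is_j\mid s_k]+[s_i\mid s_js_k]-[s_i\mid s_j],\]
and by Proposition~\ref{prop: essential 1 and 2 cells} the cells $[s_j\mid s_k]$ and $[s_i\mid s_j]$ are essential, so the first and last terms are already in the desired form; it remains to compute $q$ of the two middle terms.

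For $q([s_is_j\mid s_k])$, first rewrite $s_is_j$ in normal form as $s_js_i$, then apply Lemma~\ref{lemma: rewriting [w|v] for w not a generator} with $w=s_j$, $u=s_i$, $v=s_k$, and $uv=s_is_k\to s_ks_i$, obtaining $q([s_js_i\mid s_k])=s_j\,q([s_i\mid s_k])+q([s_j\mid s_ks_i])$. Here $[s_i\mid s_k]$ is essential, and the leftover term $q([s_j\mid s_ks_i])$ is handled by one more iteration of the same procedure: $[s_j\mid s_ks_i]$ is redundant with $c([s_j\mid s_ks_i])=[s_j\mid s_k\mid s_i]$, and the remaining faces of this collapsible $3$-cell are either the essential cell $[s_j\mid s_k]$ or—after at most one further application of the collapsing function, which turns $[s_js_k\mid s_i]$ into $[s_k\mid s_j\mid s_i]$—cells whose first two entries multiply to an irreducible word and so vanish under $q$ by Lemma~\ref{Lemma: irreducible tuples quotients to 0}. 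The net result is $q([s_is_j\mid s_k])=s_j[s_i\mid s_k]+[s_j\mid s_k]$. An entirely symmetric computation, commuting $s_js_k$ to $s_ks_j$ and then collapsing through $[s_i\mid s_k\mid s_j]$ and $[s_k\mid s_i\mid s_j]$, gives $q([s_i\mid s_js_k])=s_k[s_i\mid s_j]+[s_i\mid s_k]$.

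Substituting these two identities into the expansion of $\partial_3^F$ and collecting terms yields
\[\partial_3^Q[s_i\mid s_j\mid s_k]=s_i[s_j\mid s_k]-\bigl(s_j[s_i\mid s_k]+[s_j\mid s_k]\bigr)+\bigl(s_k[s_i\mid s_j]+[s_i\mid s_k]\bigr)-[s_i\mid s_j],\]
which is exactly $(s_i-1)[s_j\mid s_k]-(s_j-1)[s_i\mid s_k]+(s_k-1)[s_i\mid s_j]$. The only genuine work is the bookkeeping in the two collapsing computations, and the feature that makes them terminate almost immediately is that once the indices $i<j<k$ have been permuted into strictly decreasing order every word in sight is already a normal form; consequently every $2$-cell that appears is either one of the three essential cells $[s_i\mid s_j]$, $[s_i\mid s_k]$, $[s_j\mid s_k]$ or a cell killed by Lemma~\ref{Lemma: irreducible tuples quotients to 0}. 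I expect the signs in the iterated applications of the collapsing function to be the main source of possible error, but the computation is otherwise routine.
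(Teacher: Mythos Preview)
Your proof is correct and follows essentially the same approach as the paper's: expand the bar differential, put the products $s_is_j$ and $s_js_k$ into normal form, apply Lemma~\ref{lemma: rewriting [w|v] for w not a generator} to the term with a length-two first entry, then collapse the remaining redundant cells and invoke Lemma~\ref{Lemma: irreducible tuples quotients to 0} to kill the terms whose first two entries have irreducible product. The only cosmetic difference is that the paper applies Lemma~\ref{Lemma: irreducible tuples quotients to 0} directly to $[s_ks_j\mid s_i]$ rather than unwinding one more step of the collapsing function as you do, but the content is identical.
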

\begin{proof}
    We evaluate the boundary and put elements of $\sg{n}$ in normal form:
    $$\partial_{3}^{F}[s_{i}\mid s_{j}\mid s_{k}]=s_{i}[s_{j}\mid s_{k}]-[s_{j}s_{i}\mid s_{k}]+[s_{i}\mid s_{k}s_{j}]-[s_{i}\mid s_{j}].$$
    By Lemma \ref{lemma: rewriting [w|v] for w not a generator}, $q([s_{j}s_{i}\mid s_{k}])=s_{j}[s_{i}\mid s_{k}]+q([s_{j}\mid s_{k}s_{i}])$. Let $\tau=[s_{j}\mid s_{k}s_{i}]$; then $\tau$ is redundant with corresponding collapsible 3-cell $c(\tau)=[s_{j}\mid s_{k}\mid s_{i}]$. Evaluating the boundary we obtain $\tau=[s_{j}\mid s_{k}]+[s_{k}s_{j}\mid s_{i}]-s_{j}[s_{k}\mid s_{i}]$. Applying Lemma \ref{Lemma: irreducible tuples quotients to 0}, $q(\tau)=[s_{j}\mid s_{k}]$. Therefore:
    $$q([s_{j}s_{i}\mid s_{k}])=s_{j}[s_{i}\mid s_{k}]+[s_{j}\mid s_{k}].$$
    Let $\lambda=[s_{i}\mid s_{k}s_{j}]$; then $\lambda$ is redundant with corresponding collapsible 3-cell $c(\lambda)=[s_{i}\mid s_{k}\mid s_{j}]$. Evaluating the boundary, we have $\lambda =[s_{i}\mid s_{k}]+[s_{k}s_{i}\mid s_{j}]-s_{i}[s_{k}\mid s_{j}]$. Applying Lemma \ref{Lemma: irreducible tuples quotients to 0} and Lemma \ref{lemma: rewriting [w|v] for w not a generator} we obtain: 
    $$q(\lambda)=[s_{i}\mid s_{k}]+s_{k}[s_{i}\mid s_{j}].$$
\end{proof}

\begin{lemma}\label{lemma: quotient of braid relation}
    Let $\tau=[s_{i}\mid \ramp{k}{j}w]$ for some $i\leq j-1$. Then:
    $$q(\tau)=\begin{cases}
        \ramp{k}{j}q([s_{i}\mid w])+\sum\limits_{\ell=0}^{j-k}\ramp{k}{k+\ell-1}[s_{i}\mid s_{k+\ell}]\hspace{1em}&i<k-1\\
        0\hspace{1em}&i=k-1\\
        [s_{k}\mid s_{k}]\hspace{1em}&i=k\\
        [s_{i}\mid \ramp{k}{i}]+\ramp{k}{i}q([s_{i-1}\mid \ramp{i+1}{j}w])\hspace{1em}&k<i\leq j-1
    \end{cases}$$
\end{lemma}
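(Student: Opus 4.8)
The plan is to dispose of the four cases in turn, in each one using the collapsing function to locate the collapsible $3$-cell $c(\tau)$ having $\pm\tau$ as a face, writing $\partial^F c(\tau)=d_0-d_1+\tau-d_3$ (so $\tau=d_2c(\tau)$, and any degenerate face is dropped), applying the quotient map $q$, and using $q(c(\tau))=0$ to express $q(\tau)=q(d_3)-q(d_0)+q(d_1)$. We then simplify these faces with Lemma~\ref{Lemma: irreducible tuples quotients to 0} (a $2$-cell whose first two entries multiply to an irreducible word dies) and Lemma~\ref{lemma: rewriting [w|v] for w not a generator} (to peel a ramp off the front of a non-generator first entry). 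Throughout we use that $\rho(k,j)w$ is a normal form, as is implicit in calling $\tau$ a basis cell. The only genuine recursion is in the case $i<k-1$, for which we induct on $j-k$.

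Two of the cases are immediate. If $i=k-1$, then $s_i\cdot\rho(k,j)w=\rho(k-1,j)w$ is irreducible (this is exactly the normal-form hypothesis), so $\tau$ is collapsible and $q(\tau)=0$. If $i=k$, then since $i\leq j-1$ forces $j\geq k+1$, the smallest reducible initial subword of $s_k\cdot\rho(k,j)w=s_ks_ks_{k+1}\dotsm s_jw$ is the $\rewriting1$-redex $s_ks_k$, so $c(\tau)=[s_k\mid s_k\mid\rho(k+1,j)w]$; here $d_1$ is degenerate, $q(d_3)=q([s_k\mid s_k])=[s_k\mid s_k]$ because $[s_k\mid s_k]$ is essential, and $q(d_0)=s_k\,q([s_k\mid\rho(k+1,j)w])=0$ by Lemma~\ref{Lemma: irreducible tuples quotients to 0} since $s_k\cdot\rho(k+1,j)w=\rho(k,j)w$ is irreducible. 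Hence $q(\tau)=[s_k\mid s_k]$.

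For $i<k-1$, the generator $s_i$ commutes past every letter of $\rho(k,j)$, so the smallest reducible initial subword of $s_i\cdot\rho(k,j)w$ is the $\rewriting2$-redex $s_is_k$ and $c(\tau)=[s_i\mid s_k\mid\rho(k+1,j)w]$. Then $q(d_3)=q([s_i\mid s_k])=[s_i\mid s_k]$ (an essential commuting $2$-cell, which becomes the $\ell=0$ summand), $q(d_0)=s_i\,q([s_k\mid\rho(k+1,j)w])=0$ by Lemma~\ref{Lemma: irreducible tuples quotients to 0}, and $d_1=[s_ks_i\mid\rho(k+1,j)w]$ once $s_is_k$ is put in normal form; Lemma~\ref{lemma: rewriting [w|v] for w not a generator}, with irreducible prefix $s_k$, letter $s_i$, and tail $\rho(k+1,j)w$, rewrites $q(d_1)$ as $s_k\,q([s_i\mid\rho(k+1,j)w])$ plus a correction term. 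Granting the correction vanishes, the inductive hypothesis applied to $q([s_i\mid\rho(k+1,j)w])$ (legitimate as $j-(k+1)<j-k$, and it is again the case $i<k-1$), the identity $s_k\,\rho(k+1,m)=\rho(k,m)$, and a shift of summation index yield the stated formula. For $k<i\leq j-1$ the generator $s_i$ sits inside the ramp: the smallest reducible initial subword of $s_i\cdot\rho(k,j)w$ is the $\rewriting3$-redex $s_i\rho(k,i)$, which rewrites to the normal form $\rho(k,i)s_{i-1}$, so $c(\tau)=[s_i\mid\rho(k,i)\mid\rho(i+1,j)w]$. Here $q(d_3)=q([s_i\mid\rho(k,i)])=[s_i\mid\rho(k,i)]$, the essential $2$-cell of the third kind; $q(d_0)=s_i\,q([\rho(k,i)\mid\rho(i+1,j)w])=0$ because the first two entries multiply to $\rho(k,j)w$; and $d_1=[\rho(k,i)s_{i-1}\mid\rho(i+1,j)w]$, to which Lemma~\ref{lemma: rewriting [w|v] for w not a generator} applies with prefix $\rho(k,i)$, letter $s_{i-1}$, and tail $\rho(i+1,j)w$, producing $\rho(k,i)\,q([s_{i-1}\mid\rho(i+1,j)w])$ plus correction terms; granting these vanish, this is precisely the asserted identity.

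The hard part will be verifying that the correction terms coming out of Lemma~\ref{lemma: rewriting [w|v] for w not a generator} are zero. Each such term has the shape $\rho(\cdot,\cdot)\,q([s_m\mid u])$, where $u$ is a word produced by a single commuting move following the braid move, hence not in normal form as written; the plan is to return $s_m\cdot u$ to normal form by sliding the displaced generator rightward past the commuting portion (so that the two ramps merge into one longer ramp) and then checking that the result is an irreducible product of ramps of decreasing height, whence Lemma~\ref{Lemma: irreducible tuples quotients to 0} finishes. The delicate point is the interaction of this rewriting with the tail $w$: one must track whether the displaced generator is absorbed into, or must be commuted through (or braided with), the first ramp of $w$ before the word becomes normal, and it is here that the combinatorics of the braid relation $\rewriting3$ are heaviest; I expect this bookkeeping, rather than any conceptual point, to be the real content of the proof.
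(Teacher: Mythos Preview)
Your approach is correct and matches the paper's proof case by case: the same collapsing cells, the same applications of Lemmas~\ref{Lemma: irreducible tuples quotients to 0} and~\ref{lemma: rewriting [w|v] for w not a generator}, and the same induction on $j-k$ in the case $i<k-1$.

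The one place you overcomplicate things is the final paragraph. The correction terms from Lemma~\ref{lemma: rewriting [w|v] for w not a generator} vanish more easily than you anticipate: you do not need to track how the displaced generator interacts with the tail $w$. The key observation is that none of the rewriting rules $\rewriting1$--$\rewriting3$ ever introduces a generator of index higher than those already present. Since $\rho(k,j)w$ is irreducible, $w$ involves only generators of index $<j$, and therefore so does the normal form of $s_{i-1}w$ (respectively $s_iw$), whatever its detailed shape. Consequently the normal form $r$ arising in Lemma~\ref{lemma: rewriting [w|v] for w not a generator} is $\rho(i+1,j)r''$ (respectively $\rho(k+1,j)r''$) with $r''$ a product of ramps of height $<j$, and prepending any suffix $\rho(k+m,i)$ of the prefix simply extends the leading ramp to $\rho(k+m,j)$, keeping the word irreducible. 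The ``furthermore'' clause of Lemma~\ref{lemma: rewriting [w|v] for w not a generator} then kills every correction term at once. The paper accordingly dispatches this step with a bare citation of Lemmas~\ref{Lemma: irreducible tuples quotients to 0} and~\ref{lemma: rewriting [w|v] for w not a generator}; no case analysis on the structure of $w$ is needed.
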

\begin{remark}
    In this statement, the term $q([s_{i-1}\mid \ramp{i+1}{j}w])$, from the expression for $q(\tau)$ when $k<i\leq j-1$, is itself a term that the lemma applies to.
In fact, it can be rewritten using the expression from the case $i<k-1$, to get an expression in terms of essential cells.
\end{remark}
\begin{proof}[Proof of Lemma~\ref{lemma: quotient of braid relation}]
    Case: $i<k-1$. We induct on $j-k$. Suppose $j-k=1$, then $c(\tau)=[s_{i}\mid s_{j-1}\mid s_{j}w]$ and applying the rewriting rules:
    $$\partial c(\tau)=s_{i}[s_{j-1}\mid s_{j}w]+[s_{j-1}s_{i}\mid s_{j}w]+\tau-[s_{i}\mid s_{j-1}].$$
    Applying Lemma \ref{Lemma: irreducible tuples quotients to 0}, $q(\tau)=[s_{i}\mid s_{j-1}]+q([s_{j-1}s_{i}\mid s_{j}w])$. By Lemma \ref{lemma: rewriting [w|v] for w not a generator}, $q([s_{j-1}s_{i}\mid s_{j}w])=s_{j-1}[s_{i}\mid s_{j}w]$. Set $\tau'=[s_{i}\mid s_{j}w]$, then $c(\tau')=[s_{i}\mid s_{j}\mid w]$ and:
    $$\partial c(\tau')=s_{i}[s_{j}\mid w]-[s_{j}s_{i}\mid w]+\tau'-[s_{i}\mid s_{j}].$$
    By Lemma \ref{Lemma: irreducible tuples quotients to 0} and Lemma \ref{lemma: rewriting [w|v] for w not a generator}, $q(\tau')=s_{j}q([s_{i}\mid w])+[s_{i}\mid s_{j}]$. Thus:
    $$q(\tau)=[s_{i}\mid s_{j-1}]+s_{j-1}[s_{i}\mid s_{j}]+s_{j-1}s_{j}q([s_{i}\mid w]).$$

    Now, suppose $j-k>1$, then $c(\tau)=[s_{i}\mid s_{k}\mid \ramp{k+1}{j}w]$ and applying the rewriting rules:
    $$\partial c(\tau)=s_{i}[s_{k}\mid \ramp{k+1}{j}w]-[s_{k}s_{i}\mid \ramp{k+1}{j}w]+\tau-[s_{i}\mid s_{k}].$$
    By Lemma \ref{Lemma: irreducible tuples quotients to 0} and Lemma \ref{lemma: rewriting [w|v] for w not a generator}:
    $$q(\tau)=[s_{i}\mid s_{k}]+s_{k}q([s_{i}\mid \ramp{k+1}{j}w]).$$
    Applying the inductive hypothesis to $[s_{i}\mid \ramp{k+1}{j}]$ yields:
    \begin{align*}
        q(\tau)&=[s_{i}\mid s_{k}]+s_{k}\Bigg(\ramp{k+1}{j}q([s_{i}\mid w])\\
&\qquad+\sum\limits_{\ell=0}^{j-(k+1)}\ramp{k+1}{(k+1)+\ell-1}[s_{i}\mid s_{k+1+\ell}]\Bigg)\\
        &=[s_{i}\mid s_{k}]+\ramp{k}{j}q([s_{i}\mid w])+s_{k}[s_{i}\mid s_{k+1}]\\
&\qquad+s_{k}s_{k+1}[s_{i}\mid s_{k+2}]+\cdots+\ramp{k}{j}[s_{i}\mid s_{j}]\\
        &=\ramp{k}{j}q(s_{i}\mid w])+\sum\limits_{\ell=0}^{j-k}\ramp{k}{k+\ell-1}[s_{i}\mid s_{k+\ell}].
    \end{align*}

    Case: $i=k-1$. If $i=k-1$, then $s_{i}\ramp{k}{j}w=\ramp{i}{j}w$ is irreducible and by Lemma \ref{Lemma: irreducible tuples quotients to 0}, $q(\tau)=0$.

    Case: $i=k$. Suppose $i=k$, then $c(\tau)=[s_{k}\mid s_{k}\mid \ramp{k+1}{j}w]$ and applying the rewriting rules:
    $$\partial c(\tau)=s_{k}[s_{k}\mid \ramp{k+1}{j}w]-[1\mid \ramp{k+1}{j}w]+\tau-[s_{k}\mid s_{k}].$$
    Since $[1\mid \ramp{k+1}{j}w]$ is degenerate and by Lemma \ref{Lemma: irreducible tuples quotients to 0}, $q(\tau)=[s_{k}\mid s_{k}]$.

    Case: $k<i\leq j-1$. Then $c(\tau)=[s_{i}\mid \ramp{k}{i}\mid \ramp{i+1}{j}w]$ and applying the rewriting rules:
    $$\partial c(\tau)=s_{i}[\ramp{k}{i}\mid \ramp{i+1}{j}w]-[\ramp{k}{i}s_{i-1}\mid \ramp{i+1}{j}w]+\tau-[s_{i}\mid \ramp{k}{i}].$$
    By Lemma \ref{Lemma: irreducible tuples quotients to 0}, $q(\tau)=[s_{i}\mid \ramp{k}{i}]+q([\ramp{k}{i}s_{i-1}\mid \ramp{i+1}{j}w])$. Lemma \ref{lemma: rewriting [w|v] for w not a generator} completes the proof.
\end{proof}

\begin{proposition}\label{prop: boundary of c,b}
    Let $1\leq i<j-1$ and $1\leq k<j$.  Then $\partial_{3}^{Q}[s_{i}\mid s_{j}\mid \ramp{k}{j}]$ is:
    \begin{align*}
        i<k-1:\hspace{1cm}&(s_{i}-1)[s_{j}\mid \ramp{k}{j}]-[s_{i}\mid s_{j}]+\ramp{k}{j}[s_{i}\mid s_{j-1}]\\&+(1-s_{j})\sum\limits_{\ell=0}^{j-k}\ramp{k}{k+\ell-1}[s_{i}\mid s_{k+\ell}]\\
        i=k-1:\hspace{1cm}&s_{k-1}[s_{j}\mid \ramp{k}{j}]-[s_{j}\mid \ramp{k-1}{j}]-[s_{k-1}\mid s_{j}]\\
        i=k:\hspace{1cm}&s_{k}[s_{j}\mid \ramp{k}{j}]+(1-s_{j})[s_{k}\mid s_{k}]-[s_{j}\mid \ramp{k+1}{j}]-[s_{k}\mid s_{j}]\\
        i\geq k-1:\hspace{1cm}&(s_{i}-1)[s_{j}\mid \ramp{k}{j}]+(1-s_{j})[s_{i}\mid \ramp{k}{i}]+\ramp{k}{i}[s_{i-1}\mid s_{j-1}]\\
        &+(1-s_{j})\ramp{k}{i}\sum\limits_{\ell=0}^{j-i-1}\ramp{i+1}{i+\ell}[s_{i-1}\mid s_{i+1+\ell}].
    \end{align*}
\end{proposition}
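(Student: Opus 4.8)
The plan is to compute $\partial_3^F[s_i\mid s_j\mid\ramp{k}{j}]$ from the bar boundary formula and then apply the quotient map $q$, rewriting each term as a $\Z\sg{n}$-combination of the essential $2$-cells of Proposition~\ref{prop: essential 1 and 2 cells} using Lemmas~\ref{Lemma: irreducible tuples quotients to 0}, \ref{lemma: rewriting [w|v] for w not a generator}, and~\ref{lemma: quotient of braid relation}. Since $[s_i\mid s_j\mid\ramp{k}{j}]$ is essential of the ``$\rewriting2$ then $\rewriting3$'' type, reducing the inner faces to normal form (via $s_is_j\to s_js_i$, valid because $i<j-1$, and via $\rewriting3$, $s_j\ramp{k}{j}\to\ramp{k}{j}s_{j-1}$) gives
\[
\partial_3^F[s_i\mid s_j\mid\ramp{k}{j}] = s_i[s_j\mid\ramp{k}{j}] - [s_js_i\mid\ramp{k}{j}] + [s_i\mid\ramp{k}{j}s_{j-1}] - [s_i\mid s_j].
\]
Here $[s_j\mid\ramp{k}{j}]$ is essential and $[s_i\mid s_j]$ is essential (as $i\le j-2$), so $q$ fixes these two terms; all the work lies in rewriting $[s_js_i\mid\ramp{k}{j}]$ and $[s_i\mid\ramp{k}{j}s_{j-1}]$.

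First I would dispose of $[s_i\mid\ramp{k}{j}s_{j-1}]$: it is exactly a cell of the form $[s_i\mid\ramp{k}{j}w]$ with $w=s_{j-1}$ and $i\le j-1$, so Lemma~\ref{lemma: quotient of braid relation} applies directly, and its four cases $i<k-1$, $i=k-1$, $i=k$, $k<i\le j-1$ are precisely the four cases in the statement. This produces the ``leading'' terms: in the case $i<k-1$ the essential cell $\ramp{k}{j}[s_i\mid s_{j-1}]$ (essential because $i\le j-3$) and the sum $\sum_{\ell}\ramp{k}{k+\ell-1}[s_i\mid s_{k+\ell}]$; in the cases $i=k-1$ and $i=k$ the simple outputs $0$ and $[s_k\mid s_k]$; and in the case $k<i\le j-1$ the cell $[s_i\mid\ramp{k}{i}]$ (essential, matching the pattern with top index $i$) plus a residual term $\ramp{k}{i}\,q([s_{i-1}\mid\ramp{i+1}{j}s_{j-1}])$ to which Lemma~\ref{lemma: quotient of braid relation} applies again, now in its $i<k-1$ branch since $i-1<(i+1)-1$; here one checks $[s_{i-1}\mid s_{j-1}]$ and $[s_{i-1}\mid s_{i+1+\ell}]$ are essential (using $i\le j-2$ and $k\le i-1$) and simplifies with $\ramp{k}{i}\ramp{i+1}{j}=\ramp{k}{j}$.

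Next I would handle $[s_js_i\mid\ramp{k}{j}]$. Its first entry $s_js_i$ is not a generator, so this cell is redundant with collapsing cell $c(\tau)=[s_j\mid s_i\mid\ramp{k}{j}]$; extracting $\tau$ from $q(\partial c(\tau))=0$, and noting that the $[s_j\mid s_i]$ face quotients to $0$ by Lemma~\ref{Lemma: irreducible tuples quotients to 0} (since $s_js_i$ is irreducible), gives
\[
q([s_js_i\mid\ramp{k}{j}]) = s_j\,q([s_i\mid\ramp{k}{j}]) + q([s_j\mid s_i\ramp{k}{j}]).
\]
The term $q([s_i\mid\ramp{k}{j}])$ is again an instance of Lemma~\ref{lemma: quotient of braid relation} (with empty trailing word, hence the same four cases). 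For $q([s_j\mid s_i\ramp{k}{j}])$ I would first put $s_i\ramp{k}{j}$ in normal form — it is $\ramp{k}{j}s_i$, $\ramp{k-1}{j}$, $\ramp{k+1}{j}$, or $\ramp{k}{j}s_{i-1}$ in the four cases respectively — and then either recognize $[s_j\mid\ramp{k\pm1}{j}]$ as essential, or, for the forms $\ramp{k}{j}s_{i'}$ with $i'<j$, pass to the collapsing cell $[s_j\mid\ramp{k}{j}\mid s_{i'}]$ and kill the two surviving faces by Lemma~\ref{Lemma: irreducible tuples quotients to 0}, leaving just $[s_j\mid\ramp{k}{j}]$. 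Combining the four contributions of $\partial_3^F$ in each case and collecting the coefficient of each essential $2$-cell (using $\ramp{a}{b}\ramp{b+1}{c}=\ramp{a}{c}$ and the convention $\ramp{a}{a-1}=1$) then yields the four displayed formulas.

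The proof is essentially bookkeeping, and I expect the main obstacle to be getting the classification right in every branch: one has to certify each auxiliary simplex that appears as essential, redundant, or collapsible, and use the correct collapsing cell and face index. Because the $\rewriting3$ rewritings shuffle indices, keeping track of normal forms is easy to get wrong, and one must watch the degenerate edges of the inequalities — for instance whether $k+1=j$ or whether $i-1$ is too small — which can make a nominally essential $2$-cell degenerate to an irreducible pair that $q$ sends to $0$. The nested re-application of Lemma~\ref{lemma: quotient of braid relation} in the case $k<i\le j-1$, and the analogous nesting inside the treatment of $[s_js_i\mid\ramp{k}{j}]$, is the step where a term is most easily dropped or mis-signed.
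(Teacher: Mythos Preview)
Your proposal is correct and follows essentially the same route as the paper's proof: compute the bar boundary, use Lemma~\ref{lemma: rewriting [w|v] for w not a generator} (which is exactly your explicit unpacking of the collapsing cell $[s_j\mid s_i\mid\ramp{k}{j}]$) to split $q([s_js_i\mid\ramp{k}{j}])$ into $s_jq([s_i\mid\ramp{k}{j}])+q([s_j\mid s_i\ramp{k}{j}])$, and then invoke Lemma~\ref{lemma: quotient of braid relation} case by case on the three terms $[s_i\mid\ramp{k}{j}]$, $[s_j\mid\text{(normal form of }s_i\ramp{k}{j})]$, and $[s_i\mid\ramp{k}{j}s_{j-1}]$. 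The paper's write-up is terser (it just says ``apply Lemma~\ref{lemma: quotient of braid relation} and simplify'' in each case), but the computations and the four-way case split are identical to yours.
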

\begin{proof}
    We have:    $$\partial_{3}^{F}[s_{i}\mid s_{j}\mid \ramp{k}{j}]=s_{i}[s_{j}\mid \ramp{k}{j}]-[s_{j}s_{i}\mid \ramp{k}{j}]+[s_{i}\mid \ramp{k}{j}s_{j-1}]-[s_{i}\mid s_{j}].$$
    Let $\tau=[s_{j}s_{i}\mid \ramp{k}{j}]$. 
By Lemma \ref{lemma: rewriting [w|v] for w not a generator}:
    $$q(\tau)=s_{j}q([s_{i}\mid \ramp{k}{j}])+q([s_{j}\mid s_{i}\ramp{k}{j}])$$ is determined by four cases.
    
    Case: $i<k-1$. We apply Lemma~\ref{lemma: quotient of braid relation} to $[s_{i}\mid \ramp{k}{j}]$, $[s_{j}\mid \ramp{k}{j}s_{i}]$, and $[s_{i}\mid \ramp{k}{j}s_{j-1}]$, and simplify.  This completes the case.

    Case: $i=k-1$. By Lemma~\ref{Lemma: irreducible tuples quotients to 0}, $q([s_{i}\mid \ramp{k}{j}])=0$ and $q(\tau)=[s_{j}\mid \ramp{k-1}{j}]$. Applying Lemma~\ref{lemma: quotient of braid relation} to $[s_{i}\mid \ramp{k}{j}s_{j-1}]$ completes the case.

    Case: $i=k$. By applying Lemma~\ref{lemma: quotient of braid relation} to $[s_{i}\mid \ramp{k}{j}]$, we have $q(\tau)=s_{j}[s_{k}\mid s_{k}]+[s_{j}\mid \ramp{k+1}{j}]$. 
Applying Lemma~\ref{lemma: quotient of braid relation} to $q([s_{i}\mid \ramp{k}{j}s_{j-1}])$ completes the case.

    Case: $i>k$. We apply Lemma~\ref{lemma: quotient of braid relation} to $[s_{i}\mid \ramp{k}{j}]$, $[s_{j}\mid \ramp{k}{j}s_{i-1}]$, and $[s_{i}\mid \ramp{k}{j}s_{j-1}]$, and simplify.
\end{proof}
\begin{proposition}\label{prop: boundary of b,s}
    Suppose $i<j$, then:
\[
\begin{split}
\partial_{3}^{Q}[s_{j}\mid \ramp{i}{j}\mid s_{j}]=
&s_{j}\ramp{i}{j-1}[s_{j}\mid s_{j}]-\ramp{i}{j-1}[s_{j}\mid s_{j-1}s_{j}]
 \\
&-\sum_{k=i-1}^{j-3}\ramp{i}{k}[s_{k+1}\mid s_j] -[s_{j}\mid \ramp{i}{j}] \\
&-\ramp{i}{j-2}[s_{j-1}\mid s_{j-1}] .
\end{split}
\]
\end{proposition}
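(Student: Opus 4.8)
The plan is to follow the same template as the preceding propositions: expand $\partial_{3}^{F}[s_{j}\mid \ramp{i}{j}\mid s_{j}]$ with the bar differential and rewrite each resulting term in the basis of essential cells using Lemmas~\ref{Lemma: irreducible tuples quotients to 0}, \ref{lemma: rewriting [w|v] for w not a generator}, and~\ref{lemma: quotient of braid relation}. The expansion is
\[
\partial_{3}^{F}[s_{j}\mid \ramp{i}{j}\mid s_{j}]=s_{j}[\ramp{i}{j}\mid s_{j}]-[s_{j}\ramp{i}{j}\mid s_{j}]+[s_{j}\mid \ramp{i}{j}s_{j}]-[s_{j}\mid \ramp{i}{j}],
\]
where the inner entries are put in normal form before applying $q$: we have $\ramp{i}{j}s_{j}=\ramp{i}{j-1}$, and $s_{j}\ramp{i}{j}=\ramp{i}{j}s_{j-1}$ by $\rewriting3$ (this is precisely the rule witnessing that $[s_{j}\mid \ramp{i}{j}\mid s_{j}]$ is essential). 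Three of the four terms are immediate. The term $[s_{j}\mid \ramp{i}{j}]$ is already an essential $2$-cell (the third type in Proposition~\ref{prop: essential 1 and 2 cells}, using $i<j$), hence unchanged by $q$. For $[s_{j}\mid \ramp{i}{j-1}]$, the word $s_{j}\ramp{i}{j-1}=\ramp{j}{j}\ramp{i}{j-1}$ is a product of ramps of strictly decreasing height, hence irreducible, so $q([s_{j}\mid \ramp{i}{j-1}])=0$ by Lemma~\ref{Lemma: irreducible tuples quotients to 0}. For $s_{j}[\ramp{i}{j}\mid s_{j}]$, I would apply Lemma~\ref{lemma: rewriting [w|v] for w not a generator} with $w=\ramp{i}{j-1}$, $u=v=s_{j}$, $r=1$; since every tail of $w$ is a ramp, the ``furthermore'' clause gives $q([\ramp{i}{j}\mid s_{j}])=\ramp{i}{j-1}[s_{j}\mid s_{j}]$, so this term contributes $s_{j}\ramp{i}{j-1}[s_{j}\mid s_{j}]$.

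The real work is the term $-[s_{j}\ramp{i}{j}\mid s_{j}]=-[\ramp{i}{j}s_{j-1}\mid s_{j}]$. Here I would apply Lemma~\ref{lemma: rewriting [w|v] for w not a generator} with $w=\ramp{i}{j-1}$, $u=s_{j}s_{j-1}$, $v=s_{j}$: then $uv=s_{j}s_{j-1}s_{j}\to s_{j-1}s_{j}s_{j-1}=:r$ is a braid relation, and $wu=\ramp{i}{j}s_{j-1}$ is irreducible, so the lemma expresses $q([\ramp{i}{j}s_{j-1}\mid s_{j}])$ as $\ramp{i}{j-1}\,q([s_{j}s_{j-1}\mid s_{j}])$ plus $\sum_{\ell=0}^{j-i-1}\ramp{i}{i+\ell-1}\,q([s_{i+\ell}\mid \ramp{i+\ell+1}{j-1}r])$. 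Each piece then reduces to a single essential cell. For the leading term, peel off the initial $s_{j}$ via the collapsible cell $[s_{j}\mid s_{j-1}\mid s_{j}]$; two of its remaining faces vanish by Lemma~\ref{Lemma: irreducible tuples quotients to 0} (since $s_{j-1}s_{j}$ and $s_{j}s_{j-1}$ are irreducible) and the third, $[s_{j}\mid s_{j-1}s_{j}]=[s_{j}\mid\ramp{j-1}{j}]$, is essential, giving $q([s_{j}s_{j-1}\mid s_{j}])=[s_{j}\mid s_{j-1}s_{j}]$. For $0\le\ell\le j-i-2$, the element $\ramp{i+\ell+1}{j-1}r$ equals $s_{j}\ramp{i+\ell+1}{j-1}=\ramp{j}{j}\ramp{i+\ell+1}{j-1}$ in $\sg{n}$, so Lemma~\ref{lemma: quotient of braid relation} (case $i<k-1$, with the ramp's $k=j$) applies; its leading output $s_{j}\,q([s_{i+\ell}\mid \ramp{i+\ell+1}{j-1}])$ vanishes by Lemma~\ref{Lemma: irreducible tuples quotients to 0}, leaving only $[s_{i+\ell}\mid s_{j}]$, and re-indexing by $k=i+\ell-1$ turns $\sum_{\ell=0}^{j-i-2}\ramp{i}{i+\ell-1}[s_{i+\ell}\mid s_{j}]$ into $\sum_{k=i-1}^{j-3}\ramp{i}{k}[s_{k+1}\mid s_{j}]$. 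The final summand ($\ell=j-i-1$) has empty tail, so it is $\ramp{i}{j-2}\,q([s_{j-1}\mid s_{j-1}s_{j}s_{j-1}])=\ramp{i}{j-2}\,q([s_{j-1}\mid \ramp{j-1}{j}s_{j-1}])$, and Lemma~\ref{lemma: quotient of braid relation} (case $i=k$) gives $[s_{j-1}\mid s_{j-1}]$. Negating and combining with the three easy terms produces exactly the displayed formula.

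The only genuine difficulty I anticipate is bookkeeping: at each application one must first recognise which rewriting rule returns the relevant word to normal form, and then correctly propagate the ramp coefficients $\ramp{\cdot}{\cdot}$ through the nested uses of Lemmas~\ref{lemma: rewriting [w|v] for w not a generator} and~\ref{lemma: quotient of braid relation}. It is also prudent to check the case $j=i+1$ on its own, where the summation is empty and $\ramp{i}{j-2}=\ramp{i}{i-1}$ is the identity; a short direct computation confirms the formula there as well.
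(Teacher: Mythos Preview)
Your proof is correct and follows essentially the same approach as the paper's. Both expand the bar differential, dispose of the three easy faces identically, and split $[\ramp{i}{j}s_{j-1}\mid s_{j}]$ via Lemma~\ref{lemma: rewriting [w|v] for w not a generator} with $w=\ramp{i}{j-1}$ and $u=s_{j}s_{j-1}$; the only cosmetic difference is that for the intermediate and final summands the paper unwinds the collapsible cells $[s_{k+1}\mid s_{j}\mid\ramp{k+2}{j-1}]$ and $[s_{j-1}\mid s_{j-1}\mid s_{j}s_{j-1}]$ by hand, whereas you package the same computations as the $i<k-1$ and $i=k$ cases of Lemma~\ref{lemma: quotient of braid relation}.
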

\begin{proof}
    Since $i<j$, the faces of $[s_{j}\mid \ramp{i}{j}\mid s_{j}]$ have reductions by $\rewriting3$ and $\rewriting1$, and so it is essential with:
    $$\partial_{3}^{F}[s_{j}\mid \ramp{i}{j}\mid s_{j}]=s_{j}[\ramp{i}{j}\mid s_{j}]-[\ramp{i}{j}s_{j-1}\mid s_{j}]+[s_{j}\mid \ramp{i}{j-1}]-[s_{j}\mid \ramp{i}{j}].$$
    Since $s_{j}\ramp{i}{j-1}$ is irreducible, $q([s_{j}\mid\ramp{i}{j-1} ])=0$ by Lemma \ref{Lemma: irreducible tuples quotients to 0}. 
By Lemma~\ref{lemma: rewriting [w|v] for w not a generator}, $q([\ramp{i}{j}\mid s_{j}])=\ramp{i}{j-1}[s_{j}\mid s_{j}]$.
We also apply Lemma~\ref{lemma: rewriting [w|v] for w not a generator} to $[\ramp{i}{j}s_{j-1}\mid s_{j}]$; we get
\[
\begin{split}
q([\ramp{i}{j}s_{j-1}\mid s_{j}])=&\ramp{i}{j-1}q([s_{j}s_{j-1}\mid s_{j}]) \\
&+\sum_{k=i-1}^{j-3}\ramp{i}{k}q([s_{k+1}\mid s_j\ramp{k+2}{j-1}]) \\
&+\ramp{i}{j-2}q([s_{j-1}\mid s_{j-1}s_js_{j-1}]).
\end{split}
\]
First let $\tau=[s_{j}s_{j-1}\mid s_{j}]$; then $c(\tau)=[s_{j}\mid s_{j-1}\mid s_{j}]$ and:
    $$\partial c(\tau)=s_{j}[s_{j-1}\mid s_{j}]-\tau+[s_{j}\mid s_{j-1}s_{j}]-[s_{j}\mid s_{j-1}].$$
    By Lemma \ref{Lemma: irreducible tuples quotients to 0}, $q(\tau)=[s_{j}\mid s_{j-1}s_{j}]$.
Next, fix $k$ with $i-1\leq k\leq j-3$, and set $\tau=[s_{k+1}\mid s_j\ramp{k+2}{j-1}]$.
Then $c(\tau)=[s_{k+1}\mid s_j\mid \ramp{k+2}{j-1}]$, and
\[
\begin{split}
\partial c(\tau) =& s_{k+1}[s_j\mid\ramp{k+2}{j-1}]-[s_js_{k+1}\mid\ramp{k+2}{j-1}]\\
&+[s_{k+1}\mid s_j\ramp{k+2}{j-1}]-[s_{k+1}\mid s_j].
\end{split}
\]
By Lemma~\ref{lemma: rewriting [w|v] for w not a generator}, $q(\tau)=[s_{k+1}\mid s_j]$.
Finally, set $\tau=[s_{j-1}\mid s_{j-1}s_js_{j-1}]$.
Then $c(\tau)=[s_{j-1}\mid s_{j-1}\mid s_js_{j-1}]$ and
\[\partial c(\tau)=s_{j-1}[s_{j-1}\mid s_js_{j-1}]-[1\mid s_js_{j-1}]+[s_{j-1}\mid s_{j-1}s_js_{j-1}]-[s_{j-1}\mid s_{j-1}].\]
Again by Lemma~\ref{lemma: rewriting [w|v] for w not a generator}, we get $q(\tau)=[s_{j-1}\mid s_{j-1}]$.
This finishes the lemma.
\end{proof}
\begin{proposition}\label{prop: boundary of b,c}
    If $k<i<j-1$, then:
    \begin{align*}
    \partial_{3}^{Q}[s_{i}\mid \ramp{k}{i}\mid s_{j}]=&[s_{i}\mid s_{j}]+(s_{j}-1)[s_{i}\mid \ramp{k}{i}]-\ramp{k}{i}[s_{i-1}\mid s_{j}]\\
    &+(s_{i}-1)\sum\limits_{\ell=0}^{i-k}\ramp{k}{k+\ell-1}[s_{k+\ell}\mid s_{j}]
    \end{align*}
\end{proposition}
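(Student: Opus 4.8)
The plan is to follow the template of the previous propositions: evaluate the bar boundary of the essential $3$-cell, apply the quotient map $q$ to each term, and rewrite the redundant cells that appear in terms of essential $2$-cells using Lemmas~\ref{Lemma: irreducible tuples quotients to 0}, \ref{lemma: rewriting [w|v] for w not a generator}, and~\ref{lemma: quotient of braid relation}. Here $[s_i\mid\ramp{k}{i}\mid s_j]$ is the essential cell of type ``$\rewriting3$ then $\rewriting2$'' from Remark~\ref{remark:Q3gens}: the product $s_i\ramp{k}{i}$ is the left side of $\rewriting3$, and $\ramp{k}{i}s_j$ reduces by applying $\rewriting2$ to the terminal $s_is_j$ (using $i<j-1$), while all proper initial subwords stay irreducible. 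The bar boundary is
\[\partial^{F}_{3}[s_i\mid\ramp{k}{i}\mid s_j]=s_i[\ramp{k}{i}\mid s_j]-[s_i\ramp{k}{i}\mid s_j]+[s_i\mid\ramp{k}{i}s_j]-[s_i\mid\ramp{k}{i}],\]
and putting entries into normal form gives $s_i\ramp{k}{i}=\ramp{k}{i}s_{i-1}$ (by $\rewriting3$) and $\ramp{k}{i}s_j=s_j\ramp{k}{i}$ (since each of $s_k,\dots,s_i$ commutes with $s_j$). The cell $[s_i\mid\ramp{k}{i}]$ is already essential. For $[s_i\mid s_j\ramp{k}{i}]$ I would apply Lemma~\ref{lemma: quotient of braid relation} with the length-one ramp $\ramp{j}{j}$: since $i<j-1$ this is the first case, whose correction sum collapses to one term, so $q([s_i\mid s_j\ramp{k}{i}])=s_j[s_i\mid\ramp{k}{i}]+[s_i\mid s_j]$.

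The substance is computing $q([\ramp{k}{i}\mid s_j])$ and $q([\ramp{k}{i}s_{i-1}\mid s_j])$, whose first entries are not single generators. For each I would peel off the leading generators of the ramp one at a time using Lemma~\ref{lemma: rewriting [w|v] for w not a generator}, with the triggering length-two subword $uv$ being $s_is_j$ in the first case and $s_{i-1}s_j$ in the second (each reduced by $\rewriting2$). This reduces everything to cells of the form $[s_m\mid\ramp{m+1}{i-1}s_js_i]$ (with $k\le m\le i-1$), respectively $[s_m\mid\ramp{m+1}{i}s_js_{i-1}]$ (with $k\le m\le i$); commuting $s_j$ to the front makes their second entries $s_j\ramp{m+1}{i}$, respectively $s_j\ramp{m+1}{i}s_{i-1}$, so Lemma~\ref{lemma: quotient of braid relation} applies with the ramp $\ramp{j}{j}$. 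Its main term, $s_j\,q([s_m\mid\ramp{m+1}{i}])$ (resp.\ $s_j\,q([s_m\mid\ramp{m+1}{i}s_{i-1}])$), vanishes by Lemma~\ref{Lemma: irreducible tuples quotients to 0} because $s_m\ramp{m+1}{i}=\ramp{m}{i}$ is irreducible (and $\ramp{m}{i}s_{i-1}$ is still a normal form), so only $[s_m\mid s_j]$ survives from each. After reindexing, the two values should come out as
\[q([\ramp{k}{i}\mid s_j])=\sum_{\ell=0}^{i-k}\ramp{k}{k+\ell-1}[s_{k+\ell}\mid s_j],\qquad q([\ramp{k}{i}s_{i-1}\mid s_j])=\ramp{k}{i}[s_{i-1}\mid s_j]+\sum_{\ell=0}^{i-k}\ramp{k}{k+\ell-1}[s_{k+\ell}\mid s_j].\]

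Substituting these and the formula for $q([s_i\mid s_j\ramp{k}{i}])$ into $q\circ\partial^F_3[s_i\mid\ramp{k}{i}\mid s_j]$, the two copies of $\sum_{\ell}\ramp{k}{k+\ell-1}[s_{k+\ell}\mid s_j]$ (one weighted by $s_i$ from the first boundary term, one by $-1$ from $-[\ramp{k}{i}s_{i-1}\mid s_j]$) combine into $(s_i-1)\sum_{\ell}\ramp{k}{k+\ell-1}[s_{k+\ell}\mid s_j]$, the two $[s_i\mid\ramp{k}{i}]$ terms combine into $(s_j-1)[s_i\mid\ramp{k}{i}]$, and the remaining terms are $[s_i\mid s_j]$ and $-\ramp{k}{i}[s_{i-1}\mid s_j]$, which is exactly the claimed formula. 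The only real obstacle I foresee is bookkeeping: running the iterated Lemma~\ref{lemma: rewriting [w|v] for w not a generator} reductions on the two non-generator cells, keeping track of which residual $2$-cells die by Lemma~\ref{Lemma: irreducible tuples quotients to 0}, and verifying via the ramp identity $\ramp{k}{i-1}=\ramp{k}{k+\ell-1}\ramp{k+\ell}{i-1}$ that the leftover terms reassemble so that both sums carry the full range $0\le\ell\le i-k$.
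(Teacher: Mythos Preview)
Your proposal is correct and follows essentially the same approach as the paper: compute the bar boundary, put the entries in normal form, use Lemma~\ref{lemma: rewriting [w|v] for w not a generator} to peel off the leading generators of the two non-generator first entries, and then kill the residual terms via Lemma~\ref{Lemma: irreducible tuples quotients to 0}. The only cosmetic difference is that where the paper computes $q([s_{k+\ell}\mid s_j\ramp{k+\ell+1}{i}])$ and $q([s_i\mid s_j\ramp{k}{i}])$ by writing down the collapsing $3$-cell $[\,\cdot\mid s_j\mid\cdot\,]$ and taking its boundary directly, you package the same one-step reduction as the $i<k-1$ case of Lemma~\ref{lemma: quotient of braid relation} with the degenerate ramp $\ramp{j}{j}$; the two are literally the same computation.
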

\begin{proof}
    The boundary is:
    $$\partial_{3}^{F}[s_{i}\mid \ramp{k}{i}\mid s_{j}]=s_{i}[\ramp{k}{i}\mid s_{j}]-[\ramp{k}{i}s_{i-1}\mid s_{j}]+[s_{i}\mid s_{j}\ramp{k}{i}]-[s_{i}\mid \ramp{k}{i}].$$
    Applying Lemma \ref{lemma: rewriting [w|v] for w not a generator}, we have
\[q([\ramp{k}{i}\mid s_{j}])=\sum\limits_{\ell=0}^{i-k}\ramp{k}{k+\ell-1}q([s_{k+\ell}\mid s_{j}\ramp{k+\ell+1}{i}]),\quad\text{ and }
\]
\[
\begin{split}
q([\ramp{k}{i}s_{i-1}\mid s_{j}])=&\ramp{k}{i}[s_{i-1}\mid s_{j}] \\
&+\sum\limits_{\ell=0}^{i-k}\ramp{k}{k+\ell-1}q([s_{k+\ell}\mid s_{j}\ramp{k+\ell+1}{i}]).
\end{split}
\]
    Let $\tau=[s_{k+\ell}\mid s_{j}\ramp{k+\ell+1}{i}]$; then $c(\tau)=[s_{k+\ell}\mid s_{j}\mid \ramp{k+\ell+1}{i}]$ and:
    $$\partial c(\tau)=s_{k+\ell}[s_{j}\mid \ramp{k+\ell+1}{i}]-[s_{j}s_{k+\ell}\mid \ramp{k+\ell+1}{i}]+\tau-[s_{k+\ell}\mid s_{j}].$$
    By Lemma \ref{Lemma: irreducible tuples quotients to 0}, $q(\tau)=[s_{k+\ell}\mid s_{j}]$. Now, let $\lambda=[s_{i}\mid s_{j}\ramp{k}{i}]$. Then $c(\lambda)=[s_{i}\mid s_{j}\mid \ramp{k}{i}]$ and:
    $$\partial c(\lambda)=s_{i}[s_{j}\mid \ramp{k}{i}]-[s_{j}s_{i}\mid \ramp{k}{i}]+\lambda-[s_{i}\mid s_{j}].$$
    By Lemma \ref{Lemma: irreducible tuples quotients to 0} and Lemma \ref{lemma: rewriting [w|v] for w not a generator}, $q(\lambda)=[s_{i}\mid s_{j}]+s_{j}[s_{i}\mid \ramp{k}{i}]+q([s_{j}\mid \ramp{k}{i}s_{i-1}])$. Since $s_{j}\ramp{k}{i}s_{i-1}$ is irreducible, by Lemma \ref{Lemma: irreducible tuples quotients to 0} $q([s_{j}\mid \ramp{k}{i}s_{i-1}])=0$. Simplifying $q\circ\partial_{3}^{F}[s_{i}\mid \ramp{k}{i}\mid s_{j}]$ completes the proof.
\end{proof}
We have described the boundaries of eight of the nine classes of essential $3$-cells.
To determine the boundary of the last kind of essential $3$-cell, we must make more careful applications of the rewriting system. 
Consider the reducible word $\ramp{i}{j}\ramp{k}{j}$.
Applying $\rewriting3$, we may reduce: $$\ramp{i}{j}\ramp{k}{j}\to \ramp{i}{j-1}\ramp{k}{j}s_{j-1}.$$
Applying $\rewriting3$ on the subword $s_{j-1}\ramp{k}{j-1}$ and applying $\rewriting2$ to the resulting word, we obtain:
$$\ramp{i}{j-1}\ramp{k}{j}s_{j-1}\to \ramp{i}{j-2}\ramp{k}{j}s_{j-2}s_{j-1}.$$
Continuing this, we may reduce $\ramp{i}{j}\ramp{k}{j}s_{j-1}$ to $\ramp{k}{j}\ramp{i-1}{j-1}$ if $i>k$. 

Now, if $i\leq k$ we may use the previous case to reduce $\ramp{i}{j}\ramp{k}{j}$ to 
\[\ramp{i}{k}\ramp{k}{j}\ramp{k}{j-1}.\]
 Applying $\rewriting1$ followed by $\rewriting2$ repeatedly, we reduce this to $\ramp{k+1}{j}\ramp{i}{j-1}$. 
\begin{lemma}\label{lemma: first face of bb}
    Let $1\leq i,k<j$ and $\tau=[\ramp{i}{j}\mid \ramp{k}{j}]$. 
If $i>k$, then $q(\tau)$ is:
\[
\begin{split}
        &\sum\limits_{\ell=0}^{j-i}\ramp{i}{i+\ell-1}[s_{i+\ell}\mid \ramp{k}{i+\ell}]\\
        &+\sum\limits_{\ell=0}^{j-i-1}\ramp{i}{i+\ell-1}\ramp{k}{i+\ell}\sum\limits_{t=1}^{j-i-\ell}\ramp{i+\ell+1}{i+\ell+t-1}[s_{i+\ell-1}\mid s_{i+\ell+t}].\\
\end{split}
\]
If $i\leq k$, then $q(\tau)$ is:
\[
\begin{split}
&\sum\limits_{\ell=k-i}^{j-i}\ramp{i}{i+\ell-1}[s_{i+\ell}\mid \ramp{k}{i+\ell}]\\&+\sum\limits_{\ell=0}^{k-i-1}\ramp{i}{i+\ell-1}\sum\limits_{t=1}^{j-k}\ramp{k+1}{k+t-1}[s_{i+\ell}\mid s_{k+t}]\\
        &+\sum\limits_{\ell=k-i+1}^{j-i-1}\ramp{i}{i+\ell-1}\ramp{k}{i+\ell}\sum\limits_{t=1}^{j-i-\ell}\ramp{i+\ell+1}{i+\ell+t-1}[s_{i+\ell-1}\mid s_{i+\ell+t}].
\end{split}
\]
\end{lemma}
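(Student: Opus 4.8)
The plan is to carry out the ``two ramps'' computation in the same spirit as Propositions~\ref{prop: boundary of s,b}--\ref{prop: boundary of b,c}, the new feature being that both entries of the $2$-cell are genuine ramps rather than single generators. The tools are the collapsing function, Lemma~\ref{Lemma: irreducible tuples quotients to 0}, Lemma~\ref{lemma: quotient of braid relation}, and the explicit normal forms of a product $\ramp{a}{j}\ramp{b}{j}$ recorded just above the statement (namely $\ramp{b}{j}\ramp{a-1}{j-1}$ when $a>b$, and $\ramp{b+1}{j}\ramp{a}{j-1}$ when $a\leq b$). The idea is to peel the leading ramp $\ramp{i}{j}$ off one generator at a time, reducing $q(\tau)$ to finitely many residual terms of the shape $q([s_p\mid \ramp{p+1}{j}\ramp{k}{j}])$, and then to evaluate each residual with Lemma~\ref{lemma: quotient of braid relation}.

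First I would set up the peeling recursion. Since $i<j$ the word $\ramp{i}{j}$ is not a generator, so $\tau=[\ramp{i}{j}\mid\ramp{k}{j}]$ is redundant with collapsing cell $c(\tau)=[s_i\mid\ramp{i+1}{j}\mid\ramp{k}{j}]$; applying $q$ to $\partial c(\tau)=0$ and using Lemma~\ref{Lemma: irreducible tuples quotients to 0} to kill $q([s_i\mid\ramp{i+1}{j}])$ (its first two entries multiply to the irreducible word $\ramp{i}{j}$) gives
\[q([\ramp{i}{j}\mid\ramp{k}{j}])=s_i\,q([\ramp{i+1}{j}\mid\ramp{k}{j}])+q([s_i\mid\ramp{i+1}{j}\ramp{k}{j}]).\]
Iterating until the leading ramp becomes $\ramp{j}{j}=s_j$, at which stage $[s_j\mid\ramp{k}{j}]$ is essential because $s_j\ramp{k}{j}$ is precisely the left side of the $\rewriting3$ rule indexed by $(k,j)$ (cf.\ Proposition~\ref{prop: essential 1 and 2 cells}), I obtain
\[q([\ramp{i}{j}\mid\ramp{k}{j}])=\ramp{i}{j-1}[s_j\mid\ramp{k}{j}]+\sum_{p=i}^{j-1}\ramp{i}{p-1}\,q([s_p\mid\ramp{p+1}{j}\ramp{k}{j}]);\]
the same reduction can also be organized via Lemma~\ref{lemma: rewriting [w|v] for w not a generator} applied to the factorization $\ramp{i}{j}=\ramp{i}{j-1}\,s_j$.

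Next I would evaluate each residual $q([s_p\mid\ramp{p+1}{j}\ramp{k}{j}])$. Rewriting the second entry in normal form turns this into $[s_p\mid\ramp{k}{j}\ramp{p}{j-1}]$ when $p\geq k$ and into $[s_p\mid\ramp{k+1}{j}\ramp{p+1}{j-1}]$ when $p<k$, and Lemma~\ref{lemma: quotient of braid relation} (applied with ramp $\ramp{k}{j}$, resp.\ $\ramp{k+1}{j}$, and the remaining ramp as the tail $w$) breaks into three subcases. For $p>k$ we are in the case $k<i\leq j-1$ of that lemma and get $[s_p\mid\ramp{k}{p}]+\ramp{k}{p}\,q([s_{p-1}\mid\ramp{p+1}{j}\ramp{p}{j-1}])$; the inner term is already in normal form, so the case $i<k-1$ of the lemma together with Lemma~\ref{Lemma: irreducible tuples quotients to 0} (which kills the $q([s_{p-1}\mid\ramp{p}{j-1}])$ contribution) collapses it to $\sum_{t=1}^{j-p}\ramp{p+1}{p+t-1}[s_{p-1}\mid s_{p+t}]$. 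For $p=k$ we are in the case $i=k$ and get simply $[s_k\mid s_k]$. For $p<k$ we are in the case $i<k-1$ relative to the ramp $\ramp{k+1}{j}$, and again after Lemma~\ref{Lemma: irreducible tuples quotients to 0} we get $\sum_{t=1}^{j-k}\ramp{k+1}{k+t-1}[s_p\mid s_{k+t}]$.

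Finally I would assemble. If $i>k$ then every $p$ in $\{i,\dots,j-1\}$ exceeds $k$, so only the first subcase occurs; absorbing the $\ramp{i}{j-1}[s_j\mid\ramp{k}{j}]$ term as the $p=j$ endpoint of the resulting sum and re-indexing $p=i+\ell$ gives exactly the stated formula for $i>k$. If $i\leq k$ then $p$ runs through values below $k$, through $k$ itself, and through values above $k$, so all three subcases appear; the term $[s_k\mid s_k]=[s_k\mid\ramp{k}{k}]$ becomes the bottom term of the first displayed sum, and re-indexing $p=i+\ell$ gives the stated formula for $i\leq k$. I expect the bookkeeping to be the only real obstacle: one must track the normal forms through each reduction, invoke the correct case of Lemma~\ref{lemma: quotient of braid relation} at each step --- this is delicate near the endpoints $p=j-1$, $p=k$, and $p=k-1$, where the relevant ramp degenerates to a single generator and several cases of the lemma threaten to coincide --- and then carefully re-index the nested sums. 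There is no conceptual difficulty beyond that already present in the earlier boundary propositions, only more of it.
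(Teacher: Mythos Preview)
Your proposal is correct and follows essentially the same approach as the paper. You peel off $\ramp{i}{j}$ via Lemma~\ref{lemma: rewriting [w|v] for w not a generator}, reduce each residual $[s_p\mid\ramp{p+1}{j}\ramp{k}{j}]$ to normal form, and then invoke the appropriate case of Lemma~\ref{lemma: quotient of braid relation}; the paper does exactly this, the only cosmetic difference being that in the $p>k$ subcase the paper re-derives the relevant case of Lemma~\ref{lemma: quotient of braid relation} by hand (computing $c(\tau_\ell)$ and its boundary explicitly) rather than citing it, and then handles the inner term $\tau_\ell'$ via Lemmas~\ref{Lemma: irreducible tuples quotients to 0}, \ref{lemma: rewriting [w|v] for w not a generator}, and~\ref{lemma: quotient of braid relation} together.
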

\begin{proof}
    By Lemma \ref{lemma: rewriting [w|v] for w not a generator}:
    $$q(\tau)=\ramp{i}{j-1}[s_{j}\mid \ramp{k}{j}]+\sum\limits_{\ell=0}^{j-i-1}\ramp{i}{i+\ell-1}q([s_{i+\ell}\mid \ramp{i+\ell+1}{j}\ramp{k}{j}]).$$
    For each $\ell$ with $0\leq \ell \leq j-i-1$, let $\tau_{\ell}=[s_{i+\ell}\mid \ramp{i+\ell+1}{j}\ramp{k}{j}]$. The reduction of $\ramp{i+\ell+1}{j}\ramp{k}{j}$ yields two cases.
    
    Case: $i> k$. Then for each $\ell$, since $i+\ell+1>k$, we have $\tau_{\ell}=[s_{i+\ell}\mid \ramp{k}{j}\ramp{i+\ell}{j-1}]$. 
Therefore $c(\tau_{\ell})=[s_{i+\ell}\mid \ramp{k}{i+\ell}\mid \ramp{i+\ell+1}{j}\ramp{i+\ell}{j-1}]$, and:
    \begin{align*}
        \partial c(\tau_{\ell})=&s_{i+\ell}[\ramp{k}{i+\ell}\mid \ramp{i+\ell+1}{j}\ramp{i+\ell}{j-1}]\\
&-[\ramp{k}{i+\ell}s_{i+\ell-1}\mid \ramp{i+\ell+1}{j}\ramp{i+\ell}{j-1}]\\
        &+\tau_{\ell}-[s_{i+\ell}\mid \ramp{k}{i+\ell}].
    \end{align*}
    Therefore $q(\tau_{\ell})=[s_{i+\ell}\mid \ramp{k}{i+\ell}]+q(\tau_{\ell}')$ where 
\[\tau_{\ell}'=[\ramp{k}{i+\ell}s_{i+\ell-1}\mid \ramp{i+\ell+1}{j}\ramp{i+\ell}{j-1}].\]
 Applying Lemmas \ref{Lemma: irreducible tuples quotients to 0}, \ref{lemma: rewriting [w|v] for w not a generator} and~\ref{lemma: quotient of braid relation}:
    \begin{align*}
    q(\tau_{\ell}')=&\ramp{k}{i+\ell}q([s_{i+\ell-1}\mid \ramp{i+\ell+1}{j}s_{i+\ell}s_{j-1}]\\
    &+\sum\limits_{t=0}^{i+\ell-k}\ramp{k}{k+t-1}[s_{k+t}\mid \ramp{k+t+1}{j}\ramp{i+\ell-1}{j-1}]\\
    =&\ramp{k}{i+\ell}\sum\limits_{t=0}^{j-i-\ell-1}\ramp{i+\ell+1}{i+\ell+t}[s_{i+\ell-1}\mid s_{i+\ell+1+t}]\\
    =&\ramp{k}{i+\ell}\sum\limits_{t=1}^{j-i-\ell}\ramp{i+\ell+1}{i+\ell+t-1}[s_{i+\ell-1}\mid s_{i+\ell+t}].
    \end{align*}
    Therefore:
    \begin{align*}
        q(\tau)=&\ramp{i}{j-1}[s_{j}\mid \ramp{k}{j}]+\sum\limits_{\ell=0}^{j-i-1}\ramp{i}{i+\ell-1}\Bigg([s_{i+\ell}\mid \ramp{k}{i+\ell}]\\
        &+\ramp{k}{i+\ell}\sum\limits_{t=1}^{j-i-\ell}\ramp{i+\ell+1}{i+\ell+t-1}[s_{i+\ell-1}\mid s_{i+\ell+t}]\Bigg),\\
    \end{align*}
and this simplifies to the expression in the statement.

    Case: $i\leq k$. For those $\ell$ with $\ell<k-i$, we have $\tau_{\ell}=[s_{i+\ell}\mid \ramp{k+1}{j}\ramp{i+\ell+1}{j-1}]$. Applying Lemmas \ref{Lemma: irreducible tuples quotients to 0} and~\ref{lemma: quotient of braid relation}, we have:
    \begin{align*}
    q(\tau_{\ell})=&\sum\limits_{t=0}^{j-k-1}\ramp{k+1}{k+t}[s_{i+\ell}\mid s_{k+t+1}]
    =\sum\limits_{t=1}^{j-k}\ramp{k+1}{k+t-1}[s_{i+\ell}\mid s_{k+t}]
    \end{align*}
    For $\ell=k-i$, $\tau_{\ell}=[s_{k}\mid \ramp{k}{j}\ramp{i+\ell}{j-1}$ and $q(\tau_{k-i})=[s_{k}\mid s_{k}]$ by Lemma~\ref{lemma: quotient of braid relation}. 
For those $\ell$ with $\ell>k-i$, the same formula works as in the previous case.
Taking the sum from $\ell=0$ to $j-i-1$ yields the result. 
\end{proof}
\begin{lemma}\label{lemma: second face of bb}
    Suppose $1\leq i,k<j$ and $\tau=[\ramp{i}{j}s_{j-1}\mid \ramp{k}{j}]$. 
If $i> k$, then $q(\tau)$ equals
\[
\begin{split}
&\ramp{i}{j}[s_{j-1}\mid \ramp{k}{j-1}]+\ramp{k}{j-1}[s_{j-2}\mid s_{j}]\\
&+\sum\limits_{\ell=0}^{j-i}\ramp{i}{i+\ell-1}[s_{i+\ell}\mid \ramp{k}{i+\ell}]\\
       &+\sum\limits_{\ell=0}^{j-i}\ramp{i}{i+\ell-1}\ramp{k}{i+\ell}\sum\limits_{t=1}^{j-i-\ell}\ramp{i+\ell+1}{i+\ell+t-1}[s_{i+\ell-1}\mid s_{i+\ell+t}].\\
\end{split}
\]
If $i\leq k$, then $q(\tau)$ equals
\[
\begin{split}
&\ramp{i}{j}[s_{j-1}\mid \ramp{k}{j-1}]+\ramp{i}{j}\ramp{k}{j-1}[s_{j-2}\mid s_{j}]\\
       &+\sum\limits_{\ell=0}^{k-i-1}\ramp{i}{i+\ell-1}\sum\limits_{t=1}^{j-k}\ramp{k+1}{k+t-1}[s_{i+\ell}\mid s_{k+t}]\\
&+\sum\limits_{\ell=k-i}^{j-i}\ramp{i}{i+\ell-1}[s_{i+\ell}\mid \ramp{k}{i+\ell}]\\
       &+\sum\limits_{\ell=k-i+1}^{j-i}\ramp{i}{i+\ell-1}\ramp{k}{i+\ell}\sum\limits_{t=1}^{j-i-\ell}\ramp{i+\ell+1}{i+\ell+t-1}[s_{i+\ell-1}\mid s_{i+\ell+t}].
\end{split}
\]
\end{lemma}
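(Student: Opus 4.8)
\emph{Proof proposal.} The strategy closely parallels the proof of Lemma~\ref{lemma: first face of bb}; the only structural change is the extra generator $s_{j-1}$ tacked onto the first entry of $\tau$. Since $\ramp{i}{j}s_{j-1}\notin S$, the cell $\tau=[\ramp{i}{j}s_{j-1}\mid\ramp{k}{j}]$ is redundant, and the first move is to strip the leading ramp off of the first entry. I would do this by invoking Lemma~\ref{lemma: rewriting [w|v] for w not a generator} with $w=\ramp{i}{j}$, $u=s_{j-1}$, and $v=\ramp{k}{j}$: here $wu=\ramp{i}{j}s_{j-1}$ is irreducible (a ramp followed by one downward step creates no valley, so it is in normal form by Remark~\ref{remark:creepandjump}), and $uv=s_{j-1}\ramp{k}{j}=s_{j-1}\ramp{k}{j-1}s_j$ is reducible — by $\rewriting3$ on the subword $s_{j-1}\ramp{k}{j-1}$ when $k<j-1$, and by $\rewriting1$ on $s_{j-1}^2$ when $k=j-1$ — with some reduction $r$. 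This yields
\[q(\tau)=\ramp{i}{j}\,q([s_{j-1}\mid\ramp{k}{j}])+\sum_{p=0}^{j-i}\ramp{i}{i+p-1}\,q([s_{i+p}\mid\ramp{i+p+1}{j}\,r]),\]
in direct analogy with the opening display of the proof of Lemma~\ref{lemma: first face of bb}.

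Two features distinguish this from the first-face computation. First, $[s_{j-1}\mid\ramp{k}{j}]$ is itself redundant — unlike $[s_j\mid\ramp{k}{j}]$, which is one of the essential $2$-cells of Proposition~\ref{prop: essential 1 and 2 cells} — so $q([s_{j-1}\mid\ramp{k}{j}])$ must be rewritten; I would do so by one pass of the collapsing function followed by Lemma~\ref{lemma: quotient of braid relation}, producing $[s_{j-1}\mid\ramp{k}{j-1}]$ together with a $\ramp{k}{j-1}[s_{j-2}\mid s_j]$ term (and further terms). Second, the residual cells $[s_{i+p}\mid\ramp{i+p+1}{j}\,r]$ now carry the extra $s_{j-1}$ buried inside $r$; substituting $r$ and arguing exactly as in Lemma~\ref{lemma: first face of bb}, I would case-split on $i>k$ versus $i\le k$, the point being that $\ramp{i+p+1}{j}\,r$ reduces through the ramp-product moves recorded just before Lemma~\ref{lemma: first face of bb} ($\ramp{a}{j}\ramp{k}{j}\to^{*}\ramp{k}{j}\ramp{a-1}{j-1}$ when $a>k$, with a preliminary split of $\ramp{i}{k}$ when $a\le k$), after which Lemmas~\ref{Lemma: irreducible tuples quotients to 0}, \ref{lemma: rewriting [w|v] for w not a generator}, and~\ref{lemma: quotient of braid relation} dispatch each term.

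The last step is to collect everything and verify it telescopes into the stated closed form. The nested sums reproduce those of Lemma~\ref{lemma: first face of bb} verbatim (the outer index now runs one step further, but the new term has an empty inner sum and so contributes nothing), and the leftover pieces assemble into the correction terms $\ramp{i}{j}[s_{j-1}\mid\ramp{k}{j-1}]$ and $\ramp{k}{j-1}[s_{j-2}\mid s_j]$, respectively $\ramp{i}{j}\ramp{k}{j-1}[s_{j-2}\mid s_j]$ when $i\le k$. I expect this final accounting to be the main obstacle: one must track the $\Z\sg{n}$-coefficients — increasingly long products of ramps — through all the nested sums, and in particular pin down why the coefficient of $[s_{j-2}\mid s_j]$ sheds its leading $\ramp{i}{j}$ factor precisely when $i>k$, so that a $\ramp{i}{j}\ramp{k}{j-1}[s_{j-2}\mid s_j]$ term coming from the expansion of $q([s_{j-1}\mid\ramp{k}{j}])$ must be cancelled by a matching term emerging from the $p$-sum, whereas for $i\le k$ no such cancellation takes place. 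Setting up the computation so that these cancellations are manifest, rather than appearing by accident at the end, is where the care is needed.
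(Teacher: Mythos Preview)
Your plan matches the paper's proof step for step: split via Lemma~\ref{lemma: rewriting [w|v] for w not a generator} with $w=\ramp{i}{j}$, $u=s_{j-1}$, $v=\ramp{k}{j}$; reduce $q([s_{j-1}\mid\ramp{k}{j}])$ to exactly $[s_{j-1}\mid\ramp{k}{j-1}]+\ramp{k}{j-1}[s_{j-2}\mid s_j]$ (no ``further terms''---Lemma~\ref{lemma: quotient of braid relation} in the case $k<j-1\le j-1$ gives precisely these two, since $[s_{j-2}\mid s_j]$ is already essential); then for each $\tau_\ell=[s_{i+\ell}\mid\ramp{i+\ell+1}{j}s_{j-1}\ramp{k}{j}]$ reduce the second entry to $\ramp{k}{j}\ramp{i+\ell}{j-1}s_{j-2}$ (respectively through the $i\le k$ branch) and apply Lemma~\ref{lemma: quotient of braid relation} twice, exactly as in Lemma~\ref{lemma: first face of bb}.

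Your one substantive worry---the supposed cancellation that sheds the leading $\ramp{i}{j}$ from the $[s_{j-2}\mid s_j]$ coefficient when $i>k$---is chasing a typo in the stated formula, not a real phenomenon. The paper's own proof does not produce any such cancellation: multiplying $q([s_{j-1}\mid\ramp{k}{j}])$ by $\ramp{i}{j}$ gives $\ramp{i}{j}\ramp{k}{j-1}[s_{j-2}\mid s_j]$ regardless of how $i$ and $k$ compare, and the only other $[s_{j-2}\mid s_j]$ contribution (from $\ell=j-i-1$, $t=1$ in the double sum) is already recorded there with its own coefficient $\ramp{i}{j-2}\ramp{k}{j-1}$. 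The downstream use confirms this reading: in Proposition~\ref{pr:boundcellbbcase1} the ``right face'' of the cuboid carries a $\ramp{i}{j}\ramp{k}{j-1}d_{j-2,j}$ term, which under $\psi_2$ requires the coefficient in Proposition~\ref{prop: boundary of bb} to be $\ramp{i}{j}\ramp{k}{j-1}$, not $\ramp{k}{j-1}$. So do not spend effort engineering a cancellation that is not there; carry the $\ramp{i}{j}$ prefix through and the bookkeeping closes.
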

\begin{proof}
    By Lemma \ref{lemma: rewriting [w|v] for w not a generator}:
\[
\begin{split}
q(\tau)=&\ramp{i}{j}q([s_{j-1}\mid \ramp{k}{j}])\\
&+\sum\limits_{\ell=0}^{j-i}\ramp{i}{i+\ell-1}q([s_{i+\ell}\mid \ramp{i+\ell+1}{j}s_{j-1}\ramp{k}{j}]).
\end{split}
\]
    Let $\tau'=[s_{j-1}\mid \ramp{k}{j}]$; then $c(\tau')=[s_{j-1}\mid \ramp{k}{j-1}\mid s_{j}]$ with:
    $$\partial c(\tau')=s_{j-1}[\ramp{k}{j-1}\mid s_{j}]-[\ramp{k}{j-1}s_{j-2}\mid s_{j}]+\tau'-[s_{j-1}\mid \ramp{k}{j-1}].$$
    Applying Lemma \ref{lemma: rewriting [w|v] for w not a generator}, $q(\tau')=[s_{j-1}\mid \ramp{k}{j-1}]+\ramp{k}{j-1}[s_{j-2}\mid s_{j}]$. 
Now, for each $\ell$ with $0\leq \ell\leq j-i$, let $\tau_{\ell}=[s_{i+\ell}\mid \ramp{i+\ell+1}{j}s_{j-1}\ramp{k}{j}]$. The reduction of $\ramp{i+\ell+1}{j}s_{j-1}\ramp{k}{j}$ yields two cases.
    
    Case: $i\geq k.$ For each $\ell>0$, $i+\ell+1>k$ and $\tau_{\ell}=[s_{i+\ell}\mid \ramp{k}{j}\ramp{i+\ell}{j-1}s_{j-2}]$. By Lemma~\ref{lemma: quotient of braid relation}:
\[
q(\tau_{\ell})=[s_{i+\ell}\mid \ramp{k}{i+\ell}]+\ramp{k}{i+\ell}q([s_{i+\ell-1}\mid \ramp{i+\ell+1}{j}\ramp{i+\ell}{j-1}s_{j-2}]).
\]
We apply Lemma~\ref{lemma: quotient of braid relation} again to this second term, apply Lemma~\ref{Lemma: irreducible tuples quotients to 0} and reindex the sum:
\[
\begin{split}
q([s_{i+\ell-1}&\mid \ramp{i+\ell+1}{j}\ramp{i+\ell}{j-1}s_{j-2}])\\
&=\ramp{i+\ell+1}{j}q([s_{i+\ell-1}|\ramp{i+\ell}{j-s}s_{j-2}])\\
&\qquad+\sum\limits_{t=0}^{j-i-\ell-1}\ramp{i+\ell+1}{i+\ell+t}[s_{i+\ell-1}\mid s_{i+\ell+t+1}]\\
&=0+\sum\limits_{t=1}^{j-i-\ell}\ramp{i+\ell+1}{i+\ell+t-1}[s_{i+\ell-1}\mid s_{i+\ell+t}].
\end{split}
\]
This proves the lemma in this case.

    Case: $i\leq k$. For those $\ell$ with $\ell<k-i-1$, we have 
\[\tau_{\ell}=[s_{i+\ell}\mid \ramp{k+1}{j} \ramp{i+\ell+1}{j-1}s_{j-2}].\]
 Note that $(k+1)-(i+\ell)\geq 2$; applying Lemma~\ref{lemma: quotient of braid relation} for $\ell<k-i-1$ yields:
\[
q(\tau_{\ell})=\sum\limits_{t=0}^{j-k-1}\ramp{k+1}{k+t}[s_{i+\ell}\mid s_{k+1+t}]
=\sum\limits_{t=1}^{j-k}\ramp{k+1}{k+t-1}[s_{i+\ell}\mid s_{k+t}]
\]
    Furthermore, applying Lemma \ref{lemma: quotient of braid relation} to the case $\ell=k-i$ yields $q(\tau_{k-i})=[s_{k}\mid s_{k}]$. 
We use the formula from the previous case for those $\ell$ with $\ell\geq k-i+1$, and we take the sums of $\tau_{\ell}$ from $\ell=0$ to $j-i$ to obtain the result.
\end{proof}
\begin{proposition}\label{prop: boundary of bb}
    Suppose $1\leq i,k<j$. 
Then $\partial_{3}^{Q}[s_{j}\mid \ramp{i}{j}\mid \ramp{k}{j}] $ determined by the following cases.
If $i>k$, it is
\[
\begin{split}
 &[s_{j}\mid \ramp{k}{j}]+\ramp{k}{j}[s_{j-1}\mid \ramp{i-1}{j-1}]-\ramp{i}{j}[s_{j-1}\mid \ramp{k}{j-1}]\\
        &-\ramp{k}{j-1}[s_{j-2}\mid s_{j}]-[s_{j}\mid \ramp{i}{j}] \\
&+(s_{j}-1)\Bigg(\sum\limits_{\ell=0}^{j-i}\ramp{i}{i+\ell-1}[s_{i+\ell}\mid \ramp{k}{i+\ell}]\\
        &+\sum\limits _{\ell=0}^{j-i-\ell}\ramp{i}{i+\ell-1}\ramp{k}{i+\ell}\sum\limits_{t=1}^{j-i-\ell}\ramp{i+\ell+1}{i+\ell+t-1}[s_{i+\ell-1}\mid s_{i+\ell+t}]\Bigg).\\
\end{split}
\]
If $i\leq k$, it is
\[
\begin{split}
        & [s_{j}\mid \ramp{k+1}{j}]+\ramp{k+1}{j}[s_{j-1}\mid \ramp{i}{j-1}]-\ramp{i}{j}[s_{j-1}\mid \ramp{k}{j-1}]\\
        &-\ramp{i}{j}\ramp{k}{j-1}[s_{j-2}\mid s_{j}]-[s_{j}\mid \ramp{i}{j}] \\
&+(s_{j}-1)\Bigg(\sum\limits_{\ell=k-i}^{j-i}\ramp{i}{i+\ell}[s_{i+\ell}\mid \ramp{k}{i+\ell}]\\
        &+\sum\limits_{\ell=0}^{k-i-1}\ramp{i}{i+\ell-1}\sum\limits_{t=1}^{j-k}\ramp{k+1}{k+t-1}[s_{i+\ell}\mid s_{k+t}]\\
        &+\sum\limits_{\ell=k-i+1}^{j-i}\ramp{i}{i+\ell-1}\ramp{k}{i+\ell}\sum\limits_{t=1}^{j-i-\ell}\ramp{i+\ell+1}{i+\ell+t-1}[s_{i+\ell-1}\mid s_{i+\ell+t}]\Bigg).
\end{split}
\]
\end{proposition}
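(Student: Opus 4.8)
The plan is to follow the same template as the preceding boundary computations, but here the work splits into three pieces, two of which are the already-established Lemmas~\ref{lemma: first face of bb} and~\ref{lemma: second face of bb}. First I would write out the bar differential:
\[\partial_3^F[s_j\mid\ramp{i}{j}\mid\ramp{k}{j}] = s_j[\ramp{i}{j}\mid\ramp{k}{j}] - [s_j\ramp{i}{j}\mid\ramp{k}{j}] + [s_j\mid\ramp{i}{j}\ramp{k}{j}] - [s_j\mid\ramp{i}{j}].\]
The face $[s_j\mid\ramp{i}{j}]$ is an essential $2$-cell (Proposition~\ref{prop: essential 1 and 2 cells}), so $q$ fixes it. Applying $\rewriting3$ to put $s_j\ramp{i}{j}$ in normal form turns the second face into $[\ramp{i}{j}s_{j-1}\mid\ramp{k}{j}]$. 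So after applying $q$ there are exactly three nontrivial terms to evaluate: $s_j\,q([\ramp{i}{j}\mid\ramp{k}{j}])$, which is $s_j$ times the expression in Lemma~\ref{lemma: first face of bb}; $q([\ramp{i}{j}s_{j-1}\mid\ramp{k}{j}])$, which is the expression in Lemma~\ref{lemma: second face of bb}; and $q([s_j\mid\ramp{i}{j}\ramp{k}{j}])$, which I would compute directly.

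For the third term, the key point is that before invoking the collapsing machinery the entry $\ramp{i}{j}\ramp{k}{j}$ must be rewritten in normal form; by the reduction carried out just before Lemma~\ref{lemma: first face of bb}, this normal form is $\ramp{k}{j}\ramp{i-1}{j-1}$ when $i>k$ and $\ramp{k+1}{j}\ramp{i}{j-1}$ when $i\le k$. In the case $i>k$, $[s_j\mid\ramp{k}{j}\ramp{i-1}{j-1}]$ is then redundant with collapsing cell $[s_j\mid\ramp{k}{j}\mid\ramp{i-1}{j-1}]$; taking the boundary of that cell and simplifying the remaining faces with Lemmas~\ref{Lemma: irreducible tuples quotients to 0} and~\ref{lemma: rewriting [w|v] for w not a generator} (noting in particular that $[s_{j-1}\mid\ramp{i-1}{j-1}]$ is itself essential), I expect the clean answer
\[q([s_j\mid\ramp{i}{j}\ramp{k}{j}]) = [s_j\mid\ramp{k}{j}] + \ramp{k}{j}[s_{j-1}\mid\ramp{i-1}{j-1}],\]
with the evident variant (replace $\ramp{k}{j}$ by $\ramp{k+1}{j}$ and $\ramp{i-1}{j-1}$ by $\ramp{i}{j-1}$) when $i\le k$.

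Finally I would assemble the three pieces. The decisive structural observation is that every nested sum occurring in Lemma~\ref{lemma: first face of bb} occurs verbatim inside Lemma~\ref{lemma: second face of bb}, so in $s_j\,q([\ramp{i}{j}\mid\ramp{k}{j}]) - q([\ramp{i}{j}s_{j-1}\mid\ramp{k}{j}])$ those common sums collect a single coefficient $(s_j-1)$, producing exactly the $(s_j-1)(\cdots)$ block in the statement. What is left over from Lemma~\ref{lemma: second face of bb} are the leading ramp terms $-\ramp{i}{j}[s_{j-1}\mid\ramp{k}{j-1}]$ and $-\ramp{k}{j-1}[s_{j-2}\mid s_j]$ (and, when $i\le k$, the extra $\ramp{i}{j}$ factor together with the separate $\ell<k-i$ sum); combining these with the two terms from the third computation and the face $-[s_j\mid\ramp{i}{j}]$, and reindexing, yields the two displayed formulas, with the dichotomy tracking $i>k$ versus $i\le k$.

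I expect the main obstacle to be the bookkeeping in this last assembly: aligning three families of doubly-nested sums, correctly accounting for the index shifts introduced by $\rewriting3$, and verifying that the leftover boundary ramp terms reorganize into precisely the stated leading terms rather than something off by one. A subtler but smaller point is the normalization step inside the third computation: feeding $\ramp{i}{j}\ramp{k}{j}$ to the collapsing function without first rewriting it in normal form leads to a spurious self-reference to $[s_j\mid\ramp{i}{j}\mid\ramp{k}{j}]$, so the reduction must be done first. No genuinely new technique beyond the lemmas already in hand should be needed.
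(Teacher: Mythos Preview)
Your plan is correct and matches the paper's own proof essentially step for step: write out the four bar faces, reduce the third face by first putting $\ramp{i}{j}\ramp{k}{j}$ into normal form and then collapsing (obtaining exactly the two-term expressions you predict), and invoke Lemmas~\ref{lemma: first face of bb} and~\ref{lemma: second face of bb} for the first two faces. The paper's proof is terser---it simply says ``applying Lemmas~\ref{lemma: first face of bb} and~\ref{lemma: second face of bb} completes the case''---but your structural observation that the nested sums in the first lemma recur verbatim in the second, so that their difference acquires a global $(s_j-1)$ coefficient, is precisely the cancellation the paper is relying on implicitly.
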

\begin{proof}
    We have:
    \begin{align*}
        \partial_{3}^{F}[s_{j}\mid \ramp{i}{j}\mid \ramp{k}{j}]=&s_{j}[\ramp{i}{j}\mid \ramp{k}{j}]-[\ramp{i}{j}s_{j-1}\mid \ramp{k}{j}]\\
        &+[s_{j}\mid \ramp{i}{j}\ramp{k}{j}]-[s_{j}\mid \ramp{i}{j}].
    \end{align*}
    Let $\tau=[s_{j}\mid \ramp{i}{j}\ramp{k}{j}]$. The reduction of $\ramp{i}{j}\ramp{k}{j}$ yields two cases. 
    
    Case: $i>k$. Since $i>k$, $\tau=[s_{j}\mid \ramp{k}{j}\ramp{i-1}{j-1}]$ and $c(\tau)=[s_{j}\mid \ramp{k}{j}\mid \ramp{i-1}{j-1}]$ with:
    $$\partial_{3}^{F} c(\tau)=s_{j}[\ramp{k}{j}\mid \ramp{i-1}{j-1}]-[\ramp{k}{j}s_{j-1}\mid \ramp{i-1}{j-1}]+\tau-[s_{j}\mid \ramp{k}{j}].$$
    Therefore, by Lemmas \ref{Lemma: irreducible tuples quotients to 0} and \ref{lemma: rewriting [w|v] for w not a generator}, $q(\tau)=[s_{j}\mid \ramp{k}{j}]+\ramp{k}{j}[s_{j-1}\mid \ramp{i-1}{j-1}]$. Applying Lemmas \ref{lemma: first face of bb} and \ref{lemma: second face of bb} completes the case.

    Case: $i\leq k$. Since $i\leq k$, $\tau=[s_{j}\mid \ramp{k+1}{j}\ramp{i}{j-1}]$ and $c(\tau)=[s_{j}\mid \ramp{k+1}{j}\mid \ramp{i}{j-1}]$ with:
    $$\partial c(\tau)=s_{j}[\ramp{k+1}{j}\mid \ramp{i}{j-1}]-[\ramp{k+1}{j}s_{j-1}\mid \ramp{i}{j-1}]+\tau-[s_{j}\mid \ramp{k+1}{j}].$$
    Therefore, by Lemmas \ref{Lemma: irreducible tuples quotients to 0} and \ref{lemma: rewriting [w|v] for w not a generator}, $q(\tau)=[s_{j}\mid \ramp{k+1}{j}]+\ramp{k+1}{j}[s_{j-1}\mid \ramp{i}{j-1}]$. Applying Lemmas \ref{lemma: first face of bb} and \ref{lemma: second face of bb} completes the proof.
\end{proof}

\subsection{A chain map and the proof of the main theorem}
To prove Theorem~\ref{th:main}, we build a chain map from $Q_*$ to $P_*$, up to dimension $3$.
We define $\psi\co Q_*\to P_*$ as follows in dimensions $0$, $1$, and $2$.
(It is only necessary to define $\psi$ on essential cells since these freely generate $Q_*$.)
\begin{itemize}
\item We have $\psi_0([\hspace{.5em}])=*$.
\item For each $i$, we have $\psi_1([s_i])=e_i$.
\item For each $i$, we have $\psi_2([s_i|s_i])=c_i$.
\item For all $i$ and $j$ with $i\leq j-2$, we have $\psi_2([s_i|s_j])=-d_{ij}$.
\item For all $i$ and $j$ with $i<j$, we have 
\[\psi_2([s_j|\ramp{i}{j}])=
\sum_{\ell=i}^{j-2}\ramp{i}{\ell-1} d_{\ell j}
+\ramp{i}{j-2}b_{j-1}.\]
\end{itemize}

We verify in Lemma~\ref{le:lowdimchainmap} below that $\psi$ respects boundaries in dimensions $0$, $1$, and $2$.
To show that such a map exists in dimension $3$, it is enough to show that, for each essential cell $\tau\in Q_3$, the image $\psi_2(\partial\tau)$ in $P_2$ is a boundary.
The definition of $\psi\co Q_3\to P_3$ could then be made by sending $\tau$ to any chain that bounds $\psi(\partial\tau)$, for all essential cells $\tau$.
We show that the nine classes of essential cells in $Q_3$ satisfy this in a series of propositions.

\begin{lemma}\label{le:lowdimchainmap}
For each  essential cell $\tau$ in $Q_i$ for $i\in\{1,2\}$, we have 
$\partial^P_i \psi_i(\tau)=\psi_{i-1}(\partial^Q_i\tau)$.
\end{lemma}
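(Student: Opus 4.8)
The plan is to verify the chain-map identity $\partial^P_i\psi_i(\tau)=\psi_{i-1}(\partial^Q_i\tau)$ separately for each class of essential cell in dimensions $1$ and $2$, by direct computation: compute $\partial^Q_i\tau$ using Proposition~\ref{prop: essential 1 and 2 cells}, apply $\psi_{i-1}$ to the result using the definitions of $\psi$ above, and compare with $\partial^P_i$ applied to $\psi_i(\tau)$ using the boundary formulas in Definition~\ref{de:Snres}. In dimension $1$ this is immediate: $\partial^P_1\psi_1([s_i])=\partial^P_1 e_i=(s_i-1)*=\psi_0((s_i-1)[\,])=\psi_0(\partial^Q_1[s_i])$.

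In dimension $2$ there are three cases. For $[s_i\mid s_i]$, we have $\partial^Q_2[s_i\mid s_i]=(s_i+1)[s_i]$, so $\psi_1(\partial^Q_2[s_i\mid s_i])=(s_i+1)e_i=\partial^P_2 c_i=\partial^P_2\psi_2([s_i\mid s_i])$. For $[s_i\mid s_j]$ with $i\le j-2$, we have $\partial^Q_2[s_i\mid s_j]=(s_i-1)[s_j]-(s_j-1)[s_i]$, so $\psi_1$ of this is $(s_i-1)e_j-(s_j-1)e_i=-\partial^P_2 d_{ij}=\partial^P_2\psi_2([s_i\mid s_j])$, matching the boundary formula for $d_{ij}$. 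The real work is the ramp cell $[s_j\mid\ramp{i}{j}]$ with $i<j$. Here $\partial^Q_2[s_j\mid\ramp{i}{j}]=(s_j-1)\sum_{\ell=i}^{j}\ramp{i}{\ell-1}[s_\ell]+[s_j]-\ramp{i}{j}[s_{j-1}]$, and applying $\psi_1$ gives $(s_j-1)\sum_{\ell=i}^{j}\ramp{i}{\ell-1}e_\ell+e_j-\ramp{i}{j}e_{j-1}$. On the other side, using $\psi_2([s_j\mid\ramp{i}{j}])=\sum_{\ell=i}^{j-2}\ramp{i}{\ell-1}d_{\ell j}+\ramp{i}{j-2}b_{j-1}$ and the boundary formulas $\partial d_{\ell j}=(s_j-1)e_\ell-(s_\ell-1)e_j$ and $\partial b_{j-1}=(1-s_{j-1}+s_js_{j-1})e_j-(1-s_j+s_{j-1}s_j)e_{j-1}$, one expands $\partial^P_2\psi_2([s_j\mid\ramp{i}{j}])$. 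The plan is to collect the coefficients of each $e_m$ and check they agree; the coefficient of $e_j$ will require the telescoping identity $\sum_{\ell=i}^{j-2}\ramp{i}{\ell-1}(s_\ell-1)=\ramp{i}{j-2}-1$ together with the $b_{j-1}$ contribution, and one uses the braid/commuting relations in $\sg{n}$ (in the form $s_j\ramp{i}{j}=\ramp{i}{j}s_{j-1}$, cf.\ Definition~\ref{de:symmetricrewriting}) to rewrite $\ramp{i}{j-2}s_js_{j-1}$-type terms.

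The main obstacle is bookkeeping in the ramp case: the group-ring coefficients $\ramp{i}{\ell-1}$ must be manipulated using the commuting and braid relations, and one must be careful that $\ramp{i}{j-2}s_{j-1}$ versus $\ramp{i}{j}$ as elements of $\sg{n}$ are related correctly (they differ, but the relevant products that appear do collapse). I would organize this by writing $\partial^P_2\psi_2([s_j\mid\ramp{i}{j}])$ as a sum of three groups of terms — the $\sum\ramp{i}{\ell-1}(s_j-1)e_\ell$ part from the $d_{\ell j}$ summands, the $-\sum\ramp{i}{\ell-1}(s_\ell-1)e_j$ part from the same summands, and the $\ramp{i}{j-2}\partial b_{j-1}$ part — and matching each against the corresponding piece of $\psi_1(\partial^Q_2[s_j\mid\ramp{i}{j}])$; the first group matches termwise, and the remaining two groups combine via the telescoping sum and a single application of the braid relation. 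Everything else is routine substitution.
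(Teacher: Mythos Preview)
Your proposal is correct and follows essentially the same approach as the paper: verify dimension~$1$ directly, handle $[s_i\mid s_i]$ and $[s_i\mid s_j]$ by immediate substitution, and for $[s_j\mid\ramp{i}{j}]$ expand $\partial^P_2\psi_2$ into the three groups you describe, use the telescoping identity $\sum_{\ell=i}^{j-2}\ramp{i}{\ell-1}(s_\ell-1)=\ramp{i}{j-2}-1$, and simplify the group-ring coefficients. One small correction: no braid relation is actually needed in the ramp case---the simplifications use only the commuting relations (to move $s_j$ past $\ramp{i}{j-2}$) and the definition of ramps (e.g.\ $\ramp{i}{j-2}s_{j-1}s_j=\ramp{i}{j}$), not the identity $s_j\ramp{i}{j}=\ramp{i}{j}s_{j-1}$ you cite.
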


\begin{proof}
First of all, for each $i$ from $1$ to $n-1$, we have 
$\partial^P_1 \psi_1([s_i])=\partial^P_1 e_i=(s_i-1)*,$
and $\psi_0(\partial^Q_1[s_i])=\psi_0((s_i-1)[\hspace{0.5em}])=(s_i-1)*$.

Now we consider the map on $2$-cells.
Applying Proposition \ref{prop: essential 1 and 2 cells} and the definition of $\psi_{1}$, for all $1\leq i\leq n-1$ we have $\psi_{1}\circ\partial_{2}^{Q}[s_{i}\mid s_{i}]=(s_{i}+1)e_{i}=\partial_{2}^{P}c_{i}$. 
Now, for all $1\leq i<j\leq n-1$ with $i\leq j-2$, applying Proposition \ref{prop: essential 1 and 2 cells} and the definition of $\psi_{1}$ we have $\psi_{1}\circ\partial_{2}^{Q}[s_{i}\mid s_{j}]=(s_{i}-1)e_{j}-(s_{j}-1)[s_{i}]=-\partial_{2}^{P}d_{ij}=\partial_{2}^{P}\circ\psi_{2}[s_{i}\mid s_{j}]$. 
It remains to show $\partial_{2}^{P}\circ \psi_{2}[s_{j}\mid \ramp{i}{j}]=\psi_{1}\circ\partial_{2}^{Q}[s_{j}\mid \ramp{i}{j}]$ for all $1\leq i<j\leq n-1$. Applying Proposition \ref{prop: essential 1 and 2 cells} and $\psi_{1}$:
$$\psi_{1}\circ\partial_{2}^{Q}[s_{j}\mid \ramp{i}{j}]=(s_{j}-1)\sum\limits_{\ell=i}^{j}\ramp{i}{\ell-1}e_{\ell}+e_{j}-\ramp{i}{j}e_{j-1}.$$
Note that $\sum\limits_{\ell=i}^{j-2}\ramp{i}{\ell-1}e_{j}-\sum\limits_{\ell=i}^{j-2}\ramp{i}{\ell}e_{j}=(1+s_{i}+\cdots +\ramp{i}{j-3})e_{j}-(s_{i}+s_{i}s_{i+1}+\cdots+\ramp{i}s_{j-2})e_{j}=e_{j}-\ramp{i}{j-2}e_{j}$. Therefore, for all $i<j$:
\begin{align*}
    \partial_{2}^{P}\circ\psi_{2}[s_{j}\mid \ramp{i}{j}]=&\sum\limits_{\ell=i}^{j-2}\ramp{i}{\ell-1}\partial_{2}^{P}d_{\ell j}+\ramp{i}{j-2}\partial_{2}^{P}b_{j-1}\\
    =&\sum\limits_{\ell=i}^{j-2}\ramp{i}{\ell-1}((s_{j}-1)e_{\ell}-(s_{\ell}-1)e_{j})\\
    &+\ramp{i}{j-2}((1-s_{j-1}+s_{j}s_{j-1})e_{j}-(1-s_{j}+s_{j-1}s_{j})e_{j-1})\\
\end{align*}
Grouping terms together, we get
\begin{align*}
    \partial_{2}^{P}\circ\psi_{2}[s_{j}\mid \ramp{i}{j}]=&(s_{j}-1)\sum\limits_{\ell=i}^{j-2}\ramp{i}{\ell-1}e_{\ell}+s_{j}-\sum\limits_{\ell=i}^{j-2}\ramp{i}{\ell}e_{j}+\sum\limits_{\ell=i}^{j-2}\ramp{i}{\ell-1}e_{j}\\
    &+s_{j}(\ramp{i}{j-1}e_{j}+\ramp{i}{j-2}e_{j-1})\\
&-(\ramp{i}{j-1}e_{j}+\ramp{i}{j-2}e_{j-1})\\
    &+\ramp{i}{j-2}e_{j}-\ramp{i}{j-2}e_{j-1}\\
    =&(s_{j}-1)\sum\limits_{\ell=i}^{j}\ramp{i}{\ell-1}e_{\ell}+e_{j}-\ramp{i}{j-2}e_{j-1}\\
    &=\psi_{1}\circ\partial_{2}^{Q}[s_{j}\mid \ramp{i}{j}].
\end{align*}
\end{proof}

\begin{proposition}\label{prop: image of s,s in P}
The chain $\psi_{2}\circ\partial_{3}^{Q}[s_{i}\mid s_{i}\mid s_{i}]$ is a boundary, 
    for each $i$ with $1\leq i\leq n-1$.
\end{proposition}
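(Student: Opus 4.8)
The plan is to chase the definitions: combine the already-computed boundary of the essential cell $[s_i\mid s_i\mid s_i]$ in $Q_*$ with the definition of $\psi_2$ on essential $2$-cells, and then recognize the result as the boundary of the generator $c^{3,1}_i$ of $P_3$.

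Concretely, first I would invoke Proposition~\ref{prop: boundary of s,s}, which gives
\[\partial_3^Q[s_i\mid s_i\mid s_i]=(s_i-1)[s_i\mid s_i].\]
Since $\psi_2$ is $\Z\sg{n}$-linear and $\psi_2([s_i\mid s_i])=c_i$ by definition, applying $\psi_2$ yields $\psi_2(\partial_3^Q[s_i\mid s_i\mid s_i])=(s_i-1)c_i$. Finally, by the definition of $P_*$ (Definition~\ref{de:Snres}), the generator $c^{3,1}_i\in P_3$ satisfies $\partial c^{3,1}_i=(s_i-1)c_i$. Hence $\psi_2(\partial_3^Q[s_i\mid s_i\mid s_i])=\partial c^{3,1}_i$ is a boundary, as claimed; indeed one may simply set $\psi_3([s_i\mid s_i\mid s_i])=c^{3,1}_i$.

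There is no real obstacle here: this is the simplest of the nine classes of essential $3$-cells, and the generator $c^{3,1}_i$ was introduced precisely to fill in this boundary. The only thing to double-check is that the sign and group-element coefficients in Proposition~\ref{prop: boundary of s,s} match those in the boundary formula for $c^{3,1}_i$ verbatim, which they do.
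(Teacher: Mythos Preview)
Your proof is correct and follows exactly the same approach as the paper: apply Proposition~\ref{prop: boundary of s,s}, then use $\psi_2([s_i\mid s_i])=c_i$ to get $(s_i-1)c_i$, and recognize this as $\partial_3^P c^{3,1}_i$. The paper's proof is the one-line version of what you wrote.
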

\begin{proof}
    Applying Proposition \ref{prop: boundary of s,s} $\psi_{2}\circ\partial_{3}^{Q}[s_{i}\mid s_{i}\mid s_{i}]=(s_{i}-1)\psi_{2}[s_{i}\mid s_{i}]=(s_{i}-1)c_{i}=\partial_{3}^{P} c_{i}^{3,1}$.
\end{proof}
\begin{proposition}\label{prop: image of s,c in P}
The chain $\psi_{2}\circ\partial_{3}^{Q}[s_{i}\mid s_{i}\mid s_{j}]$ is a boundary, for all $i$ and $j$ with $1\leq i<j\leq n-1$ and $i\leq j-2$
\end{proposition}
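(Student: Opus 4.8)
The strategy is the same as for all nine essential $3$-cells: compute $\psi_2(\partial_3^Q[s_i\mid s_i\mid s_j])$ explicitly as a chain in $P_2$, and then exhibit a generator of $P_3$ whose $\partial_3^P$-image is that chain. First I would invoke Proposition~\ref{prop: boundary of s,c}, which expresses the boundary of this essential cell in terms of essential $2$-cells:
\[
\partial_3^Q[s_i\mid s_i\mid s_j]=(s_i+1)[s_i\mid s_j]+(s_j-1)[s_i\mid s_i].
\]

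Next I would apply $\psi_2$, using the values on essential $2$-cells from the definition of $\psi$: here $i\leq j-2$, so $\psi_2([s_i\mid s_j])=-d_{ij}$, while $\psi_2([s_i\mid s_i])=c_i$. This gives
\[
\psi_2\bigl(\partial_3^Q[s_i\mid s_i\mid s_j]\bigr)=-(s_i+1)d_{ij}+(s_j-1)c_i.
\]
Then I would compare this with the boundary of the $P_3$-generator $c^{3,2}_{ij}$ from Definition~\ref{de:Snres}: since $i<j$ and $|i-j|\geq 2$ (both guaranteed by the hypotheses $i<j$ and $i\leq j-2$), the generator $c^{3,2}_{ij}$ is defined, and $\partial_3^P c^{3,2}_{ij}=(s_j-1)c_i-(s_i+1)d_{ij}$. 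This is literally the chain displayed above, so $\psi_2(\partial_3^Q[s_i\mid s_i\mid s_j])$ is a boundary; one may (and presumably will, for later bookkeeping) record the choice $\psi_3([s_i\mid s_i\mid s_j])=c^{3,2}_{ij}$.

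There is no real obstacle in this proposition — the only points requiring attention are the index bookkeeping (confirming that $i\leq j-2$ puts us in the case $\psi_2([s_i\mid s_j])=-d_{ij}$ and that $c^{3,2}_{ij}$ rather than $c^{3,2}_{ji}$ is the relevant generator), after which the verification is a one-line algebraic comparison. The genuinely laborious cases in this sequence of propositions will be the ones coming from $\rewriting3$-heavy essential cells, where Propositions~\ref{prop: boundary of b,c} and~\ref{prop: boundary of bb} produce long sums and $\psi_2$ must be expanded term by term; this one is immediate by contrast.
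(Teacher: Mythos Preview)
Your proof is correct and matches the paper's argument essentially line for line: apply Proposition~\ref{prop: boundary of s,c}, push through $\psi_2$ using $\psi_2([s_i\mid s_j])=-d_{ij}$ and $\psi_2([s_i\mid s_i])=c_i$, and recognize the result as $\partial_3^P c^{3,2}_{ij}$.
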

\begin{proof}
    Since $i<j$, applying Proposition \ref{prop: boundary of s,c} we obtain:
    \begin{align*}
        \psi_{2}\circ\partial_{3}^{Q}[s_{i}\mid s_{i}\mid s_{j}]=&(s_{i}+1)\psi_{2}[s_{i}\mid s_{j}]+(s_{j}-1)\psi_{2}[s_{i}\mid s_{i}]\\
        =&-(s_{i}+1)d_{i j}+(s_{j}-1)c_{i}\\
        =&\partial_3^{P}c_{i,j}^{3,2}.
    \end{align*}
\end{proof}

\begin{proposition}\label{prop: image of s,b in P}
    For all $1\leq i<j\leq n-1$, $\psi_{2}\circ\partial_{3}^{Q}[s_{j}\mid s_{j}\mid \ramp{i}{j}]$ is a boundary.
\end{proposition}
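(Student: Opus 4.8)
The plan is to imitate the proof of Propositions~\ref{prop: image of s,s in P}--\ref{prop: image of s,c in P}: apply $\psi_2$ to the formula for $\partial_3^Q[s_j\mid s_j\mid\ramp{i}{j}]$ supplied by Proposition~\ref{prop: boundary of s,b}, and then rewrite the resulting chain in $P_2$ as $\partial_3^P$ of an explicit $\Z\sg{n}$-linear combination of $3$-cells. Using the defining formulas $\psi_2([s_j\mid\ramp{i}{j}])=\sum_{\ell=i}^{j-2}\ramp{i}{\ell-1}d_{\ell j}+\ramp{i}{j-2}b_{j-1}$, $\psi_2([s_{j-1}\mid s_{j-1}])=c_{j-1}$, and $\psi_2([s_j\mid s_j])=c_j$, Proposition~\ref{prop: boundary of s,b} gives
\[\psi_2\circ\partial_3^Q[s_j\mid s_j\mid\ramp{i}{j}]=(s_j+1)\Bigl(\sum_{\ell=i}^{j-2}\ramp{i}{\ell-1}d_{\ell j}+\ramp{i}{j-2}b_{j-1}\Bigr)+\ramp{i}{j}c_{j-1}-c_j.\]

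The first bookkeeping step is that $s_j$ commutes past every ramp appearing here: $\ramp{i}{\ell-1}$ uses only generators $s_i,\dots,s_{\ell-1}$ with $\ell-1\le j-3$, and $\ramp{i}{j-2}$ uses only $s_i,\dots,s_{j-2}$, so $(s_j+1)$ commutes with all of them. Combining this with the factorization $\ramp{i}{j}=\ramp{i}{j-2}s_{j-1}s_j$ and the identity $\partial_3^P c^{3,5}_{j-1}=(s_j+1)b_{j-1}-c_j+s_{j-1}s_jc_{j-1}$ from Definition~\ref{de:Snres}, the two terms $\ramp{i}{j-2}(s_j+1)b_{j-1}$ and $\ramp{i}{j}c_{j-1}$ combine as $\ramp{i}{j-2}\bigl(\partial_3^P c^{3,5}_{j-1}+c_j\bigr)$. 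Hence the expression above equals
\[\sum_{\ell=i}^{j-2}\ramp{i}{\ell-1}(s_j+1)d_{\ell j}+\ramp{i}{j-2}\,\partial_3^P c^{3,5}_{j-1}+(\ramp{i}{j-2}-1)c_j.\]

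The last step is to resolve the leftover $(\ramp{i}{j-2}-1)c_j$ via the telescoping identity $\ramp{i}{j-2}-1=\sum_{\ell=i}^{j-2}\ramp{i}{\ell-1}(s_\ell-1)$ (with the convention $\ramp{i}{i-1}=1$), which recasts the displayed chain as $\sum_{\ell=i}^{j-2}\ramp{i}{\ell-1}\bigl[(s_j+1)d_{\ell j}+(s_\ell-1)c_j\bigr]+\ramp{i}{j-2}\,\partial_3^P c^{3,5}_{j-1}$. The bracketed chain is precisely $\partial_3^P c^{3,2}_{j\ell}$ (the second branch of the formula for $\partial_3^P c^{3,2}$ in Definition~\ref{de:Snres}, applied to the pair $j>\ell$, which is legitimate since $\ell\le j-2$ forces $|j-\ell|\ge2$). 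Therefore
\[\psi_2\circ\partial_3^Q[s_j\mid s_j\mid\ramp{i}{j}]=\partial_3^P\Bigl(\ramp{i}{j-2}c^{3,5}_{j-1}+\sum_{\ell=i}^{j-2}\ramp{i}{\ell-1}c^{3,2}_{j\ell}\Bigr),\]
a boundary, as claimed; in the degenerate case $i=j-1$ the sum is empty and $\ramp{i}{j-2}=1$, so the answer is simply $\partial_3^P c^{3,5}_{j-1}$. The only real difficulty is guessing the correct $3$-cells: once one notices that $\partial_3^P c^{3,2}_{j\ell}=(s_\ell-1)c_j+(s_j+1)d_{\ell j}$ has exactly the shape left over after the commutation-and-telescoping manipulations, the identification is forced and the remainder is routine $\Z\sg{n}$-linearity of $\partial_3^P$.
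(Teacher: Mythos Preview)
Your proof is correct and arrives at exactly the same bounding chain as the paper, namely $\ramp{i}{j-2}c^{3,5}_{j-1}+\sum_{\ell=i}^{j-2}\ramp{i}{\ell-1}c^{3,2}_{j\ell}$. The only cosmetic difference is direction: the paper posits this chain $A$ up front and computes $\partial_3^P A$ to match $\psi_2\circ\partial_3^Q[s_j\mid s_j\mid\ramp{i}{j}]$, whereas you start from the latter and manipulate until the bounding chain reveals itself; the telescoping and commutation steps are the same.
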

\begin{proof}
Let $A\in P_3$ be the chain
\[A=\ramp{i}{j-2}c_{j-1}^{3,5}+\sum\limits_{\ell=i}^{j-2}\ramp{i}{\ell-1}c_{j,\ell}^{3,2}.\]
Then by Proposition \ref{prop: boundary of s,b} and since $i<j$:
\[
\begin{split}
        \partial_{3}^{P}(A)=&\ramp{i}{j-2}((s_{j}+1)b_{j-1}-c_{j}+s_{j-1}s_{j}c_{j-1})\\
        &+\sum\limits_{\ell=i}^{j-2}\ramp{i}{\ell-1}((s_{\ell-1})c_{j}+(s_{j}+1)d_{\ell j})\\
        =&(s_{j}+1)\ramp{i}{j-2}b_{j-1}-\ramp{i}{j-2}c_{j}+\ramp{i}{j}c_{j-1}\\
        &+\sum\limits_{\ell=i}^{j-2}\ramp{i}{\ell}c_{j}-\sum\limits_{\ell=i}^{j-2}\ramp{i}{\ell-1}c_{j}+(s_{j}+1)\sum\limits_{\ell=i}^{j-2}\ramp{i}{\ell-1}d_{\ell j}.\\
\end{split}
\]
Using that $\sum\limits_{\ell=i}^{j-2}\ramp{i}{\ell}c_{j}-\sum\limits_{\ell=i}^{j-2}\ramp{i}{\ell-1}c_{j}=\ramp{i}{j-2}c_{j}-c_{j}$, this becomes:
\[
\begin{split}
        \partial_{3}^{P}(A)=&(s_{j}+1)(\ramp{i}{j-2}b_{j-1}+\sum\limits_{\ell=i}^{j-2}\ramp{i}{\ell-1}d_{\ell j})+\ramp{i}{j}c_{j-1}-c_{j}\\
        =&(s_{j}+1)\psi_{2}[s_{j}\mid \ramp{i}{j}]+\ramp{i}{j}\psi_{2}[s_{j-1}\mid s_{j-1}]-\psi_{2}[s_{j}\mid s_{j}]\\
        =&\psi_{2}\circ\partial_{3}^{Q}[s_{j}\mid s_{j}\mid \ramp{i}{j}].
\end{split}
\]
\end{proof}

\begin{proposition}\label{prop: image of c,s in P}
    Suppose $1\leq i<j\leq n-1$ with $i\leq j-2$. Then $\psi_{2}\circ\partial_{3}^{Q}[s_{i}\mid s_{j}\mid s_{j}]$ is a boundary.
\end{proposition}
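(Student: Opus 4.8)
The plan is to follow exactly the template of the preceding propositions: express $\partial_3^Q[s_i\mid s_j\mid s_j]$ in terms of essential $2$-cells, push it forward through $\psi_2$, and recognize the image as an explicit boundary in $P_*$. First I would invoke Proposition~\ref{prop: boundary of c,s} (applicable since $i\le j-2$, hence $i<j-1$), which gives
\[\partial_3^Q[s_i\mid s_j\mid s_j]=(s_i-1)[s_j\mid s_j]-(s_j+1)[s_i\mid s_j].\]
Then I would apply $\psi_2$, using its definition on essential $2$-cells: $\psi_2([s_j\mid s_j])=c_j$, and, because $i\le j-2$, $\psi_2([s_i\mid s_j])=-d_{ij}$. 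Combining these yields
\[\psi_2\circ\partial_3^Q[s_i\mid s_j\mid s_j]=(s_i-1)c_j+(s_j+1)d_{ij}.\]

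The last step is to match this expression against the boundary formulas in Definition~\ref{de:Snres}. The relevant $3$-cell is $c^{3,2}_{ji}$ (with the \emph{larger} index written first, so that it lands in the ``$j<i$'' branch of the definition of $\partial c^{3,2}$), whose boundary is precisely $(s_i-1)c_j+(s_j+1)d_{ij}$; this is defined since $i\le j-2$ forces $\abs{j-i}\ge 2$. Hence $\psi_2\circ\partial_3^Q[s_i\mid s_j\mid s_j]=\partial_3^P c^{3,2}_{ji}$ is a boundary, as required.

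I expect no genuine analytic obstacle here — the whole content is bookkeeping. The only point needing care is choosing the correct generator $c^{3,2}_{ji}$ rather than $c^{3,2}_{ij}$ and keeping straight the two branches of the definition of $\partial c^{3,2}$, together with the sign convention $\psi_2([s_i\mid s_j])=-d_{ij}$. It is worth remarking that this proposition is the mirror image of Proposition~\ref{prop: image of s,c in P}, which used the cell $c^{3,2}_{ij}$; the present case simply produces its partner $c^{3,2}_{ji}$.
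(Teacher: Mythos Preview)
Your proposal is correct and essentially identical to the paper's own proof: both invoke Proposition~\ref{prop: boundary of c,s}, apply $\psi_2$ to obtain $(s_i-1)c_j+(s_j+1)d_{ij}$, and recognize this as $\partial_3^P c^{3,2}_{ji}$.
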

\begin{proof}
    Since $i<j$ and by Proposition \ref{prop: boundary of c,s}: $\psi_{2}\circ\partial_{3}^{Q}[s_{i}\mid s_{j}\mid s_{j}]=(s_{i}-1)\psi_{2}[s_{j}\mid s_{j}]-(s_{j}+1)\psi_{2}[s_{i}\mid s_{i}]=(s_{i}-1)c_{j}+(s_{j}+1)d_{ij}=\partial_{3}^{P}c_{ji}^{3,2}$
\end{proof}
\begin{proposition}\label{prop: image of c,c in P}
    Suppose $1\leq i<j<k\leq n-1$ with $i\leq j-2$ and $j\leq k-2$. Then $\psi_{2}\circ\partial_{3}^{Q}[s_{i}\mid s_{j}\mid s_{k}]$ is a boundary.
\end{proposition}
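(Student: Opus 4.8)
The plan is to expand the boundary using Proposition~\ref{prop: boundary of c,c}, push it through $\psi_2$, and recognize the result as $\pm\partial_3^P c^{3,3}_{ijk}$.

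First, by Proposition~\ref{prop: boundary of c,c},
\[\partial_3^Q[s_i\mid s_j\mid s_k]=(s_i-1)[s_j\mid s_k]-(s_j-1)[s_i\mid s_k]+(s_k-1)[s_i\mid s_j].\]
The hypotheses $i\leq j-2$ and $j\leq k-2$ force $i\leq k-2$ as well, so each of the three index pairs $(i,j)$, $(i,k)$, $(j,k)$ has difference at least $2$ and is listed in increasing order; hence $\psi_2$ applied to each of these essential $2$-cells is given by the rule $\psi_2([s_a\mid s_b])=-d_{ab}$. Substituting,
\[\psi_2\circ\partial_3^Q[s_i\mid s_j\mid s_k]=-(s_i-1)d_{jk}+(s_j-1)d_{ik}-(s_k-1)d_{ij}.\]

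Finally, compare with the boundary of the generator $c^{3,3}_{ijk}$ of $P_3$, which (using $i<j<k$ with $j>i+1$ and $k>j+1$, exactly our hypotheses) is
\[\partial_3^P c^{3,3}_{ijk}=(s_i-1)d_{jk}-(s_j-1)d_{ik}+(s_k-1)d_{ij}.\]
Thus $\psi_2\circ\partial_3^Q[s_i\mid s_j\mid s_k]=-\partial_3^P c^{3,3}_{ijk}=\partial_3^P(-c^{3,3}_{ijk})$, which is a boundary.

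This argument is entirely formal once Proposition~\ref{prop: boundary of c,c} is in hand; the only point requiring a moment's care is the bookkeeping that the three $2$-cells appearing in $\partial_3^Q[s_i\mid s_j\mid s_k]$ all fall under the ``$\abs{a-b}\geq 2$'' clause in the definition of $\psi_2$ and are indexed in increasing order, so no sign or orientation correction is needed beyond the overall $-1$. There is no real obstacle here.
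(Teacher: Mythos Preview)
Your proof is correct and follows essentially the same approach as the paper: apply Proposition~\ref{prop: boundary of c,c}, push through $\psi_2$ using $\psi_2([s_a\mid s_b])=-d_{ab}$, and recognize the result as $-\partial_3^P c^{3,3}_{ijk}$. Your explicit check that all three index pairs satisfy the $\abs{a-b}\geq 2$ condition is a nice bit of bookkeeping that the paper leaves implicit.
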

\begin{proof}
    By Proposition \ref{prop: boundary of c,c}:
    \begin{align*}
        \psi_{2}\circ\partial_{3}^{Q}[s_{i}\mid s_{j}\mid s_{k}]=&(s_{i}-1)\psi_{2}[s_{j}\mid s_{k}]-(s_{j}-1)\psi_{2}[s_{i}\mid s_{k}]+(s_{k}-1)\psi_{2}[s_{i}\mid s_{j}]\\
        =&-(s_{i}-1)d_{jk}+(s_{j}-1)d_{ik}-(s_{k}-1)d_{ij}\\
        =&-\partial_{3}^{P}c_{ijk}^{3,3}.
    \end{align*}
\end{proof}

\begin{proposition}\label{pr:boundcellcbcase1}
Let $i$, $j$, and $k$ be integers with $1\leq i<k-1$ and $k<j<n$.
Let $\sigma=[s_i|s_j|\ramp{k}{j}]$.
Then the chain $\psi^P_3(\partial \sigma)$ is a boundary in $P_2$.
\end{proposition}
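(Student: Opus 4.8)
The plan is to make this completely explicit, in the style of Propositions~\ref{prop: image of s,b in P}--\ref{prop: image of c,c in P}: first expand $\psi_2\circ\partial^Q_3(\sigma)$ as a concrete element of $P_2$, then exhibit a chain $B\in P_3$ and verify $\partial^P_3 B=\psi_2\circ\partial^Q_3(\sigma)$.

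First I would apply Proposition~\ref{prop: boundary of c,b}, case $i<k-1$, to write $\partial^Q_3\sigma$ as a $\Z\sg n$-combination of essential $2$-cells, namely $(s_i-1)[s_j\mid\ramp kj]-[s_i\mid s_j]+\ramp kj[s_i\mid s_{j-1}]+(1-s_j)\sum_{\ell=0}^{j-k}\ramp k{k+\ell-1}[s_i\mid s_{k+\ell}]$. Since $i<k-1\le j-2$, every $2$-cell $[s_i\mid s_m]$ occurring here has $m\geq i+2$, so $\psi_2([s_i\mid s_m])=-d_{im}$; and $\psi_2([s_j\mid\ramp kj])=\sum_{\ell=k}^{j-2}\ramp k{\ell-1}d_{\ell j}+\ramp k{j-2}b_{j-1}$ by definition. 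Substituting and reindexing the last sum by $m=k+\ell$ yields an explicit formula for $\psi_2\circ\partial^Q_3(\sigma)$ involving $b_{j-1}$, the terms $d_{\ell j}$ for $k\le\ell\le j-2$, and the terms $d_{im}$ for $k\le m\le j$.

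The candidate bounding chain is
\[
B=\ramp k{j-2}\,c^{3,4}_{j-1,i}+\sum_{\ell=k}^{j-2}\ramp k{\ell-1}\,c^{3,3}_{i\ell j}.
\]
All indices are legal: $i<k-1$ with $k<j$ forces $i\le j-3$, so $c^{3,4}_{j-1,i}$ is defined and lies in the ``$j<i$'' branch of its boundary formula; and for $k\le\ell\le j-2$ we have $i+1<k\le\ell$ and $\ell+1\le j-1<j$, so $c^{3,3}_{i\ell j}$ is defined. Computing $\partial^P_3 B$ from the boundary formulas for $c^{3,4}$ and $c^{3,3}$, I would then simplify using two commutation facts that hold because $i<k-1$ and $k<j$: (i) $s_i$ commutes with every $s_m$, $m\ge k$, so it slides to the front of each ramp $\ramp k\bullet$; and (ii) $s_j$ commutes with every $s_m$, $m\le j-2$, which gives the ramp identities $\ramp k{j-2}s_{j-1}=\ramp k{j-1}$, $\ramp k{j-2}s_{j-1}s_j=\ramp k j$, and $\ramp k{j-2}s_js_{j-1}=s_j\ramp k{j-1}$. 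The $c^{3,3}$-sum also telescopes: $\sum_{\ell=k}^{j-2}\ramp k{\ell-1}(s_\ell-1)=\ramp k{j-2}-1$. After these simplifications the $b_{j-1}$-term, the $d_{\ell j}$-terms, and the $d_{i\ell}$-terms for $k\le\ell\le j-2$ match $\psi_2\circ\partial^Q_3(\sigma)$ directly; the $d_{i,j-1}$-terms cancel against the tail of the last sum, and the residual $d_{ij}$-terms cancel between the two pieces of $B$.

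I expect the only real obstacle to be the bookkeeping of several families of $d$-terms while respecting noncommutativity --- in particular tracking which generators may be slid past which ramps. The telescoping identity and the two commutation facts above are exactly what force all the spurious terms to cancel, so once $B$ is written down the proof is a routine, if somewhat lengthy, verification.
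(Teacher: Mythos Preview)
Your proposal is correct and takes essentially the same approach as the paper: the bounding chain $B=\ramp{k}{j-2}c^{3,4}_{j-1,i}+\sum_{\ell=k}^{j-2}\ramp{k}{\ell-1}c^{3,3}_{i\ell j}$ is exactly the one the paper writes down (after reindexing their sum by $\ell\mapsto k+\ell$), and your outlined verification via the commutation facts and the telescoping identity is precisely what makes the comparison go through.
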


\begin{proof}
By Proposition~\ref{prop: boundary of c,b},
\[
\begin{split}
\psi_2(\partial^Q_3 \sigma) &= (s_i-1)\bigg(\sum_{\ell=k}^{j-2}\ramp{k}{\ell-1}d_{\ell j}+\ramp{k}{j-2}b_{j-1})\bigg)+d_{ij}\\
&\quad-\ramp{k}{j}d_{i,j-1}-(1-s_j)\sum_{\ell=0}^{j-k}\ramp{k}{k+\ell-1}d_{i,k+\ell}.
\end{split}
\]
We recognize this as the boundary of 
\[\sum_{\ell=0}^{j-k-2}\ramp{k}{k+\ell-1}c^{33}_{i,k+\ell,j}+\ramp{k}{j-2}c^{34}_{j-1,i}.\]
\end{proof}

\begin{proposition}\label{pr:boundcellcbcase23}
Let $i$, $j$, and $k$ be integers with $1\leq i<j-1$ and $1\leq k<j<n$.
Suppose that $i=k$ or $i=k-1$.
Let $\sigma=[s_i|s_j|\ramp{k}{j}]$.
Then the chain $\psi_2(\partial^Q_3 \sigma)$ is a boundary in $P_2$.
\end{proposition}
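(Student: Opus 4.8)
The plan is to treat the two subcases $i=k-1$ and $i=k$ separately; in each one I start from the corresponding line of Proposition~\ref{prop: boundary of c,b} for $\partial_3^Q\sigma$, apply $\psi_2$ termwise using its definition on the essential $2$-cells, collapse the resulting telescoping sum, and then exhibit the answer as $\partial_3^P$ of a $3$-chain in $P_*$.

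In the case $i=k-1$, Proposition~\ref{prop: boundary of c,b} gives $\partial_3^Q\sigma = s_{k-1}[s_j\mid\ramp{k}{j}]-[s_j\mid\ramp{k-1}{j}]-[s_{k-1}\mid s_j]$. Applying $\psi_2$ with $\psi_2([s_{k-1}\mid s_j])=-d_{k-1,j}$ (legitimate since $k-1\leq j-2$), the point is the identity $s_{k-1}\ramp{k}{\ell-1}=\ramp{k-1}{\ell-1}$ for all $\ell\geq k$, and similarly $s_{k-1}\ramp{k}{j-2}=\ramp{k-1}{j-2}$. These show that $s_{k-1}\psi_2([s_j\mid\ramp{k}{j}])$ equals $\psi_2([s_j\mid\ramp{k-1}{j}])$ with its $\ell=k-1$ summand removed, and that summand is $\ramp{k-1}{k-2}\,d_{k-1,j}=d_{k-1,j}$ because $\ramp{k-1}{k-2}$ is the empty word. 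Hence the first two terms of $\psi_2(\partial_3^Q\sigma)$ combine to $-d_{k-1,j}$, which cancels the last term, giving $\psi_2(\partial_3^Q\sigma)=0$. This is visibly a boundary.

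In the case $i=k$, Proposition~\ref{prop: boundary of c,b} gives $\partial_3^Q\sigma = s_k[s_j\mid\ramp{k}{j}]+(1-s_j)[s_k\mid s_k]-[s_j\mid\ramp{k+1}{j}]-[s_k\mid s_j]$. I apply $\psi_2$ using $\psi_2([s_k\mid s_k])=c_k$ and $\psi_2([s_k\mid s_j])=-d_{kj}$. Now the relevant identity is $s_k\ramp{k}{\ell-1}=\ramp{k+1}{\ell-1}$ for $\ell>k$, whereas the $\ell=k$ summand of $\psi_2([s_j\mid\ramp{k}{j}])$ contributes $s_k\ramp{k}{k-1}d_{kj}=s_k d_{kj}$, and also $s_k\ramp{k}{j-2}b_{j-1}=\ramp{k+1}{j-2}b_{j-1}$. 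Thus $s_k\psi_2([s_j\mid\ramp{k}{j}])$ and $-\psi_2([s_j\mid\ramp{k+1}{j}])$ cancel except for the leftover $s_k d_{kj}$, and the whole expression collapses to $\psi_2(\partial_3^Q\sigma)=(s_k+1)d_{kj}+(1-s_j)c_k$. Finally, since $k\leq j-2$ the cell $c^{3,2}_{kj}$ is defined with $\partial_3^P c^{3,2}_{kj}=(s_j-1)c_k-(s_k+1)d_{kj}$, so $\psi_2(\partial_3^Q\sigma)=\partial_3^P(-c^{3,2}_{kj})$, completing the case.

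I expect no real obstacle here; the whole argument is a routine telescoping computation, slightly easier than the $i<k-1$ case since one of the two subcases gives $0$ and the other lands on a single $c^{3,2}$ cell. The only things to watch are the degenerate (empty) ramps sitting at the bottom of the telescoping sums, namely $\ramp{k-1}{k-2}$ in the first case and $\ramp{k}{k-1}$ in the second, and a quick check that every symbol appearing — $d_{\ell j}$ for $\ell$ in the relevant range, $b_{j-1}$, $d_{k-1,j}$ or $d_{kj}$, and the cell $c^{3,2}_{kj}$ — is actually defined under the standing hypotheses $1\leq i<j-1$ and $1\leq k<j<n$.
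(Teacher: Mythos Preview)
Your proof is correct and follows essentially the same approach as the paper: both treat the two subcases separately, apply $\psi_2$ termwise, telescope the ramp sums, and arrive at $0$ in the $i=k-1$ case and $-\partial c^{3,2}_{kj}$ in the $i=k$ case. Your write-up is in fact slightly more careful about naming the telescoping identities $s_{k-1}\ramp{k}{\ell-1}=\ramp{k-1}{\ell-1}$ and $s_k\ramp{k}{\ell-1}=\ramp{k+1}{\ell-1}$ explicitly.
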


\begin{proof}
First assume that $i=k$.
Then by Proposition~\ref{prop: boundary of c,b},
\[
\begin{split}
\psi_2(\partial^Q_3 \sigma) &= s_k\Bigg(\sum_{\ell=k}^{j-2}\ramp{k}{\ell-1}d_{\ell j}+\ramp{k}{j-2}b_{j-1}\Bigg)+(1-s_j)c_k\\
&\quad-\sum_{\ell=k+1}^{j-2}\ramp{k+2}{\ell-1}d_{\ell j}-\ramp{k+1}{j-2}b_{j-1}.
\end{split}
\]
This simplifies to
\[\psi_2(\partial^Q_3 \sigma) = (s_k+1)d_{kj}+(1-s_j)c_k=-\partial c^{32}_{kj}.\]
Next assume that $i=k-1$.
Again by Proposition~\ref{prop: boundary of c,b},
\[
\begin{split}
\psi_2(\partial^Q_3 \sigma) &= s_{k-1}\Bigg(\sum_{\ell=k}^{j-2}\ramp{k}{\ell-1}d_{\ell j}+\ramp{k}{j-2}b_{j-1}\Bigg)\\
&\quad-\sum_{\ell=k-1}^{j-2}\ramp{k-1}{\ell-1}d_{\ell j}-\ramp{k-1}{j-2}b_{j-1}+d_{k-1,j}.
\end{split}
\]
This simplifies to $\psi_2(\partial^Q_3 \sigma)=0$, so it is a boundary.
\end{proof}

\begin{proposition}\label{pr:boundcellcbcase4}
Let $i$, $j$, and $k$ be integers with $1\leq i< j-1$, $1\leq k<j<n$, and $k <i$.
Let $\sigma=[s_i|s_j|\ramp{k}{j}]$.
Then the chain $\psi_2(\partial^Q_3 \sigma)$ is a boundary in $P_2$.
\end{proposition}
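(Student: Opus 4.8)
The plan is to follow the same strategy used in Propositions~\ref{pr:boundcellcbcase1} and~\ref{pr:boundcellcbcase23}: start from the explicit formula for $\partial^Q_3[s_i\mid s_j\mid\ramp{k}{j}]$ given by the fourth case ($i>k$) of Proposition~\ref{prop: boundary of c,b}, apply $\psi_2$ term by term using the definition of $\psi_2$ on essential $2$-cells, and then recognize the resulting chain in $P_2$ as an explicit $\partial^P_3$-image of a $\Z\sg{n}$-linear combination of the generators $c^{3,j}_*$. Concretely, the fourth case of Proposition~\ref{prop: boundary of c,b} gives
\[
\partial^Q_3\sigma=(s_i-1)[s_j\mid\ramp{k}{j}]+(1-s_j)[s_i\mid\ramp{k}{i}]+\ramp{k}{i}[s_{i-1}\mid s_j]+(1-s_j)\ramp{k}{i}\sum_{\ell=0}^{j-i-1}\ramp{i+1}{i+\ell}[s_{i-1}\mid s_{i+1+\ell}],
\]
and applying $\psi_2$ turns $[s_j\mid\ramp{k}{j}]$ and $[s_i\mid\ramp{k}{i}]$ into the ramp-sums-of-$d$'s-plus-$b$ expressions, turns $[s_{i-1}\mid s_j]$ into $-d_{i-1,j}$, and turns each $[s_{i-1}\mid s_{i+1+\ell}]$ into $-d_{i-1,i+1+\ell}$ (here we use $k<i$, so that $s_{i-1}$ is a genuine ramp top and all these $2$-cells are the essential ones indexed with increasing indices).

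The key step is then to produce the bounding $3$-chain. By analogy with Proposition~\ref{pr:boundcellcbcase1}, I expect it to be a $\Z\sg{n}$-linear combination involving $c^{3,3}$-generators (cube cells, whose boundary is the alternating sum $(s_a-1)d_{bc}-(s_b-1)d_{ac}+(s_c-1)d_{ab}$), $c^{3,4}$-generators (hexagonal-prism cells, whose boundary mixes $b$'s and $d$'s), and possibly $c^{3,6}$-generators (whose boundary $(s_is_{i+1}-1)b_i+(s_{i+1}-1)(s_ic_{i+1}+c_i)$ is the natural source of the $(1-s_j)\ramp{k}{i}(\dots)$ tail, since that tail involves products like $\ramp{k}{i}$ acting on $b$- and $d$-cells). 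I would write the candidate as
\[
\sum_{\ell=0}^{?}\ramp{k}{k+\ell-1}c^{3,3}_{i-1,?,j}+\ramp{k}{i-1}c^{3,4}_{?,?}+(\text{correction terms from }c^{3,6}_*\text{ or }c^{3,4}_*),
\]
compute its $\partial^P_3$, and match coefficients. Matching the $b_{j-1}$-coefficient and the $b$-terms coming from $\ramp{k}{i}\psi_2[s_{i-1}\mid\ramp{?}{?}]$ will pin down which $c^{3,4}$ and $c^{3,6}$ generators appear, and the $d$-terms (organized by which pair of indices they carry) will then fall into place via the telescoping identity $\sum\ramp{k}{k+\ell}-\sum\ramp{k}{k+\ell-1}=\ramp{k}{k+m}-1$ that was already used repeatedly in Lemma~\ref{le:lowdimchainmap} and Proposition~\ref{prop: image of s,b in P}.

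The main obstacle I anticipate is purely bookkeeping: the fourth case of Proposition~\ref{prop: boundary of c,b} is the most complicated of the four, carrying a double sum with a $\ramp{k}{i}$ prefactor, and $\psi_2$ further expands two of its terms into ramp-sums, so the expression that must be matched is a triple-indexed sum of $d$'s together with several $b$-terms. Keeping track of the index ranges — in particular the boundary cases $\ell=0$ and $\ell=j-i-1$, and the distinction between $d_{i-1,\ast}$ terms and $d_{\ast,j}$ terms — and verifying that the $c^{3,3}$-type contributions telescope correctly against the $(s_i-1)$ and $(1-s_j)$ prefactors will be where all the work lies; once the right combination of generators is guessed, the verification is a routine (if lengthy) expansion, exactly parallel to the proofs of the preceding three cases. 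I would organize the computation by grouping the bounding chain as $c^{3,3}$-block $+$ $c^{3,4}$-block $+$ $c^{3,6}$-block, check that each block's $\partial^P_3$ reproduces the correspondingly-grouped part of $\psi_2(\partial^Q_3\sigma)$, and conclude.
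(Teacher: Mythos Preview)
Your plan is correct and is essentially the paper's approach: apply the fourth case of Proposition~\ref{prop: boundary of c,b}, expand under $\psi_2$, and exhibit an explicit bounding chain. One correction: your speculation about $c^{3,6}$-cells is off the mark. The boundary $\partial c^{3,6}_i$ contains $c_i$- and $c_{i+1}$-terms, but $\psi_2(\partial^Q_3\sigma)$ has no $c$-cells whatsoever (none of the essential $2$-cells in $\partial^Q_3\sigma$ is of the form $[s_m\mid s_m]$), so $c^{3,6}$-cells cannot appear in the bounding chain. The paper's bounding chain consists only of $c^{3,3}$- and $c^{3,4}$-cells: two sums of $c^{3,3}$'s (one indexed by $\ell$ running from $k$ to $i-2$, the other by $\ell$ from $i+1$ to $j-2$) together with two single $c^{3,4}$-terms, one $c^{3,4}_{i-1,j}$ and one $c^{3,4}_{j-1,i-1}$. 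The ``tail'' you were worried about is handled entirely by the second $c^{3,3}$-sum and the second $c^{3,4}$-term, via exactly the telescoping you anticipated.
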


\begin{proof}
By Proposition~\ref{prop: boundary of c,b},
\[
\begin{split}
\psi_2(\partial^Q_3 \sigma) =& (s_i-1)\Bigg(\sum_{\ell=k}^{j-2}\ramp{k}{\ell-1}d_{\ell j}+\ramp{k}{j-2}b_{j-1}\Bigg)\\
&+(1-s_j)\Bigg(\sum_{\ell=k}^{i-2}\ramp{k}{\ell-1}d_{\ell i}\\
&+\ramp{k}{i-2}b_{i-1}
-\sum_{\ell=i+1}^j\ramp{k}{\ell-1} d_{i-1,\ell}\Bigg)+d_{i j}-\ramp{k}{j}d_{i-1,j-1}.
\end{split}
\]
We can recognize this as the boundary of
\[
\begin{split}
&\sum_{\ell=k}^{i-2}\ramp{k}{\ell-1}c^{33}_{\ell,i,j}
-\ramp{k}{i-2}c^{34}_{i-1,j} \\
&+ \sum_{\ell=i+1}^{j-2} \ramp{k}{\ell-1}c^{33}_{i-1,\ell,j}
+\ramp{k}{j-2}c^{34}_{j-1,i-1}.
\end{split}
\]
\end{proof}

\begin{proposition}\label{pr:boundcellbbcase1}
Let $i,j,k$ be integers with $1\leq k<i<j<n$.
Let $\sigma=[s_j|\ramp{i}{j}|\ramp{k}{j}]\in Q_3$.
Then the chain $\psi_2(\partial^Q_3 \sigma)$ is a boundary in $P_2$.
\end{proposition}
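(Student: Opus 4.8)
The plan is to follow the same template used for the previous eight classes of essential $3$-cells: first expand $\partial^Q_3\sigma$ explicitly using Proposition~\ref{prop: boundary of bb}, then push the result forward under $\psi_2$ using the formulas defining $\psi$ together with Lemma~\ref{le:lowdimchainmap}, and finally recognize the resulting $2$-chain in $P_2$ as $\partial^P_3$ of an explicit element of $P_3$.

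Since $k<i$, the applicable formula in Proposition~\ref{prop: boundary of bb} is the first one, which writes $\partial^Q_3\sigma$ as a $\Z\sg{n}$-linear combination of the essential $2$-cells $[s_j\mid\ramp{k}{j}]$, $[s_j\mid\ramp{i}{j}]$, $[s_{j-1}\mid\ramp{i-1}{j-1}]$, $[s_{j-1}\mid\ramp{k}{j-1}]$, $[s_{j-2}\mid s_j]$, the cells $[s_{i+\ell}\mid\ramp{k}{i+\ell}]$ for $0\le\ell\le j-i$, and the commuting cells $[s_{i+\ell-1}\mid s_{i+\ell+t}]$ appearing in the double sum. Applying $\psi_2$ replaces each $[s_a\mid s_b]$ by $-d_{ab}$ and each $[s_b\mid\ramp{a}{b}]$ by $\sum_{\ell=a}^{b-2}\ramp{a}{\ell-1}d_{\ell b}+\ramp{a}{b-2}b_{b-1}$; note that no cell of the form $[s_a\mid s_a]$ occurs, so $\psi_2(\partial^Q_3\sigma)$ involves only the generators $b_a$ (for $i-1\le a\le j-1$) and $d_{ab}$ of $P_2$, with coefficients that are alternating sums of ramps obtained after reindexing the nested sums.

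The remaining task is to write down the bounding $3$-chain. Since the image involves only $b$- and $d$-generators, it should be a $\Z\sg{n}$-linear combination of $c^{3,3}$- and $c^{3,4}$-cells, possibly together with $c^{3,7}$-cells, exactly in the spirit of the $cb$-cases (Propositions~\ref{pr:boundcellcbcase1} and~\ref{pr:boundcellcbcase4}): nested sums of $c^{3,3}$-cells absorb the double sum of $d$'s, while $c^{3,4}$-cells (and a $c^{3,7}$-cell coupling $b_{j-2}$, $b_{j-1}$, and $d_{j-2,j}$) supply the $b$-terms. Expanding $\partial^P_3$ of such a chain with the boundary formulas of Definition~\ref{de:Snres}, matching coefficients of each generator of $P_2$, and using the telescoping identity $\sum_{\ell=i}^{j-2}\bigl(\ramp{i}{\ell-1}-\ramp{i}{\ell}\bigr)=1-\ramp{i}{j-2}$ that already appeared in the proof of Lemma~\ref{le:lowdimchainmap}, should complete the verification.

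The main obstacle is organizational rather than conceptual: this is the most intricate of the nine essential-cell classes, because Proposition~\ref{prop: boundary of bb} already produces a double sum of ramps, so the bounding chain carries two nested sums whose coefficients interact through the telescoping identities. I would keep the $b$-terms and the $d$-terms separate throughout and verify them one at a time, and I would pay particular attention to the degenerate sub-cases $j=i+1$ and $i=k+1$, where several ramps collapse to single generators and some inner sums become empty, as well as to the sign conventions attached to the $c^{3,3}$- and $c^{3,4}$-cells.
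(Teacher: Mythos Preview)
Your plan is essentially the paper's approach: expand $\partial^Q_3\sigma$ via Proposition~\ref{prop: boundary of bb}, apply $\psi_2$, and bound the result with a $\Z\sg{n}$-combination of $c^{3,3}$-, $c^{3,4}$-, and $c^{3,7}$-cells. You correctly identify that a single $c^{3,7}$-cell is needed (not merely ``possibly''): it sits at the top-right corner where $b_{j-2}$, $b_{j-1}$, and $d_{j-2,j}$ meet, and no combination of $c^{3,3}$- and $c^{3,4}$-cells alone can produce that pattern.

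Where your sketch differs from the paper is organizational, and this is exactly the point you flag as the main obstacle. The paper resolves it geometrically rather than by brute-force coefficient matching: it visualizes $\psi_2(\partial^Q_3\sigma)$ as the boundary of a cuboid tiled by squares ($d$-cells) and hexagons ($b$-cells), with the seven terms of Proposition~\ref{prop: boundary of bb} becoming the six faces (front/back, left/right, top/bottom). The bounding $3$-chain is then built by filling this solid region in five pieces: one $c^{3,7}$-cell at a corner, two stacks of $c^{3,4}$-cells along the right and top edges, a rectangular block of $c^{3,3}$-cells at the bottom, and a final center-left region consisting of a diagonal band of $c^{3,4}$-cells flanked by two triangles of $c^{3,3}$-cells. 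Each piece is written down explicitly as a double sum, and the cancellations between adjacent pieces are visible geometrically rather than requiring telescoping algebra. This picture also makes the edge cases $j=i+1$ and $i=k+1$ transparent (the cuboid simply degenerates in one dimension), so you need not treat them separately.
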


\begin{proof}
By using the formulas for $\partial^Q_3 \sigma$ in terms of essential $2$-simplices in $Q_2$, and using the definition of $\psi_2$, we can describe the structure of $\psi_2(\partial^Q_3 \sigma)$.
It is helpful to visualize $\psi_2(\partial^Q_3 \sigma)$ as a tiling of the surface of a cuboid by squares and hexagons.
In this description, the front face represents a relation conjugating $\ramp{i}{j}$ by $\ramp{k}{j}^{-1}$, and this face comes from $s_j[\ramp{i}{j}|\ramp{k}{j}]$ in $\partial \sigma$.
The back face contains the basepoint and is an identical copy of the front face.
These faces consist of a diagonal band of copies of $b_i$-cells, with the rest being filled in by $d_{ij}$-cells.
The contribution of these two faces to $\psi_2(\partial^Q_3 \sigma)$ is
\[\begin{split}
(s_j-1)\Bigg(&\sum_{\ell=0}^{j-i}\ramp{i}{i-1+\ell}\ramp{k}{i-2+\ell}b_{i-1+\ell} \\
&+\sum_{\ell=0}^{j-i}\sum_{m=0}^{i-k-2}\ramp{i}{i-1+\ell}\ramp{k}{k-1+m}d_{k+m,i+\ell} \\
&+\sum_{\ell=1}^{j-i}\sum_{m=i-k-1}^{i-k+\ell-2}\ramp{i}{i-1+\ell}\ramp{k}{k-1+m}d_{k+m,i+\ell} \\
&-\sum_{\ell=0}^{j-i-1}\sum_{m=i-k+\ell+1}^{j-k}\ramp{i}{i-1+\ell}\ramp{k}{k-1+m}d_{i+\ell-1,k+m}\Bigg).
\end{split}\]
The remaining faces are simpler.
The left face is:
\[\sum_{\ell=0}^{j-k-2}\ramp{k}{k+\ell-1}d_{k+\ell,j}+\ramp{k}{j-2}b_{j-1},\]
and the right face is:
\[
\begin{split}
-\ramp{i}{j}\Bigg(&\sum_{\ell=0}^{j-k-3}\ramp{k}{k+\ell-1}d_{k+\ell,j-1}\\
&+\ramp{k}{j-3}(b_{j-2}-s_{j-2}s_{j-1}d_{j-2,j})\Bigg).
\end{split}
\]
The bottom face is
\[-\sum_{\ell=0}^{j-i-2}\ramp{i}{i+\ell-1}d_{i+\ell,j}-\ramp{i}{j-2} b_{j-1},\]
and the top face is
\[\ramp{k}{j}\Bigg(\sum_{\ell=0}^{j-i-2}\ramp{i-1}{i+\ell-2}d_{i+\ell-1,j-1}+\ramp{i-1}{j-3}b_{j-2}\Bigg).\]
This is illustrated in Figure~\ref{fi:cuboid1}.
To see that this description is correct, compare to the expression for $\partial^Q_3 \sigma$ in Proposition~\ref{prop: boundary of bb}.
There are seven terms in this expression; under $\psi$, the first term becomes the left face, the second term becomes the top face, the third and fourth terms become the right face, the fifth term becomes the bottom face, and the last two terms become the front and back faces.)
\begin{figure}[h!]
\begin{tabular}{cc}
\includegraphics[scale=0.5]{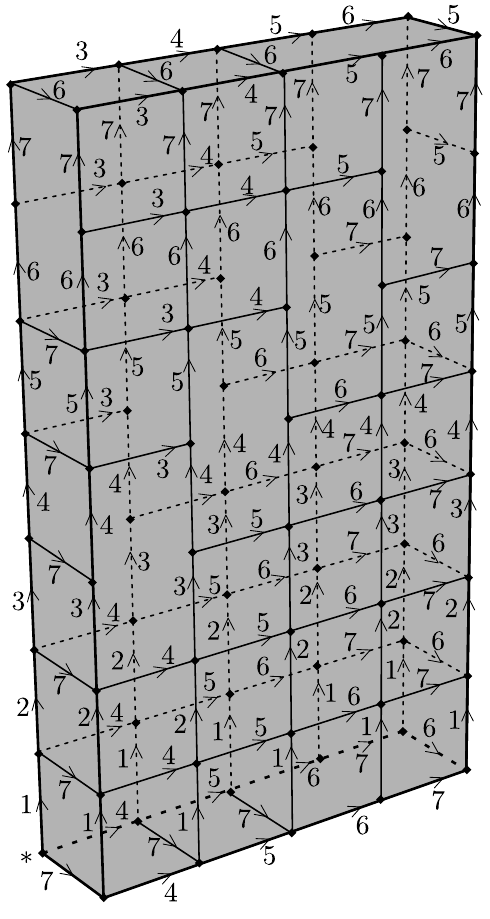} &
\includegraphics[scale=0.5]{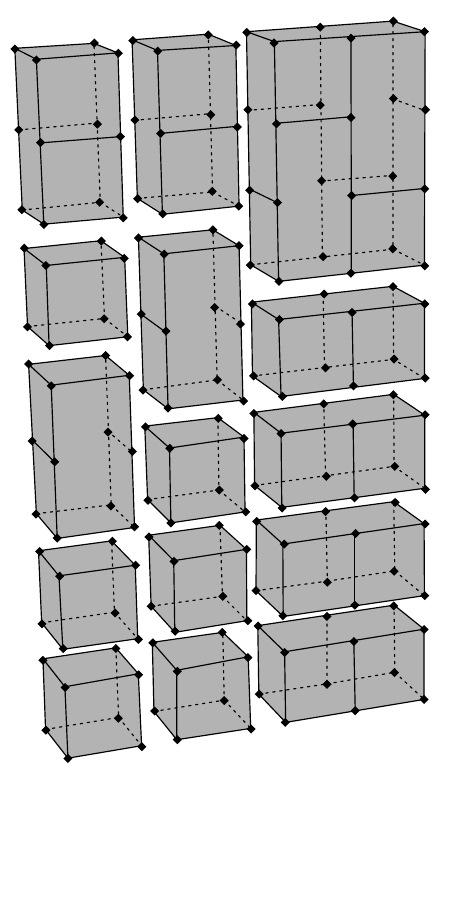}
\end{tabular}
\caption{An example of a filling of $\psi_2(\partial^Q_3 \sigma)$, as in Proposition~\ref{pr:boundcellbbcase1}, for $\sigma=[s_j|\ramp{i}{j}|\ramp{k}{j}])$.  
Here $k=1$, $i=4$, and $j=7$.  
Each edge $e_i$ is labeled with ``$i$".
The left figure is the visualization of $\psi_2(\partial^Q_3 \sigma)$ as a cuboid tiled by squares and hexagons.
The right figure is the same cuboid, exploded into $3$-cells of types $c^{33}_{ijk}$, $c^{34}_{ij}$, and $c^{37}_i$.}\label{fi:cuboid1}
\end{figure}

To prove the proposition, we must exhibit a sum of $3$-cells that bounds this cycle.
First of all, the upper-right corner of the cuboid can be filled with a cell of type $c^{37}_i$.
Specifically, the cell
\begin{equation}\label{eq:urcorner1}
\ramp{i}{j-2} \ramp{k}{j-3} c^{37}_{j-2}
\end{equation}
bounds the eleven $2$-cells closest to the upper-right corner (if $i=j-1$ or $k=j-2$, then it bounds more).

Next we bound the remainder of the right side using several $c^{34}_{ij}$-cells.
We use
\begin{equation}\label{eq:rside1}
\sum_{\ell=0}^{j-k-3}\ramp{i}{j-2}\ramp{k}{k+\ell-1} c^{34}_{j-1,k+\ell}.
\end{equation}
This bounds the rightmost two $d_{ij}$-cells in each of the bottom $j-k-2$ rows of the front and back faces, and the rightmost $b_i$-cell on the bottom, and the remaining cells on the right face.

Next we bound the remainder of the top side.
We use
\begin{equation}\label{eq:tside1}
-\sum_{\ell=0}^{j-i-2}\ramp{k}{j-2} \ramp{i-1}{i-2+\ell} c^{34}_{j-1,i-1+\ell}.
\end{equation}
This bounds the uppermost two $d_{ij}$-cells in each of the leftmost $j-i-1$ columns of the front and back faces, and the top $b_i$-cell on the left face, and the remaining cells on the bottom.
Further, these two stacks of $c^{34}_{ij}$-cells have two faces that cancel with two of the remaining faces from the $c^{37}_i$-cell in the corner.

Now there is a rectangle of $c^{33}_{ijk}$-cells that we can fill at the bottom of the cuboid.
We use
\begin{equation}\label{eq:bside1}
\sum_{\ell=0}^{j-i-2}\sum_{m=0}^{i-k-2}\ramp{i}{i-1+\ell}\ramp{k}{k-1+m}c^{33}_{k+m,i+\ell,j}.
\end{equation}
This bounds the bottom $i-k-1$ rows of the front, back, and left faces, and the rest of the bottom face.  
It meshes with the $c^{34}_{ij}$-cells on the right.

Finally, we fill in the remainder of the cuboid, which is the center-left section.
We bound the diagonal bands of $b_i$-cells with $c^{34}_{ij}$-cells, and fill in the triangular regions of $d_{ij}$-cells with $c^{33}_{ijk}$-cells.
This amounts to
\begin{equation}\label{eq:lside1}
\begin{split}
&\sum_{\ell=0}^{j-i-2}\ramp{i}{i-1+\ell}\ramp{k}{i-2+\ell}c^{34}_{i-1+\ell,j} \\
&+\sum_{\ell=1}^{j-i-2}\sum_{m=i-k-1}^{i-k+\ell-2}\ramp{i}{i-1+\ell}\ramp{k}{k-1+m}c^{33}_{k+m,i+\ell,j} \\
&-\sum_{\ell=0}^{j-i-3}\sum_{m=i-k+\ell+1}^{j-k-2}\ramp{i}{i-1+\ell}\ramp{k}{k-1+m}c^{33}_{i+\ell-1,k+m,j}.
\end{split}
\end{equation}
So $\psi_2(\partial^Q_3 \sigma)$ is equal to the boundary of the chain given by the sum of~\eqref{eq:urcorner1}, \eqref{eq:rside1}, \eqref{eq:tside1}, \eqref{eq:bside1}, and~\eqref{eq:lside1}.
\end{proof}

\begin{proposition}\label{pr:boundcellbbcase2}
Let $i,j,k$ be integers with $1\leq i\leq k<j<n$ and $j-k\geq 2$.
Let $\sigma=[s_j|\ramp{i}{j}|\ramp{k}{j}]\in Q_3$.
Then $\psi_2(\partial^Q_3 \sigma)$ is a boundary in $P_2$.
\end{proposition}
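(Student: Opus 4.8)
The plan is to follow the strategy used to prove Proposition~\ref{pr:boundcellbbcase1}, now in the companion case $i\leq k$. First I would combine the $i\leq k$ case of Proposition~\ref{prop: boundary of bb} with the definition of $\psi_2$ to write $\psi_2(\partial^Q_3\sigma)$ explicitly as a $\Z\sg{n}$-linear combination of $d_{pq}$-cells, $b_p$-cells, and --- the new feature compared with Proposition~\ref{pr:boundcellbbcase1} --- a single $c_k$-cell, which arises from the $\ell=k-i$ summand of the first interior sum, where the ramp $\ramp{k}{i+\ell}$ degenerates to $s_k$ and $[s_k\mid s_k]$ appears. As before, I would organize this cycle as a tiling of the surface of a cuboid: the squares are $d$-cells, the hexagons are $b$-cells, and there is now one extra bigon (the cell $c_k=[s_k\mid s_k]$) on each of the front and back faces. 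The seven terms of Proposition~\ref{prop: boundary of bb} distribute over the faces exactly as in the previous proof: the first term is the left face, the second term is the top face, the third and fourth terms are the right face, the fifth term is the bottom face, and the $(s_j-1)(\cdots)$ terms are the front and back faces. The one structural change is that, because $i\leq k$, the bottom part $\ramp{i}{k-1}$ of the lower ramp commutes past the upper ramp before the braiding and the squaring $s_ks_k$ take place; so, reading a front or back face along its diagonal, one now sees first a parallelogram of $d$-cells (the commuting region, $\ell<k-i$), then the lone $c_k$-cell (the collision, $\ell=k-i$), then the diagonal band of $b$-cells (the braiding region, $\ell>k-i$), with $d$-cells filling the rest.

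Next I would exhibit a bounding chain, region by region, reusing the five pieces from the proof of Proposition~\ref{pr:boundcellbbcase1} almost verbatim and adding two new pieces. As there: the upper-right corner is bounded by a single $c^{37}$-cell of the form $\ramp{i}{j-2}\ramp{k}{j-3}c^{37}_{j-2}$; the remainder of the right face together with the rightmost columns of the front and back faces is bounded by a stack of $c^{34}$-cells; the remainder of the top face together with the topmost rows is bounded by another stack of $c^{34}$-cells; the bottom of the cuboid is bounded by a rectangle of $c^{33}$-cells; and the center-left braiding band is bounded by a mix of $c^{34}$- and $c^{33}$-cells. The two new pieces are a single $c^{32}$-cell, of the form $\ramp{i}{k-1}c^{32}_{kj}$, which absorbs the two $c_k$-bigons, and an extra block of $c^{33}$-cells filling in the commuting parallelogram of $d$-cells; this $c^{32}$-cell plays the same role as the one in Proposition~\ref{pr:boundcellcbcase23}, which handles the case $i=k$. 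Finally I would check that the boundary of the total chain equals $\psi_2(\partial^Q_3\sigma)$, by verifying that the external faces agree term-by-term with Proposition~\ref{prop: boundary of bb} and that all internal faces cancel in pairs.

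The main obstacle will be the bookkeeping around the collision seam $\ell=k-i$ of the front and back faces, where the tiling changes from the commuting regime (a parallelogram of $d$-cells indexed by a double sum) to the braiding regime (the band of $b$-cells), passing through the lone squaring cell $c_k$. One must choose the index ranges of all the summands in the bounding chain so precisely that the new $c^{32}$-cell and $c^{33}$-block mesh on one side with the bottom $c^{33}$-rectangle and on the other with the center-left braiding block, with nothing left over and no double counting; in particular one should check carefully whether any $c^{35}$- or $c^{36}$-cells are needed where the $b$-cell band abuts the squaring cell. The underlying geometry, however, is the same cuboid as in Figure~\ref{fi:cuboid1}, merely subdivided slightly differently, so once the correct combinatorial description of the six faces is in hand, what remains is a finite, if lengthy, verification.
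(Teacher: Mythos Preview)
Your approach is the paper's approach, and your identification of the key new ingredient --- the single cell $\ramp{i}{k-1}c^{32}_{kj}$ absorbing the two $c_k$-bigons --- is exactly right. However, your geometric picture is slightly off in a way that would cause trouble when you write down the explicit sums. In the $i\leq k$ case the rectangular block of $c^{33}$-cells does not stay at the bottom of the cuboid with a new commuting block added elsewhere; rather, the rectangle \emph{relocates} to the left of the cuboid, and this relocated rectangle \emph{is} the commuting parallelogram you describe. There is only one such block, not two. Correspondingly, the face assignments change: in Proposition~\ref{prop: boundary of bb} the $i\leq k$ expression has eight terms (not seven --- the $(s_j-1)(\cdots)$ factor now contains three sums), and compared with the $i>k$ case the roles of terms two and three are swapped, so that the second and fourth terms give the right face and the third term gives the top face. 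Because of the bigon cancellation, the left and top faces each have one fewer tile than the right and bottom faces, respectively.

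Concretely, the paper's bounding chain has five pieces, not seven: the $c^{37}$-corner, the right stack of $c^{34}$-cells, the top stack of $c^{34}$-cells, the left rectangle of $c^{33}$-cells (your commuting parallelogram), and a final piece consisting of the $c^{32}$-cell together with the diagonal band of $c^{34}$-cells and two triangles of $c^{33}$-cells. No $c^{35}$- or $c^{36}$-cells are needed at the seam. Once you correct the location of the rectangular block and adjust the index ranges accordingly, your verification plan goes through exactly as you outline.
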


\begin{proof}
This is similar to the previous proposition, with one complication.
When we reduce $\ramp{i}{j}\ramp{k}{j}$ to normal form, this involves applying a squaring relation $s_k^2=1$.
This means that when we visualize $\psi_2(\partial^Q_3 \sigma)$ as a tiling of a cuboid, in addition to having squares and hexagons in the tiling, we also have two bigons.

The front and back faces of the cuboid each have a rectangular region of $(k-i)\times (j-k)$ squares on the left, a diagonal band of hexagonal regions going up to the upper right, a single bigon at the lower left of the diagonal band, and two triangular regions of squares above and below the band.
This is
\[\begin{split}
(s_j-1)\Bigg(&\ramp{i}{k-1}c_k \\
&-\sum_{\ell=0}^{k-i-1}\sum_{m=0}^{j-k-1}\ramp{i}{i+\ell-1}\ramp{k+1}{k+m}d_{i+\ell,k+m+1} \\
&+\sum_{\ell=0}^{j-k-1} \ramp{i}{k+\ell}\ramp{k}{k-1+\ell} b_{k+l} \\
&+\sum_{\ell=0}^{j-k-2} \sum_{m=0}^\ell\ramp{i}{k+\ell+1} \ramp{k}{k+m-1} d_{k+m,k+\ell+2} \\
&-\sum_{\ell=0}^{j-k-2} \sum_{m=\ell}^{j-k-2} \ramp{i}{k+\ell} \ramp{k}{k+1+m} d_{k+l,k+2+m}
\Bigg).
\end{split}\]

Because of the cancellation represented by the bigons, the left and top sides have one fewer tile each than the right and bottom sides, respectively.
The left face is
\[\sum_{\ell=0}^{j-k-3}\ramp{k+1}{k+\ell}d_{k+\ell+1,j}+\ramp{k+1}{j-2}b_{j-1},\]
and the right face is
\[-\ramp{i}{j}\Bigg(\sum_{\ell=0}^{j-k-3}\ramp{k}{k-1+\ell}d_{k+\ell,j-1}+\ramp{k}{j-3}(b_{j-2}-s_{j-2}s_{j-1}d_{j-2,j})\Bigg).\]
The bottom face is
\[-\sum_{\ell=0}^{j-i-2}\ramp{i}{i+\ell-1}d_{i+\ell,j}-\ramp{i}{j-2} b_{j-1},\]
and the top face is
\[\ramp{k+1}{j}\Bigg(\sum_{\ell=0}^{j-i-3}\ramp{i}{i+\ell-1}d_{i+\ell,j-1}+\ramp{i}{j-3}b_{j-2}\Bigg).\]

Like with the previous proposition, we can justify this by looking at the expression for $\partial\sigma$ in Proposition~\ref{prop: boundary of bb}.
That expression has eight terms.
The first term is the left face, the second and fourth terms are the right face, the third term is the top face, the fifth term is the bottom face, and the last three terms are the front and back faces.

Bounding this cycle works similarly to the previous proposition.
First of all, we fill the upper-right corner with a $c^{37}_i$-cell:
\begin{equation}\label{eq:urcorner2}
\ramp{i}{j-2} \ramp{k}{j-3} c^{37}_{j-2}.
\end{equation}

Again, we bound the remainder of the right side using several $c^{34}_{ij}$-cells:
\begin{equation}\label{eq:rside2}
\sum_{\ell=0}^{j-k-3}\ramp{i}{j-2}\ramp{k}{k+\ell-1} c^{34}_{j-1,k+\ell}.
\end{equation}

Similarly, we bound the remainder of the top side:
\begin{equation}\label{eq:tside2}
-\sum_{\ell=0}^{j-i-3}\ramp{k+1}{j-2} \ramp{i}{i-1+\ell} c^{34}_{j-1,i+\ell}.
\end{equation}

Like in the previous proposition, there is a rectangle that we can fill with $c^{33}_{ijk}$-cells, but this time it is on the left of the cuboid.
We use
\begin{equation}\label{eq:lside2}
\sum_{\ell=0}^{k-i-1}\sum_{m=1}^{j-k-2}\ramp{i}{i-1+\ell}\ramp{k+1}{k-1+m}c^{33}_{i+\ell,k+m,j}.
\end{equation}

Finally, we fill in the remaining region with a diagonal band of $c^{34}_{ij}$-cells and two triangular regions of $c^{33}_{ijk}$-cells.
The main difference between this and the previous case is that we also include a $c^{32}_{ij}$-cell to bound the two bigon regions:
\begin{equation}\label{eq:lastregion2}
\begin{split}
&\ramp{i}{k-1} c^{32}_{kj} 
+\sum_{\ell=0}^{j-k-3}\ramp{i}{k+\ell}\ramp{k}{k-1+\ell}c^{34}_{k+\ell,j} \\
&+\sum_{\ell=k-i+2}^{j-i-2}\sum_{m=0}^{\ell-k+i-2}\ramp{i}{i-1+\ell}\ramp{k}{k-1+m}c^{33}_{k+m,i+\ell,j} \\
&-\sum_{\ell=k-i}^{j-i-4}\sum_{m=i+\ell-k+2}^{j-k-2}\ramp{i}{i+\ell}\ramp{k}{k-1+m}c^{33}_{i+\ell,k+m,j}.
\end{split}
\end{equation}
Then $\psi_2(\partial^Q_3\sigma)$ is the boundary of the sum of the chains~\eqref{eq:urcorner2}, \eqref{eq:lside2}, \eqref{eq:tside2}, \eqref{eq:rside2}, and~\eqref{eq:lastregion2}.
\end{proof}

\begin{proposition}\label{pr:boundcellbbcase3}
Let $i,j$ be integers with $1\leq i\leq j-1$ and $j<n$.
Let $\sigma=[s_j|\ramp{i}{j}|\ramp{j-1}{j}]\in Q_3$.
Then the chain $\psi_2(\partial^Q_3 \sigma)$ is a boundary in $P_2$.
\end{proposition}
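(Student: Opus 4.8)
The plan is to specialize Proposition~\ref{prop: boundary of bb} to the case $k=j-1$, which lies in the branch $i\leq k$ of that proposition, then push the resulting $2$-cycle through $\psi_2$ and exhibit an explicit chain in $P_3$ whose boundary it is. Setting $k=j-1$ collapses several ramps: $\ramp{k+1}{j}=s_j$, $\ramp{k}{j-1}=s_{j-1}$, and $\ramp{k}{j}=s_{j-1}s_j$; moreover, since $j-k=1$, every inner sum $\sum_{t=1}^{j-k}$ has exactly one term. After discarding the terms that degenerate, one obtains a manageable formula for $\partial^Q_3\sigma$ as a $\Z\sg{n}$-linear combination of essential $2$-cells, and applying the definition of $\psi_2$ on the three classes of essential $2$-cells and collecting terms produces an explicit $2$-cycle in $P_2$.

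Geometrically this is the degenerate version of the cuboid picture from Propositions~\ref{pr:boundcellbbcase1} and~\ref{pr:boundcellbbcase2}: with $j-k=1$ one dimension of the cuboid collapses, so $\psi_2(\partial^Q_3\sigma)$ becomes a short diagonal band of $b_i$-cells flanked by triangular regions of $d_{ij}$-cells, together with the remaining side faces and a bigon produced by the squaring relation $s_{j-1}^2=1$ that appears when $\ramp{i}{j}\ramp{j-1}{j}$ is reduced to normal form. The bounding chain in $P_3$ should be assembled from a single $c^{37}_{j-2}$-cell in the corner, a stack of $c^{34}_{ij}$-cells bounding the diagonal band and the adjacent side faces, a triangular array of $c^{33}_{ijk}$-cells filling the $d_{ij}$-regions, and --- this is the feature forcing a separate case --- a $c^{36}$-cell to absorb the bigon. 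In Proposition~\ref{pr:boundcellbbcase2} the analogous bigon is bounded by a $c^{32}_{ij}$-cell, but here that cell would have to be $c^{32}_{j-1,j}$, which is undefined since $|j-(j-1)|<2$; the $c^{36}$-cell, which has bigon faces built from $c_{j-1}$ and $c_j$, is designed for exactly this braid-relation situation. Once the bounding chain is written down, checking that its $\partial^P_3$ equals $\psi_2(\partial^Q_3\sigma)$ is bookkeeping of the same kind as in the preceding propositions.

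The main obstacle is getting the index ranges of the bounding chains exactly right in this degenerate regime: many sums that were nonempty for $j-k\geq 2$ become empty or singleton, and one must make sure no $c^{34}$- or $c^{37}$-face is left unmatched where ramps such as $\ramp{k+1}{j-2}$ or $\ramp{k}{j-2}$ no longer make sense. The extreme subcase $i=j-1$, where $\ramp{i}{j}=s_{j-1}s_j$ and $\sigma=[s_j\mid s_{j-1}s_j\mid s_{j-1}s_j]$, and small values of $j$, should be checked directly, as with the other propositions in this section.
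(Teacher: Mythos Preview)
Your plan has a genuine gap at the very first step. Proposition~\ref{prop: boundary of bb} does not specialize correctly to $k=j-1$: its proof reduces the third face $[s_j\mid \ramp{i}{j}\ramp{k}{j}]$ by writing the normal form as $\ramp{k+1}{j}\ramp{i}{j-1}$ and then taking $c(\tau)=[s_j\mid\ramp{k+1}{j}\mid\ramp{i}{j-1}]$, using that $s_j\ramp{k+1}{j}\to\ramp{k+1}{j}s_{j-1}$. When $k=j-1$ this collapses, since $\ramp{k+1}{j}=s_j$ and $s_j\cdot s_j=1$; the face $[s_j\ramp{k+1}{j}\mid\ramp{i}{j-1}]$ is degenerate, and one finds $q([s_j\mid s_j\ramp{i}{j-1}])=[s_j\mid s_j]$ only, with no $s_j[s_{j-1}\mid\ramp{i}{j-1}]$ term. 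The same phenomenon hits Lemma~\ref{lemma: second face of bb}: with $k=j-1$ one has $q([s_{j-1}\mid\ramp{k}{j}])=[s_{j-1}\mid s_{j-1}]$, and the $\ramp{k}{j-1}[s_{j-2}\mid s_j]$ term disappears. So ``discarding the terms that degenerate'' is not enough---several terms you expect to survive are in fact absent.

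The consequence is that your geometric picture is wrong. After $\psi_2$, the cycle contains no $b_{j-2}$ faces and no $d_{\ell,j-1}$ faces, so there is no corner $c^{37}_{j-2}$-cell, no stack of $c^{34}$-prisms, and no $c^{33}$-array to place. The paper's bounding chain is of a completely different shape,
\[
-\sum_{m=0}^{j-i-2}\ramp{i}{i+m-1}\,c^{32}_{j,\,i+m}\;-\;\ramp{i}{j-2}\,c^{31}_{j}\;-\;\ramp{i}{j-1}\,c^{36}_{j-1},
\]
built only from $c^{32}$-tubes, a single $c^{31}$-ball, and a single $c^{36}$-pillow. Your instinct that a $c^{36}$-cell is required (because $c^{32}_{j-1,j}$ does not exist) is correct and is exactly what the paper uses, but the rest of the cuboid scaffolding is not there to be filled. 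To repair your argument you must recompute $\partial^Q_3\sigma$ from scratch for $k=j-1$ rather than specializing the general formula.
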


\begin{proof}
In this special case, $\psi_2(\partial^Q_3 \sigma)$ is simpler than in the two preceding propositions.
By Proposition~\ref{prop: boundary of bb},
\[
\begin{split}
\psi_2(\partial^Q_3 \sigma) &= c_j-\sum_{k=0}^{j-i-2}\ramp{i}{i+k-1}(1+s_j)d_{i+k,j} \\
&\quad +s_j\ramp{i}{j-2}(s_{j-1}b_{j-1} - s_{j}b_{j-1} + s_{j-1}c_{j-1}) \\
&\quad -\ramp{i}{j}(c_j+s_{j-1}c_{j-1}).
\end{split}
\]
This is the boundary of
\[-\sum_{k=0}^{j-i-2} \ramp{i}{i+k-1} c^{32}_{j,i+k}-\ramp{i}{j-2}c^{31}_{j}-\ramp{i}{j-1} c^{36}_{j-1}.\]
\end{proof}

\begin{proposition}\label{pr:boundcellbc}
Let $i, j, k$ be integers with $1\leq k < i< j-1$ and $j<n$.
Let $\sigma=[s_i|\ramp{k}{i}|s_j]\in Q_3$.
Then the chain $\psi_2(\partial^Q_3 \sigma)$ is a boundary in $P_2$.
\end{proposition}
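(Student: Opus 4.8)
By Proposition~\ref{prop: boundary of b,c}, since $k<i<j-1$, we have
\[
\partial^Q_3\sigma=[s_i\mid s_j]+(s_j-1)[s_i\mid\ramp{k}{i}]-\ramp{k}{i}[s_{i-1}\mid s_j]+(s_i-1)\sum_{\ell=0}^{i-k}\ramp{k}{k+\ell-1}[s_{k+\ell}\mid s_j].
\]
The plan is to apply $\psi_2$ to this expression term by term. Every essential $2$-cell appearing here has the form $[s_a\mid s_b]$ with $a\le b-2$ — this uses $k<i<j-1$, so each of the indices $k+\ell$ (for $0\le\ell\le i-k$), $i$, and $i-1$ is at most $j-2$ — except for $[s_i\mid\ramp{k}{i}]$. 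Using the definition of $\psi_2$ (sending $[s_a\mid s_b]\mapsto -d_{ab}$ and $[s_i\mid\ramp{k}{i}]\mapsto\sum_{\ell=k}^{i-2}\ramp{k}{\ell-1}d_{\ell i}+\ramp{k}{i-2}b_{i-1}$, the latter valid since $k<i$) and reindexing the final sum by $m=k+\ell$, this gives
\[
\psi_2(\partial^Q_3\sigma)=-d_{ij}+(s_j-1)\Bigl(\sum_{\ell=k}^{i-2}\ramp{k}{\ell-1}d_{\ell i}+\ramp{k}{i-2}b_{i-1}\Bigr)+\ramp{k}{i}d_{i-1,j}-(s_i-1)\sum_{m=k}^{i}\ramp{k}{m-1}d_{m,j}.
\]

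I claim that this chain is the boundary of
\[
\sum_{\ell=k}^{i-2}\ramp{k}{\ell-1}c^{33}_{\ell,i,j}+\ramp{k}{i-2}c^{34}_{i-1,j}.
\]
Here the index constraints needed to make sense of these $3$-cells — namely $1\le\ell<i<j\le n-1$ with $i>\ell+1$ and $j>i+1$ for $c^{33}_{\ell,i,j}$, and $1\le i-1,j\le n-1$ with $j>(i-1)+2$ for $c^{34}_{i-1,j}$ — all follow from $k<i<j-1$ and $j<n$; and when $i=k+1$ the sum is empty and $\ramp{k}{i-2}=\ramp{k}{k-1}$ is the empty ramp. To verify the claim I would compute $\partial^P_3$ of each summand. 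From $\partial c^{33}_{\ell,i,j}=(s_\ell-1)d_{ij}-(s_i-1)d_{\ell j}+(s_j-1)d_{\ell i}$, summing against $\ramp{k}{\ell-1}$ and telescoping via $\ramp{k}{\ell-1}(s_\ell-1)=\ramp{k}{\ell}-\ramp{k}{\ell-1}$ yields
\[
\partial^P_3\Bigl(\sum_{\ell=k}^{i-2}\ramp{k}{\ell-1}c^{33}_{\ell,i,j}\Bigr)=(\ramp{k}{i-2}-1)d_{ij}-(s_i-1)\sum_{\ell=k}^{i-2}\ramp{k}{\ell-1}d_{\ell j}+(s_j-1)\sum_{\ell=k}^{i-2}\ramp{k}{\ell-1}d_{\ell i}.
\]
For the second summand, one expands $\partial c^{34}_{i-1,j}=(s_j-1)b_{i-1}-(1-s_{i-1}+s_is_{i-1})d_{ij}+(1-s_i+s_{i-1}s_i)d_{i-1,j}$ and uses the identities $\ramp{k}{i-1}=\ramp{k}{i-2}s_{i-1}$ and $\ramp{k}{i}=\ramp{k}{i-1}s_i$, together with the fact that $s_i$ commutes with $\ramp{k}{i-2}$ (all of whose letters have index at most $i-2$); this rewrites $\partial^P_3(\ramp{k}{i-2}c^{34}_{i-1,j})$ so that, after adding the previous display, everything collapses exactly to the displayed expression for $\psi_2(\partial^Q_3\sigma)$. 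In particular the $m=i-1$ and $m=i$ terms of the sum $-(s_i-1)\sum_{m}\ramp{k}{m-1}d_{m,j}$ are produced precisely by the $d_{i-1,j}$- and $d_{ij}$-contributions of the $c^{34}$-cell.

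This is a routine computation; structurally it is the mirror image of the $c^{32}$-type $3$-cell filling the $c,b$-essential cell in Propositions~\ref{pr:boundcellcbcase1} and~\ref{pr:boundcellcbcase4}. The only points that demand attention are the telescoping of the ramp coefficients, the degenerate case $i=k+1$, and checking that the $d_{ij}$- and $d_{i-1,j}$-terms coming from the $c^{34}$-cell assemble into the exact coefficients $1-s_{i-1}+s_is_{i-1}$ and $1-s_i+s_{i-1}s_i$; there is no conceptual obstacle here.
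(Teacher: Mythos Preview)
Your proof is correct and follows essentially the same approach as the paper: after applying $\psi_2$ to the expression from Proposition~\ref{prop: boundary of b,c}, the paper recognizes $\psi_2(\partial^Q_3\sigma)$ as the boundary of exactly the same chain $\sum_{\ell=k}^{i-2}\ramp{k}{\ell-1}c^{33}_{\ell,i,j}+\ramp{k}{i-2}c^{34}_{i-1,j}$ (written there with the index shifted by $k$). Your write-up is in fact more detailed, spelling out the telescoping and the commutation of $s_i$ with $\ramp{k}{i-2}$ that the paper leaves implicit.
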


\begin{proof}
By Proposition~\ref{prop: boundary of b,c},
\[
\begin{split}
\psi(\partial \sigma) &= -d_{ij}+(s_j-1)(\sum_{\ell=k}^{i-2}\ramp{k}{\ell-1}d_{\ell i}+ \ramp{k}{i-2}b_{i-1}) \\
&\quad +\ramp{k}{i} d_{i-1,j} +(1-s_i)\sum_{\ell=0}^{i-k}\ramp{k}{k+\ell-1}d_{k+\ell,j}.
\end{split}
\]
We visualize this as a cuboid tiled by squares and hexagons.
The first term is the bottom, the second term is the left and right sides, the third term is the top, and the last term is the front and back sides.
We then recognize this as the boundary of 
\[
\sum_{\ell=0}^{i-k-2}\ramp{k}{k+\ell-1} c^{33}_{k+\ell,i,j}+\ramp{k}{i-2}c^{34}_{i-1,j}.\]
\end{proof}

\begin{proposition}\label{pr:boundcellbs}
Let $i, j$ be integers with $1\leq i< j<n$.
Let $\sigma=[s_j|\ramp{i}{j}|s_j]\in Q_3$.
Then the chain $\psi_2(\partial^Q_3 \sigma)$ is a boundary in $P_2$.
\end{proposition}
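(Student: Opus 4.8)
The cell $\sigma=[s_j\mid\ramp{i}{j}\mid s_j]$ is the essential $3$-cell obtained by applying $\rewriting3$ and then $\rewriting1$ (the class written as $[s_i\mid\ramp{j}{i}\mid s_i]$ in Remark~\ref{remark:Q3gens}, here with $i<j$), and its boundary in $Q_*$ is given explicitly by Proposition~\ref{prop: boundary of b,s}. The plan is to push that expression forward under $\psi_2$, simplify it, and recognize the result as $\partial^P_3$ of an explicit chain built from cells of types $c^{3,5}$ and $c^{3,6}$.

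First I would apply $\psi_2$ to each of the five terms of $\partial^Q_3\sigma$ from Proposition~\ref{prop: boundary of b,s}, using the definition of $\psi_2$ on the three kinds of essential $2$-cell. The only term needing care is $-\ramp{i}{j-1}[s_j\mid s_{j-1}s_j]$: since $s_{j-1}s_j=\ramp{j-1}{j}$, this is an essential cell of ramp type with bottom index $j-1$, and the formula for $\psi_2$ on it degenerates (the sum is empty and $\ramp{j-1}{j-2}=1$, using the convention $\ramp{a}{b}=1$ when $a>b$) to simply $b_{j-1}$. After substituting, the contribution of $-\sum_{k=i-1}^{j-3}\ramp{i}{k}[s_{k+1}\mid s_j]$ and the $d$-part of $-[s_j\mid\ramp{i}{j}]$ are, after the reindexing $\ell=k+1$, exact negatives of one another and cancel. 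What remains is
\[
\psi_2(\partial^Q_3\sigma)=s_j\ramp{i}{j-1}c_j-\ramp{i}{j-1}b_{j-1}-\ramp{i}{j-2}b_{j-1}-\ramp{i}{j-2}c_{j-1}.
\]

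Next I would use the identities $\ramp{i}{j-1}=\ramp{i}{j-2}s_{j-1}$ and $s_j\ramp{i}{j-1}=\ramp{i}{j-2}s_js_{j-1}$, both valid in $\Z\sg{n}$ because $s_j$ commutes with $s_i,\dots,s_{j-2}$ (the degenerate case $i=j-1$ is harmless since then $\ramp{i}{j-2}=1$), to factor $\ramp{i}{j-2}$ out on the left:
\[
\psi_2(\partial^Q_3\sigma)=\ramp{i}{j-2}\bigl(s_js_{j-1}c_j-(s_{j-1}+1)b_{j-1}-c_{j-1}\bigr).
\]
Finally I would match the parenthesized factor against the boundary formulas of Definition~\ref{de:Snres}: expanding $\partial^P_3 c^{3,6}_{j-1}$ and $s_{j-1}\partial^P_3 c^{3,5}_{j-1}$ and using $s_{j-1}^2=1$ shows that this factor equals $\partial^P_3(c^{3,6}_{j-1}-s_{j-1}c^{3,5}_{j-1})$. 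Therefore $\psi_2(\partial^Q_3\sigma)=\partial^P_3\bigl(\ramp{i}{j-2}(c^{3,6}_{j-1}-s_{j-1}c^{3,5}_{j-1})\bigr)$, which is a boundary.

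Unlike Propositions~\ref{pr:boundcellbbcase1} and~\ref{pr:boundcellbbcase2}, there is no large tiling to bookkeep here, so the argument is short. The only genuinely non-mechanical step is spotting the right combination $c^{3,6}_{j-1}-s_{j-1}c^{3,5}_{j-1}$; everything else is reindexing ramps and tracking the empty-ramp boundary cases.
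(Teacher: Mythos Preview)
Your proof is correct and follows essentially the same route as the paper: you apply Proposition~\ref{prop: boundary of b,s}, push through $\psi_2$, observe the cancellation of the $d_{\ell j}$-terms, and arrive at the same simplified expression $s_j\ramp{i}{j-1}c_j-\ramp{i}{j-2}((1+s_{j-1})b_{j-1}+c_{j-1})$. Your bounding chain $\ramp{i}{j-2}(c^{3,6}_{j-1}-s_{j-1}c^{3,5}_{j-1})$ is identical to the paper's $\ramp{i}{j-2}c^{3,6}_{j-1}-\ramp{i}{j-1}c^{3,5}_{j-1}$, since $\ramp{i}{j-1}=\ramp{i}{j-2}s_{j-1}$.
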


\begin{proof}
By Proposition~\ref{prop: boundary of b,s},
\[\psi_2(\partial^Q_3 \sigma) = s_j\ramp{i}{j-1} c_j -\ramp{i}{j-2}((1+s_{j-1})b_{j-1}+c_{j-1}).\]
This is the boundary of
\[\ramp{i}{j-2}c^{3,6}_{j-1}-\ramp{i}{j-1}c^{3,5}_{j-1}.\]
(Note: at a glance, $\psi(\partial\sigma)$ appears to be the boundary of a single $c^{3,5}$-cell, but the configuration of edges is not quite right; the $c^{3,6}$-cell is needed to correct this.)
\end{proof}

Now we can finish the proof of Theorem~\ref{th:main}.
\begin{proof}[Proof of Theorem~\ref{th:main}.]
By Theorem~\ref{th:rwscomplete}, our rewriting system is complete.
So by Theorem~\ref{Theorem: Quotient Complex}, the chain complex $Q_*$ is a free resolution for $\Z$ as a $\Z \sg{n}$-module.
To show the theorem, we build chain maps $\psi\co Q_*\to P_*$ and $\phi\co P_*\to Q_*$ in dimensions $0$ through $3$, and show that $\psi_2$ is a left-inverse to $\phi_2$.

Now we claim that there is a chain map $\psi\co Q_*\to P_*$ in dimensions $0$ through $3$.
The map $\psi$ that we have already defined in dimensions $0$ through $2$ (at the beginning of the current section) will work; we just need to explain how it can be extended to dimension $3$.
It is a chain map in dimensions $0$ through $2$ by Lemma~\ref{le:lowdimchainmap}.
For each basis element $\sigma\in Q_3$, we need a chain $\psi_3(\sigma)\in P_3$ with $\partial^P_3\psi_3(\sigma)=\psi_2(\partial^Q_3 \sigma)$.
So it is enough to show that each $\psi_2(\partial^Q_3\sigma)$ is a boundary in $P_2$.
. 
We enumerated the nine kinds of basis elements for $Q_3$ in Remark~\ref{remark:Q3gens}.
Each kind of basis element has a boundary that maps to a boundary in $P_2$ under $\psi_2$; we showed this
\begin{itemize}
\item for $[s_i\mid s_i\mid s_i]$ in Proposition~\ref{prop: image of s,s in P},
\item for $[s_i\mid s_i\mid s_j]$, with $i<j-1$, in Proposition~\ref{prop: image of s,c in P},
\item for $[s_i\mid s_i\mid \ramp{j}{i}]$, with $j<i$, in Proposition~\ref{prop: image of s,b in P},
\item for $[s_i\mid s_j\mid s_j]$, with $i<j-1$, in Proposition~\ref{prop: image of c,s in P},
\item for $[s_i\mid s_j\mid s_k]$, with $i<j-1$ and $j<k-1$, in Proposition~\ref{prop: image of c,c in P},
\item for $[s_i\mid s_j\mid \ramp{k}{j}]$, with  $i<j-1$ and $k<j$, in Propositions~\ref{pr:boundcellcbcase1}, \ref{pr:boundcellcbcase23}, and~\ref{pr:boundcellcbcase4}.
\item for $[s_i\mid \ramp{j}{i}\mid s_i]$, with $j<i$, in Proposition~\ref{pr:boundcellbs},
\item for $[s_i\mid \ramp{j}{i}\mid s_k]$, with $j<i$ and $i<k-1$, in Proposition~\ref{pr:boundcellbc},
\item for $[s_i\mid \ramp{j}{i}\mid \ramp{k}{i}]$, with $j<i$ and $k<i$, in Proposition~\ref{pr:boundcellbbcase1}, \ref{pr:boundcellbbcase2}, and~\ref{pr:boundcellbbcase3}.
\end{itemize}
So for each $\Z\sg{n}$-basis element, we pick a bounding chain and define the image of this basis element under $\psi_3$ to be this chain.
We extend by linearity to finish the definition of $\psi$.

We also build a chain map $\phi\co P_*\to Q_*$.
It is easy to verify that the following formulas define a chain map:
\begin{itemize}
\item $\phi_0(*)=[\hspace{0.5em}]$,
\item $\phi_1(e_i)=[s_i]$,
\item $\phi_2(c_i)=[s_i\mid s_i]$, $\phi_2(b_i)=[s_{i+1}\mid s_is_{i+1}]$, and 
$\phi_2(d_{ij})=-[s_i\mid s_j]$, for $i<j-1$.
\end{itemize}
(It is also possible to define $\phi_3$, but we don't need this.)
It also follows from our formulas that $\psi_2\circ \phi_2=\mathrm{id}$ on $P_2$.

Now we use our maps to show that $P_*$ is the truncation of a free resolution.
Since the $2$-skeleton of $P_*$ is a Cayley complex for a presentation for $\sg{n}$, it is enough to show that $H_2(P_*)=0$.
Let $[c]\in H_2(P_*)$ be a class with cycle representative $c\in P_2$.
Then $\phi_2(c)\in Q_2$, and since $\phi$ is chain map, $\partial\phi_2(c)=\phi_1(\partial c)=0$.
Since $Q_*$ is a free resolution of $\Z$, we have $H_2(Q_*)=0$, so there is a chain $f\in Q_3$ with $\partial f=\phi_2(c)$.
Then $\psi_3(f)\in P_3$.
As we painstakingly showed, $\psi$ is a chain map, so $\partial\psi_3(f)=\psi_2(\partial f)=\psi_2(\phi_2(c))$.
But this composition is the identity, so $\partial \psi_3(f)=c$.
Since $[c]$ was arbitrary, this shows that $H_2(P_*)=0$.
This means that $P_*$ is a free resolution of $\Z$ as a $\Z \sg{n}$-module up to dimension $3$, which is what we needed to show.
\end{proof}

\section{Preliminaries for the applications}
\subsection{Actions on sets and coinduced modules}
In this section, we fix a commutative ring $R$.
Some of the modules that we consider below are derived from group actions by the free $R$-module functor.
We compute their cohomology by identifying them as coinduced modules and using Shapiro's lemma, which we recall below.
Coinduced modules can be defined more generally, but for simplicity we only coinduce up from untwisted modules.
So suppose $G$ is a group, and $H\leq G$, and $M$ is an $R$-module considered as a trivial $RH$-module.
Then the \emph{coinduced module} of $M$ from $H$ to $G$ is
\[\mathrm{Coind}_H^G(M)=\hom_{RH}(RG,M).\]
Here $\hom_{RH}(RG,M)$ consists of the $R$-module homomorphisms $f\colon RG\to M$ such that, for all $h\in H$ and $a\in RG$, we have $f(ah)=f(a)$.
Then $\mathrm{Coind}_H^G(M)$ is an $RG$-module by, for $f\in \mathrm{Coind}_H^G(M)$, $g\in G$, and $a\in RG$, $(gf)(a)=f(g^{-1}a)$.

The main application of coinduced modules is Shapiro's lemma.
A reference is Brown~\cite{Brown}.
The next theorem is part of Theorem~III.6.2 of Brown, and we sketch a proof of it in the proof of Proposition~\ref{pr:explicitShapiro} below.
\begin{theorem}[Shapiro's lemma for cohomology]
For $G$, $H$, and $M$ as above, there is an isomorphism
\[H^*(G;\mathrm{Coind}_H^G(M))\cong H^*(H;M).\]
\end{theorem}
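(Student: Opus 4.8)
The plan is to prove this via the standard adjunction between restriction and coinduction, realized at the level of resolutions. First I would fix a projective resolution $F_\ast\to R$ of the trivial module $R$ by $RG$-modules. Since $RG$ is free as a left $RH$-module --- a basis being any set of representatives for the cosets in $H\backslash G$ --- every projective $RG$-module is also projective over $RH$, so the restriction $\mathrm{Res}^G_H F_\ast$ is a projective resolution of $R$ over $RH$. Hence $H^\ast(H;M)$ is the cohomology of the cochain complex $\hom_{RH}(\mathrm{Res}^G_H F_\ast, M)$, while $H^\ast(G;\mathrm{Coind}_H^G(M))$ is the cohomology of $\hom_{RG}(F_\ast, \mathrm{Coind}_H^G(M))$.

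The key step is to exhibit a natural isomorphism of cochain complexes
\[
\hom_{RG}\bigl(F_\ast,\ \hom_{RH}(RG,M)\bigr)\ \xrightarrow{\ \cong\ }\ \hom_{RH}\bigl(\mathrm{Res}^G_H F_\ast,\ M\bigr),
\]
given in each degree by $\varphi\mapsto\bigl(x\mapsto\varphi(x)(1)\bigr)$, with inverse sending $\psi$ to the map $x\mapsto\bigl(a\mapsto\psi(a^{-1}x)\bigr)$ for $a\in G$, extended $R$-linearly over $RG$. I would check that both assignments are well defined --- using the right $RH$-invariance $f(ah)=f(a)$ that defines $\mathrm{Coind}_H^G(M)$, together with its consequence $f(h^{-1})=f(1)$ for $h\in H$ --- that they are mutually inverse $R$-module maps, and that the coboundary on the left (induced by $\partial^F$) is carried to the coboundary on the right. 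The last two points are formal: the displayed map is obtained by applying $\hom(F_\ast,-)$ to the counit $\hom_{RH}(RG,M)\to M$, $f\mapsto f(1)$, of the restriction--coinduction adjunction, so it is automatically a chain map and automatically natural in $F_\ast$. Passing to cohomology then yields $H^\ast(G;\mathrm{Coind}_H^G(M))\cong H^\ast(H;M)$. (One could instead observe that both sides are cohomological $\delta$-functors in $M$ that agree in degree $0$ and are effaceable in positive degrees, but the resolution argument is more concrete and produces an explicit comparison map.)

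The only real obstacle here is bookkeeping rather than anything conceptual: one must pin down the chain-level isomorphism precisely and confirm it intertwines the differentials. Moreover, the applications in the later sections require cocycle-level formulas, so rather than leaving the comparison map abstract we will run this verification for the explicit resolutions used elsewhere in the paper; this is carried out in the proof of Proposition~\ref{pr:explicitShapiro}, where the isomorphism above is made explicit and Shapiro's lemma is deduced from it.
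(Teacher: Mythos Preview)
Your proposal is correct and is essentially the same argument the paper gives: both fix a projective $\Z G$-resolution $F_*$, observe it restricts to a projective $\Z H$-resolution, and then invoke the adjunction $\hom_{\Z G}(F_*,\hom_{\Z H}(\Z G,M))\cong \hom_{\Z H}(F_*,M)$ to identify the two cochain complexes. The paper phrases the middle step as hom--tensor adjunction through $F_*\otimes_{\Z G}\Z G\cong F_*$, arriving at exactly your explicit map $\varphi\mapsto(x\mapsto\varphi(x)(1))$; your direct construction of the inverse is a welcome addition but does not change the route.
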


We need to use this isomorphism explicitly on cocycles.
\begin{proposition}\label{pr:explicitShapiro}
The isomorphism in Shapiro's lemma is the composition
\[H^*(G;\mathrm{Coind}_H^G(M))\to H^*(H;\mathrm{Coind}_H^G(M)) \to H^*(H;M),\]
where the left map is induced by the inclusion $H\to G$ and the right map is induced by evaluation $\mathrm{Coind}_H^G(M)\to M$, $f\mapsto f(1)$.
\end{proposition}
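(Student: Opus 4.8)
The plan is to unwind the standard proof of Shapiro's lemma at the level of cochain complexes and recognize the resulting isomorphism as the stated composition. Fix a projective resolution $P_*$ of $R$ as an $RG$-module. Since $RG$ is free as an $RH$-module — choose a set of coset representatives — each $P_i$ is also $RH$-projective, so $P_*$ is likewise a projective resolution of $R$ over $RH$; we therefore compute all three of $H^*(G;\coind_H^G(M))$, $H^*(H;\coind_H^G(M))$, and $H^*(H;M)$ using this single resolution. The adjunction between restriction and coinduction $\coind_H^G$ then gives a natural isomorphism of cochain complexes
\[
\Phi\co \hom_{RG}(P_*,\coind_H^G(M))\xrightarrow{\ \cong\ }\hom_{RH}(P_*,M),\qquad \Phi(\phi)(p)=\phi(p)(1),
\]
and passing to cohomology is, essentially by definition, the Shapiro isomorphism (this is the part of Theorem~III.6.2 of Brown~\cite{Brown} being quoted above).

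Next I would exhibit the factorization of $\Phi$. Restricting scalars on cochains gives the inclusion of cochain complexes $\hom_{RG}(P_*,\coind_H^G(M))\hookrightarrow\hom_{RH}(P_*,\coind_H^G(M))$, and — because both $H^*(G;-)$ and $H^*(H;-)$ are computed with the same resolution $P_*$ — the induced map on cohomology is precisely the restriction map $H^*(G;\coind_H^G(M))\to H^*(H;\coind_H^G(M))$ associated to $H\hookrightarrow G$. Composing with postcomposition by $\mathrm{ev}_1\co\coind_H^G(M)\to M$, $f\mapsto f(1)$, sends an $H$-equivariant cochain $\phi$ to $p\mapsto\phi(p)(1)$. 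Thus at the cochain level $\Phi=(\mathrm{ev}_1)_*\circ(\text{inclusion})$, and taking cohomology yields exactly the identity asserted in the proposition.

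Finally I would record the two small verifications. First, $\mathrm{ev}_1$ is a homomorphism of $RH$-modules: for $h\in H$ and $f\in\coind_H^G(M)$, $(hf)(1)=f(h^{-1})=f(1\cdot h^{-1})=f(1)$ by the defining relation $f(ah')=f(a)$ and the triviality of $M$, so $(\mathrm{ev}_1)_*$ is defined on $H$-cohomology. Second, that $\Phi$ is indeed the isomorphism appearing in Shapiro's lemma, i.e.\ that it is inverse to the standard comparison map; this is the routine adjunction check — the inverse sends an $RH$-cochain $\psi$ to the $RG$-cochain $p\mapsto(g\mapsto\psi(g^{-1}p))$, and the two composites recover $\psi(p)=\psi(1^{-1}p)$ and $\phi$ respectively, using $RG$-linearity of $\phi$. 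The only point needing care is the one flagged in the second paragraph: one must insist on using the \emph{same} resolution $P_*$ for $H^*(G;-)$ and $H^*(H;-)$ (legitimate because $P_*$ is $RH$-projective), so that restriction is literally the inclusion of cochain complexes; with different resolutions one would have to track comparison chain maps and the clean factorization would be obscured. Past that subtlety the statement is pure bookkeeping with the restriction–coinduction adjunction.
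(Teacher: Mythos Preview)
Your argument is correct and follows essentially the same route as the paper: both fix a single projective $\Z G$-resolution, use it to compute all three cohomology groups, invoke the restriction--coinduction (equivalently hom--tensor) adjunction to get the cochain-level isomorphism $\phi\mapsto(p\mapsto\phi(p)(1))$, and then recognize this as the composite of the inclusion of $G$-cochains into $H$-cochains with postcomposition by $\mathrm{ev}_1$. Your write-up is a bit more explicit in checking that $\mathrm{ev}_1$ is $RH$-linear and in exhibiting the inverse of $\Phi$, but the idea and structure are the same.
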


\begin{proof}
    First we sketch the proof of Shapiro's lemma for cohomology.
    Fix a projective resolution $F_*$ for $\Z$ as a $\Z G$-module.
    Then $F_*$ is also a projective resolution for $\Z$ as a $\Z H$-module.
    For any $i$,
    \[
    \begin{split}
        H^i(G;\mathrm{Coind}_H^G(M))&= H^i(\hom_{\Z G}(F_*,\hom_{\Z H}(\Z G,M))) \\
        &\cong H^i(\hom_{\Z H}(F_*\otimes_{\Z G} \Z G,M)) \\
        &\cong H^i(\hom_{\Z H}(F_*,M)) \\
        &= H^i(H;M).
    \end{split}
    \]
    If $\alpha\co F_i\to \mathrm{Coind}_H^G(M)$ is a cocycle, the hom-tensor adjunction sends $\alpha$ to the map 
    \[F_i\otimes_{\Z G}\Z G\to M,\quad c\otimes g\mapsto \alpha_c(g).\]
    Since $F_i\to F_i\otimes_{\Z G}\Z G$ by $c\mapsto c\otimes 1$ is an isomorphism, the next isomorphism sends $\alpha$ to 
    \[F_i\to M, \quad c\mapsto \alpha_c(1).\]
    But this cocycle also represents the image of $[\alpha]$ under the composite map in the statement.
\end{proof}

Coinduced modules do arise naturally.
Let $G$ be a group acting transitively on a finite set $S$.  Let $F$ be the free $R$-module with basis $S$, considered as an $RG$-module by extending the action on basis elements linearly.
It turns out that  $F$ is a coinduced module, as we explain below.  

A typical element of $F$ has the form $\sum_{s\in S}r_s s$ with coefficients $r_s\in R$.
For  any $t\in S$, we have an  $R$-linear projection map $\pi_t\co F\to R$ by $\pi_t(\sum_{s\in S}r_s s)=r_t$.
Pick a basepoint $e\in S$ and let $H\leq G$ be the stabilizer of $e$.

\begin{proposition}\label{pr:freeascoinduced}
We have 
$F \cong \mathrm{Coind}_H^G(R),$
where $R$ is the trivial $H$-module.
This isomorphism is $\psi\co F\to \mathrm{Coind}_H^G(R)$, given by $\psi(v)=\psi_v\co RG\to R$, for $v\in F$, with 
    \[\psi_v(g)=\pi_{e}(g^{-1}v).\] 
\end{proposition}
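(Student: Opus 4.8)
The plan is to verify directly that $\psi$ is a well-defined homomorphism of $RG$-modules, and then to exhibit an explicit two-sided inverse; a bijective module homomorphism is an isomorphism, so this suffices. The computation underlying everything is the bookkeeping identity
\[\pi_e(g^{-1}v)=\pi_{ge}(v)\qquad\text{for all }v\in F,\ g\in G.\]
Indeed, writing $v=\sum_{s\in S}r_s s$ we have $g^{-1}v=\sum_{s\in S}r_s(g^{-1}s)$, and since $g^{-1}$ permutes the basis $S$, the only term contributing to the coefficient of $e$ is the one with $g^{-1}s=e$, i.e.\ $s=ge$. I would record this identity first, since it gets used repeatedly.

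Granting the identity, well-definedness of $\psi$ — that $\psi_v$ genuinely lies in $\mathrm{Coind}_H^G(R)=\hom_{RH}(RG,R)$ — is quick. The map $\psi_v$ is $R$-linear by construction, and for $h\in H$ and $g\in G$ we get $\psi_v(gh)=\pi_e((gh)^{-1}v)=\pi_{ghe}(v)=\pi_{ge}(v)=\psi_v(g)$, using $he=e$; extending $R$-linearly gives $\psi_v(ah)=\psi_v(a)$ for all $a\in RG$. Additivity and $R$-linearity of $v\mapsto\psi_v$ are immediate from $R$-linearity of $v\mapsto g^{-1}v$ and of $\pi_e$. For $G$-equivariance, recall the action $(gf)(a)=f(g^{-1}a)$ on $\mathrm{Coind}_H^G(R)$; for $g\in G$ and a basis element $a\in G$ we compute $\psi_{gv}(a)=\pi_e(a^{-1}gv)=\pi_e((g^{-1}a)^{-1}v)=\psi_v(g^{-1}a)=(g\psi_v)(a)$, and this forces $\psi_{gv}=g\psi_v$ since $G$ is an $R$-basis of $RG$. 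Hence $\psi$ is a map of $RG$-modules.

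It remains to invert $\psi$. For each $t\in S$ choose $g_t\in G$ with $g_te=t$ (possible by transitivity), and define $\Phi\co\mathrm{Coind}_H^G(R)\to F$ by $\Phi(f)=\sum_{t\in S}f(g_t)\,t$. This is independent of the chosen representatives: if also $g_t'e=t$, then $g_t^{-1}g_t'$ fixes $e$, hence lies in $H$, so $f(g_t')=f(g_t\cdot g_t^{-1}g_t')=f(g_t)$ by the defining property of the coinduced module. Then $\Phi$ is $R$-linear, the identity above gives $\Phi(\psi_v)=\sum_{t\in S}\psi_v(g_t)\,t=\sum_{t\in S}\pi_{g_te}(v)\,t=\sum_{t\in S}\pi_t(v)\,t=v$, and for $g\in G$ we get $\psi_{\Phi(f)}(g)=\pi_e(g^{-1}\Phi(f))=\pi_{ge}(\Phi(f))=f(g_{ge})=f(g)$, the last step because $g$ is itself an admissible choice for $g_{ge}$ and $\Phi$ does not depend on that choice. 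Thus $\Phi$ and $\psi$ are mutually inverse, so $\psi$ is an isomorphism of $RG$-modules.

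I do not anticipate a genuine obstacle here: the whole argument is bookkeeping with the permutation action on $S$. The only places needing care are keeping the variance straight in $\pi_e(g^{-1}v)=\pi_{ge}(v)$ and applying it consistently, and the two independence-of-representative checks — that $\psi_v$ satisfies the $RH$-invariance condition and that $\Phi$ is well defined — both of which reduce to the fact that the stabilizer of $e$ is exactly $H$.
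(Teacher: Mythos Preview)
Your proof is correct and follows essentially the same approach as the paper. The only cosmetic difference is that you package bijectivity as an explicit two-sided inverse $\Phi$, whereas the paper proves injectivity directly and then surjectivity by constructing the same preimage $v=\sum_s f(g_s)s$; the underlying computations (the identity $\pi_e(g^{-1}v)=\pi_{ge}(v)$ and the independence-of-representative check) are identical in both.
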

\begin{proof}
    First we explain how the $\pi_t$ functions transform.
    For any $t\in S$, $v\in F$ and $g\in G$, we have $\pi_t(gv)=\pi_{g^{-1}t}(v)$.
    To see this, write $v=\sum_{s\in S}r_s s$; then
    \[\pi_t(gv)=\pi_t(\sum_{s\in S}r_s gs)=\pi_t(\sum_{s\in S}r_{g^{-1}s} s)=r_{g^{-1}t}=\pi_{g^{-1}t}(v).\]
    
    Now fix $v\in F$.
    It is straightforward to verify that $\psi_v$ is $R$-linear.  
    To see that $\psi_v$ is $H$-equivariant, let $h\in H$.  
    Then for $g\in G$
    \[\phi_v(gh)
    =\pi_{e}(h^{-1}g^{-1}v)
    =\pi_{he}(g^{-1}v)
    =\pi_{e}(g^{-1}v)
    =\phi_v(g).
    \]
    So $\psi_v$ is a well defined element of $\mathrm{Coind}_H^G(R)$.

    Now let $v$ vary.  Again, it is easy to see that $\psi$ is $R$-linear.
    To see that $\psi$ is $G$-equivariant, let $g\in G$ and $v\in F$.
    Let $k\in G$.  Then
    \[\psi_{gv}(k)=\pi_e(k^{-1}gv)=\psi_v(g^{-1}k)=(g\psi_v)(k).\]
    This shows that $\psi_{gv}=g\psi_v$, meaning that $\psi$ is $G$-equivariant.
    So $\psi$ is a well defined $RG$-linear map.

    Now suppose $v\in F$ is in the kernel of $\psi$.
    Write $v=\sum_{s\in S} r_s s$.
    Since the action of $G$ on $S$ is transitive, for each $t\in S$, there is $g\in G$ with $ge=t$.
    Then $0=\psi_v(g)=\pi_e(g^{-1}v)=\pi_{ge}(v)=r_t$.
    So $v=0$, and the kernel is trivial, so $\psi$ is injective.

    Finally, suppose $f\in \mathrm{Coind}_H^G(R)$.  Since $S$ is finite, for each $s\in S$, we can choose an element $g_s\in G$ with $g_se=s$, and set $v=\sum_{s\in S} f(g_s)s$.
    Fix $g\in G$ and set $t=ge$.
    Then $\psi_v(g)=\pi_e(g^{-1}v)=\pi_{ge}(v)=f(g_{t})$.
    But $g_{t}e=ge$, so $g_{t}^{-1}g\in H$, so $f(g_{t})=f(g_{t}g_{t}^{-1}g)=f(g)$.
    Since $g$ was arbitrary, $\psi_v=f$, and $\psi$ is surjective.
    So $\psi$ is an isomorphism.
\end{proof}

\subsection{Untwisted second homology and cohomology}

Now we recall some classical facts.
For the untwisted module $\Z$, $H_1(\sg{n};\Z)=\Z/2\Z$ for $n\geq 2$.
(This follows from the presentation for $\sg{n}$ using the interpretation of $H_1$ as the abelianization.)
For $n\geq 4$, we have $H_2(\sg{n};\Z)=\Z/2\Z$ (and this is trivial for $n=2$ and $n=3$).
This is due to Schur~\cite{Schur}, and predates the definition of homology.

We need explicit representatives of the low-dimensional homology classes for $\sg{n}$ using $P_*\otimes_{\Z \sg{n}}\Z$ as the chain complex.
For $H_1(\sg{n};\Z)$, it is easy to see that $e_1$ represents the nontrivial class (for example, by reasoning about the abelianization).
For $H_2(\sg{n};\Z)$, finding a representative takes a little more work; in doing so, we recover Schur's computation of $H^2(\sg{n};\Z)$.
\begin{lemma}\label{le:H2Sn}
Let $n\geq 4$.  Then $H_2(\sg{n};\Z)=\Z/2\Z$.
Using $P_*\otimes_{\Z \sg{n}} \Z$ to compute $H_2(\sg{n};\Z)$, the nontrivial homology class in $H_2(\sg{n};\Z)$ is $[d_{13}]$.
\end{lemma}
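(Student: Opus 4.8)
The plan is to compute $H_2(\sg{n};\Z)$ directly from the chain complex $P_*\otimes_{\Z\sg{n}}\Z$, i.e.\ from the complex obtained from $P_*$ by setting every $s_k$ equal to $1$ in the boundary formulas of Definition~\ref{de:Snres}. First I would record these trivialized boundaries. In degrees $1$ and $2$ one gets $\partial e_i=0$, $\partial c_i=2e_i$, $\partial b_i=e_{i+1}-e_i$, and $\partial d_{ij}=0$. In degree $3$ one gets $\partial c^{3,1}_i=0$, $\partial c^{3,3}_{ijk}=0$, $\partial c^{3,6}_i=0$, while $\partial c^{3,2}_{ij}=\pm 2d_{kl}$ (where $(k,l)$ is the ordered pair underlying $\{i,j\}$), $\partial c^{3,7}_i=2d_{i,i+2}$, $\partial c^{3,4}_{ij}$ equals $d_{ij}-d_{i+1,j}$ or $d_{j,i+1}-d_{ji}$ (a difference of two $d$-generators), and $\partial c^{3,5}_i=c_i-c_{i+1}+2b_i$. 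Each of these is immediate by inspection.

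Next I would compute $\ker(\partial_2\co P_2\otimes\Z\to P_1\otimes\Z)$. Writing a general $2$-chain as $\sum a_ic_i+\sum\beta_ib_i+\sum\gamma_{ij}d_{ij}$, the vanishing of its boundary is equivalent to $2a_k=\beta_k-\beta_{k-1}$ for all $k$ (with the convention $\beta_0=\beta_{n-1}=0$). Thus all $\beta_i$ are forced to be even, the $a_i$ are then determined by the $\beta_i$, and the $\gamma_{ij}$ are free; consequently $\ker\partial_2$ is free abelian with basis $\{d_{ij}\}\cup\{z_m:1\le m\le n-2\}$, where $z_m:=c_m-c_{m+1}+2b_m$. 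Comparing with the list above, $\partial c^{3,5}_m=z_m$, while every other degree-$3$ boundary lies in the summand $D:=\langle d_{ij}\rangle$; hence $\im\partial_3=\langle z_1,\dots,z_{n-2}\rangle\oplus N$, where $N\subseteq D$ is generated by the elements $2d_{ij}$, $2d_{i,i+2}$, and the differences $d_{ij}-d_{i+1,j}$ (for $j\ge i+3$) and $d_{j,i+1}-d_{ji}$ (for $j\le i-2$). Therefore $H_2(\sg{n};\Z)=\ker\partial_2/\im\partial_3\cong D/N$.

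Now I would analyze $D/N$. Since $2d_{ij}\in N$, every $d$-generator has order dividing $2$ in $D/N$. Using the relation $d_{j,i+1}-d_{ji}\in N$ repeatedly shows $d_{a,b}\equiv d_{a,n-1}\pmod N$ for every valid second index $b$; and using $d_{ij}-d_{i+1,j}\in N$ with $j=n-1$ shows $d_{1,n-1}\equiv d_{2,n-1}\equiv\cdots\equiv d_{n-3,n-1}\pmod N$. Since the first index of a $d$-generator is always at most $n-3$, every $d_{ij}$ is congruent mod $N$ to $d_{1,n-1}$, hence to $d_{13}$. Thus $D/N$ is cyclic, generated by the class of $d_{13}$, of order dividing $2$. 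To see that the order is exactly $2$ --- equivalently that $d_{13}\notin N$ --- I would use the homomorphism $\bar\psi\co D\to\Z/2\Z$ sending every $d_{ij}$ to $1$: it annihilates all the listed generators of $N$ (the even ones obviously, and each difference maps to $1-1=0$), so it factors through $D/N$ and sends the class of $d_{13}$ to $1\neq 0$. Hence $H_2(\sg{n};\Z)\cong\Z/2\Z$ with nontrivial class $[d_{13}]$, which also recovers Schur's computation~\cite{Schur}. (Alternatively, one could simply quote Schur's theorem that $H_2(\sg{n};\Z)\cong\Z/2\Z$ for $n\ge 4$, which combined with the fact that $D/N$ is cyclic generated by $[d_{13}]$ already pins down $[d_{13}]$ as the nontrivial class.)

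The step I expect to be the main obstacle is the bookkeeping in the two middle paragraphs: correctly isolating the basis $\{d_{ij}\}\cup\{z_m\}$ of $\ker\partial_2$ (tracking the parity condition on the $\beta_i$ and the index ranges $1\le i$, $i+2\le j\le n-1$ of the $d$-generators) and then chasing the difference relations in $N$ through the various boundary index-range edge cases (such as $j=i+2$, or $i$ and $j$ near the ends of their ranges, where a cell appearing in a boundary formula can degenerate). None of this is conceptually hard, but it has to be done with care; the rest is a short formal argument.
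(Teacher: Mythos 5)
Your proof is correct and follows essentially the same route as the paper: trivialize the $\sg{n}$-action, identify the $2$-cycles as the $d_{ij}$ together with the elements $c_m-c_{m+1}+2b_m$, and use the boundaries of the $c^{3,2}$-, $c^{3,4}$-, $c^{3,5}$-, and $c^{3,7}$-cells to reduce everything to $[d_{13}]$ of order dividing $2$. Your parity homomorphism $D\to\Z/2\Z$ makes the nontriviality of $[d_{13}]$ self-contained where the paper leans on the previously cited theorem of Schur, and you correctly record that $c^{3,7}_i$ contributes $2d_{i,i+2}$ (harmlessly redundant, but not $0$ as the paper's proof loosely asserts).
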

\begin{proof}
After tensoring with $\Z$ to take coinvariants, the boundary on $P_2$ becomes $c_i\mapsto 2e_i$, $b_i\mapsto e_{i+1}-e_i$, and $d_{ij}\mapsto 0$.
From this it is straightforward to show that the $2$-cycles are generated by all the $d_{ij}\otimes 1$ elements, and for $i$ from $1$ to $n-2$, the elements $2b_i +c_i-c_{i+1}$.
The boundary of $c_i^{3,5}$ is precisely this second kind of generator.
The boundary of $c_{ij}^{3,2}$ shows that each $[d_{ij}]$ has order at most $2$ in $H_2(\sg{n};\Z)$.
The boundary of $c_{ij}^{3,4}\otimes 1$ shows that for all $i$ and $j$ for which it makes sense, we have that $[d_{ij}]=[d_{i+1,j}]$ and $[d_{ji}]=[d_{j,i+1}]$.
The other $3$-chain generators map to $0$ after tensoring with $\Z$. 
 The lemma follows.
\end{proof}

For any abelian group $A$ and positive integer $k$, we use $A[k]$ to denote the $k$-torsion of $A$:
\[A[k]=\{a\in A\,|\,ka=0\}.\]
The following is well known:
\begin{proposition}\label{pr:Snuntwisted}
    Let $A$ be an abelian group.
    For all $n\geq 2$, we have
    \[H^1(\sg{n};A)=A[2].\]
    We have
    \[H^2(\sg{n};A)= 
    \left\{
    \begin{array}{cc}
    A/2A & \text{if $n\in\{2,3\}$} \\
    A[2]\oplus A/2A &\text{if $n\geq 4$}
    \end{array}
    \right.
    \]
\end{proposition}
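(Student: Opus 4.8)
The plan is to reduce to the integral homology of $\sg{n}$ in low degrees and then invoke the Universal Coefficient Theorem. Recall from the paragraph preceding Lemma~\ref{le:H2Sn} that $H_0(\sg{n};\Z)=\Z$, that $H_1(\sg{n};\Z)=\Z/2\Z$ for all $n\geq 2$, and (due to Schur, with the case $n\geq 4$ and an explicit representative recorded in Lemma~\ref{le:H2Sn}) that $H_2(\sg{n};\Z)=\Z/2\Z$ for $n\geq 4$ while $H_2(\sg{n};\Z)=0$ for $n\in\{2,3\}$.

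First I would note that, by Theorem~\ref{th:main}, $P_*$ is a free $\Z\sg{n}$-resolution of $\Z$ through dimension $3$, so $H^i(\sg{n};A)=H^i(\hom_{\Z\sg{n}}(P_*,A))$ for $i\in\{0,1,2\}$; since $A$ is untwisted there is a natural identification $\hom_{\Z\sg{n}}(P_*,A)\cong\hom_{\Z}(P_*\otimes_{\Z\sg{n}}\Z,A)$. Thus $H^i(\sg{n};A)$ is the $i$-th cohomology of the $\Z$-dual of the chain complex $C_*:=P_*\otimes_{\Z\sg{n}}\Z$, which is a complex of \emph{free} abelian groups, and moreover $H_i(C_*)=H_i(\sg{n};\Z)$ for $i\in\{0,1,2\}$.

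The main step is then to apply the algebraic Universal Coefficient Theorem over the PID $\Z$ to $C_*$: for $i\in\{1,2\}$ there is a short exact sequence
\[0\to\ext^1_{\Z}(H_{i-1}(\sg{n};\Z),A)\to H^i(\sg{n};A)\to\hom_{\Z}(H_i(\sg{n};\Z),A)\to 0\]
that splits (not naturally). For $i=1$ this gives $\ext^1_{\Z}(\Z,A)=0$ on the left and $\hom_{\Z}(\Z/2\Z,A)=A[2]$ on the right, so $H^1(\sg{n};A)=A[2]$. For $i=2$ it gives $\ext^1_{\Z}(\Z/2\Z,A)=A/2A$ on the left, and on the right $\hom_{\Z}(H_2(\sg{n};\Z),A)$ is $0$ when $n\in\{2,3\}$ and $A[2]$ when $n\geq 4$; using the splitting then yields $H^2(\sg{n};A)=A/2A$ for $n\in\{2,3\}$ and $H^2(\sg{n};A)=A[2]\oplus A/2A$ for $n\geq 4$.

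There is no real obstacle here: the content is entirely in the classical integral homology of $\sg{n}$ in degrees $\leq 2$, and the remainder is bookkeeping of $\ext^1$ and $\hom$ over $\Z$. One could instead argue directly from the differentials of $\hom_{\Z\sg{n}}(P_*,A)$ — each $(s_i-1)$ acts as $0$ and each $(s_i+1)$ as multiplication by $2$ — but then one would have to check by hand that for $n\geq 4$ all the cocycle classes dual to the $d_{ij}$ coincide in $H^2$ and are $2$-torsion, and separately identify the extension class; the Universal Coefficient argument packages precisely this.
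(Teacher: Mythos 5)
Your proof is correct and follows essentially the same route as the paper: the paper deduces Proposition~\ref{pr:Snuntwisted} from Proposition~\ref{pr:untwistedcohomreps}, whose proof is exactly the universal coefficient sequence
$0\to \ext^1_{\Z}(H_{i-1}(\sg{n}),A)\to H^i(\sg{n};A)\to \hom_{\Z}(H_i(\sg{n}),A)\to 0$
combined with $H_1(\sg{n};\Z)=\Z/2\Z$ and $H_2(\sg{n};\Z)$ as computed in Lemma~\ref{le:H2Sn}. The only difference is that the paper additionally identifies explicit cocycle representatives ($\kappa^0_r$, $\alpha^0_r$, $\beta^0_r$) realizing the splitting, which the later applications require but the bare isomorphism statement does not.
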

\begin{proof}
We prove this as a corollary of Proposition~\ref{pr:untwistedcohomreps} below.
\end{proof}

Because we will need to do explicit computations later, we define cocycles representing these cohomology classes.
Since $P_1$ and $P_2$ are free $\Z \sg{n}$-modules, to define our cocycles it suffices to define them on the bases given above.
\begin{definition}
    Let $A$ be an abelian group, considered as a $\Z \sg{n}$-module with the trivial action. 
    For $r\in A[2]$,
    define $\kappa^0_r\co P_1\to A$ by $\kappa^0_r(e_i)=r$ for all $i$ from $1$ to $n-1$.

    For $r\in A$, define $\alpha^0_r\co P_2\to A$ by $\alpha^0_r(c_i)=r$, $\alpha^0_r(d_{ij})=0$, and $\alpha^0_r(b_i)=0$, for all choices of $i$ and $j$ that make sense.

    Now assume $n\geq 4$.
    For $r\in A[2]$, define $\beta^0_r\co P_2\to A$ by
    $\beta^0_r(c_i)=0$, $\beta^0_r(d_{ij})=r$, and $\beta^0_r(b_i)=0$, for all choices of $i$ and $j$ that make sense.
\end{definition}
The reason for the superscript $0$ is to fit with notation that we use for permutation modules.

\begin{proposition}\label{pr:untwistedcohomreps}
    For all choices of $r$ as above, the functions $\kappa^0_r$, $\alpha^0_r$, and $\beta^0_r$ are cocycles.
    
    The map $A[2]\to H^1(\sg{n};A)$ by $r\mapsto [\kappa^0_r]$ is an isomorphism.

    If $n\geq 4$, then the map $A[2]\oplus A/2A\to H^2(\sg{n};A)$ by $(r,[s])\mapsto [\beta^0_r+\alpha^0_s]$ is an isomorphism.
    Similarly, if $n$ is $2$ or $3$, then $A/2A\to H^2(\sg{n};A)$ by $[s]\mapsto [\alpha^0_s]$ is an isomorphism.
\end{proposition}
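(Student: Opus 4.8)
The plan is to compute $H^1(\sg{n};A)$ and $H^2(\sg{n};A)$ directly from the cochain complex $\hom_{\Z\sg{n}}(P_*,A)$ by writing down the groups of cocycles and coboundaries explicitly. Since $A$ carries the trivial $\sg{n}$-action, $\hom_{\Z\sg{n}}(P_i,A)$ is identified with the set of $A$-valued functions on the free $\Z\sg{n}$-basis of $P_i$ listed in Definition~\ref{de:Snres}, and the coboundary $\delta g$ of such a function is obtained by pairing $g$ with the boundary $\partial^P$ and then letting every $s_k$ act as $1$ on $A$. The first step is to carry this out: every group-ring coefficient collapses, with $(s_i-1)\mapsto 0$, $(s_i+1)\mapsto 2$, $(1-s_i+s_{i+1}s_i)\mapsto 1$, and so on. One obtains, for $f\in\hom(P_1,A)$, that $\delta f(c_i)=2f(e_i)$, $\delta f(b_i)=f(e_{i+1})-f(e_i)$, and $\delta f(d_{ij})=0$; and for $g\in\hom(P_2,A)$, that $\delta g$ vanishes on $c^{3,1}_i$, $c^{3,3}_{ijk}$, and $c^{3,6}_i$, while $\delta g(c^{3,2}_{ij})=\mp 2g(d_{ij})$, $\delta g(c^{3,5}_i)=2g(b_i)-g(c_{i+1})+g(c_i)$, $\delta g(c^{3,7}_i)=2g(d_{i,i+2})$, and $\delta g(c^{3,4}_{ij})$ equates $g(d_{ij})$ with $g(d_{i+1,j})$, or $g(d_{j,i+1})$ with $g(d_{ji})$, according to which case of the boundary formula applies.

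Degree $1$ is then immediate: $B^1=0$, and $f$ is a cocycle if and only if all $f(e_i)$ equal a single value $r\in A$ with $2r=0$, i.e.\ $Z^1=\{\kappa^0_r:r\in A[2]\}$. This at once shows that each $\kappa^0_r$ is a cocycle and that $r\mapsto[\kappa^0_r]$ is an isomorphism $A[2]\to H^1(\sg{n};A)$.

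For degree $2$, the formulas above say that $g\in Z^2$ precisely when (a) $g(c_{i+1})-g(c_i)=2g(b_i)$ for $1\le i\le n-2$, (b) $g(d_{ij})\in A[2]$ for every generator $d_{ij}$, and (c) $g(d_{ij})$ is constant along the identifications coming from the $c^{3,4}$-generators (conditions (b) and (c) being vacuous when $n\le 3$). Checking (a)--(c) against the definitions shows directly that $\alpha^0_s$ and $\beta^0_r$ are cocycles. Next, a cocycle $g$ is determined by the data $g(c_1),g(b_1),\dots,g(b_{n-2})$ (which forces $g(c_2),\dots,g(c_{n-1})$ via (a)) together with, when $n\ge4$, the common value of all the $g(d_{ij})$, an element of $A[2]$; here one needs that the $c^{3,4}$-identifications connect the $d_{ij}$ into a single class, which holds because for $n=4$ there is only $d_{13}$, and for $n\ge5$ one chains together the moves $g(d_{1,q})=g(d_{1,q+1})$ (from the cells $c^{3,4}_{q,1}$) and $g(d_{i,j})=g(d_{i+1,j})$ (from $c^{3,4}_{i,j}$) to reach every $d_{ij}$ starting from $d_{13}$. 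Hence $Z^2\cong A\times A^{n-2}\times A[2]$, with the $A[2]$ factor present exactly when $n\ge4$. On the other hand $B^2$ is the set of tuples $(2a_1,\;a_2-a_1,\dots,a_{n-1}-a_{n-2})$ with vanishing $d$-component; letting $a_1,\dots,a_{n-1}$ range over $A$ (and noting that once $a_1$ is chosen the successive differences are arbitrary) shows $B^2=2A\times A^{n-2}\times\{0\}$. Quotienting kills the $A^{n-2}$ summand and leaves $A/2A$ from the $g(c_1)$-coordinate and $A[2]$ from the $d$-coordinate, so $H^2(\sg{n};A)\cong A/2A\oplus A[2]$ for $n\ge4$ and $\cong A/2A$ for $n\in\{2,3\}$. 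Finally, tracing the identifications $\alpha^0_s\leftrightarrow$ ($g(c_1)=s$, all $g(b_i)=0$, $d$-value $0$) and $\beta^0_r\leftrightarrow$ ($g(c_1)=0$, all $g(b_i)=0$, $d$-value $r$) through this computation shows that $(r,[s])\mapsto[\beta^0_r+\alpha^0_s]$ (respectively $[s]\mapsto[\alpha^0_s]$ when $n\le3$) is the asserted isomorphism.

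The main obstacle is the degree-$2$ computation, and within it the connectivity claim for the $c^{3,4}$-identifications: one must track which generators $c^{3,4}_{ij}$ actually exist (recalling that $d_{ij}$ requires $i<j$ with $j\ge i+2$, and that every term appearing in the relevant boundary must itself be a legitimate generator of $P_2$), and verify that the available identifications join all the $d_{ij}$. The rest is a direct but slightly delicate linear-algebra argument over the not-necessarily-free abelian group $A$: solving for $a_{i+1}$ in terms of $a_i$ and $g(b_i)$ is legitimate since it is only addition, whereas dividing by $2$ is not, and this asymmetry is exactly what produces the $A/2A$ and $A[2]$ summands.
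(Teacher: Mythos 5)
Your proposal is correct, and it takes a genuinely different route from the paper. You compute $H^1$ and $H^2$ directly from the cochain complex $\hom_{\Z\sg{n}}(P_*,A)$, using the fact that the trivial action collapses all group-ring coefficients to integers; the resulting cocycle and coboundary conditions you list check out (including the point that $c^{3,1}$, $c^{3,3}$, $c^{3,6}$ impose nothing, that $c^{3,2}$ and $c^{3,7}$ force the $d$-values into $A[2]$, and that the $c^{3,4}$-identifications, whose existence conditions you correctly track, connect all the $d_{ij}$ to $d_{13}$). The paper instead verifies the cocycle conditions by the same evaluation (left as an exercise) and then derives the isomorphism statements from the universal coefficients theorem: for $H^1$ it uses that $H_0$ is free so $H^1\cong\hom(H_1(\sg{n}),A)$, and for $H^2$ it sets up a map of short exact sequences $0\to A/2A\to A[2]\oplus A/2A\to A[2]\to 0$ onto the UCT sequence and applies the five lemma, feeding in the prior computations $H_1(\sg{n})=\Z/2\Z$ (generated by $[e_1]$) and $H_2(\sg{n})=\Z/2\Z$ (generated by $[d_{13}]$, Lemma~\ref{le:H2Sn} — which itself is a coinvariants computation dual in spirit to yours). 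Your approach is more self-contained and elementary (no UCT, no Ext bookkeeping, no prior knowledge of $H_1$ and $H_2$ of $\sg{n}$), at the cost of a somewhat longer explicit linear-algebra argument; the paper's approach is shorter given the classical inputs and makes the relationship to Schur's computation transparent. Both correctly isolate the same two phenomena — the $A/2A$ from the $c$-coordinate (addition is invertible, division by $2$ is not) and the $A[2]$ from the $d$-coordinate.
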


\begin{proof}
    It is quick to verify that these functions are cocycles by evaluating their coboundaries on generators, using the generators and boundaries for $P_*$ given in Definition~\ref{de:Snres} above.
    We leave this as an exercise.

    To verify the statements about isomorphisms, we turn to the universal coefficients theorem.  Brown~\cite{Brown}, Section~I.0, is a reference.  For $i=1,2$, we have exact sequences
    \[0\to \mathrm{Ext}^1_\Z(H_{i-1}(\sg{n}),A)\to H^i(\sg{n};A)\to \hom_\Z(H_i(\sg{n}),A)\to 0.\]
    (Suppressed coefficients are $\Z$, untwisted.)
    If $i=1$, then since $H_0(\sg{n})=\Z$ is free abelian, the exact sequence says that evaluation defines an isomorphism from $H^1(\sg{n};A)$ to $\hom_\Z(H_1(\sg{n}),A)$.
    Since $H_1(\sg{n})=\Z/2\Z$ is generated by $[e_1]$, and $\kappa_a^0(e_1)=a$, we see that the composition $A[2]\to H^1(\sg{n};A)\to \hom_\Z(H_1(\sg{n}),A)$ is an isomorphism.
    This proves the statement about $H^1(\sg{n};A)$.

    For $i=2$ and $n\geq 4$, consider the following diagram with exact rows:
    \begin{equation}\label{eq:uctdiagram}
    \xymatrix{
    0 \ar[r] & A/2A \ar[r] \ar[d] & A[2]\oplus A/2A \ar[r] \ar[d] & A[2] \ar[r] \ar[d] & 0 \\
    0 \ar[r] & \mathrm{Ext}^1_\Z(H_{1}(\sg{n}),A) \ar[r] & H^2(\sg{n};A) \ar[r] & \hom_\Z(H_2(\sg{n}),A) \ar[r] & 0.
    }
    \end{equation}
    The middle vertical map is the map in the statement, and the maps in the first row are the direct sum coordinate maps.
    The right vertical map sends $r$ to $[\beta^0_r]$, which is an isomorphism since $H_2(\sg{n};\Z)$ is generated by $[d_{13}]$ by Lemma~\ref{le:H2Sn}.
    So the right square commutes.
    We have a chain homotopy equivalence of projective resolutions by
    \begin{equation}\label{eq:extres}
    \xymatrix{
    0 \ar[r] \ar[d] & \Z \ar[r]^2 \ar[d]^{c_1} & \Z \ar[r] \ar[d]^{e_1} & \Z/2\Z \ar@{=}[d] \\
    Z_2(P_*\otimes \Z)  \ar[r] & P_2\otimes \Z \ar[r]^\partial & Z_1(P_*\otimes \Z) \ar[r] & H_1(\sg{n};\Z).
    }
    \end{equation}
    (We are tensoring over $\Z \sg{n}$ here.)
    For a cocycle $\gamma\colon P_2\to A$, we can consider $\gamma$ as a map  
    \[P_2\otimes \Z\to A \text{ by } c\otimes 1\mapsto \gamma(c),\]
    since the action on $A$ is trivial.
    If we further assume that $[\gamma]$ maps to the trivial element of $\hom_\Z(H_2(\sg{n}),Z)$, then it restricts to $0$ on $Z_2(P_*\otimes \Z)$ and defines an element of $\mathrm{Ext}^1_\Z(H_{1}(\sg{n}),A)$.
    This association is an isomorphism, and its inverse
    \[\mathrm{Ext}^1_\Z(H_{1}(\sg{n}),A)\stackrel{\cong}{\longrightarrow} \ker\big(H^2(\sg{n};A)\to \hom_\Z(H_2(\sg{n});A)\big)\]
    is the map in the universal coefficients theorem.
    Let $s\in A$; then since $\alpha_s^0(d_{13})=0$, we have $[\alpha_s^0]\in \mathrm{Ext}^1_\Z(H_{1}(\sg{n}),A)$.
    The vertical homotopy equivalence in diagram~\eqref{eq:extres} induces the isomorphism $\mathrm{Ext}^1_\Z(H_{1}(\sg{n}),A)\to A/2A$, and it takes $[\alpha_s^0]$ to $[s]$.
    The inverse of this map is the left vertical map in diagram~\eqref{eq:uctdiagram}, and it follows from these definitions that the left square commutes.
    So the middle vertical map is an isomorphism by the five lemma.
    If $n$ is $2$ or $3$, the conclusion follows by the same argument, but with $A[2]$ replaced by $0$.
\end{proof}

As an application, we study the low-dimensional cohomology of $\sg{n}$ in the permutation modules of the $2$-row Young diagrams.
Recall that a \emph{Young diagram} is a partition of $n$ given as a left-justified configuration of boxes, and a \emph{Young tableau} is a bijective filling of these boxes by the integers $1$ through $n$.
A \emph{tabloid} is an equivalence class of tableaux under reorderings of rows.
So a tabloid for the partition $(n-k,k)$ has the same data as a $k$-subset of $\{1,2,\dotsc,n\}$.

Let $n\geq 4$, and let $k$ be an integer with $1\leq k\leq n/2$, and consider the action of $\sg{n}$ on the free module with basis given by the $(n-k,k)$-tabloids.
To apply Shapiro's lemma, we need to consider the stabilizer of a basis element.
This is the same as a point stabilizer in the natural action of $\sg{n}$ on the $k$-subsets of $\{1,2,\dotsc,n\}$.
The stabilizer of $\{n-k+1,\dotsc,n\}$ is the subgroup generated by $s_1,\dotsc,s_{n-k-1}$ and $s_{n-k+1},\dotsc, s_{n-1}$, and it is isomorphic to $\sg{n-k}\times \sg{k}$.
We use the notation $\sg{n-k,k}$ for this stabilizer subgroup.
Let $\iota_1\co \sg{n-k}\to\sg{n-k,k}$ and $\iota_2\co \sg{k}\to\sg{n-k,k}$ be the product inclusion maps.

\begin{proposition}\label{pr:stabilizerhomology}
Let $n\geq 2$, with $1\leq k\leq n/2$.
    Let $A$ be an abelian group, considered as an untwisted $\Z\sg{n-k,k}$-module.  Then
\[
H_1(\sg{n-k,k};A)=
\left\{
\begin{array}{cl}
0 & \text{if $(n,k)=(2,1)$,} \\
A/2A & \text{if $k=1$, $n-k\geq 2$,} \\
(A/2A)^2 & \text{otherwise.}
\end{array}
\right.
\]
Generators are given by $\iota_{1*}(e_1)\otimes a$ and $\iota_{2*}(e_1)\otimes a$ for $a\in A[2]$.
\[
H_2(\sg{n-k,k};A)=
\left\{
\begin{array}{cl}
0 & \text{if $(n,k)=(2,1)$,} \\
A[2] & \text{if $k=1$, $n-k\in \{2,3\}$,} \\
A[2]^2\oplus (A/2A) & \text{if $k,n-k\in \{2,3\}$,} \\
A[2]\oplus (A/2A) & \text{if $k=1$ and $n-k\geq 4$,} \\
A[2]^2\oplus (A/2A)^2 & \text{if $k\in\{2,3\}$ and $n-k\geq 4$,} \\
A[2]^2\oplus (A/2A)^3 & \text{otherwise.}
\end{array}
\right.
\]
Generators are given by $\iota_{1*}(c_1)\otimes a$ and $\iota_{2*}(c_1)\otimes a$ for $a\in A[2]$, and $\iota_{1*}(d_{13})\otimes a$, $\iota_{2*}(d_{13})\otimes a$, and $\iota_{1*}(e_1)\otimes \iota_{2*}\otimes a$ for $a\in A$.
This last generator is interpreted via the homology cross product.
The generator $\iota_{1*}(c_1)\otimes a$ can be nontrivial if $n-k\geq 2$, 
$\iota_{2*}(c_1)\otimes a$ can be nontrivial if $k\geq 2$, 
$\iota_{1*}(d_{13})\otimes a$ can be nontrivial if $n-k\geq 4$, 
$\iota_{2*}(d_{13})\otimes a$ can be nontrivial if $k\geq 4$, and
$\iota_{1*}(e_1)\otimes \iota_{2*}(e_1)\otimes a$ can be nontrivial if both $k, n-k\geq 2$.
\end{proposition}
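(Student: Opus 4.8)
The plan is to exploit the isomorphism $\sg{n-k,k}\cong\sg{n-k}\times\sg{k}$ (note that $k\le n-k$ since $k\le n/2$) and to compute with the Künneth theorem for the homology of a product of groups, combined with the universal coefficients theorem to pass from $\Z$-coefficients to $A$. The needed input is the low-degree homology of symmetric groups: $H_0(\sg{m};\Z)=\Z$; $H_1(\sg{m};\Z)=\Z/2\Z$ for $m\ge 2$ and $0$ for $m\le 1$, generated by $[e_1]$; and $H_2(\sg{m};\Z)=\Z/2\Z$ for $m\ge 4$ and $0$ for $m\le 3$, generated by $[d_{13}]$. The $H_1$ statement is the abelianization of the presentation, and the $H_2$ statement is Lemma~\ref{le:H2Sn} together with the remarks preceding it (which also give $H_2(\sg{m};\Z)=0$ for $m\le 3$).

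At the chain level I would take copies $P^{(1)}_*$ and $P^{(2)}_*$ of the complex $P_*$ of Definition~\ref{de:Snres}, for $\sg{n-k}$ and $\sg{k}$ respectively. By Theorem~\ref{th:main}, each $P^{(i)}_*$ is the degree-$3$ truncation of a free resolution $\hat P^{(i)}_*$ of $\Z$ over the group ring of the $i$-th factor; then $\hat P^{(1)}_*\otimes_\Z\hat P^{(2)}_*$ is a free resolution of $\Z$ over $\Z[\sg{n-k}\times\sg{k}]=\Z\sg{n-k}\otimes_\Z\Z\sg{k}$, and in degrees $\le 3$ it agrees with $P^{(1)}_*\otimes_\Z P^{(2)}_*$; hence the latter computes $H_i(\sg{n-k,k};-)$ for $i\le 2$. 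Let $\bar P^{(i)}_*$ be the complex of coinvariants of $P^{(i)}_*$; it is finitely generated and free abelian in each degree. Since $A$ is untwisted, $(P^{(1)}_*\otimes_\Z P^{(2)}_*)\otimes_{\Z[\sg{n-k}\times\sg{k}]}A\cong(\bar P^{(1)}_*\otimes_\Z\bar P^{(2)}_*)\otimes_\Z A$, so that $H_*(\sg{n-k,k};A)\cong H_*((\bar P^{(1)}_*\otimes_\Z\bar P^{(2)}_*)\otimes_\Z A)$. Applying the algebraic Künneth theorem twice — first to $\bar P^{(1)}_*\otimes_\Z\bar P^{(2)}_*$, then to the subsequent $\otimes_\Z A$ — and inserting the values above (all mixed $\mathrm{Tor}$ terms involving $H_0=\Z$ vanish), I obtain $H_1(\sg{n-k,k};\Z)\cong H_1(\sg{n-k})\oplus H_1(\sg{k})$ and $H_2(\sg{n-k,k};\Z)\cong H_2(\sg{n-k})\oplus(H_1(\sg{n-k})\otimes_\Z H_1(\sg{k}))\oplus H_2(\sg{k})$; specializing according to whether each of $k$ and $n-k$ equals $1$, lies in $\{2,3\}$, or is at least $4$ (with $k\le n-k$) yields the six displayed cases.

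For the explicit generators I would track cycle representatives through these isomorphisms. The homology cross product $H_p(\sg{n-k};\Z)\otimes H_q(\sg{k};\Z)\to H_{p+q}(\sg{n-k,k};\Z)$ is realized on chains by $x\otimes y\mapsto\iota_{1*}(x)\otimes\iota_{2*}(y)$, so the $H_1\otimes H_1$ summand of $H_2$ is generated by $\iota_{1*}(e_1)\otimes\iota_{2*}(e_1)$, giving the generator $\iota_{1*}(e_1)\otimes\iota_{2*}(e_1)\otimes a$ after $\otimes_\Z A$. The $H_2(\sg{m};\Z)\otimes A$ summands are generated by $\iota_{i*}(d_{13})\otimes a$ for $a\in A$ (a cycle since $\partial d_{13}$ becomes $0$ in $\bar P^{(i)}_*$). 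For the summands $\mathrm{Tor}(H_1(\sg{m};\Z),A)=A[2]$ I would use the standard description of the universal-coefficients connecting map: since $\partial c_1=(s_1+1)e_1$ becomes $2e_1$ in $\bar P^{(i)}_*$, the chain $c_1\otimes a$ is a cycle in $\bar P^{(i)}_*\otimes_\Z A$ exactly for $a\in A[2]$, and it represents the image of $a$ under $A[2]=\mathrm{Tor}(\Z/2\Z,A)\hookrightarrow H_2(\sg{m};A)$; so $\iota_{i*}(c_1)\otimes a$ ($a\in A[2]$) generates that part. The two summands of $H_1$ are generated analogously by the (always-cycle) chains $\iota_{i*}(e_1)\otimes a$. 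The nonvanishing conditions then match the existence conditions for the relevant $H_*(\sg{\cdot};\Z)$ groups: $\iota_{1*}(c_1)\otimes a$ (resp.\ $\iota_{2*}(c_1)\otimes a$) can be nonzero exactly when $n-k\ge 2$ (resp.\ $k\ge 2$); $\iota_{i*}(d_{13})\otimes a$ requires the indices $1,3$ to exist and $H_2\ne 0$, i.e.\ $n-k\ge 4$ (resp.\ $k\ge 4$); and the cross-product class requires $k,n-k\ge 2$.

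The main obstacle I expect is bookkeeping rather than conceptual difficulty: carrying the (non-canonically split) Künneth and universal-coefficients sequences through all six cases while keeping honest track of the cycle representatives and of the coefficient constraint ($a\in A$ versus $a\in A[2]$) attached to each of the five types of $H_2$-generator. The secondary points to verify carefully are the identification $(P^{(1)}_*\otimes_\Z P^{(2)}_*)\otimes_{\Z[\sg{n-k}\times\sg{k}]}A\cong(\bar P^{(1)}_*\otimes_\Z\bar P^{(2)}_*)\otimes_\Z A$ for the untwisted module $A$ (compatibility of coefficient change with tensor products of complexes over the two group rings), and that the degree-$3$ truncation of $P_*$ really is enough for the product computation through degree $2$.
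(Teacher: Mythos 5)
Your proposal is correct and follows essentially the same route as the paper: the K\"unneth formula for the product $\sg{n-k}\times\sg{k}$ with $\Z$-coefficients (all mixed $\mathrm{Tor}$ terms dying because $H_0=\Z$), followed by the universal coefficients theorem to pass to $A$, with the case analysis driven by $H_1(\sg{m};\Z)$ and $H_2(\sg{m};\Z)$. Your chain-level identification of the $\mathrm{Tor}(H_1,A)=A[2]$ generators via $\partial c_1=2e_1$ in coinvariants is a welcome extra: it justifies the listed representatives more explicitly than the paper does.
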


\begin{proof}
It should be clear that $\sg{n-k,k}$ has a presentation gotten by taking the disjoint union of the presentations of $\sg{n-k}$ and $\sg{k}$, and adding the relations that all generators from one factor commute with all generators from the other.
Taking the abelianization of this presentation, we get the description of $H_1(\sg{n-k,k};A)$ above for $A=\Z$.
Since $H_0(\sg{n-k,k};\Z)=\Z$ has no torsion, the universal coefficients theorem then implies that 
\[H_1(\sg{n-k,k};A)=H_1(\sg{n-k,k};\Z)\otimes A,\]
which fits the formula above.

For the second homology, we use the K\"unneth formula, with coefficients in $\Z$:
\[
\begin{split}
0\to \bigoplus_{i+j=2}H_i(\sg{n-k})\otimes H_j(\sg{k})&
\to H_2(\sg{n-k,k}) \\
&\to \bigoplus_{i+j=1}\mathrm{Tor}_1(H_i(\sg{n-k}), H_j(\sg{k}))
\to 0.
\end{split}
\]
Since $H_0(\sg{n-k})$ and $H_0(\sg{k})$ have no torsion, there is no contribution from the cokernel of the sequence.
The contribution from the kernel of the sequence is via the homology cross product.
If both of $k$ and $n-k$ are at least $2$, then there is a factor of $\Z/2\Z$ that comes from $H_1(\sg{n-k})\otimes H_1(\sg{k})$.
There is a factor of $\Z/2\Z$ that comes from $H_2(\sg{n-k})$ if $n-k\geq 4$, and if $k\geq 4$ then there is another factor of $\Z/2\Z$ from $H_2(\sg{k})$.
This agrees with the description given above if $A=\Z$.

For general $A$, we apply the universal coefficients theorem.
The overall statement follows.
\end{proof}

To describe the low-dimensional cohomology, we need a way of describing the cocycles.
There are product projections $\pi_1\co \sg{n-k,k}\to \sg{n-k}$ and $\pi_2\co \sg{n-k,k}\to \sg{k}$, but these do not detect all the low-dimensional cohomology.
Let $G=(\Z/2\Z)^2$.
We also consider the map $\pi_3\co \sg{n-k,k}\to G$, given by $\pi_3(\sigma)=(\mathrm{sign}(\pi_1(\sigma)),\mathrm{sign}(\pi_2(\sigma)))$.
(Here $\mathrm{sign}(\sigma)$ is $0$ if $\sigma$ is even and $1$ if $\sigma$ is odd).
Let $Q_*$ be free resolution of $\Z$ as a $\Z G$-module given by the cellular chains of the universal cover of $\R P^\infty\times \R P^\infty$.
This makes sense because $\R P^\infty\times \R P^\infty$ is a $K(G,1)$-space; we mean for it to have the standard cellular structure where each $\R P^\infty$-factor has a single cell in every dimension, and where cells in the product correspond to pairs of cells in the factors.
Let $e_1$ and $e_2$ denote the $1$-cells and let $d_{12}$ denote the $2$-cell that bounds the commutator square of $e_1$ and $e_2$.
Fix an abelian group $A$.
For $r\in A[2]$, let $\hat\beta^0_r\co Q_2\to A$ be the cochain with $\hat\beta^0_r(d_{12})=r$, and which evaluates to $0$ on the other two $2$-cells.
This is a cocycle because it is trivial on the boundaries of all four generators for $Q_3$.

\begin{proposition}\label{pr:cohomstabilizer} 
Let $n\geq 2$, with $1\leq k\leq n/2$.
    Let $A$ be an abelian group, considered as an untwisted $\Z\sg{n-k,k}$-module.  Then
    \[
    H^1(\sg{n-k,k};A) = 
    \left\{
    \begin{array}{cl}
    0 & \text{if $(n,k)=(2,1)$,}\\
    A[2] & \text{if $k=1$,} \\
    A[2]^2 & \text{otherwise.}
    \end{array}
    \right.
    \]
    The map $A[2]^2\to H^1(\sg{n-k,k};A)$ by $(r,s)\mapsto [\pi_{1}^*\kappa^0_r+\pi_{2}^*\kappa^0_s]$ is an isomorphism if $k\geq 2$, and there is a similar isomorphism by $s\mapsto \pi_{1}^*\kappa^0_s$ if $k=1$.
    \[
   H^2(\sg{n-k,k};A) = 
    \left\{
    \begin{array}{cl}
0 & \text{if $(n,k)=(2,1)$,} \\
A/2A & \text{if $k=1$, $n-k\in \{2,3\}$,} \\
(A/2A)^2\oplus A[2] & \text{if $k,n-k\in \{2,3\}$,} \\
(A/2A)\oplus A[2] & \text{if $k=1$ and $n-k\geq 4$,} \\
(A/2A)^2\oplus A[2]^2 & \text{if $k\in\{2,3\}$ and $n-k\geq 4$,} \\
(A/2A)^2\oplus A[2]^3 & \text{otherwise.}
    \end{array}
    \right.
    \]
    The map $(A/2A)^2\oplus A[2]^3\to H^2(\sg{n-2,2};A)$ by 
    \[([q],[r],s,t,u)\mapsto [\pi_{1}^*\alpha^0_q+\pi_{2}^*\alpha^0_r+\pi_{1}^*\beta^0_s+\pi_{2}^*\beta^0_s+\pi_3^*\hat\beta^0_u]\]
    is an isomorphism if both $n-k,k\geq 4$.
    There are similar isomorphisms for smaller $n$; we need the term $\pi_1^*\alpha^0_q$ if $n-k\geq 2$, we need $\pi_2^*\alpha^0_r$ if $k\geq 2$, we need $\pi_1^*\beta^0_s$ if $n-k\geq 4$, and we need the $\pi_3^*\hat\beta^0_u$ if both $n-k,k\geq 2$.
\end{proposition}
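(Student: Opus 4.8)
The plan is to deduce everything from the homology computation in Proposition~\ref{pr:stabilizerhomology} via the universal coefficients theorem, and then to exhibit the listed pullback cochains as explicit generators. First, for $H^1$: since $H_0(\sg{n-k,k};\Z)=\Z$ is free, universal coefficients gives $H^1(\sg{n-k,k};A)\cong\hom_\Z(H_1(\sg{n-k,k};\Z),A)$, and the value of $H_1$ in each case is read off from Proposition~\ref{pr:stabilizerhomology}. The classes $\pi_1^*\kappa^0_r$ and $\pi_2^*\kappa^0_s$ are pullbacks of cocycles and hence cocycles; using that $\pi_m\circ\iota_\ell$ is the identity if $m=\ell$ and trivial otherwise, together with $\kappa^0_r(e_1)=r$, one checks that under the evaluation pairing $H^1\times H_1\to A$ these classes pair with the homology generators $\iota_{1*}(e_1)\otimes a$ and $\iota_{2*}(e_1)\otimes a$ exactly as the statement requires, so they realise the isomorphism with $\hom_\Z(H_1,A)$. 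When $k=1$ only the first family is present, matching the corresponding collapse of $H_1$.

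For $H^2$ we use the (split) universal coefficients sequence
\[0\to\ext^1_\Z(H_1(\sg{n-k,k};\Z),A)\to H^2(\sg{n-k,k};A)\to\hom_\Z(H_2(\sg{n-k,k};\Z),A)\to 0,\]
substituting the values of $H_1$ and $H_2$ from Proposition~\ref{pr:stabilizerhomology} and the identities $\ext^1_\Z(\Z/2\Z,A)=A/2A$ and $\hom_\Z(\Z/2\Z,A)=A[2]$; this pins down the isomorphism type in every case. To get explicit generators I would match the listed cochains to the two ends of the sequence. The classes $\pi_1^*\beta^0$ and $\pi_2^*\beta^0$ pair nontrivially with the pushed-in classes $\iota_{1*}(d_{13})\otimes a$, $\iota_{2*}(d_{13})\otimes a$ and trivially with the remaining $H_2$-generators, and $\pi_3^*\hat\beta^0_u$ pairs with the cross-product class $\iota_{1*}(e_1)\times\iota_{2*}(e_1)\otimes a$: since $\pi_3=\mathrm{sign}\times\mathrm{sign}$ is itself a product map, it commutes with cross products, so it carries this class to $[e_1^G]\times[e_1^G]=\pm[d_{12}]$ in $H_2(G;\Z)$, on which $\hat\beta^0_u$ evaluates to $u$ (the sign being irrelevant since $A[2]$ is $2$-torsion). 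Together these three families surject onto $\hom_\Z(H_2,A)$. The remaining classes $\pi_1^*\alpha^0_q$ and $\pi_2^*\alpha^0_r$ restrict to $0$ on $2$-cycles, because $\alpha^0$ vanishes on the $d_{ij}$ and on the cycles $2b_i+c_i-c_{i+1}$; hence they lie in the $\ext^1_\Z(H_1,A)$ summand, and — exactly as in the proof of Proposition~\ref{pr:untwistedcohomreps}, applied separately to each of the two symmetric-group factors and transported by $\pi_m^*$ (a split injection, split by $\iota_m^*$) — they generate that summand. A diagram chase with the five lemma, of the same shape as diagram~\eqref{eq:uctdiagram}, then shows that the displayed map from the direct sum of all these pieces to $H^2(\sg{n-k,k};A)$ is an isomorphism.

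The case analysis is uniform: a term $\pi_m^*\alpha^0$ appears exactly when the $m$-th factor is $\sg{N}$ with $N\geq 2$, a term $\pi_m^*\beta^0$ exactly when that factor is $\sg{N}$ with $N\geq 4$, and $\pi_3^*\hat\beta^0_u$ exactly when both factors are $\sg{N}$ with $N\geq 2$; in each instance this matches the presence of the corresponding summand of $H_1$ or $H_2$ in Proposition~\ref{pr:stabilizerhomology}, and when $k=1$ every term referring to the second factor simply drops out (and the degenerate case $(n,k)=(2,1)$ gives the zero groups). I expect the main obstacle to be the $\pi_3$-term: it is the one ingredient with no analogue in Proposition~\ref{pr:untwistedcohomreps}, and verifying that it detects the Künneth cross-product class requires unwinding the standard cell structure on $\R P^\infty\times\R P^\infty$, the definition of $\hat\beta^0_u$, and the compatibility of the product map $\pi_3$ with the homology cross product. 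Everything else is, in effect, two copies of Proposition~\ref{pr:untwistedcohomreps} glued together by the Künneth and universal coefficients theorems.
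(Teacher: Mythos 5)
Your proposal is correct and follows essentially the same route as the paper: both arguments combine the homology computation of Proposition~\ref{pr:stabilizerhomology} with the universal coefficients theorem, identify the $\beta$- and $\hat\beta$-type classes via the evaluation pairing on $\hom_\Z(H_2,A)$ (including the key check that $\pi_3^*\hat\beta^0_u$ detects the cross-product class $\iota_{1*}(e_1)\times\iota_{2*}(e_1)$), place the $\alpha$-type classes in the $\ext^1_\Z(H_1,A)$ summand, and conclude with a five-lemma diagram chase as in Proposition~\ref{pr:untwistedcohomreps}. The only cosmetic difference is that the paper tabulates the cocycle evaluations directly, whereas you derive the $\pi_3$-evaluation from naturality of the cross product; both are valid.
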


\begin{proof}
The descriptions of the cohomology groups follow easily using Proposition~\ref{pr:stabilizerhomology}, using the universal coefficients theorem.

The generators follow from an argument similar to that in Proposition~\ref{pr:untwistedcohomreps}.
    For the argument to work, we need to check the evaluations of the given cocycles on homology chains.
Momentarily assume that both $n-k,k\geq 4$.
    We have 
    \[\pi_1^*\kappa_r^0(\iota_{1*}e_i)=r,\quad \pi_1^*\kappa_r^0(\iota_{2*}e_i)=0,\]
    \[\pi_2^*\kappa_r^0(\iota_{1*}e_i)=0,\quad \pi_2^*\kappa_r^0(\iota_{2*}e_i)=r,\]
    \[\pi_1^*\alpha^0_r(\iota_{1*}c_i)=r,\quad \pi_1^*\alpha^0_r(\iota_{1*}b_i)=0,\quad \pi_1^*\alpha^0_r(\iota_{1*}d_{ij})=0,\]
    \[\pi_1^*\alpha^0_r(\iota_{2*}c_i)=0,\quad \pi_1^*\alpha^0_r(\iota_{2*}b_i)=0,\quad \pi_1^*\alpha^0_r(\iota_{2*}d_{ij})=0,\]
    \[\pi_1^*\alpha^0_r(\iota_{1*}e_1\otimes \iota_{2*}e_1)=0,\]
    \[\pi_2^*\alpha^0_r(\iota_{1*}c_1)=0,\quad \pi_2^*\alpha^0_r(\iota_{1*}b_i)=0,\quad \pi_2^*\alpha^0_r(\iota_{1*}d_{ij})=0,\]
    \[\pi_2^*\alpha^0_r(\iota_{2*}c_1)=r,\quad \pi_2^*\alpha^0_r(\iota_{2*}b_i)=0,\quad \pi_2^*\alpha^0_r(\iota_{2*}d_{ij})=0,\]
    \[\pi_2^*\alpha^0_r(\iota_{1*}e_i\otimes \iota_{2*}e_i)=0,\]
    \[\pi_1^*\beta^0_r(\iota_{1*}c_i)=0,\quad \pi_1^*\beta^0_r(\iota_{1*}b_i)=0,\quad \pi_1^*\beta^0_r(\iota_{1*}d_{ij})=r,\]
    \[\pi_1^*\beta^0_r(\iota_{2*}c_i)=0,\quad \pi_1^*\beta^0_r(\iota_{2*}b_i)=0,\quad \pi_1^*\beta^0_r(\iota_{2*}d_{ij})=0,\]
    \[\pi_1^*\beta^0_r(\iota_{1*}e_i\otimes \iota_{2*}e_1)=0,\]
    \[\pi_2^*\beta^0_r(\iota_{1*}c_i)=0,\quad \pi_2^*\beta^0_r(\iota_{1*}b_i)=0,\quad \pi_2^*\beta^0_r(\iota_{1*}d_{ij})=0,\]
    \[\pi_2^*\beta^0_r(\iota_{2*}c_i)=0,\quad \pi_2^*\beta^0_r(\iota_{2*}b_i)=0,\quad \pi_2^*\beta^0_r(\iota_{2*}d_{ij})=r,\]
    \[\pi_1^*\beta^0_r(\iota_{1*}e_i\otimes \iota_{2*}e_1)=0,\]
    \[\pi_3^*\hat\beta^0_r(\iota_{1*}c_i)=0,\quad \pi_3^*\hat\beta^0_r(\iota_{1*}b_i)=0,\quad \pi_3^*\hat\beta^0_r(\iota_{1*}d_{ij})=0,\]
    \[\pi_3^*\hat\beta^0_r(\iota_{2*}c_i)=0,\quad \pi_3^*\hat\beta^0_r(\iota_{2*}b_i)=0,\quad \pi_3^*\hat\beta^0_r(\iota_{2*}d_{ij})=0,\]
    \[\pi_3^*\hat\beta^0_r(\iota_{1*}e_i\otimes \iota_{2*}e_1)=r.\]
    Momentarily assume that $n\geq 6$.
    From these evaluations, we see that $(r,s)\mapsto \pi_1^*\kappa_r^0+\pi_2^*\kappa_s^0$ is an isomorphism  $A[2]^2\to \hom(H_1(\sg{n-2,2}),A)$, which is therefore $H^1(\sg{n-2,2},A)$ by universal coefficients.
    We see that $(s,t)\mapsto \pi_1^*\beta^0_s+\pi_3^*\hat\beta^0_t$ is an isomorphism $A[2]^2\to \hom(H_2(\sg{n-2,2}),A)$, and $([q],[r])\mapsto \pi_1^*\alpha_q^0+\pi_2^*\alpha_r^0$ is an isomorphism $(A/2A)^2\to\mathrm{Ext}^1_\Z(H_1(\sg{n-2,2}),A)$.
    These assemble into an isomorphism as in the statement, by universal coefficients and the five-lemma.
    In the other cases, similar arguments work, modified as indicated in the statement.
\end{proof}

\section{Applications}

\subsection{Twisted cohomology in permutation modules}
We adopt the convention that, if the group is suppressed in the cohomology notation, but only the module is  given, then it is cohomology of $\sg{n}$ in that module.

We fix a commutative ring $R$.
Fix a positive integer $n$ and an integer $k$ with $1\leq k\leq n/2$.
Let $\underline{n}=\{1,2,3,\dotsc,n\}$, and let $\binom{\underline{n}}{k}$ denote the set of $k$-subsets of $\underline{n}$.
\begin{definition}\label{de:permmod}
The module $M^k=M^{(n-k,k)}_R$, the \emph{permutation module} for the partition $(n-k,k)$,
is the free $R$-module on the basis $\{v_S\,|\,S\in\binom{\underline{n}}{k}\}$.
We give $M^k$ the structure of an $R\sg{n}$-module by using the natural action of $\sg{n}$ on $\binom{\underline{n}}{k}$, and extending linearly.
\end{definition}
(More generally, given a young diagram, the tabloids are equivalence classes of tableaux under permutation of rows, and the permutation module is the free $R$-module on this set, but the definition in terms of $k$-subsets suffices for two-row diagrams.)

Again, we need to work explicitly, so we define cocycles that will represent our cohomology classes.
As above, we use $R[2]$ as notation for the $2$-torsion in $R$.
\begin{definition}
   All sums in this definition are over $k$-subsets $S\subseteq \underline{n}$, in other words, over elements $S\in \binom{\underline{n}}{k}$.  
Additional conditions on $S$ are added in some of the sums.

   For $r\in R[2]$, define $\kappa^k_r\co P_1\to M^k$ by 
$\kappa^k_r(e_i)=r\sum_{S}v_S$.

   For $r\in R[2]$, if $k\geq 2$, define $\hat \kappa^k_r\co P_1\to M^k$ by 
\[\hat \kappa^k_r(e_i)=r\sum_{i,i+1\in S}v_S.\]

    For $r\in R$, define $\alpha^k_r\co P_2\to M^k$ by $\alpha^k_r(c_i)=r\sum_{S}v_S$, $\alpha^k_r(d_{ij})=0$, and $\alpha^k_r(b_i)=0$.

   For $r\in R$, if $k\geq 2$, define $\hat \alpha^k_r\co P_2\to M^k$ by 
\[\hat \alpha^k_r(c_i)=r\sum_{i,i+1\in S}v_S,\]
with $\hat\alpha^k_r(d_{ij})=0$, and $\hat\alpha^k_r(b_i)=0$.
    
    For $r\in R[2]$, define $\beta^k_r\co P_2\to M^k$ by $\beta^k_r(c_i)=0$, $\beta^k_r(d_{ij})=r\sum_{S}v_S$, and $\beta^k_r(b_i)=0$.

    For $r\in R[2]$, if $k\geq 2$, define $\hat \beta^k_r\co P_2\to M^k$ by $\hat \beta^k_r(c_i)=0$, 
\[\hat \beta^k_r(d_{ij})=r\sum\big\{v_S\,\big|\,\abs{\{i,i+1,j,j+1\}\cap S}=2\big\},\] 
and $\hat \beta^k_r(b_i)=0$.

    For $r\in R[2]$, if $k\geq 4$, define $\tilde \beta^k_r\co P_2\to M^k$ by $\tilde \beta^k_r(c_i)=0$, 
\[\tilde \beta^k_r(d_{ij})=r\sum\{v_S\,|\,i,i+1,j,j+1\in S\},\] 
and $\tilde \beta^k_r(b_i)=0$.
\end{definition}

\begin{lemma}
    For all choices of $r$ as above, the functions $\kappa^k_r$, $\hat\kappa^k_r$, $\alpha^k_r$, $\hat \alpha^k_r$, $\beta^k_r$, $\hat \beta^k_r$, and $\tilde\beta^k_r$ are cocycles.
\end{lemma}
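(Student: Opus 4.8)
The plan is to verify directly that each of the listed functions has vanishing coboundary. Recall that a cochain $f\in\hom_{\Z\sg{n}}(P_i,M^k)$ is a cocycle precisely when $f(\partial x)=0$ for every $x\in P_{i+1}$, and since $f$ is $\Z\sg{n}$-linear it suffices to check this on the $\Z\sg{n}$-module generators of $P_{i+1}$ from Definition~\ref{de:Snres}. Thus for the $1$-cochains $\kappa^k_r$ and $\hat\kappa^k_r$ one evaluates on $\partial c_i$, $\partial b_i$, and $\partial d_{ij}$; for the $2$-cochains one evaluates on the seven families $\partial c^{3,1}_i,\dots,\partial c^{3,7}_i$.

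The cochains $\kappa^k_r$, $\alpha^k_r$, and $\beta^k_r$ take all their values in the $R$-submodule $R\cdot\bigl(\sum_{S}v_S\bigr)\subseteq M^k$, which is fixed pointwise by $\sg{n}$ and is isomorphic, as an abelian group with trivial $\sg{n}$-action, to $R$. Under this identification $\kappa^k_r$, $\alpha^k_r$, $\beta^k_r$ become exactly the cochains $\kappa^0_r$, $\alpha^0_r$, $\beta^0_r$ of Proposition~\ref{pr:untwistedcohomreps} (with matching torsion hypotheses on $r$), so for these three families the lemma is immediate from that proposition.

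The remaining four families $\hat\kappa^k_r$, $\hat\alpha^k_r$, $\hat\beta^k_r$, $\tilde\beta^k_r$ do not factor through a trivial submodule and require a computation, which I would organize around one observation. Each sends a single kind of generator ($e_i$ for $\hat\kappa$, $c_i$ for $\hat\alpha$, $d_{ij}$ for $\hat\beta$ and $\tilde\beta$) to $r$ times a sum $\Sigma(T)$ of basis vectors $v_S$ over the $k$-subsets $S$ bearing a prescribed relation to a small ``window'' $T$ of indices (namely $T=\{i,i+1\}$, or $\{i,i+1,j,j+1\}$), and is zero on the other generators. When $f(\partial x)$ is expanded, each transposition $s_\ell$ occurring in the boundary formulas acts on $\Sigma(T)$ in one of two ways. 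Either $s_\ell$ fixes the window $T$ setwise — which the gap hypotheses in the definitions of $d_{ij}$, $c^{3,2}_{ij}$, $c^{3,3}_{ijk}$, and the ``disjoint'' branches of $c^{3,4}_{ij}$ guarantee whenever $s_\ell$ is forced to be far from the other indices — so that $s_\ell$ fixes $\Sigma(T)$; the corresponding piece of $f(\partial x)$ then cancels in pairs, or picks up a factor of $2$ from a coefficient $(s_\ell+1)$ or from the length-six coefficient of $d_{i,i+2}$ in $\partial c^{3,7}_i$, hence vanishes because $r\in R[2]$. Or else $s_\ell$ cyclically permutes $T$ through a three-element orbit of windows, in the pattern forced by the braid relation $s_\ell s_{\ell+1}s_\ell=s_{\ell+1}s_\ell s_{\ell+1}$, and then the coefficient combinations $1-s_\ell+s_{\ell+1}s_\ell$ and $1-s_{\ell+1}+s_\ell s_{\ell+1}$ appearing in $\partial b_i$ and in the two branches of $\partial c^{3,4}_{ij}$ yield a telescoping cancellation of the three resulting sums. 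Either way $f(\partial x)=0$.

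I expect the main obstacle to be the clean bookkeeping of this three-window orbit for the hexagonal-prism cell $c^{3,4}_{ij}$ (and for the $\sg{3}$-cell $b_i$ in the case of $\hat\kappa^k_r$): one must identify the two auxiliary windows produced from $\{i,i+1,j,j+1\}$ by $s_i$ and $s_{i+1}$ and check the sixfold cancellation carefully. This is easiest for $\tilde\beta^k_r$, where the defining condition is $T\subseteq S$, and most delicate for $\hat\beta^k_r$, where the condition $\abs{T\cap S}=2$ must be tracked through the orbit. As a conceptual alternative one could instead identify $M^k$ with $\coind_{\sg{n-k,k}}^{\sg{n}}(R)$ via Proposition~\ref{pr:freeascoinduced} and recognize these cochains, through the explicit Shapiro correspondence, as pullbacks of the cocycles $\pi_1^*\kappa^0$, $\pi_2^*\alpha^0$, $\pi_3^*\hat\beta^0$, etc.\ of Proposition~\ref{pr:cohomstabilizer}; but making that correspondence explicit on the cochain level involves the transfer map, so the direct verification above is likely the shorter route to write out.
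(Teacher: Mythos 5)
Your proposal is correct and follows essentially the same route as the paper: the three families $\kappa^k_r$, $\alpha^k_r$, $\beta^k_r$ are handled by observing they factor through the trivial submodule $R\cdot\sum_S v_S$ (the paper writes this as pushing forward the untwisted cocycles along $\iota\co R\to M^k$), and the remaining four are checked by direct evaluation on the generators' boundaries, using exactly the window bookkeeping you describe (the paper's $\chi(i,j)$ and $\zeta(i,j,p,q)$), with setwise-fixed windows contributing multiples of $2r=0$, the three-window telescoping for the $b_i$ and $c^{3,4}_{ij}$ boundaries, and the coefficient sum $2$ on $d_{i,i+2}$ in $\partial c^{3,7}_i$. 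The paper likewise disposes of $\tilde\beta^k_r$ by noting the $\hat\beta^k_r$ computation goes through verbatim with the window condition changed to containment.
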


\begin{proof}
There is an $R \sg{n}$-module homomorphism $\iota\co R\to M^k$ by $r\mapsto r\sum_{S\in \binom{\underline{n}}{k}}v_S$.
    Then for all choices of $r$, we have $\kappa^k_r=\iota^*\kappa^0_r$, $\alpha^k_r=\iota^*\alpha^0_r$, and $\kappa^k_r=\iota^*\kappa^0_r$.
    This immediately implies that these three functions are cocycles.

For the other functions, we must evaluate them on boundaries.
We use a temporary notation to make this easier; for $i,j\in\underline{n}$, let $\chi(i,j)=\sum_{i,j\in S}v_S\in M^k$, so that $\hat\kappa^k_r(e_i)=r\chi(i,i+1)$.
Notice that for $\sigma\in \sg{n}$, we have $\sigma\chi(i,j)=\chi(\sigma(i),\sigma(j))$.
For $r\in R[2]$, we have
\[\hat\kappa^k_r(\partial c_i)=\hat\kappa^k_r((1+s_i)e_i)=r(1+s_i)\chi(i,i+1)=2r\chi(i,i+1)=0,\]
\[\hat\kappa^k_r(\partial d_{ij})=\hat\kappa^k_r((1-s_i)e_j -(1-s_j)e_i)=r((1-s_i)\chi(j,j+1)-(1-s_j)\chi(i,i+1))=0,\]
(since in this case $\abs{i-j}\geq 2$ so that $s_i$ fixes $j$ and $j+1$ and $s_j$ fixes $i$ and $i+1$) and,
\[\begin{split}
\hat\kappa^k_r(\partial b_i)&=\hat\kappa^k_r((1-s_i+s_{i+1}s_i)e_{i+1}-(1-s_{i+1}+s_is_{i+1})e_i)\\
&= r((1-s_i+s_{i+1}s_i)\chi(i+1,i+2)-(1-s_{i+1}+s_is_{i+1})\chi(i,i+1)) \\
&= r(\chi(i+1,i+2)-\chi(i,i+2)+\chi(i,i+1) \\
&\quad -\chi(i,i+1)+\chi(i,i+2)-\chi(i+1,i+2))=0
\end{split}
\]

Next we consider $\hat\alpha^k_r$, for $r\in R$.
Again, we use $\chi(i,j)$ to denote the sum of $v_S$ over $k$-subsets $S$ with $i,j\in S$, so that  $\hat\alpha^k_r(c_i)=r\chi(i,i+1)$.
Since $\hat\alpha^k_r$ is $0$ on cells of the form $d_{ij}$ and $b_i$, it is $0$ on boundaries of the form $\partial c^{33}_{ijk}$, $\partial c^{34}_{ij}$, and $\partial c^{37}_i$.
For $c^{32}_{ij}$, there is a multiple of $d_{ij}$ or $d_{ji}$ that maps to $0$, so that
\[\hat\alpha^k_r(\partial c^{32}_i)=\hat\alpha^k_r((s_j-1)c_i)=r(s_j-1)\chi(i,i+1)=0,\]
since in this case $\abs{i-j}\geq 2$ and $s_j$ fixes $i$ and $i+1$.
For the other boundaries,
\[\hat\alpha^k_r(\partial c^{31}_i)=
\hat\alpha^k_r((s_i-1)c_i)=r(s_i-1)\chi(i,i+1)=0,\]
\[\hat\alpha^k_r(\partial c^{35}_i)=\hat\alpha^k_r((s_{i+1}+1)b_i-c_{i+1}+s_is_{i+1}c_i)
=r(s_is_{i+1}\chi(i,i+1)-\chi(i+1,i+2))=0,\]
\[\begin{split}
\hat\alpha^k_r(\partial c^{36}_i)&=\hat\alpha^k_r((s_is_{i+1}-1)b_i+(s_{i+1}-1)(s_ic_{i+1}+c_i))\\
&=r(s_{i+1}s_i\chi(i+1,i+2)+s_{i+1}\chi(i,i+1)-s_i\chi(i+1,i+2)-\chi(i,i+1))\\
&=0.
\end{split}\]

Now we look at $\hat\beta^k_r$, for $r\in R[2]$.
We establish a temporary notation, and set
\[\zeta(i,j,p,q)=\sum\{v_S\,|\, \text{$S\in \binom{\underline{n}}{k}$ and $\abs{\{i,j,p,q\}\cap S}=2$}\},\]
so $\zeta(i,j,p,q)$ is the sum of $v_S$ over all choices of $S$ that contain exactly two of $i$, $j$, $p$, and $q$.
Then $\hat\beta^k_r(d_{ij})=r\zeta(i,i+1,j,j+1)$.
Also, for $\sigma\in \sg{n}$, we have $\sigma\zeta(i,j,p,q)=\zeta(\sigma(i),\sigma(j),\sigma(p),\sigma(q))$.
We evaluate $\hat\beta^k_r$ on boundaries; since it is zero on cells of the form $c_i$ and $b_i$, it is zero on $\partial c^{31}_i$, $\partial c^{35}_i$, and $\partial c^{36}_i$.
For $c^{32}_{ij}$, we assume $i<j$; the computation is almost the same if $j<i$:
\[\hat\beta^k_r(\partial c^{32}_{ij})=\hat\beta^k_r((s_j-1)c_i-(s_i+1)d_{ij})
=-r(s_i+1)\zeta(i,i+1,j,j+1)=0,\]
since $s_i$ fixes $\{i,i+1,j,j+1\}$ setwise and $2r=0$.
For $c^{33}_{ijl}$, we know that $j\geq i+2$ and $l\geq j+2$.  So
\[
\begin{split}
\hat\beta^k_r(\partial c^{33}_{ijl})&=
r((s_i-1)\zeta(j,j+1,l,l+1)-(s_j-1)\zeta(i,i+1,l,l+1)\\
&\quad
+(s_l-1)\zeta(i,i+1,j,j+1))\\
&=0.
\end{split}
\]
For $c^{34}_{ij}$, we again assume that $i<j$ (really, that $i+1<j$) since the case for the reverse inequality is almost the same:
\[
\begin{split}
\hat\beta^k_r(\partial c^{34}_{ij})
&=r(-(1-s_i+s_{i+1}s_i)\zeta(i+1,i+2,j,j+1) \\
&\quad +(1-s_{i+1}+s_is_{i+1})\zeta(i,i+1,j,j+1))\\
&=r(-\zeta(i+1,i+2,j,j+1)+\zeta(i,i+2,j,j+1)-\zeta(i,i+1,j,j+1) \\
&\quad +\zeta(i,i+1,j,j+1)-\zeta(i,i+2,j,j+1)+\zeta(i+1,i+2,j,j+1))\\
&=0.
\end{split}
\]
Finally, we explain what happens with $c^{37}_i$.
We can ignore the multiples of $b_i$ and $b_{i+1}$, since these map to $0$.
Then $\hat\beta^k_r$ maps $\partial c^{37}_i$ to 
\[r(1-s_{i+1}+s_is_{i+1}-s_{i+2}s_is_{i+1}+s_{i+2}s_{i+1}+s_{i+1}s_{i+2}s_is_{i+1})\zeta(i,i+1,i+2,i+3).\]
Since each of the six permutations here fixes $\{i,i+1,i+2,i+3\}$ setwise, we have
\[\hat\beta^k_r(\partial c^{37}_i)=2r\zeta(i,i+1,i+2,i+3)=0,\]
since $2r=0$.

Finally, we need to argue that $\tilde\beta^k_r$ is a cocycle.
If we change the meaning of $\zeta(i,j,p,q)$ to be the sum of $v_S$ with $i,j,p,q\in S$, then exactly the same computations that we just did to show that $\hat\beta^k_r$ is a cocycle will show the same thing for $\tilde \beta^k_r$.
\end{proof}

As above, we assume that $n\geq 2k$ in the following statement.
\begin{proposition}\label{pr:Mkcohomology}
We have $H^*(\sg{n};M^k)=H^*(\sg{n,n-k},R)$, and $H^1(\sg{n};M^k)$ and $H^2(\sg{n};M^k)$ are generated by the classes of the cocycles just given.
In particular, if $k\geq 2$, then 
\[R[2]^2\to H^1(\sg{n};M^k), \quad (r,s)\mapsto [\kappa^k_r+\hat\kappa^k_s]\]
is an isomorphism, and if $k\geq 4$, then
\[(R/2R)^2\oplus R[2]^3\to H^2(\sg{n};M^k),\quad ([r],[s],t,u,v)\mapsto
[\alpha^k_r+\hat\alpha^k_s+\beta^k_t+\hat\beta^k_u+\tilde\beta^k_v]\]
is an isomorphism.

There are similar isomorphisms for small values of $k$ and $n-k$; specifically:
\begin{itemize}
    \item     If $n\geq 3$, then $R[2]\to H^1(\sg{n};M^1)$ by $r\mapsto [\kappa^1_r]$ is an isomorphism.
    \item   For $n=2$, $H^1(\sg{2};M^1)=0$.
    \item If $n\geq 5$, then $R[2]\oplus R/2R\to H^2(\sg{n};M^1)$ by $(r,[s])\mapsto [\alpha^1_s+\beta^1_r]$ is an isomorphism.
    \item
    If $n\in\{3,4\}$, then $R/2R\to H^2(\sg{n};M^1)$ by $[s]\mapsto[\alpha^1_s]$ is an isomorphism.
    \item 
    If $n=2$, then $H^2(\sg{2};M^1)=0$.
    \item
    If $n\in \{2,3\}$, then $R[2]\to H^1(\sg{n};M^2)$ by $r\mapsto [\kappa^2_r]$ is an isomorphism.
    \item     
    If $n\geq6$, then $R[2]^2\oplus (R/2R)^2\to H^2(\sg{n};M^2)$ by $(q, r,[s],[t])\mapsto [\alpha^2_s+\hat\alpha^2_t+\beta^2_q+\hat\beta^2_r]$ is an isomorphism.
    \item
    If $n\in\{4,5\}$, then $R[2]\oplus (R/2R)^2\to H^2(\sg{n};M^2)$ by $( r,[s],[t])\mapsto [\alpha^2_s+\hat\alpha^2_t+\hat\beta^2_r]$ is an isomorphism.
\end{itemize}
\end{proposition}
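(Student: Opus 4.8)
The key identification is $H^*(\sg{n};M^k)\cong H^*(\sg{n-k,k};R)$, which follows from Proposition~\ref{pr:freeascoinduced} and Shapiro's lemma: the permutation module $M^k$ is the free $R$-module on the transitive $\sg{n}$-set $\binom{\underline{n}}{k}$, hence $M^k\cong\coind_{\sg{n-k,k}}^{\sg{n}}(R)$, and so $H^*(\sg{n};M^k)\cong H^*(\sg{n-k,k};R)$. The abstract structure of these groups for all ranges of $n-k$ and $k$ is then read off from Proposition~\ref{pr:cohomstabilizer}. So the only real content of the statement is the explicit identification of the generating cocycles in terms of $P_*$, and in particular showing that the classes $[\kappa^k_r]$, $[\hat\kappa^k_r]$, $[\alpha^k_r]$, $[\hat\alpha^k_r]$, $[\beta^k_r]$, $[\hat\beta^k_r]$, $[\tilde\beta^k_r]$ correspond under the Shapiro isomorphism to the classes $[\pi_1^*\kappa^0]$, $[\pi_2^*\kappa^0]$, etc., from Proposition~\ref{pr:cohomstabilizer}.

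First I would record the explicit form of the Shapiro isomorphism using Proposition~\ref{pr:explicitShapiro}: it is the composition of restriction along $\sg{n-k,k}\hookrightarrow\sg{n}$ with the evaluation map $\coind_{\sg{n-k,k}}^{\sg{n}}(R)\to R$, $f\mapsto f(1)$. Under the isomorphism $M^k\cong\coind_{\sg{n-k,k}}^{\sg{n}}(R)$ from Proposition~\ref{pr:freeascoinduced} (with basepoint $e=\{n-k+1,\dots,n\}$), the evaluation $f\mapsto f(1)$ corresponds to the coefficient projection $\pi_e\co M^k\to R$ picking out the coefficient of $v_e$. So, concretely, a cocycle $\gamma\co P_i\to M^k$ is sent by Shapiro's map to the cocycle $P_i\to R$, $c\mapsto \pi_e(\gamma(c))$, restricted to the subcomplex of $P_*$ relevant to $\sg{n-k,k}$. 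Thus the plan is: for each of the cocycles in the definition, compute $\pi_e$ of its value on each generator $e_i$, $c_i$, $d_{ij}$, $b_i$ that survives after restricting to the subgroup $\sg{n-k,k}$, and match against the pullbacks $\pi_1^*\kappa^0_r$, $\pi_2^*\kappa^0_r$, $\pi_3^*\hat\beta^0_r$, etc., appearing in Proposition~\ref{pr:cohomstabilizer}. For instance $\kappa^k_r(e_i)=r\sum_S v_S$ has $\pi_e$-coefficient $r$ for every $i$, matching $\kappa^0_r$ on the relevant edges; $\hat\kappa^k_r(e_i)=r\sum_{i,i+1\in S}v_S$ has $\pi_e$-coefficient $r$ when $\{i,i+1\}\subseteq e$, i.e.\ for $i$ in the range $n-k+1,\dots,n-1$ (the ``second factor'' $\sg{k}$), and $0$ otherwise, matching $\pi_2^*\kappa^0_r$; and $\hat\beta^k_r(d_{ij})$ has $\pi_e$-coefficient $r$ exactly when $|\{i,i+1,j,j+1\}\cap e|=2$, which after restricting to $\sg{n-k,k}$ picks out the "mixed" cells corresponding to $\pi_3^*\hat\beta^0_r$. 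I would present this as a table of evaluations of $\pi_e$ composed with each cocycle, parallel to the table in the proof of Proposition~\ref{pr:cohomstabilizer}.

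Having matched each cocycle (up to pullback) with a generator from Proposition~\ref{pr:cohomstabilizer}, the isomorphism statements follow formally: since Shapiro's map is an isomorphism and carries the spanning set of classes on the $M^k$ side to the spanning set of classes on the $R$ side that was shown to be a basis in Proposition~\ref{pr:cohomstabilizer}, the given maps $R[2]^2\to H^1(\sg{n};M^k)$ and $(R/2R)^2\oplus R[2]^3\to H^2(\sg{n};M^k)$ (and their small-$n$ analogues) are isomorphisms. The small-$n$ cases are handled by exactly the same bookkeeping, dropping whichever generators are killed according to the case analysis already carried out in Proposition~\ref{pr:cohomstabilizer} (e.g.\ $\hat\kappa^k$ and $\hat\alpha^k$, $\hat\beta^k$ require $k\geq 2$; $\tilde\beta^k$ requires $k\geq 4$; $\beta^k$ requires $n-k\geq 4$).

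The main obstacle is the third step: correctly tracking which cells $e_i$, $c_i$, $d_{ij}$, $b_i$ of $P_*$ lie in (the image of the subcomplex associated to) $\sg{n-k,k}$, and checking that the $\pi_e$-evaluations of the "$\,\hat{\ }\,$" and "$\,\tilde{\ }\,$" cocycles restrict correctly to the pullbacks along $\pi_1,\pi_2,\pi_3$ of the cocycles $\kappa^0,\alpha^0,\beta^0,\hat\beta^0$ from the stabilizer. This is essentially an indexing bookkeeping problem --- for example one must verify that a cell $d_{ij}$ with $i,i+1,j,j+1$ all inside or all outside the block $\{n-k+1,\dots,n\}$ contributes to $\pi_1^*\beta^0$ or $\pi_2^*\beta^0$ (this is the role of $\tilde\beta^k$), while the "half in, half out" cells contribute to $\pi_3^*\hat\beta^0$ (the role of $\hat\beta^k$) --- and keeping the ranges of $i$ and $j$ straight. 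Once this dictionary is set up, the actual verification that each composed cochain is a cocycle is already done in the preceding lemma, and the isomorphism claims are immediate from Proposition~\ref{pr:cohomstabilizer}.
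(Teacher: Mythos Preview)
Your proposal is correct and follows essentially the same approach as the paper's proof. One small refinement worth making explicit: the images of the ``unhatted'' cocycles under the Shapiro map are not single pullbacks but sums (e.g.\ $\eta\kappa^k_r$ restricts to $\pi_1^*\kappa^0_r+\pi_2^*\kappa^0_r$, and $\eta\beta^k_r$ to $\pi_1^*\beta^0_r+\pi_2^*\beta^0_r+\pi_3^*\hat\beta^0_r$), so the matching against the basis of Proposition~\ref{pr:cohomstabilizer} is upper-triangular rather than one-to-one---and in particular $\tilde\beta^k$ only picks out the ``all four inside $T$'' contribution (i.e.\ $\pi_2^*\beta^0$), with the ``all outside'' part recovered from $\beta^k-\hat\beta^k-\tilde\beta^k$.
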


\begin{proof}
Let $T=\{n-k+1,n-k+2,\dotsc,n\}$.
Since this is a $k$-subset, $v_T$ is a basis element for $M^k$, and $\sg{n-k,k}$ is the stabilizer of $v_T$ in $\sg{n}$.  
Let $\pi_T\co M^k\to R$ be the coordinate projection for $v_T$.
    By Proposition~\ref{pr:freeascoinduced}, we have an isomorphism $\psi\co M^k\to \mathrm{Coind}_{\sg{n-k,k}}^{\sg{n}}(R)$ by $\psi_v(g)=\pi_T(g^{-1}v)$.
    So we apply Shapiro's lemma, and by Proposition~\ref{pr:explicitShapiro}, the isomorphism on cohomology is induced by inclusion $\sg{n-k,k}\to \sg{n}$ and evaluation $\mathrm{Coind}_{\sg{n-k,k}}^{\sg{n}}(R)\to R$, $f\mapsto f(1)$.
    Composing these, we get the isomorphism $H^*(\sg{n};M^k)\cong H^*(\sg{n-k,k};R)$ by restricting to $\sg{n-k,k}$ and mapping $M^k\to R$ by $v\mapsto \pi_T(v)$.
Let $\eta$ denote this map on cochains.

Now we look at $H^1(\sg{n};M^k)$;
we use the notation from Proposition~\ref{pr:cohomstabilizer}.
Fix $r\in R[2]$.
Since $\kappa^k_r$ outputs $r(\sum_S v_S)$ on all $1$-cell inputs, we have that $\pi_T\circ \kappa^k_r$ outputs $r$ on all such inputs.
So we can recognize $[\eta\kappa^k_r]=[\pi_1^*\kappa^0_r+\pi_2^*\kappa^0_r]$.
On the other hand, $\pi_T\circ \hat\kappa^k_r$ is more interesting.
It outputs $r$ on inputs of $e_{n-k+1}$ through $e_{n-1}$, but outputs $0$ on other $1$-cells. 
(This is because this is the range of $e_i$ such that $i,i+1\in T$.)
So $[\eta\kappa^k_r]=[\pi_2^*\kappa^0_r]$.
This implies that the map in the statement is an isomorphism, since the isomorphism in Proposition~\ref{pr:cohomstabilizer} factors through it.

We do a similar analysis of $H^2(\sg{n};M^k)$.
Fix $r\in R$.
First, $[\eta\alpha^k_r]=[\pi_1^*\alpha^0_r+\pi_2^*\alpha^0_r]$, since $\pi_T\circ\alpha^k_r$ outputs $r$ for all $c_i$-cells, and $0$ for other $2$-cells.
Also, $[\eta\hat\alpha^k_r]=[\pi_2^*\alpha^0_r]$, since $\pi_T\circ \alpha^k_r$ only outputs $r$ for $c_{n-k+1}$ through $c_{n-1}$, and outputs $0$ for other $2$-cells.
(Again, this is because this is the range of $c_i$ such that $i,i+1\in T$.)
Now fix $r\in R[2]$.
We have $[\eta\beta^k_r]=[\pi_1^*\beta^0_r+\pi_2^*\beta^0_r+\pi_3^*\hat\beta^0_r]$, since $\pi_T\circ \beta^k_r$ outputs $r$ for all $d_{ij}$-cells, and $0$ for all other $2$-cells.
We have $[\eta\hat\beta^k_r]=[\pi_3^*\hat\beta^0_r]$, since $\pi_T\circ \hat\beta^k_r$ returns $r$ only for those $d_{ij}$-cells with $1\leq i\leq n-k-1$ and $n-k+1\leq j\leq n-1$, and it returns $0$ for all other $2$-cells.
(These are the choices of $d_{ij}$ with exactly two of $i$,$i+1$, $j$, and $j+1$ in $T$.)
Finally, we have 
$[\eta\tilde\beta^k_r]=[\pi_2^*\beta^k_r]$, since $\pi_T\circ\tilde\beta^k_r$ only returns $r$ for those $d_{ij}$-cells with $n-k+1\leq i<j\leq n-1$, and returns $0$ for other $2$-cells.
(These are the choices  of $d_{ij}$ with $i$, $i+1$, $j$, $j+1\in T$.)
Again, this implies that the map in the statement is an isomorphism, since the isomorphism in Proposition~\ref{pr:cohomstabilizer} factors through it.

The special cases for small values of $k$ and $n-k$ follow by a similar argument, using the special cases of the cohomology of $\sg{n-k,k}$ in Proposition~\ref{pr:cohomstabilizer}.
\end{proof}

\subsection{Twisted homology in permutation modules}
Since the index of $\sg{n-k,k}$ in $\sg{n}$ is finite, $M^k$ is isomorphic to both $\mathrm{Coind}_{\sg{n-k,k}}^{\sg{n}}(R)$ and $\mathrm{Ind}_{\sg{n-k,k}}^{\sg{n}}(R)$.
However, we would like to give explicit generators for low-dimensional homology similar to what we did for cohomology in Proposition~\ref{pr:Mkcohomology}, so we give an explicit isomorphism and explain how to exploit Shapiro's lemma for homology explicitly.
Recall that if $G$ is a group, $H\leq G$, and $M$ is a $\Z H$-module, then $\mathrm{Ind}_H^G(M)$ is $\Z G\otimes_{\Z H} M$, where we are considering $\Z G$ as a left $\Z G$-module and a right $\Z H$-module (by the inclusion of $H$ into $G$).

\begin{lemma}\label{le:inducedasfree}
Let $G$ be a group acting transitively on a set $S$.
Let $p\in S$ and let $H\leq G$ be the stabilizer of $p$.
Let $R$ be a commutative ring, and let $M$ be the free $R$-module with basis $S$, given the structure of an $R G$-module by extending the action on $S$ linearly.
Then the map $\phi\co \mathrm{Ind}_H^G(R)\to M$ by $g\otimes r\mapsto rg\cdot p$ is an isomorphism of $R G$-modules.
\end{lemma}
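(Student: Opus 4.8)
The plan is to check directly that $\phi$ is a well-defined $RG$-module homomorphism with an explicit inverse. First I would verify well-definedness: the map $G\times R\to M$ sending $(g,r)\mapsto rg\cdot p$ is $R$-bilinear, and it is $H$-balanced because for $h\in H$ we have $(gh)\cdot p = g\cdot(h\cdot p) = g\cdot p$ (since $h$ stabilizes $p$), so $(gh)\otimes r$ and $g\otimes r$ have the same image. By the universal property of the tensor product $RG\otimes_{RH}R$, this descends to an $R$-linear map $\phi\co \mathrm{Ind}_H^G(R)\to M$. Equivariance is immediate: for $g'\in G$, $\phi(g'\cdot(g\otimes r)) = \phi((g'g)\otimes r) = r(g'g)\cdot p = g'\cdot(rg\cdot p) = g'\cdot\phi(g\otimes r)$.

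Next I would build the inverse. Since $G$ acts transitively on $S$, for each $s\in S$ choose $g_s\in G$ with $g_s\cdot p = s$; set $g_p = 1$ without loss of generality. Define $\psi\co M\to \mathrm{Ind}_H^G(R)$ on the basis by $\psi(s) = g_s\otimes 1$, extended $R$-linearly. Then $\phi(\psi(s)) = 1\cdot g_s\cdot p = s$, so $\phi\circ\psi = \mathrm{id}_M$. For the other composition, note that a general element of $\mathrm{Ind}_H^G(R)$ is an $R$-linear combination of terms $g\otimes r$, and for any $g\in G$, writing $s = g\cdot p$ we have $g_s^{-1}g\cdot p = p$, so $g_s^{-1}g\in H$; hence $g\otimes r = g_s(g_s^{-1}g)\otimes r = g_s\otimes r$ (moving the element of $H$ across the tensor and using that $R$ is a trivial $H$-module, so $(g_s^{-1}g)\cdot r = r$). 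Therefore $\psi(\phi(g\otimes r)) = \psi(rs) = r(g_s\otimes 1) = g_s\otimes r = g\otimes r$, so $\psi\circ\phi = \mathrm{id}$. Thus $\phi$ is an isomorphism of $RG$-modules.

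I do not anticipate a serious obstacle here; the only point requiring a little care is the manipulation $g\otimes r = g_s\otimes r$ in $\mathrm{Ind}_H^G(R)$, which is exactly where the triviality of the $RH$-action on $R$ and the balancing relation over $RH$ are used, and this is the crux of why the statement works. Everything else is routine bookkeeping with the universal property of $\otimes_{RH}$. In the write-up I would state the bilinearity and balancing verification briefly and spend the few extra words on the inverse construction, since that is the substantive content.
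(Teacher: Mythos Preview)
Your proof is correct and follows essentially the same approach as the paper. The paper's argument is slightly more terse: after invoking the universal property of the tensor product for well-definedness, it simply observes that $\{g\otimes 1\}$, with $g$ ranging over left coset representatives of $H$, is an $R$-basis of $\mathrm{Ind}_H^G(R)$, and that $\phi$ carries this basis bijectively onto $S$; your explicit inverse $\psi$ is just this observation unpacked.
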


\begin{proof}
The universal property of tensor products implies that $\phi$ is a well defined homomorphism of $R G$-modules.
As an $R$-module, $\mathrm{Ind}_H^G(R)$ is a free $R$-module with basis given by the elements $g\otimes 1$, where $g$ ranges over a set of left coset representatives of $H$ in $G$.
Since $G$ acts transitively on $S$ and $H$ is the stabilizer of $p$, $\phi$ sends this basis bijectively to $S\subset M$.
This implies that $\phi$ is an isomorphism of $R$-modules, and therefore an isomorphism of $R G$-modules.
\end{proof}

\begin{proposition}\label{pr:explicithomShapiro}
Let $H\leq G$.
Let $F_*$ and $Q_*$ be projective resolutions of $\Z$ as $\Z H$- and $\Z G$-modules, respectively.
Let $f\co F_*\to Q_*$ be an augmentation-preserving $\Z H$-chain module map.
Let $M$ be a $\Z H$-module.
Then the map
\[F_*\otimes_H M \to Q_*\otimes_G \Z G\otimes_H M, \quad c\otimes m\mapsto f(c)\otimes 1\otimes m\]
induces the isomorphism from Shapiro's lemma: $H_*(H;M)\cong H_*(G;\mathrm{Ind}_H^G(M))$.
\end{proposition}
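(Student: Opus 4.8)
The plan is to factor the asserted chain map as the composite of the standard ingredients in the proof of Shapiro's lemma for homology, and then observe that this composite is literally the map written in the statement.

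First I would record the two facts that make Shapiro's lemma work. The first is that $\Z G$, regarded as a right $\Z H$-module via the inclusion $H\hookrightarrow G$, is free (a basis is a set of right coset representatives for $H$ in $G$); consequently restriction of scalars along $\Z H\hookrightarrow \Z G$ sends projective $\Z G$-modules to projective $\Z H$-modules, so $Q_*$ is simultaneously a projective resolution of $\Z$ over $\Z G$ and over $\Z H$. The second is the associativity isomorphism of tensor products, natural in each degree and compatible with the differentials, $Q_*\otimes_{\Z G}(\Z G\otimes_{\Z H}M)\xrightarrow{\cong} Q_*\otimes_{\Z H}M$, given by $q\otimes g\otimes m\mapsto qg\otimes m$, with inverse $q\otimes m\mapsto q\otimes 1\otimes m$. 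Since $Q_*\otimes_{\Z G}\mathrm{Ind}_H^G(M)$ is by definition $Q_*\otimes_{\Z G}\Z G\otimes_{\Z H}M$ and computes $H_*(G;\mathrm{Ind}_H^G(M))$, this identifies $H_*(G;\mathrm{Ind}_H^G(M))$ with $H_*(Q_*\otimes_{\Z H}M)$, and this identification is (a model for) the Shapiro isomorphism.

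Next I would invoke the comparison theorem: because $F_*$ and $Q_*$ are both projective resolutions of $\Z$ over $\Z H$ and $f$ is an augmentation-preserving $\Z H$-chain map between them, $f$ is a $\Z H$-chain homotopy equivalence. Applying $-\otimes_{\Z H}M$ yields a chain homotopy equivalence $f\otimes\mathrm{id}_M\colon F_*\otimes_{\Z H}M\to Q_*\otimes_{\Z H}M$, hence an isomorphism $H_*(H;M)=H_*(F_*\otimes_{\Z H}M)\xrightarrow{\cong}H_*(Q_*\otimes_{\Z H}M)$. Composing with the identification of the previous paragraph gives the Shapiro isomorphism $H_*(H;M)\cong H_*(G;\mathrm{Ind}_H^G(M))$, and the composite chain map is $c\otimes m\mapsto f(c)\otimes m\mapsto f(c)\otimes 1\otimes m$, which is exactly the map in the statement. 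It is a chain map because it is a composite of chain maps, and a direct check against the defining relations $ch\otimes m=c\otimes hm$ (using $\Z H$-linearity of $f$ and the relations defining the induced module) confirms it is well defined.

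I do not expect a real obstacle here; the only delicate points are keeping the left/right module conventions consistent throughout (following Brown's convention that group homology is computed from a right-module structure on the resolution) and confirming that the composite of these canonical maps is what one means by ``the'' Shapiro isomorphism. Both are bookkeeping rather than substantive, and the statement is made precisely so that later we may transport explicit homology classes of $\sg{n-k,k}$ to explicit classes of $\sg{n}$ along a concrete chain map $f$.
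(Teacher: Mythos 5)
Your proposal is correct and follows essentially the same route as the paper, which simply defers to the standard proof of Shapiro's lemma for homology (Brown, Theorem~III.6.2) after noting that $Q_*$ restricts to a projective resolution over $\Z H$ and that $f$ is unique up to homotopy; your writeup just makes explicit the two standard ingredients (the comparison map $f\otimes\mathrm{id}_M$ and the cancellation isomorphism $Q_*\otimes_{\Z G}\Z G\otimes_{\Z H}M\cong Q_*\otimes_{\Z H}M$) whose composite is the stated map.
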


\begin{proof}
For clarity, we note that $Q_*$ is also a chain complex of $\Z H$-modules since $H$ is a subgroup of $G$, and a map such as $f$ will exist and will be unique up to homotopy by standard arguments.
The proposition is then immediate from the proof of Shapiro's lemma in a standard reference, for example Theorem~III.6.2 of Brown~\cite{Brown}.
\end{proof}

\begin{proposition}\label{pr:Mkhomology}
Let $n$ and $k$ be positive integers with $1\leq k\leq n/2$.
Then $H_*(\sg{n};M^k)\cong H_*(\sg{n-k,k};R)$.

Let $T=\{n-k+1,n-k+2,\dotsc,n\}$, so that $v_T$ is the basis element in $M^k$ with stabilizer $\sg{n-k,k}$.
If $k\geq 2$, then
\[(R/2R)^2\to H_1(\sg{n};M^k),\quad ([r],[s])\mapsto [e_1\otimes rv_T + e_{n-k+1}\otimes sv_T]\]
is an isomorphism.
If $k\geq 4$, then
\[R[2]^2\oplus (R/2R)^3 \to H_2(\sg{n};M^k),\]
\[
\begin{split}
&(q,r,[s],[t],[u])\mapsto \\
&\quad [c_1\otimes qv_T + c_{n-k+1}\otimes rv_T+d_{13}\otimes sv_T + d_{1,n-k+1}\otimes tv_T+d_{n-k+1,n-k+3}\otimes tv_T]
\end{split}
\]
is an isomorphism.
There are similar isomorphisms for smaller values of $k$ and $n$.
\end{proposition}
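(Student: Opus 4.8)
The plan is to carry out the homology analogue of the argument for Proposition~\ref{pr:Mkcohomology}, replacing coinduced modules by induced modules. First I would apply Lemma~\ref{le:inducedasfree} with $G=\sg{n}$, $H=\sg{n-k,k}$, $S=\binom{\underline{n}}{k}$, and basepoint $T=\{n-k+1,\dotsc,n\}$, to get an $R\sg{n}$-module isomorphism $M^k\cong\mathrm{Ind}_{\sg{n-k,k}}^{\sg{n}}(R)$ carrying $g\otimes r$ to $rg\cdot v_T$. Then Shapiro's lemma for homology in the explicit form of Proposition~\ref{pr:explicithomShapiro}, taking $Q_*=P_*$ (a free resolution of $\Z$ over $\Z\sg{n-k,k}$ since $\Z\sg{n}$ is free over $\Z\sg{n-k,k}$), $M=R$, and $F_*$ any free resolution of $\Z$ over $\Z\sg{n-k,k}$ equipped with an augmentation-preserving $\Z\sg{n-k,k}$-chain map $f\co F_*\to P_*$, yields $H_*(\sg{n-k,k};R)\cong H_*(\sg{n};M^k)$, realized on chains by $c\otimes r\mapsto f(c)\otimes rv_T$. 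This already gives the first sentence of the proposition.

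To obtain the explicit generators I would take $F_*=B_*\otimes_\Z C_*$, the tensor product of the complexes of Definition~\ref{de:Snres} for $\sg{n-k}$ and for $\sg{k}$, so that the description of $H_1$ and $H_2$ of $\sg{n-k,k}$ in Proposition~\ref{pr:stabilizerhomology} applies directly. Using that $\iota_1\co\sg{n-k}\to\sg{n}$ is $s_i\mapsto s_i$ and $\iota_2\co\sg{k}\to\sg{n}$ is $s_i\mapsto s_{n-k+i}$, I would construct $f$ in dimensions $0,1,2$ by matching boundary formulas: $f_0(\ast\otimes\ast)=\ast$; $f_1(e_1\otimes\ast)=e_1$, $f_1(\ast\otimes e_1)=e_{n-k+1}$; $f_2(c_1\otimes\ast)=c_1$, $f_2(\ast\otimes c_1)=c_{n-k+1}$, $f_2(d_{13}\otimes\ast)=d_{13}$, $f_2(\ast\otimes d_{13})=d_{n-k+1,n-k+3}$; and, for the Eilenberg--Zilber term, $f_2(e_1\otimes e_1)=-d_{1,n-k+1}$. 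This last assignment is the one point requiring a small check: $f_1$ applied to $\partial(e_1\otimes e_1)$ gives $(s_1-1)e_{n-k+1}-(s_{n-k+1}-1)e_1=-\partial d_{1,n-k+1}$, so the assignment is consistent, and the overall sign does not affect the homology class. One extends $f$ over the remaining $\Z\sg{n-k,k}$-basis elements using the acyclicity of $P_*$ in this range; the precise values are not needed.

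Finally I would read off the conclusion. Proposition~\ref{pr:stabilizerhomology} describes the generators of $H_1(\sg{n-k,k};R)$ as $\iota_{1*}(e_1)\otimes a$ and $\iota_{2*}(e_1)\otimes a$, and of $H_2(\sg{n-k,k};R)$ as $\iota_{1*}(c_1)\otimes a$, $\iota_{2*}(c_1)\otimes a$, $\iota_{1*}(d_{13})\otimes a$, $\iota_{2*}(d_{13})\otimes a$, and the cross product $\iota_{1*}(e_1)\otimes\iota_{2*}(e_1)\otimes a$, together with the conditions saying which of these can be nontrivial. On $F_*=B_*\otimes C_*$ these classes are represented by $e_1\otimes\ast$, $\ast\otimes e_1$, $c_1\otimes\ast$, $\ast\otimes c_1$, $d_{13}\otimes\ast$, $\ast\otimes d_{13}$, and $e_1\otimes e_1$; applying $c\otimes r\mapsto f(c)\otimes rv_T$ sends them to $e_1\otimes rv_T$, $e_{n-k+1}\otimes rv_T$, $c_1\otimes rv_T$, $c_{n-k+1}\otimes rv_T$, $d_{13}\otimes rv_T$, $d_{n-k+1,n-k+3}\otimes rv_T$, and $-d_{1,n-k+1}\otimes rv_T$. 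Matching these against the cases $k\geq 2$ (for $H_1$) and $k\geq 4$, respectively $k,n-k\in\{2,3\}$, $k=1$, and so on (for $H_2$) in Proposition~\ref{pr:stabilizerhomology} produces the displayed isomorphisms and the ``similar isomorphisms for smaller values''. The main obstacle is pinning down $f_2$ on the Eilenberg--Zilber term and identifying the cross-product class with $d_{1,n-k+1}$ with the right sign; everything else is formal bookkeeping parallel to Propositions~\ref{pr:cohomstabilizer} and~\ref{pr:Mkcohomology}, including the routine small-$k$ and small-$(n-k)$ cases.
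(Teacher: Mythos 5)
Your proposal is correct and follows essentially the same route as the paper's proof: induced module via Lemma~\ref{le:inducedasfree}, explicit homological Shapiro via Proposition~\ref{pr:explicithomShapiro} with $F_*$ the tensor product of the two partial resolutions, and the same chain map (including the key assignment $e_1\otimes e_1\mapsto -d_{1,n-k+1}$ with the same boundary verification) used to push the generators of Proposition~\ref{pr:stabilizerhomology} forward. No gaps.
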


\begin{proof}
By Lemma~\ref{le:inducedasfree} and Proposition~\ref{pr:explicithomShapiro}, we have isomorphisms
\[H_*(\sg{n-k,k},R)\to H_*(\sg{n};\mathrm{Ind}_{\sg{n-k,k}}^{\sg{n}}(R))\to H_*(\sg{n};M^k),\]
since $M^k$ is free on a basis that is a transitive $\sg{n}$-set with $\sg{n-k,k}$ being the stabilizer of $v_T$.
To see what happens to cocycles, we take our resolution $F_*$ for $\sg{n-k,k}$ to be $P^{n-k}_*\otimes P^k_*$, where $P^m_*$ is a completion of the partial resolution for $\sg{m}$ from Definition~\ref{de:Snres}.
We take our resolution $Q_*$ for $\sg{n}$ to be $P^n_*$.
Our chain map $f\co F_*\to Q_*$ is an extension of
\begin{itemize}
\item $*\otimes *\mapsto *$,
\item $e_i\otimes *\mapsto e_i$, $*\otimes e_i\mapsto e_{n-k+i}$,
\item $b_i\otimes *\mapsto b_i$, $*\otimes b_i\mapsto b_{n-k+i}$,
\item $c_i\otimes *\mapsto c_i$, $*\otimes c_i\mapsto c_{n-k+i}$,
\item $d_{ij}\otimes *\mapsto d_{ij}$, $*\otimes d_{ij}\mapsto d_{n-k+i,n-k+j}$,
\item $e_i\otimes e_j\mapsto -d_{i,n-k+j}$.
\end{itemize}
Such a map $f$ exists because the map we defined in dimensions $0$, $1$, and $2$
is an augmentation-preserving $\sg{n-k,k}$-chain map, and it can be extended to higher dimensions by standard methods since these modules are projective.
To see that this is an $\sg{n-k,k}$-chain map, we check that it respects boundaries on a few examples (the other cases are similar).
Let $\iota_1\co \sg{n-k}\to\sg{n-k,k}$ and $\iota_2\co \sg{k}\to\sg{n-k,k}$ be the product inclusion maps.
Then 
$\partial (e_i\otimes *)=(\iota_1(s_i)-1)*\otimes *$, which maps to $(s_i-1)*$ under $f$, which is $\partial e_i$.
Likewise,
$\partial (*\otimes e_i)=(\iota_2(s_i)-1)*\otimes *$, which maps to $(s_{n-k+i}-1)*$ under $f$, which is $\partial e_{n-k+i}$.
The interesting $2$-cell is $e_i\otimes e_j$; under usual conventions for tensor products of chain complexes, 
\[\partial (e_i\otimes e_j)=(\iota_1(s_i)-1)*\otimes e_j-(\iota_2(s_j)-1)e_i\otimes *.\]
Then
\[f(\partial (e_i\otimes e_j))=(s_i-1)e_{n-k+j}-(s_{n-k+j}-1)e_i.\]
Since this is  $\partial(-d_{i,n-k+j})$, this choice for $f(e_i\otimes e_j)$ works.
We find our generators for $H_i(\sg{n};M^k)$  ($i=1,2$) by taking our generators for $H_i(\sg{n-k,k};R)$ from Proposition~\ref{pr:stabilizerhomology} and sending them through the composition of maps above.
Since $v_T$ is our basepoint, our map $\mathrm{Ind}_{\sg{n-k,k}}^{\sg{n}}(R)\to M^k$ sends $g\otimes 1$ to $gv_T$ for $g\in \sg{n}$.
Then our generators $\iota_{1*}e_1$, $\iota_{1*}c_1$, and $\iota_{1*}d_{13}$ map to $e_1\otimes v_T$, $c_1\otimes v_T$, and $d_{13}\otimes v_T$, respectively.
Our generators in the image of $\iota_2$ are in the second factor, and get shifted, so that $\iota_{1*}e_1$, $\iota_{1*}c_1$, and $\iota_{1*}d_{13}$ map to $e_{n-k+1}\otimes v_T$, $c_{n-k+1}\otimes v_T$, and $d_{n-k+1,n-k+3}\otimes v_T$, respectively.
Finally, we have that $\iota_{1*}e_1\otimes \iota_{2*}e_1$ maps to
$-d_{1,n-k+1}\otimes v_T$.
\end{proof}

\subsection{Untwisted third homology}\label{ss:untwisted3rd}

Our starting point is the following lemma, which is adapted from Lemma II.5.1 of Brown~\cite{Brown}.
\begin{lemma}\label{le:upstairscycles}
Let $G$ be a group and let $n$ be a positive integer.
Let $F_n\to \dotsm\to F_0\to \Z\to 0$ be an exact sequence of $\Z G$-modules where each $F_i$ is projective.
Then $H_n(G)=\coker(H_n(F_*)\to H_n((F_*)_G))$.
\end{lemma}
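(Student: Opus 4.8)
The plan is to reduce the statement to the usual description of $H_n(G)$ coming from a projective resolution. First I would extend the given partial resolution to a genuine one: writing $Z_n=\ker\bigl(\partial_n\co F_n\to F_{n-1}\bigr)$, choose a projective $\Z G$-module $\widetilde F_{n+1}$ together with a surjection $\widetilde F_{n+1}\twoheadrightarrow Z_n$, and continue inductively to obtain a projective resolution $\widetilde F_*\to\Z$ with $\widetilde F_i=F_i$ and the same differentials for all $i\le n$. By the definition of group homology, $H_n(G)\cong H_n\bigl((\widetilde F_*)_G\bigr)$, where $(-)_G$ denotes coinvariants. The argument below is uniform for all $n\ge 1$.

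Next I would unwind the right-hand side. Since $\widetilde F_i=F_i$ for $i\le n$, the complex $(\widetilde F_*)_G$ coincides with $(F_*)_G$ in degrees $0$ through $n$ and has exactly one further term $(\widetilde F_{n+1})_G$ mapping into $(F_n)_G$. Hence $H_n\bigl((\widetilde F_*)_G\bigr)=Z\big/\im\bigl((\widetilde F_{n+1})_G\to (F_n)_G\bigr)$, where $Z=\ker\bigl((F_n)_G\to (F_{n-1})_G\bigr)$. On the other hand $(F_*)_G$ is concentrated in degrees $0$ through $n$, so its top homology is just the module of cycles: $H_n\bigl((F_*)_G\bigr)=Z$. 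Likewise, the exactness of the original sequence gives $H_n(F_*)=Z_n$, there being no term in degree $n+1$.

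The crux is to identify the submodule $\im\bigl((\widetilde F_{n+1})_G\to (F_n)_G\bigr)$ with the image of $H_n(F_*)$ under the natural map $H_n(F_*)\to H_n\bigl((F_*)_G\bigr)$ induced by the quotient $F_*\to (F_*)_G$. The differential $\widetilde F_{n+1}\to F_n$ factors as $\widetilde F_{n+1}\twoheadrightarrow Z_n\hookrightarrow F_n$. Applying $(-)_G=\Z\otimes_{\Z G}(-)$, which is right exact, the induced map $(\widetilde F_{n+1})_G\to (Z_n)_G$ is still surjective, so the image of $(\widetilde F_{n+1})_G$ in $(F_n)_G$ equals the image of $(Z_n)_G$ in $(F_n)_G$. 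Since the canonical map $Z_n\to (Z_n)_G$ is surjective, that image equals the image of the composite $Z_n\to (Z_n)_G\to (F_n)_G$, which is precisely the image of $Z_n=H_n(F_*)$ under the coinvariants map, and this image lies inside $Z=H_n\bigl((F_*)_G\bigr)$. Combining the three steps, $H_n(G)\cong Z\big/\im\bigl(H_n(F_*)\to H_n((F_*)_G)\bigr)=\coker\bigl(H_n(F_*)\to H_n((F_*)_G)\bigr)$.

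I expect the main obstacle to be precisely this last step: the careful bookkeeping needed to check that right exactness of the coinvariants functor lets one replace $\im\bigl((\widetilde F_{n+1})_G\to (F_n)_G\bigr)$ first by $\im\bigl((Z_n)_G\to (F_n)_G\bigr)$ and then by the image of $Z_n$ itself, and to confirm that the arrow $H_n(F_*)\to H_n((F_*)_G)$ so produced is the one implicit in the statement. Everything else — extending the resolution, invoking the definition of $H_n(G)$, and reading off top homology of a complex with nothing above degree $n$ — is formal.
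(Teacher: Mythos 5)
Your proposal is correct and follows essentially the same route as the paper: complete the partial resolution by a projective surjecting onto $\ker(F_n\to F_{n-1})$, read off $H_n(G)$ as cycles modulo boundaries in the coinvariants of the completed resolution, and identify the boundary submodule with the image of $H_n(F_*)\to H_n((F_*)_G)$. Your detour through $(Z_n)_G$ and right-exactness of coinvariants is just a slightly more explicit version of the paper's observation that the image of $(F'_{n+1})_G\to (F_n)_G$ is the image of the composite $F'_{n+1}\to F_n\to (F_n)_G$.
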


\begin{proof}
The partial resolution $F_*$ can be completed to a full resolution $F'_*$ by finding a projective module $F'_{n+1}$ with a surjection $F'_{n+1}\to \ker(F_n\to F_{n-1})$, and likewise for higher $F'_k$.
Then $H_n(G)$ is the homology of the coinvariants of $F'_*$.
So $H_n(G)=\ker((F_{n})_G\to (F_{n-1})_G)/\im((F'_{n+1})_G\to (F_n)_G)$.
But $\ker((F_n)_G\to (F_{n-1})_G)=H_n((F_*)_G)$, 
and $\im((F'_{n+1})_G\to (F_n)_G)$ is the image of $F'_{n+1}\to F_n$ composed with $F_n\to (F_n)_G$.
We chose $F'_{n+1}$ so that the image of $F'_{n+1}\to F_n$ is $\ker(F_n\to F_{n-1})=H_n(F_*)$.
This proves the lemma. 
\end{proof}

A more intuitive interpretation of Lemma~\ref{le:upstairscycles} is that, if $X$ is an $n$-dimensional approximation to a $K(G,1)$-complex and $\widetilde X$ is its universal cover, then $H_n(G)$ is the cokernel of the covering map $H_n(\widetilde X)\to H_n(X)$.
The idea is that we can recover the boundaries that would be filled by the missing $(n+1)$-skeleton by finding the $n$-cycles that actually come from the universal cover.
These are the cycles that would be filled by cells in building the $(n+1)$-skeleton in the next step of the approximation to the $K(G,1)$-complex.

Our plan is to use Lemma~\ref{le:upstairscycles} to get an upper bound to $H_3(\sg{n})$; in other words, to find a generating set for $H_3(\sg{n})$ and enough relations until we get a conjecturally correct picture of $H_3(\sg{n})$.
Then we will show that the cycles are nontrivial by other methods in order to prove Theorem~\ref{th:untwistedthird}.

Now, fix $n\geq 6$.
Let $X$ be the finite cellular $3$-complex described by Definition~\ref{de:Snres}.
This means that $X$ has a single $0$-cell $*$, $n-1$ $1$-cells $e_1,\dotsc, e_{n-1}$, and $2$- and $3$-cells according to the several families in that definition.
The cells are glued according to the boundaries in that definition, so that the complex $P_*$ we define there is the complex of cellular chains for $X$.
Let $\widetilde X$ be the universal cover of $X$.
By Lemma~\ref{le:upstairscycles}, $H_3(\sg{n})=\coker(H_3(\widetilde X)\to H_3(X))$.
Our generators for $P_*$ denote cells in $\widetilde X$, where the formulas for their boundaries correctly account for the $\sg{n}$-action on $\widetilde X$ by deck transformations.
We use barred versions of these notations for our cells in $X$, or equivalently, for their coinvariance classes.

\begin{lemma}\label{le:Xhomology}
$H_3(X)$ is generated by the following classes, for all choices of $i$, $j$, and $k$ that make sense, with $i<j<k$:
\[\bar c^{31}_i, \bar c^{32}_{ij}+\bar c^{32}_{ji}, \bar c^{33}_{ijk}, 2\bar c^{34}_{ij}-\bar c^{32}_{i+1,j}+\bar c^{32}_{ij}, 2\bar c^{34}_{ji}-\bar c^{32}_{i,j+1}+\bar c^{32}_{ij}, \bar c^{36}_i, \text{ and } \bar c^{37}_i+\bar c^{32}_{i,i+2}.\]
\end{lemma}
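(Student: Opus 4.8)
The plan is to compute $H_3(X)$ by taking the kernel of $\partial^P_3$ after passing to coinvariants (tensoring $P_*$ with $\Z$ over $\Z\sg{n}$), and finding a generating set for this kernel; since $X$ is $3$-dimensional, $H_3(X)=\ker(\bar\partial_3\co (P_3)_{\sg{n}}\to (P_2)_{\sg{n}})$. So the first step is to record the coinvariant boundary maps. Tensoring the formulas in Definition~\ref{de:Snres} with $\Z$ sends every group element to $1$, so $\bar\partial_3$ is given on generators by: $\bar c^{31}_i\mapsto 0$; $\bar c^{32}_{ij}\mapsto -2\bar d_{ij}$ (when $i<j$) and $\bar c^{32}_{ji}\mapsto 2\bar d_{ij}$ (when $j<i$, using the second branch); $\bar c^{33}_{ijk}\mapsto \bar d_{jk}-\bar d_{ik}+\bar d_{ij}$; $\bar c^{34}_{ij}\mapsto \bar b_i-\bar d_{i+1,j}+\bar d_{ij}$ (when $i+1<j$), and the corresponding expression with $\bar d_{j,i+1}$, $\bar d_{ji}$ when $j<i$; $\bar c^{35}_i\mapsto 2\bar b_i-\bar c_{i+1}+\bar c_i$; $\bar c^{36}_i\mapsto 0$; and $\bar c^{37}_i\mapsto \bar d_{i,i+2}-\bar b_i-\bar b_{i+1}+\bar b_i+\bar d_{i,i+2}-\bar b_{i+1}+\bar d_{i,i+2}+\cdots$ — the point is that after setting all group elements to $1$ the first coefficient bundle collapses and one gets $2\bar d_{i,i+2}$ together with some multiple of $\bar b_i$ and $\bar b_{i+1}$; I would compute this carefully and expect $\bar\partial_3 \bar c^{37}_i = 2\bar d_{i,i+2}$ (the $b$-terms should cancel, since $(s_{i+2}-1+\cdots)$ and $(s_i-1+\cdots)$ each become $0$ or a cancelling sum). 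This computation, which I would do by hand, is the first concrete task.

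Next I would solve for $\ker\bar\partial_3$. The key observation is that $\bar\partial_3$ restricted to the span of the $\bar c^{31}_i$ and $\bar c^{36}_i$ is zero, so these are automatically cycles — accounting for the generators $\bar c^{31}_i$ and $\bar c^{36}_i$ in the list. For the rest, one sees that $\bar d_{ij}\in \im\bar\partial_3$ only through the combinations above, so I would: (i) observe $\bar c^{32}_{ij}+\bar c^{32}_{ji}$ is a cycle (the two terms map to $-2\bar d_{ij}$ and $+2\bar d_{ij}$); (ii) observe $2\bar c^{34}_{ij}-\bar c^{32}_{i+1,j}+\bar c^{32}_{ij}$ is a cycle since $2\bar c^{34}_{ij}\mapsto 2\bar b_i-2\bar d_{i+1,j}+2\bar d_{ij}$ while $-\bar c^{32}_{i+1,j}+\bar c^{32}_{ij}\mapsto 2\bar d_{i+1,j}-2\bar d_{ij}$, and these sum to $2\bar b_i$ — wait, that is not zero, so I will need to be more careful: presumably the correct statement is that the $\bar b_i$ contribution is itself killed, or that I should also add a $\bar c^{35}$ or $\bar c^{37}$ term, and I must recheck the sign conventions in the hexagonal-prism boundary; the point is that the listed combination is claimed to be a cycle and I would verify the $\bar b_i$'s and $\bar d$'s cancel using the exact coefficients; (iii) observe $\bar c^{33}_{ijk}\mapsto \bar d_{jk}-\bar d_{ik}+\bar d_{ij}$, which is nonzero, so $\bar c^{33}_{ijk}$ alone is \emph{not} a cycle — again I suspect the intended statement uses the fact that this class only needs to generate $H_3(X)$ together with enough relations, i.e. the lemma lists a spanning set for $H_3(X)$ as a \emph{quotient}-relevant generating set, and I should interpret "$H_3(X)$ is generated by" loosely or track which combinations actually lie in the kernel. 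I would resolve this by a direct linear-algebra argument: write a general $3$-chain $z=\sum a^{31}_i\bar c^{31}_i+\sum a^{32}_{ij}\bar c^{32}_{ij}+\cdots$, impose $\bar\partial_3 z=0$, and solve the resulting system over $\Z$; the free parameters will correspond (after change of basis) to the seven families listed.

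The main obstacle I anticipate is the bookkeeping for the $\bar d_{ij}$-coefficients in $\bar\partial_3 z=0$: the generators $\bar d_{ij}$, $\bar b_i$, $\bar c_i$ each appear in several of the seven $3$-cell families, and the kernel equations couple them across the index ranges ($i<j$, $j<i$, $i<j<k$, $|i-j|\ge 2$, etc.). I would organize this by first projecting onto the $\bar c_i$-coordinates (only $\bar c^{35}_i$ contributes, forcing the $\bar c^{35}$-coefficients into an arithmetic relation), then onto the $\bar b_i$-coordinates (contributions from $\bar c^{34}$, $\bar c^{35}$, $\bar c^{37}$), then onto the $\bar d_{ij}$-coordinates ($\bar c^{32}$, $\bar c^{33}$, $\bar c^{34}$, $\bar c^{37}$). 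Solving these in order — $\bar c^{35}$ and $\bar c^{37}$ coefficients get pinned down or related, then $\bar c^{34}$ coefficients, then $\bar c^{32}$, $\bar c^{33}$, leaving $\bar c^{31}$, $\bar c^{36}$ entirely free — should exhibit the claimed generating set. Throughout I will need to be vigilant that I am reading the boundary formulas from Definition~\ref{de:Snres} with the correct signs and with the group elements correctly set to $1$; in particular I expect that the apparent discrepancies noted above (the stray $2\bar b_i$) will disappear once the coinvariant boundary of $\bar c^{34}_{ij}$ is written out exactly as $\bar b_i - \bar d_{i+1,j} + \bar d_{ij}$ and the factor-$2$ combination is paired against the correct $\bar c^{32}$'s, possibly with an additional correction term that I will identify in the course of solving the system.
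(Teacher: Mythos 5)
Your overall strategy is exactly the paper's: pass to coinvariants, compute $\bar\partial_3$ on the generators of $(P_3)_{\sg{n}}$, and identify the kernel. But your execution of the coinvariant boundary computation contains a systematic error that you notice but never resolve, and it is precisely this error that produces every ``discrepancy'' you flag. The augmentation $\Z\sg{n}\to\Z$ sends every group element to $1$, so every coefficient of the form $g-1$ (indeed, anything in the augmentation ideal) maps to $0$, not to $1$. Applying this correctly: $\bar\partial\bar c^{33}_{ijk}=0$ (each of the three coefficients is $s-1$), so $\bar c^{33}_{ijk}$ \emph{is} a cycle on the nose and there is no need to ``interpret generated-by loosely''; and $\bar\partial\bar c^{34}_{ij}=\bar d_{ij}-\bar d_{i+1,j}$ with \emph{no} $\bar b_i$ term, since the coefficient of $b_i$ is $s_j-1$. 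With that, $2\bar c^{34}_{ij}-\bar c^{32}_{i+1,j}+\bar c^{32}_{ij}$ maps to $2\bar d_{ij}-2\bar d_{i+1,j}-2\bar d_{ij}+2\bar d_{i+1,j}=0$ and your ``stray $2\bar b_i$'' disappears. (Your guess $\bar\partial\bar c^{37}_i=2\bar d_{i,i+2}$ is right: the $d_{i,i+2}$-coefficient augments to $2$ and the two $b$-coefficients each augment to $0$.) As written, your proposal asserts boundary formulas that are mutually inconsistent with the claimed cycles and defers the resolution to a future recomputation; that is a genuine gap, even though the fix is a one-line observation.

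For the spanning step, the paper does not set up the full linear system you propose; it argues by successive reduction: strip off the $\bar c^{31}_i$, $\bar c^{33}_{ijk}$, $\bar c^{36}_i$ terms (zero boundary), use the listed combinations to eliminate all $\bar c^{34}$ and $\bar c^{37}$ terms, note that no $\bar c^{35}_i$ can occur in a cycle because it alone contributes $\bar b_i$ (and a $\bar c_i-\bar c_{i+1}$) to the boundary, and then observe that the surviving $\bar c^{32}_{ij},\bar c^{32}_{ji}$ coefficients must pair up. Your ``project onto the $\bar c$-, $\bar b$-, $\bar d$-coordinates in order'' plan is the same idea and would work once the boundary formulas are corrected — but note that with the correct formulas the $\bar b$-coordinate receives contributions \emph{only} from $\bar c^{35}_i$, which is what forces those coefficients to vanish and makes the bookkeeping much lighter than you anticipate.
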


\begin{proof}
The cellular $3$-chains of $X$ are the same as the coinvariants $(P_3)_{\sg{n}}$, so we compute the boundaries of the generators for $P_3$, ignoring the  $\sg{n}$-action.

We record the results here.
Since $c^{32}_{ij}$ and $c^{34}_{ij}$ have different boundaries depending on whether $i<j$, we assume $i<j$ and record both orders.
\begin{itemize}
\item $\partial \bar c^{31}_i=0$.
\item $\partial \bar c^{32}_{ij}=-2 \bar d_{ij}$.
\item $\partial \bar c^{32}_{ji}=2\bar d_{ij}$.
\item $\partial \bar c^{33}_{ijk}=0$.
\item $\partial \bar c^{34}_{ij}=\bar d_{ij}-\bar d_{i+1,j}$.
\item $\partial \bar c^{34}_{ji}=\bar d_{i,j+1}-\bar d_{ij}$.
\item $\partial \bar c^{35}_i=2\bar b_i+\bar c_i - \bar c_{i+1}$.
\item $\partial \bar c^{36}_i=0$.
\item $\partial \bar c^{37}_i=2\bar d_{i,i+2}$.
\end{itemize}
Now assume that we have a chain $x=\sum a_i\bar c_i$ for some $a_i\in \Z$ and $\bar c_i$ among the generators for $(P_3)_{\sg{n}}$, and assume $\partial x=0$.
We will modify $x$ using the generators given above until we are left with the zero chain.
First of all, if any of the cells in $x$ have the form $\bar c^{31}_i$, $\bar c^{33}_{ijk}$, or $\bar c^{36}_i$, then we simply remove these from the sum; since they have zero boundary, this does not change our assumption that $x$ has zero boundary.
Next, if any cell of the form $\bar c^{34}_{ij}$, $\bar c^{34}_{ji}$, or $\bar c^{37}_{i}$ appears, we add an appropriate multiple of $2\bar c^{34}_{ij}-\bar c^{32}_{i+1,j}+\bar c^{32}_{ij}$, $2\bar c^{34}_{ji}-\bar c^{32}_{i,j+1}+\bar c^{32}_{ji}$, or $\bar c^{37}_i+\bar c^{32}_{i,i+2}$.
So we assume that no cells of these forms appear in $x$.
Next we note that no cell of the form $\bar c^{35}_i$ can appear in $x$ with a nonzero coefficient, since if it did, then a cell of the form $\bar b_i$ would appear in $\partial x$ with nonzero coefficient.
From this, we deduce that all the cells in $x$ have the form $\bar c^{32}_{ij}$  or $\bar c^{32}_{ji}$.
Since $\partial x=0$, we deduce that for each pair $\{i,j\}$, we have the same coefficient on $\bar c^{32}_{ij}$ as $\bar c^{32}_{ji}$.
So we subtract off the appropriate multiple of $\bar c^{32}_{ij}+\bar c^{32}_{ji}$, for each pair, and reduce $x$ to zero.
\end{proof}

Our next goal is to give a recipe for building several useful chains in $P_*$.
The idea is to pick a consecutive cycle $\alpha=(i,i+1,\dotsc,j)$ and a $k$-cell $c_I$ in $P_k$ for  $k\in\{0,1,2\}$, and build a $(k+1)$-chain $\Pi(\alpha,c_I)$ in $P_{k+1}$ that represents a prism bounding $c_I$ and $\alpha c_{\alpha(I)}$:
\[\partial \Pi(\alpha, c_I)=\alpha c_{\alpha(I)}-c_I-\Pi(\alpha,\partial c_I).\]
However, for this to work, we must pick $c$ so that this conjugate is also a single cell.
So what we are about to define is a kind of ``partial prism operator" on $P_*$.

\begin{definition}
Let $i<j$ be from $\{1,2,\dotsc, n\}$ and let $\alpha=(i,i+1,\dotsc, j)$.
Pick a cell in $P_*$ of dimension $0$, $1$, or $2$ and write it as $\sigma c_I$, where $\sigma\in \sg{n}$ and $c$ is one of the cells in our basis.
For the following construction to be defined, we demand that all $1$-cells on $c$ are translates of $1$-cells $e_k$, where $k\neq i$ and $k\neq j$.
Define $\Pi(\alpha,\sigma c)$ as follows:
\begin{itemize}
\item If $\sigma\neq (1)$, then $\Pi(\alpha,\sigma c)=\sigma \Pi(\alpha,c)$.
\item If $c=*$, then 
\[\Pi(\alpha, *)= e_i + s_i e_{i+1}+\dotsc+ s_i s_{i+1}\dotsm s_{j-2} e_{j-1}.\]
\item If $c=e_k$, we define $\Pi$ in two steps.
\begin{itemize}
\item If $\alpha=s_i$, then $k\neq i$ and $k\neq i+1$.
We define 
\[\Pi(s_i,e_k)=
\left\{\begin{array}{cl} - d_{ik} & \text{if $i<k$} \\
 d_{ki} & \text{if $k<i$.}\end{array}\right.
\]
\item We define
\[\Pi(s_i s_{i+1}, e_{i+1})=-b_i.\]
\end{itemize}
In general, if $k<i$ or $k>j$, then
\[\Pi((i,i+1,\dotsc, j),e_k) =\sum_{l=i}^{j-1} s_is_{i+1}\dotsc s_{l-1}\Pi(s_l,e_k).\]
If $i<k<j$, then
\[
\begin{split}
\Pi((i,i+1,\dotsc, j),e_k)&= \Pi((i,i+1,\dotsc, k-1),e_k) \\
&\quad +(i,i+1,\dotsc, k-1)\cdot \Pi(s_{k-1}s_k,e_k)\\
&\quad +(i,i+1,\dotsc, k+1)\cdot\Pi((k+1,k+2,\dotsc, j),e_{k-1}).
\end{split}
\]
\item 
If $c=c_k$, then:
\begin{itemize}
\item if $\alpha=s_i$ and $\abs{i-k}\geq 2$, then
\[\Pi(s_i,c_k)=c^{32}_{ki};\]
\item if $\alpha=s_{k-1}s_k$, then
\[\Pi(s_{k-1}s_k,c_k)=c^{35}_{k-1}.\]
\end{itemize}
Like with $e_k$, if $k<i$ or $k>j$, then
\[\Pi((i,i+1,\dotsc, j),c_k) =\sum_{l=i}^{j-1} s_is_{i+1}\dotsc s_{l-1}\Pi(s_l,c_k).\]
If $i<k<j$, then
\[
\begin{split}
\Pi((i,i+1,\dotsc, j),c_k)&= \Pi((i,i+1,\dotsc, k-1),c_k) \\
&\quad+(i,i+1,\dotsc, k-1)\cdot \Pi(s_{k-1}s_k,c_k) \\
&\quad +(i,i+1,\dotsc, k+1)\cdot\Pi((k+1,k+2,\dotsc, j),c_{k-1}).
\end{split}
\]
\item
If $c=d_{jk}$, then:
\begin{itemize}
\item
if $\alpha=s_i$ and $\abs{i-j}\geq 2$ and $\abs{i-k}\geq2$, then
\[\Pi(s_i, d_{jk})=
\left\{\begin{array}{cl}
c^{33}_{ijk} & \text{if $i<j$} \\
-c^{33}_{jik} & \text{if $j<i<k$} \\
c^{33}_{jki} & \text{if $k<i$;}
\end{array}
\right.
\]
\item
if $\alpha=s_{j-1}s_j$, then
\[\Pi(s_{j-1}s_j,d_{jk})=c^{34}_{j-1,k};\]
\item
if $\alpha=s_{k-1}s_k$, then 
\[\Pi(s_{k-1}s_k,d_{jk})=-c^{34}_{k-1,j}.\]
\end{itemize}
Generally, if neither $k$ nor $l$ is in $\{i,i+1,\dotsc, j\}$, then
\[\Pi((i,i+1,\dotsc,j),d_{kl})=\sum_{m=i}^{j-1}s_is_{i+1}\dotsm s_{m-1}\Pi(s_m,d_{kl}),\]
and otherwise
\[
\begin{split}
\Pi((i,i+1,\dotsc,j),d_{kl})&=\Pi((i,i+1,\dotsc,k-1),d_{kl})\\
&\quad+(i,i+1,\dotsc,k-1)\cdot\Pi(s_{k-1}s_k,d_{kl})\\
&\quad+(i,i+1,\dotsc,k+1)\cdot\Pi((k+1,k+2,\dotsc,l-1),d_{k-1,1})\\
&\quad+(i,i+1,\dotsc,l-1)\cdot\Pi(s_{l-1,l},d_{k-1,l})\\
&\quad+(i,i+1,\dotsc,l+1)\cdot\Pi((l+1,l+2,\dotsc,j),d_{k-1,l-1}).
\end{split}
\]
\item
If $c=b_k$, then
\begin{itemize}
\item if $\alpha=s_i$ with $i+1<k$ or $i>k+2$, then
\[\Pi(s_i,b_k)=c^{34}_{ik};\]
\item if $\alpha=s_{k-1}s_ks_{k+1}$, then
\[\Pi(s_{k-1}s_ks_{k+1},b_k)=c^{37}_{k-1}.\]
\end{itemize}
Generally, if $k>j$ or $k+2<i$ then
\[\Pi((i,i+1,\dotsc,j),b_k)=\sum_{l=i}^{j-1}s_is_{i+1}\dotsm s_{l-1}\Pi(s_l,b_k),\]
and if $k-1\geq i$ and $k+1\leq j$, then
\[
\begin{split}
\Pi((i,i+1,\dotsc, j), b_k)&=\Pi((i,i+1,\dotsc, k-1),b_k)\\
&\quad+(i,i+1,\dotsc, k-1)\cdot\Pi(s_{k-1}s_ks_{k+1},b_k)\\
&\quad+(i,i+1,\dotsc, k+2)\cdot\Pi((k+2,k+3,\dotsc ,j),b_k).
\end{split}
\]
\end{itemize}
\end{definition}

The point of the definition is the following prism operator property, and its corollary about homology classes.
\begin{proposition}
Let $\alpha$ be a consecutive cycle $(i,i+1,\dotsc,j)$ and let $c_I\in P_m$ be a basis cell, for $m\in \{0,1,2\}$, and where $I$ is the list of indices in the definition of the cell.
Suppose that $c_I$ does not contain any $1$-cells that are translates of $e_i$ or $e_j$.
Then $\Pi(\alpha,c_I)\in P_{m+1}$ is defined, and
\[\partial \Pi(\alpha,c_I) = \alpha c_{\alpha\cdot I} -c_I -\Pi(\alpha,\partial c_I).\]
\end{proposition}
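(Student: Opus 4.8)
The plan is to prove the identity by induction on the length $j-i$ of the consecutive cycle $\alpha=(i,i+1,\dotsc,j)$, treating all admissible basis cells $c_I$ of dimension $m\in\{0,1,2\}$ simultaneously, and checking in each case, alongside the identity itself, that $\Pi(\alpha,c_I)$ is well defined (i.e.\ that every cell occurring on the right-hand side of the recursive definition satisfies its index inequalities, such as $j>i+1$ and $k>j+1$ for $c^{3,3}_{ijk}$) and that the hypothesis ``no $1$-cell of $c_I$ is a translate of $e_i$ or $e_j$'' passes to each sub-prism the definition produces. A small bookkeeping observation is used throughout: since $\Pi(\alpha,\sigma c)=\sigma\Pi(\alpha,c)$ and $\partial$ is $\Z\sg{n}$-equivariant, the identity for a translate $\sigma c_I$ reduces to the identity for $c_I$ whenever $\sigma$ commutes with the transpositions of $\alpha$ that survive, and one verifies that this is the only situation in which $\Pi$ is applied to a translate when $\Pi(\alpha,\partial c_I)$ is unwound.

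The base of the induction consists of $\alpha$ trivial (where $\Pi=0$ and the identity reads $0=c_I-c_I-0$) together with the finitely many configurations in which $\alpha$ is $s_i$, $s_{k-1}s_k$, or $s_{k-1}s_ks_{k+1}$ and $c_I$ is a cell of one of the types $*,\,e_k,\,c_k,\,d_{kl},\,b_k$ for which the definition assigns $\Pi(\alpha,c_I)$ an explicit single cell of $P_{m+1}$. Each of these is a direct computation: substitute the explicit value of $\Pi(\alpha,c_I)$, apply the appropriate boundary formula from Definition~\ref{de:Snres}, and compare with $\alpha c_{\alpha\cdot I}-c_I-\Pi(\alpha,\partial c_I)$, the last term being evaluated from the already-known values of $\Pi$ on $*$ and on $e_k$ by linearity in the cell argument. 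For instance, the case $c=*$ holds because $\partial\bigl(e_i+s_ie_{i+1}+\dotsb+s_is_{i+1}\dotsm s_{j-2}e_{j-1}\bigr)$ telescopes to $(s_is_{i+1}\dotsm s_{j-1}-1)*=(\alpha-1)*$; the case $\alpha=s_is_{i+1}$, $c=e_{i+1}$ unwinds from the formula for $\partial b_i$ together with the length-two ramp $\Pi(s_is_{i+1},*)=e_i+s_ie_{i+1}$; and the $c^{3,2}$, $c^{3,3}$, $c^{3,4}$, $c^{3,5}$, $c^{3,7}$ values are checked against their respective boundary formulas in the same mechanical way.

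For the inductive step one has $\alpha=(i,\dotsc,j)$ with $j-i\geq2$, and there are two cases matching the two recursive branches of the definition. If no index of $c_I$ lies in the active range $\{i,\dotsc,j\}$, so that every $s_l$ with $i\leq l\leq j-1$ fixes $I$, then $\Pi(\alpha,c_I)=\sum_{l=i}^{j-1}(s_is_{i+1}\dotsm s_{l-1})\Pi(s_l,c_I)$; applying $\partial$ and the single-transposition identity, the part $\sum_l(s_is_{i+1}\dotsm s_{l-1})(s_lc_I-c_I)$ telescopes to $(\alpha-1)c_I=\alpha c_{\alpha\cdot I}-c_I$, while $\sum_l(s_is_{i+1}\dotsm s_{l-1})\Pi(s_l,\partial c_I)$ equals $\Pi(\alpha,\partial c_I)$ by applying the same recursion termwise to $\partial c_I$, whose cells again lie in the complement of the active range up to translates commuting past the prefixes. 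If instead some index $k$ of $c_I$ satisfies $i<k<j$, the definition writes $\Pi(\alpha,c_I)$ as a sum of three terms — $\Pi(\beta_1,c_I)$ with $\beta_1=(i,\dotsc,k-1)$, a translate of the short prism $\Pi(s_{k-1}s_k,c_I)$ (or $\Pi(s_{k-1}s_ks_{k+1},c_I)$ when $c$ is a $b$-cell), and a translate of $\Pi(\beta_2,c_{I'})$ with $\beta_2=(k+1,\dotsc,j)$ and $I'$ obtained from $I$ by replacing $k$ with $k-1$ — or, when $c$ is a $d$-cell with both indices in the range, a sum of five terms of the same shape. Since $\beta_1$ and $\beta_2$ are strictly shorter than $\alpha$, the inductive hypothesis applies to the outer prisms and a base case to the middle one; adding the resulting boundary expressions, using the evident factorization of $\alpha$ as the product of $\beta_1$, the short cycle, and $\beta_2$, and cancelling the interface cells that two consecutive prisms share, one obtains exactly $\alpha c_{\alpha\cdot I}-c_I-\Pi(\alpha,\partial c_I)$.

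The hard part is precisely this last recombination. One must keep careful track of how $\alpha$, $\beta_1$, and $\beta_2$ permute the index list $I$ (so that the ``$\alpha c$'' and ``$-c$'' pieces produced by the three prisms telescope, e.g.\ via $(s_{k-1}s_k)(k)=k-1$ and $\alpha(k)=k-1$), and of the signs and group-element coefficients carried by the interface cells, as well as handling the degenerate cases where a sub-cycle collapses to the identity or where an index just outside the active range forces an atomic formula with a shifted range. Each individual verification is elementary, but there are many of them — five cell types, each with a ``disjoint'' subcase and one or more ``straddling'' subcases, and for $d$-cells the possibility of straddling one or both indices. The polyhedral pictures referenced after Definition~\ref{de:Snres} — the cube $c^{3,3}_{ijk}$, the hexagonal prism $c^{3,4}_{ij}$, the truncated octahedron $c^{3,7}_i$, and the tube- and ball-shaped cells — are a reliable guide: the cancellations to be checked are exactly those that glue these polyhedra into the boundary of a prism over $c_I$.
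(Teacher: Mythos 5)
Your proposal is correct and follows essentially the same route as the paper: verify the identity directly on the atomic cases where $\Pi(\alpha,c_I)$ is a single explicitly named cell, then deduce the general case from the fact that the recursive definition concatenates these atomic prisms. The paper states this in two sentences and leaves all verification implicit; your organization of the concatenation step as an induction on $j-i$ with explicit telescoping and interface cancellation is a more careful rendering of the same argument, not a different one.
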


\begin{proof}
This is by construction in the definition of $\Pi$.
It is straightforward to verify this by copying out the definitions of the boundaries in the cases of 
\begin{itemize}
\item $\Pi(s_i,c_k)$, 
\item $\Pi(s_{k-1}s_k,c_k)$, 
\item $\Pi(s_i, d_{jk})$, 
\item $\Pi(s_{j-1}s_j,d_{jk})$, 
\item $\Pi(s_{k-1}s_k,d_{jk})$, 
\item $\Pi(s_i,b_k)=c^{34}_{ik}$, and 
\item $\Pi(s_{k-1}s_ks_{k+1},b_k)$.
\end{itemize}
Then it follows in general since the general definitions are built as concatenations of these special cases.
\end{proof}

\begin{corollary}
For each of our generators for $H_3(X)$ in Lemma~\ref{le:Xhomology}, the homology class that the generator represents in $H_3(\sg{n})$ does not depend on the choice of subscripts.
In particular, $H_3(\sg{n})$ is generated by
\[\bar c^{31}_1, \bar c^{32}_{13}+\bar c^{32}_{31}, \bar c^{33}_{135}, 2\bar c^{34}_{14}-\bar c^{32}_{24}+\bar c^{32}_{14}, 2\bar c^{34}_{31}-\bar c^{32}_{14}+\bar c^{32}_{13}, \bar c^{36}_1, \text{ and } \bar c^{37}_1+\bar c^{32}_{13}.\]
\end{corollary}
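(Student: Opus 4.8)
The plan is to invoke Lemma~\ref{le:upstairscycles}. Since the complex $P_*$ vanishes above degree $3$, the map $H_3(\widetilde X)\to H_3(X)$ appearing there is the map $Z_3(P_*)\to Z_3((P_*)_{\sg{n}})$ induced by the coinvariance quotient $q\colon P_*\to (P_*)_{\sg{n}}$, so $H_3(\sg{n})\cong Z_3((P_*)_{\sg{n}})/q(Z_3(P_*))$. By Lemma~\ref{le:Xhomology}, $Z_3((P_*)_{\sg{n}})=H_3(X)$ is generated by the seven displayed families of cycles. Hence it is enough to prove that within each family any two members have the same image in this quotient, i.e.\ that their difference lies in $q(Z_3(P_*))$; granting this, one representative per family generates $H_3(\sg{n})$, and choosing the indicated minimal-subscript representatives gives the ``in particular'' statement.

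To compare two members of a family it suffices to treat an \emph{elementary} subscript change, replacing one index by an adjacent value, since a general change of subscripts is a composition of such moves, and the two orders of a pair of subscripts (as in $c^{32}_{ij}$ versus $c^{32}_{ji}$, or $c^{34}_{ij}$ versus $c^{34}_{ji}$) are already built into the generators of Lemma~\ref{le:Xhomology}. For each family and each elementary move I will exhibit an explicit $z\in Z_3(P_*)$ with $q(z)$ equal to the relevant difference. The tool for building $z$ is the partial prism operator $\Pi$ together with the $3$-cells of Definition~\ref{de:Snres}: for a $2$-cell $c$ and a consecutive cycle $\alpha$ disjoint from the edges of $c$, the prism identity $\partial\Pi(\alpha,c)=\alpha c_{\alpha\cdot I}-c-\Pi(\alpha,\partial c)$ produces $3$-chains in $P_*$ with understood boundaries, and the boundary formulas for $c^{31},\dots,c^{37}$ are exactly the relations that convert combinations of such prism boundaries, plus translates of $3$-cells, into genuine $3$-cycles.

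The $c^{31}$ family illustrates the mechanism. Using $\partial c^{35}_i=(s_{i+1}+1)b_i-c_{i+1}+s_is_{i+1}c_i$, the squaring relation $s_{i+1}^2=1$, and the braid relation, one checks that
\[z=c^{31}_{i+1}-s_is_{i+1}c^{31}_i+(s_{i+1}-1)c^{35}_i\]
has $\partial z=0$, so $z\in Z_3(P_*)$; since $(s_{i+1}-1)$ acts trivially after passing to coinvariants, $q(z)=\bar c^{31}_{i+1}-\bar c^{31}_i$, and iterating shows $\bar c^{31}_i$ has a class in $H_3(\sg{n})$ independent of $i$. The remaining six families are handled the same way: one writes the combination of $3$-cells and prisms $\Pi(\alpha,-)$ applied to the $c_k$, $b_k$, and $d_{kl}$ occurring in the boundaries of the family's cells, checks that the boundary cancels, reads off the coinvariant image, and verifies it is the required difference. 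For $c^{33}$ one starts from the observation that $c^{33}_{ijk}=\Pi(s_i,d_{jk})$ is itself a prism and uses longer consecutive cycles; for the $c^{32}$ and $c^{34}$ families one combines $\Pi(\alpha,c_k)$ and $\Pi(\alpha,d_{kl})$ with the identities $c^{32}_{kl}=\Pi(s_l,c_k)$, $c^{34}_{j-1,k}=\Pi(s_{j-1}s_j,d_{jk})$; and $c^{36}$ is analogous to $c^{31}$.

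I expect the main obstacle to be the verification in the third step for the families involving $b$-cells, namely the $c^{34}$, $c^{36}$, and $c^{37}$ generators. The boundary of $c^{37}_i$ is a six-term $\Z\sg{n}$-combination on the three $2$-cells $d_{i,i+2}$, $b_i$, $b_{i+1}$, and the prisms $\Pi(\alpha,b_k)$ and $\Pi(\alpha,d_{kl})$ expand through their recursive definitions into long sums indexed by $\alpha$. One must check that all interior $d$- and $b$-contributions telescope, so that after passing to coinvariants only the intended difference of two family members survives; this is lengthy but mechanical bookkeeping, with the prism identity $\partial\Pi(\alpha,c)=\alpha c_{\alpha\cdot I}-c-\Pi(\alpha,\partial c)$ serving as the organizing principle that keeps it routine rather than delicate.
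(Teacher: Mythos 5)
Your strategy is the paper's strategy: reduce via Lemma~\ref{le:upstairscycles} to exhibiting, for each shift of subscripts, an explicit $3$-cycle in $P_3$ whose coinvariant image is the difference of the two generators, and build that cycle out of the partial prism operator $\Pi$ and the $3$-cells of $P_3$. Your worked $c^{31}$ case is correct: $\partial\bigl(c^{31}_{i+1}-s_is_{i+1}c^{31}_i+(s_{i+1}-1)c^{35}_i\bigr)=0$ by the squaring and braid relations, and its coinvariant image is $\bar c^{31}_{i+1}-\bar c^{31}_i$. The one thing you miss is the observation that makes the remaining families uniform and eliminates the ``lengthy but mechanical bookkeeping'' you flag as the main obstacle. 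In the identity $\partial\Pi(\alpha,c)=\alpha c_{\alpha\cdot I}-c-\Pi(\alpha,\partial c)$, the error term $\Pi(\alpha,\partial c)$ dies in coinvariants automatically whenever the chain $c$ being shifted is already a cycle in $(P_*)_{\sg{n}}$: being a coinvariant cycle means the terms of $\partial c$ pair off as $g\sigma$ and $-g'\sigma$ for the same basis $2$-cell $\sigma$, and since $\Pi(\alpha,-)$ is $\sg{n}$-equivariant in its second argument, the corresponding prism terms pair off as $(g-g')\Pi(\alpha,\sigma)$, which vanishes after passing to coinvariants. This two-line argument applies verbatim to all seven generators of Lemma~\ref{le:Xhomology} (in particular to the $c^{34}$, $c^{36}$, and $c^{37}$ families), so no telescoping verification is needed; apart from that shortcut your route and the paper's coincide.
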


\begin{proof}
We explain the argument for $\bar c^{33}_{ijk}$; the argument for the other generators is similar.
First let $\alpha=(1,2,\dotsc,i)$.
Then 
\[\partial \Pi(\alpha,c^{33}_{ijk})= \alpha\cdot c^{33}_{i-1,j,k}-c^{33}_{ijk}-\Pi(\alpha,\partial c^{33}_{ijk}).\]
Since $\bar c^{33}_{ijk}$ is a cycle in $(P_*)_G$, the cells in $\bar c^{33}_{ijk}$ can be split into pairs of the same cell appearing twice with opposite signs, but possibly with one of the pair being translated.
These pairs cancel when taking coinvariants.
Since $\Pi$ is equivariant in its second input, the prisms in $\Pi(\alpha,\partial c^{33}_{ijk})$" also pair off and cancel when taking coinvariants.
So we deduce that $\bar c^{33}_{ijk}$ and $c^{33}_{i-1,j,k}$ represent the same homology class.
Repeating this, we get that $\bar c^{33}_{ijk}\sim \bar c^{33}_{1jk}$.
Next we repeat this with $\alpha=(3,4,\dotsc,j)$, and deduce that $\bar c^{33}_{1jk}\sim \bar c^{33}_{13k}$.
Finally, we repeat this with $\alpha=(5,6,\dotsc, k)$, and deduce that $\bar c^{33}_{13k}\sim \bar c^{33}_{135}$.
So $\bar c^{33}_{ijk}\sim \bar c^{33}_{135}$.
\end{proof}

\begin{proposition}
The cycles $2\bar c^{34}_{14}-\bar c^{32}_{24}+\bar c^{32}_{14}$ and $2\bar c^{34}_{31}-\bar c^{32}_{14}+\bar c^{32}_{13}$ are null-homologous in $H_3(\sg{n})$.
\end{proposition}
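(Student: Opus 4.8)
The reduction is via Lemma~\ref{le:upstairscycles}. Applying it with $G=\sg n$ and in homological degree $3$, we get $H_3(\sg n)=\coker\bigl(H_3(\widetilde X)\to H_3(X)\bigr)$; since $P_*$ is supported in degrees $\le 3$, this reads $H_3(\sg n)=Z_3\bigl((P_*)_{\sg n}\bigr)\big/\,\overline{Z_3(P_*)}$, where the bar denotes the image under the coinvariants quotient $P_3\to (P_3)_{\sg n}$. Thus a cycle $z\in Z_3\bigl((P_*)_{\sg n}\bigr)$ is null-homologous in $H_3(\sg n)$ precisely when there is a chain $\tilde z\in P_3$ with $\partial_3\tilde z=0$ and $\bar{\tilde z}=z$. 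The plan is therefore, for each of the two cycles $z$, to exhibit such a $\tilde z$ explicitly. By the preceding corollary the choice of subscripts is immaterial (and all those occurring are admissible because $n\ge 6$), so I keep the ones in the statement.

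For $z_1=2\bar c^{34}_{14}-\bar c^{32}_{24}+\bar c^{32}_{14}$, one natural starting point is $\tilde z_1^{(0)}=(1+s_4)c^{34}_{14}-c^{32}_{24}+c^{32}_{14}\in P_3$: it reduces to $z_1$ because $s_4$ acts trivially on coinvariants, so $\overline{(1+s_4)c^{34}_{14}}=2\bar c^{34}_{14}$, and the point of using the factor $(1+s_4)$ rather than $2$ is that $s_4$ commutes with $s_1$ and $s_2$, hence with the coefficient of the $b_1$-term of $\partial c^{34}_{14}$, while $s_4^2=1$, so $(1+s_4)(s_4-1)b_1=0$ and this lift already has boundary free of $b$-cells. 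Expanding $\partial\tilde z_1^{(0)}$ with the boundary formulas of Definition~\ref{de:Snres} leaves a $2$-cycle supported on $c_1$, $c_2$, $d_{14}$, $d_{24}$, whose $\Z\sg n$-coefficients necessarily lie in the augmentation ideal (its coinvariants image is $\partial z_1=0$). One must then cancel this residue by adding a chain $\omega$ in the augmentation-ideal submodule $I\!\cdot\!P_3=\ker\bigl(P_3\to (P_3)_{\sg n}\bigr)$, so that $\overline{\tilde z_1^{(0)}+\omega}=z_1$ is preserved, with $\partial\omega$ equal to minus the residue. Such an $\omega$ is built from translates $(g-1)c$ of basis $3$-cells from several of the seven families: the $d$-part of the residue is absorbed by $c^{33}$- and $c^{32}$-cells, while the surviving $(s_4-1)(c_1-c_2)$ forces one to introduce $c^{35}$-cells — essentially because $c^{35}$ is the only family whose boundary contains a $c$-cell with a unit coefficient — which reintroduces $b$-cells that are in turn cancelled by further $c^{34}$- and $c^{36}$-cells. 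The prism operator $\Pi$ from the previous subsection is the device that organizes this bookkeeping, since it converts a term of the form $(g-1)\partial c$ into the boundary of an explicit $3$-chain; one tracks which translates appear so that the total correction lands in $I\!\cdot\!P_3$. The chain $\tilde z_2$ reducing to $z_2=2\bar c^{34}_{31}-\bar c^{32}_{14}+\bar c^{32}_{13}$ is constructed the same way, starting from $(1+s_1)c^{34}_{31}-c^{32}_{14}+c^{32}_{13}$ and using that $s_1$ commutes with the coefficients of $\partial c^{34}_{31}$ and that $s_1^2=1$ to clear the $b_3$-term.

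Geometrically, $\tilde z_1$ is a closed $3$-cycle in $\widetilde X$: the two hexagonal prisms $c^{34}_{14}$ and $s_4c^{34}_{14}$ are glued to one another along their hexagonal $b_1$-faces, and the remaining square $d$-faces are capped off by the tubes $c^{32}_{14}$, $c^{32}_{24}$ and the correction cells. Once $\tilde z_1$ and $\tilde z_2$ have been written down, the proof concludes by verifying $\partial_3\tilde z_1=\partial_3\tilde z_2=0$ directly — expanding the boundaries and collecting terms using only $s_i^2=1$, the commuting relations, and the braid relations — and then invoking Lemma~\ref{le:upstairscycles}. I expect this last verification to be the main obstacle: the correction cells must be translated exactly so that every $\Z\sg n$-coefficient of $\partial\tilde z_i$ cancels on the nose, not merely after passing to coinvariants. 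Everything else is formal.
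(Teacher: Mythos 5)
Your reduction is the right one: by Lemma~\ref{le:upstairscycles} (with $P_4=0$) the cycle $z_1=2\bar c^{34}_{14}-\bar c^{32}_{24}+\bar c^{32}_{14}$ is null-homologous exactly when it is the coinvariants image of an element of $Z_3(P_*)$, and this is also the principle behind the paper's argument. But your writeup stops exactly where the content of the proposition begins: you never produce the lift. You choose a starting chain $\tilde z_1^{(0)}=(1+s_4)c^{34}_{14}-c^{32}_{24}+c^{32}_{14}$, observe (correctly) that the coefficients of its boundary lie in the augmentation ideal, and then describe the needed correction $\omega$ only in qualitative terms (``absorbed by $c^{33}$- and $c^{32}$-cells,'' ``cancelled by further $c^{34}$- and $c^{36}$-cells''), finally conceding that verifying $\partial\tilde z_1=0$ on the nose is ``the main obstacle.'' There is no argument that this correction procedure terminates or that the required $\omega$ exists; since the existence of the lift is precisely what is to be proved, this is a genuine gap rather than a deferred routine check.

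The gap closes cleanly if you apply the prism operator to $\partial c^{35}_1$ instead of guessing a lift of $z_1$ directly. From the definitions, $\Pi(s_4,\partial c^{35}_1)=(s_2+1)c^{34}_{14}-c^{32}_{24}+s_1s_2\,c^{32}_{14}$, whose coinvariants image is exactly $z_1$; your $\tilde z_1^{(0)}$ has the same image but the wrong group-ring coefficients, which is why its boundary fails to vanish. Summing the prism identity over the cells of $\partial c^{35}_1$, and using $\partial^2c^{35}_1=0$ together with the fact that $s_4$ commutes with $s_1$ and $s_2$, gives $\partial\,\Pi(s_4,\partial c^{35}_1)=(s_4-1)\,\partial c^{35}_1$. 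Hence $(s_4-1)c^{35}_1-\Pi(s_4,\partial c^{35}_1)$ is an honest cycle in $P_3$, and since $\overline{(s_4-1)c^{35}_1}=0$ its coinvariants image is $-z_1$; Lemma~\ref{le:upstairscycles} then finishes the argument, and the second cycle is handled identically using $\Pi(s_1,\partial c^{35}_3)$. Note in particular that the actual lift involves only translates of $c^{35}_1$, $c^{34}_{14}$, and two $c^{32}$-cells --- no $c^{33}$- or $c^{36}$-cells --- so your structural guess for $\omega$ does not match what works.
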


\begin{proof}
Notice that
\[(s_2+1) c^{34}_{14}- c^{32}_{24}+ s_1s_2 c^{32}_{14}=\Pi(s_4,\partial c^{35}_1)\]
and
\[(s_4+1) c^{34}_{31}- c^{32}_{14}+ s_3s_4 c^{32}_{13}=\Pi(s_1,\partial c^{35}_3).\]
In $\widetilde X$, we have a cycle $s_4c^{35}_1-c^{35}_1-\Pi(s_4,\partial c^{35}_1)$  
(this would be $\partial \Pi(s_4, c^{35}_1)$ if $\widetilde X$ had $4$-cells).
So the image of this cycle is trivial in $H_3(\sg{n})$.
But since $s_4c^{35}_1$ and $c^{35}_1$ represent the same thing in $H_3(\sg{n})$, we have $[\Pi(s_4,\partial c^{35}_1)]=0$ in $H_3(\sg{n})$.
By the same argument, $[\Pi(s_1,\partial c^{35}_3)]=0$ in $H_3(\sg{n})$.
\end{proof}

\begin{proposition}
In $H_3(\sg{n})$, we have
\[2[\bar c^{37}_1+\bar c^{32}_{13}]=[ \bar c^{32}_{13}+\bar c^{32}_{31}].\]
\end{proposition}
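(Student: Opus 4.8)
\textit{Plan.} The idea is to realize the asserted relation as the image in $H_3(\sg{n})=\coker\big(H_3(\widetilde X)\to H_3(X)\big)$ of a genuine $3$-cycle of $\widetilde X$, produced by the prism operator $\Pi$ in exactly the way the two preceding propositions produce $3$-cycles. Set $w=2\bar c^{37}_1+\bar c^{32}_{13}-\bar c^{32}_{31}\in (P_3)_{\sg{n}}$. Using $\partial\bar c^{37}_1=2\bar d_{13}$, $\partial\bar c^{32}_{13}=-2\bar d_{13}$ and $\partial\bar c^{32}_{31}=2\bar d_{13}$ from Lemma~\ref{le:Xhomology}, one sees $\partial w=0$, and a one-line rearrangement shows that $2[\bar c^{37}_1+\bar c^{32}_{13}]-[\bar c^{32}_{13}+\bar c^{32}_{31}]=[w]$; so it suffices to prove $[w]=0$ in $H_3(\sg{n})$.

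The first step is to evaluate $\Pi((1,2,3,4),\partial c^{35}_2)\in P_3$ directly from the recursive definition of $\Pi$. Since $\partial c^{35}_2=(s_3+1)b_2-c_3+s_2s_3c_2$ involves only the $1$-cells $e_2$ and $e_3$, and neither is $e_1$ or $e_4$, this chain is defined. Unwinding the recursion yields $\Pi((1,2,3,4),b_2)=c^{37}_1$, $\Pi((1,2,3,4),c_3)=c^{32}_{31}+s_1c^{35}_2$, and $\Pi((1,2,3,4),c_2)=c^{35}_1+s_1s_2c^{32}_{13}$ (each obtained by splitting the consecutive cycle at the relevant index and applying the base cases $\Pi(s_{k-1}s_ks_{k+1},b_k)=c^{37}_{k-1}$, $\Pi(s_i,c_k)=c^{32}_{ki}$, $\Pi(s_{k-1}s_k,c_k)=c^{35}_{k-1}$). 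By additivity and equivariance of $\Pi$ in its second slot this gives
\[\Pi((1,2,3,4),\partial c^{35}_2)=(s_3+1)c^{37}_1-c^{32}_{31}-s_1c^{35}_2+s_2s_3c^{35}_1+s_2s_3s_1s_2c^{32}_{13}.\]

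Next I would consider the chain $z=(1,2,3,4)\,c^{35}_1-c^{35}_2-\Pi((1,2,3,4),\partial c^{35}_2)\in P_3$, where $(1,2,3,4)\,c^{35}_1$ is the translate of the relabelling of $c^{35}_2$ by the consecutive cycle $(1,2,3,4)$ (which carries the index pair $\{2,3\}$ to $\{1,2\}$). This $z$ is precisely what $\partial\Pi((1,2,3,4),c^{35}_2)$ would be if $\widetilde X$ had a $4$-cell filling $c^{35}_2$, and the same computation that establishes the prism-operator property — using $\partial^2=0$ on $P_*$ together with the compatibility of $\partial$ with relabelling — shows $\partial z=0$; thus $z$ is a $3$-cycle of $\widetilde X$ and $[\bar z]=0$ in $H_3(\sg{n})$. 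Finally I would pass to coinvariants: $\overline{(1,2,3,4)\,c^{35}_1}=\bar c^{35}_1$ and, from the displayed formula, $\overline{\Pi((1,2,3,4),\partial c^{35}_2)}=2\bar c^{37}_1-\bar c^{32}_{31}-\bar c^{35}_2+\bar c^{35}_1+\bar c^{32}_{13}$, so the $\bar c^{35}$-terms cancel and $\bar z=-2\bar c^{37}_1+\bar c^{32}_{31}-\bar c^{32}_{13}=-w$. Hence $[w]=0$, which is the claim.

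The main obstacle is the honest evaluation of $\Pi((1,2,3,4),\partial c^{35}_2)$ from the recursive definition, and the bookkeeping needed to confirm that the relabelling built into the "missing $4$-cell" identity for $z$ produces exactly the $c^{35}$-terms that then cancel; once that cancellation is verified, everything else is a short manipulation with the boundary formulas of Definition~\ref{de:Snres}.
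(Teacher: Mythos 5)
Your proposal is correct and is essentially the paper's argument: the chain $-z=(1+s_3)c^{37}_1-c^{32}_{31}+s_2s_3s_1s_2c^{32}_{13}+(1-s_1)c^{35}_2+(s_2s_3-s_1s_2s_3)c^{35}_1$ produced by your prism computation is exactly the $3$-cycle of $\widetilde X$ that the paper writes down and verifies directly. The only difference is presentational: you derive that cycle systematically from $\Pi((1,2,3,4),\partial c^{35}_2)$, mirroring the preceding proposition, whereas the paper simply exhibits it and checks its boundary.
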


\begin{proof}
First, we verify that the following is a cycle in $\widetilde X$:
\[\partial( (1+s_3)c^{37}_1-c^{32}_{31}+s_2s_1s_3s_2c^{32}_{13} + (1-s_1)c^{35}_2 +(s_2s_3-s_1s_2s_3)c^{35}_1)=0.\]
Since cycles in $\widetilde X$ produce trivial elements of $H_3(\sg{n})$, this implies that
\[[2\bar c^{37}_1-\bar c^{32}_{31}+\bar c^{32}_{13}]=0\]
in $H_3(\sg{n})$.
Rearranging this, we get the statement.
\end{proof}

\begin{proposition}
In $H_3(\sg{n})$, the elements 
\[[\bar c^{31}_1], [\bar c^{33}_{135}], \text{ and }[ \bar c^{32}_{13}+\bar c^{32}_{31}]\]
have order dividing $2$, and the element $[\bar c^{36}_1]$ has order dividing $3$.
\end{proposition}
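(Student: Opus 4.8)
The plan is to use Lemma~\ref{le:upstairscycles} in the form of the remark following it: since $X$ has no $4$-cells, $H_3(\sg{n})$ is the cokernel of the map $Z_3(P_*)=H_3(\widetilde X)\to H_3(X)=Z_3((P_*)_{\sg{n}})$ induced by passing to coinvariants, $z\mapsto\bar z$. So it suffices, for each generator $\bar c$ and each target order $d\in\{2,3\}$, to exhibit a chain $z\in P_3$ with $\partial z=0$ and $\bar z=d\,\bar c$ in $(P_3)_{\sg{n}}$: such a $z$ represents a cycle in $\widetilde X$, hence maps to $0$ in $H_3(\sg{n})$, so $d[\bar c]=0$ there. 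Each of the four verifications then reduces to a direct computation with the boundary formulas of Definition~\ref{de:Snres}.

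For three of the four generators the cycle is essentially forced by the identities $(s_i+1)(s_i-1)=s_i^2-1=0$. For $\bar c^{31}_1$, take $z=(s_1+1)c^{31}_1$; then $\partial z=(s_1+1)(s_1-1)c_1=0$ and $\bar z=2\bar c^{31}_1$. For $\bar c^{32}_{13}+\bar c^{32}_{31}$, take $z=(s_3+1)c^{32}_{13}+(s_1+1)c^{32}_{31}$; using $\partial c^{32}_{13}=(s_3-1)c_1-(s_1+1)d_{13}$ and $\partial c^{32}_{31}=(s_1-1)c_3+(s_3+1)d_{13}$, the $c_1$- and $c_3$-terms are killed by $(s_3+1)(s_3-1)=0$ and $(s_1+1)(s_1-1)=0$, while the two $d_{13}$-terms cancel because $s_1$ and $s_3$ commute; hence $\partial z=0$ and $\bar z=2(\bar c^{32}_{13}+\bar c^{32}_{31})$. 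For $\bar c^{33}_{135}$, multiplying $\partial c^{33}_{135}=(s_1-1)d_{35}-(s_3-1)d_{15}+(s_5-1)d_{13}$ by $(s_3+1)$ kills the middle term, and the surviving terms are rewritten via $\partial c^{32}_{35}=(s_5-1)c_3-(s_3+1)d_{35}$ and $\partial c^{32}_{31}=(s_1-1)c_3+(s_3+1)d_{13}$ (the resulting $c_3$-terms cancel since $s_1,s_3,s_5$ pairwise commute), giving that
\[
z=(s_3+1)c^{33}_{135}+(s_1-1)c^{32}_{35}-(s_5-1)c^{32}_{31}
\]
satisfies $\partial z=0$ and $\bar z=2\bar c^{33}_{135}$; this is where $n\ge 6$ is used, so that $c^{32}_{35}$ and $c^{32}_{31}$ exist.

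The case $\bar c^{36}_1$ needs the extra observation that $s_1s_2$ has order $3$ in $\sg{n}$, so $(1+s_1s_2+(s_1s_2)^2)(s_1s_2-1)=(s_1s_2)^3-1=0$; this is the main point of the proof. From $\partial c^{36}_1=(s_1s_2-1)b_1+(s_2-1)(s_1c_2+c_1)$ one gets that $\partial\big((1+s_1s_2+(s_1s_2)^2)c^{36}_1\big)$ has no $b_1$-term and equals $N(c_1-c_2)$, where $N=(1+s_1s_2+(s_1s_2)^2)(s_2-1)=(s_1+s_2+s_1s_2s_1)-(1+s_1s_2+s_2s_1)$ and where we use the short computation $Ns_1=-N$ in $\Z\sg{3}$. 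It then remains to see that $N(c_1-c_2)$ is, up to sign, the boundary of a combination of $c^{31}$- and $c^{35}$-cells all of whose coefficients have augmentation zero; one checks directly that
\[
N(c_1-c_2)=-\,\partial\big((s_2-1)c^{31}_1+(s_1s_2+s_2s_1-1-s_2)c^{31}_2+(1-s_2)c^{35}_1\big),
\]
using $\partial c^{31}_i=(s_i-1)c_i$ and $\partial c^{35}_1=(s_2+1)b_1-c_2+s_1s_2c_1$ (the coefficient $1-s_2$ of $c^{35}_1$ annihilates $s_2+1$, so no $b_1$-term is introduced). Adding the two chains,
\[
z=(1+s_1s_2+(s_1s_2)^2)c^{36}_1+(s_2-1)c^{31}_1+(s_1s_2+s_2s_1-1-s_2)c^{31}_2+(1-s_2)c^{35}_1
\]
has $\partial z=0$, and since the last three terms have augmentation-zero coefficients they vanish in $(P_3)_{\sg{n}}$, so $\bar z=3\bar c^{36}_1$, giving $3[\bar c^{36}_1]=0$.

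The only genuine bookkeeping is in this last case: pinning down the correction cells and checking the cancellations, in particular that the correction terms die in the coinvariants $(P_3)_{\sg{n}}$. Everything else follows immediately from the relations $s_i^2=1$ and the commutation relations, so I expect the write-up to be short for the first three generators and to consist mostly of a displayed computation for $\bar c^{36}_1$.
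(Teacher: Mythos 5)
Your proposal is correct and follows essentially the same strategy as the paper: exhibit explicit $3$-cycles in $P_3$ (cycles in $\widetilde X$) whose coinvariant images equal the desired multiples of the generators, invoking Lemma~\ref{le:upstairscycles}. The particular bounding chains you chose differ slightly from the paper's (e.g.\ the paper's correction terms for $\bar c^{36}_1$ use only $c^{31}$-cells and then appeal to $[\bar c^{31}_1]=[\bar c^{31}_2]$, whereas your inclusion of the $(1-s_2)c^{35}_1$ term makes all corrections vanish outright in coinvariants), but the computations check out and the argument is the same.
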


\begin{proof}
First we notice the following cycle in $\widetilde X$:
\[\partial (s_1+1)c^{31}_1=0.\]
In $H_3(\sg{n})$, this becomes $2[\bar c^{31}_1]=0$.
Next we notice:
\[\partial ((s_1+1)c^{33}_{135}+(s_5-1)c^{32}_{13}+(1-s_3)c^{32}_{15})=0.\]
This becomes $2[\bar c^{33}_{135}]=0$.
Similarly,
\[\partial ((1+s_1s_3)(c^{32}_{13}+c^{32}_{31})+(s_3-1)c^{31}_1+(s_1-1)c^{31}_3)=0,\]
which becomes $2[\bar c^{32}_{13}+\bar c^{32}_{31}]=0$.
Finally, we check that
\[\partial ( (1+s_1s_2+s_2s_1)c^{36}_1 +(-1+s_1s_2s_1 +s_2)c^{31}_1+(1-s_1-s_1s_2s_1)c^{31}_2)=0.\]
Since $[c^{31}_1]=[c^{31}_2]$ in $H_3(\sg{n})$, this becomes $3[c^{36}_1]=0$ in $H_3(\sg{n})$.
\end{proof}

We summarize what we have just proven in the following corollary.
\begin{corollary}\label{co:H3upperbound}
The group $H_3(\sg{n})$ is a quotient of $(\Z/2\Z)^2\oplus (\Z/3\Z)\oplus (\Z/4\Z)$.
It is generated by $[\bar c^{31}_1]$ and $[\bar c^{33}_{135}]$, which have order dividing $2$, $[\bar c^{36}_1]$, which has order dividing $3$, and  $[\bar c^{37}_1+\bar c^{32}_{13}]$, which has order dividing $4$.
\end{corollary}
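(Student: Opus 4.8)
The plan is simply to assemble the results established so far in this subsection, since each individual ingredient has already been proved. First I would apply Lemma~\ref{le:upstairscycles} with $G=\sg{n}$ and with the truncated complex $P_*$ in place of $F_*$: this identifies $H_3(\sg{n})$ with the cokernel of the covering map $H_3(\widetilde X)\to H_3(X)$, and in particular exhibits $H_3(\sg{n})$ as a quotient of $H_3(X)=H_3((P_*)_{\sg{n}})$. Then Lemma~\ref{le:Xhomology} supplies a finite generating set for $H_3(X)$, namely the seven families $\bar c^{31}_i$, $\bar c^{32}_{ij}+\bar c^{32}_{ji}$, $\bar c^{33}_{ijk}$, $2\bar c^{34}_{ij}-\bar c^{32}_{i+1,j}+\bar c^{32}_{ij}$, $2\bar c^{34}_{ji}-\bar c^{32}_{i,j+1}+\bar c^{32}_{ij}$, $\bar c^{36}_i$, and $\bar c^{37}_i+\bar c^{32}_{i,i+2}$, and the corollary following the prism-operator proposition shows that in $H_3(\sg{n})$ each of these families collapses to the single representative with the standard small subscripts. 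So $H_3(\sg{n})$ is generated by the seven classes $[\bar c^{31}_1]$, $[\bar c^{32}_{13}+\bar c^{32}_{31}]$, $[\bar c^{33}_{135}]$, $[2\bar c^{34}_{14}-\bar c^{32}_{24}+\bar c^{32}_{14}]$, $[2\bar c^{34}_{31}-\bar c^{32}_{14}+\bar c^{32}_{13}]$, $[\bar c^{36}_1]$, and $[\bar c^{37}_1+\bar c^{32}_{13}]$.

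Next I would invoke the three remaining propositions to prune and bound this list. The first kills the two $\bar c^{34}$-type generators, leaving five. The second gives the relation $2[\bar c^{37}_1+\bar c^{32}_{13}]=[\bar c^{32}_{13}+\bar c^{32}_{31}]$; this expresses $[\bar c^{32}_{13}+\bar c^{32}_{31}]$ in terms of $[\bar c^{37}_1+\bar c^{32}_{13}]$, removing it from the generating set and leaving the four classes $[\bar c^{31}_1]$, $[\bar c^{33}_{135}]$, $[\bar c^{36}_1]$, $[\bar c^{37}_1+\bar c^{32}_{13}]$. The third proposition gives $2[\bar c^{31}_1]=0$, $2[\bar c^{33}_{135}]=0$, $2[\bar c^{32}_{13}+\bar c^{32}_{31}]=0$, and $3[\bar c^{36}_1]=0$; combining the middle of these with the relation above yields $4[\bar c^{37}_1+\bar c^{32}_{13}]=2[\bar c^{32}_{13}+\bar c^{32}_{31}]=0$. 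Thus $H_3(\sg{n})$ is generated by four elements of orders dividing $2$, $2$, $3$, and $4$ respectively, which gives a surjection $(\Z/2\Z)^2\oplus(\Z/3\Z)\oplus(\Z/4\Z)\twoheadrightarrow H_3(\sg{n})$, exactly the claimed statement.

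There is no substantive obstacle, since every component has already been proven; the only care needed is in the bookkeeping of the pruning step — verifying that the relation $2[\bar c^{37}_1+\bar c^{32}_{13}]=[\bar c^{32}_{13}+\bar c^{32}_{31}]$ is legitimately used both to eliminate a generator and to transfer the order-$2$ bound to an order-$4$ bound, and noting that because we are only asserting an upper bound we do not need to rule out further relations among the four surviving classes. Showing that this surjection is in fact an isomorphism (i.e.\ that the four classes are independent and of exactly the stated orders) is deliberately postponed; that is the content of the later argument that proves Theorem~\ref{th:untwistedthird}.
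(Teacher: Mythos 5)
Your proposal is correct and is essentially identical to the paper's argument: the corollary is stated there as a summary of the immediately preceding results, and your assembly (Lemma~\ref{le:upstairscycles} plus Lemma~\ref{le:Xhomology} for the generating set, the prism-operator corollary to normalize subscripts, then the three propositions to kill the $\bar c^{34}$-type generators, absorb $[\bar c^{32}_{13}+\bar c^{32}_{31}]$ into $2[\bar c^{37}_1+\bar c^{32}_{13}]$, and bound the orders) is exactly the intended chain of reasoning. The bookkeeping you flag, deducing $4[\bar c^{37}_1+\bar c^{32}_{13}]=2[\bar c^{32}_{13}+\bar c^{32}_{31}]=0$, is handled correctly.
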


Next we want to show that these classes are actually nontrivial.
We can quote results to do most of the work.
We derive the following statement from the exposition in Adem--Milgram~\cite{AMCFG}; it is based on results of Nakaoka~\cite{Nakaoka} and Mann~\cite{Mann}.
\begin{theorem}\label{th:cyclesmodp}
Let $n\geq 6$.
Then $H_3(\sg{n};\mathbb{F}_2)=\mathbb{F}_2^4$, and $\dim(H_3(\sg{n};\mathbb{F}_3))\geq 1$.
\end{theorem}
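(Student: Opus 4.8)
The plan is to extract $\dim_{\mathbb{F}_2} H_3(\sg{n};\mathbb{F}_2)$ and a lower bound on $\dim_{\mathbb{F}_3} H_3(\sg{n};\mathbb{F}_3)$ directly from the known additive structure of the mod-$p$ homology of the symmetric groups, following Nakaoka's theorem and the exposition in Adem--Milgram~\cite{AMCFG}, Chapter~VI. First I would recall that for $n$ large (in particular $n\geq 6$, which is past the stable range for degree $3$), the Poincar\'e series of $H_*(\sg{\infty};\mathbb{F}_p)$ is the free graded-commutative algebra on the Dyer--Lashof-type generators coming from $H_*(\sg{p^i};\mathbb{F}_p)$; equivalently, $H_*(B\sg{\infty};\mathbb{F}_p)\cong H_*(\coprod_k B\sg{k};\mathbb{F}_p)$ assembled by the Dold--Thom/group-completion description. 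In degree $3$ at the prime $2$, this free algebra has, as a basis, the monomials of weighted degree $3$ in the classes $x_1\in H_1$ (from $\sg{2}$) and $x_2\in H_2$, $x_3'\in H_3$ (from $\sg{4}$), together with products; counting these monomials gives $\dim H_3(\sg{\infty};\mathbb{F}_2)=4$. I would then invoke Nakaoka's stability theorem to conclude $H_3(\sg{n};\mathbb{F}_2)\cong H_3(\sg{\infty};\mathbb{F}_2)$ once $n\geq 6$, giving $H_3(\sg{n};\mathbb{F}_2)\cong\mathbb{F}_2^4$.

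For the prime $3$, the same free-algebra description shows that $H_*(\sg{\infty};\mathbb{F}_3)$ contains a class in degree $3$: the lowest Dyer--Lashof operation on the degree-$0$ generator, coming from $H_3(\sg{3};\mathbb{F}_3)\cong\mathbb{F}_3$ (the Bockstein of the degree-$2$ class, detected on the cyclic subgroup $\Z/3$). Stability again gives $\dim_{\mathbb{F}_3}H_3(\sg{n};\mathbb{F}_3)\geq 1$ for $n\geq 6$ (in fact equality holds, but we only need the inequality). I would cite the relevant statements in Adem--Milgram, and note that the role of Mann~\cite{Mann} is to pin down precisely which monomials survive and the ring structure, but that for the bare dimension count in this low degree the cited results suffice.

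The main obstacle is not any deep computation but rather bookkeeping: matching our conventions (homology of $\sg{n}$ with trivial $\mathbb{F}_p$ coefficients, our value $n\geq 6$) to the conventions in Adem--Milgram, and verifying that $n=6$ is already in the stable range for degree $3$ at both primes. I would handle this by citing Nakaoka's explicit stability bound ($H_k(\sg{n})\to H_k(\sg{n+1})$ is an isomorphism for $n\geq 2k$, hence for $n\geq 6$ in degree $k=3$), so that the infinite symmetric group computation applies verbatim. No originality is claimed here; this theorem is stated only to provide the lower bounds on the orders of the generating cycles that, combined with Corollary~\ref{co:H3upperbound}, pin down $H_3(\sg{n};\Z)$ exactly.
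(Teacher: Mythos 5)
Your proposal follows essentially the same route as the paper: both arguments quote the Nakaoka/Adem--Milgram description of $H_*(\sg{\infty};\mathbb{F}_p)$ as a free graded-commutative algebra on explicit generators and then count classes in degree $3$. The one structural difference is how you descend from $\sg{\infty}$ to $\sg{n}$: you invoke Nakaoka's stability range $n\geq 2k$, whereas the paper uses the bidegree attached to each generator (a generator indexed by an admissible sequence of length $m$ has bidegree $2^m$, and a monomial lies in the image of $H_*(\sg{n};\mathbb{F}_p)$ exactly when its total bidegree is at most $n$); both criteria give $n\geq 6$ in degree $3$, so this is a matter of taste. One bookkeeping point you should fix before this becomes a proof: at $p=2$ you list only three algebra generators, $x_1$ in degree $1$, $x_2$ in degree $2$, and a single $x_3'$ in degree $3$, and these would yield only the three degree-$3$ monomials $x_1^3$, $x_1x_2$, $x_3'$, not four. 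There are in fact two independent algebra generators in degree $3$: in the paper's indexing these are $x_{(3)}$ (length one, bidegree $2$, detected on $\Z/2\Z\leq\sg{2}$) and $x_{(1,1)}$ (length two, bidegree $4$, the genuinely new class coming from $\sg{4}$). The four monomials $x_1^3$, $x_1x_2$, $x_{(3)}$, $x_{(1,1)}$, all of bidegree at most $6$, are what give $\dim H_3(\sg{n};\mathbb{F}_2)=4$. Your attribution of $x_2$ to $\sg{4}$ is also off (it is the length-one class detected on $\sg{2}$), though this does not affect the count. The $p=3$ part of your argument matches the paper's (a single class of degree $2p-3=3$ with bidegree $3$, detected on $\Z/3\Z\leq\sg{3}$) and is fine.
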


\begin{remark}
It also follows from Adem--Milgram that $H_3(\sg{n};\mathbb{F}_3)=\mathbb{F}_3$ exactly, and $H_3(\sg{n};\mathbb{F}_p)=0$ for $p\geq 5$, but we do not need this since we are only using the result to show that certain cycles represent nontrivial classes.
\end{remark}

\begin{proof}[Proof of Theorem~\ref{th:cyclesmodp}]
Let $p$ be a prime.
The approach in Adem--Milgram is to show that the cohomology of $\sg{n}$ in $\mathbb{F}_p$ is detected by the abelian subgroups $V$ of $\sg{n}$.
The cohomology of an abelian group in $\mathbb{F}_p$ consists of a polynomial algebra if $p=2$, and the tensor product of a polynomial algebra and an exterior algebra if $p\geq 3$.
The image of the restriction map $H^*(\sg{n};\mathbb{F}_p)\to H^*(V;\mathbb{F}_p)$ consists of highly symmetric elements; for the polynomial subalgebra this is the classic Dickson algebra.
The point is that Adem and Milgram give a procedure for finding the degrees of generators for $H^*(\sg{n};\mathbb{F}_p)$.
These then give generators in the same degree for $H_*(\sg{\infty};\mathbb{F}_p)$, which has the structure of an algebra.
There is a second degree that can be used to determine the image of $H_*(\sg{n};\mathbb{F}_p)$ in $H_*(\sg{\infty};\mathbb{F}_p)$.

For $p=2$, define an \emph{admissible sequence} $I=(i_1,\dotsc, i_m)$ to be a nondecreasing sequence of positive integers.
An admissible sequence $I$ of length $m$ determines a generator $x_I$ in $H_*(\sg{\infty};\mathbb{F}_2)$ of degree $d(I)=i_1+2i_2+4i_3+\dotsm+2^{m-1}i_m$; $H_*(\sg{\infty};\mathbb{F}_2)$ is the $\mathbb{F}_2$-polynomial algebra on all such generators.
The generator $x_I$ has bidegree $b(x_I)=2^m$; the image of $H_*(\sg{n};\mathbb{F}_2)$  in $H_*(\sg{\infty};\mathbb{F}_2)$ is generated by monomials with bidegree less than or equal to $n$.
(This is part of the discussion on page~176 of Adem--Milgram~\cite{AMCFG}.)
The admissible sequences of degree $3$ or less are $(1)$, $(2)$, $(1,1)$, and $(3)$.
From this, we see that a basis for $H_3(\sg{\infty};\mathbb{F}_2)$ is $x_1^3$, $x_1x_2$, $x_{1,1}$, and $x_3$.
Since all of these have bidegree less than or equal to $6$, we find that $H_3(\sg{n};\mathbb{F}_2)$ has the same basis (since $n\geq 6$).

For odd primes, the description of generators is a little more involved.
For an elementary abelian group $(\Z/p\Z)^i$ with $p$ odd, we have a description of the cohomology algebra as
\[H^*((\Z/p\Z)^i;\mathbb{F}_p)=\mathbb{F}_p[b_1,\dotsc,b_i]\otimes \Lambda(e_1,\dotsc, e_i).\]
Here the $b_j$ classes are $2$-dimensional, the $e_j$ classes are $1$-dimensional, and $\Lambda$ denotes the exterior algebra over $\mathbb{F}_p$ on these generators.
(See, for example, Corollary II.4.3 in Adem--Milgram~\cite{AMCFG}.)
For each positive integer $i$, Adem--Milgram define several polynomials in the $b$- and $e$-classes.
Most of these polynomials involve another parameter $j$, an integer with $0\leq j<i$.
The names of these polynomials are $d_{p^i-p^j}$, $L_i$, and $M_{j,i}$.
The statement, explained in the discussion on page~196 of Adem--Milgram~\cite{AMCFG}, is that 
\[H_*(\sg{\infty};\mathbb{F}_p)=\bigotimes_{i=1}^\infty \Lambda(\mathcal{N}_i),\]
where $\mathcal{N}_i$ is the dual group to an ideal $\mathcal{I}_i$ in a subalgebra of $H^*((\Z/p\Z)^i;\mathbb{F}_p)$.
Here 
\[\mathcal{I}_i=\mathbb{F}_p[d_{p^i-p^{i-1}},\dotsc,d_{p^i-1}](d_{p^i-1},M_{0,i}L_i^{p-2},\dotsc,M_{j,i}L_i^{p-2},\dotsc,M_{0,i}\dotsm M_{i-1,i}L_i^\gamma).\]
(Here $\gamma$ is the minimal power that makes this polynomial $\mathbb{F}_p^*$-invariant, and it depends on $i$ and $p$.)
We don't need to understand this in general, but only for $i=1$.
If $i=1$, then $L_1=b_1$ and $M_{0,1}=e_1$ (this is from the definitions on page~187 of Adem--Milgram~\cite{AMCFG}).
The polynomial $M_{0,1}L_1^{p-2}$ is in $\mathcal{I}_1$, and so it has a dual element in $\mathcal{N}_1$ and determines a class in $H_*(\sg{\infty};\mathbb{F}_p)$.
We have $\dim(M_{0,1}L_1^{p-2})=2p-3$, so that this class is in $H_{2p-3}(\sg{\infty};\mathbb{F}_p)$.
The bidegree of this class is $p$, so this class is also in $H_{2p-3}(\sg{n};\mathbb{F}_p)$ for $n\geq p$.
In particular, if $p=3$, we see that $H_3(\sg{n};\mathbb{F}_3)$ is nontrivial for $n\geq 3$.
\end{proof}

\begin{corollary}\label{co:gensnontriv}
For $n\geq 6$,
 the classes $[\bar c^{31}_1]$, $[\bar c^{33}_{135}]$, $[\bar c^{36}_1]$, and  $[\bar c^{37}_1+\bar c^{32}_{13}]$ are nontrivial in $H_3(\sg{n})$.
\end{corollary}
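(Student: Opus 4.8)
The plan is to combine the upper bound of Corollary~\ref{co:H3upperbound} with the mod-$p$ computations of Theorem~\ref{th:cyclesmodp} through the universal coefficients theorem, and then observe that a generating set whose size equals the minimal number of generators of a nontrivial finite abelian group cannot contain the zero element.

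First I would run the universal coefficients theorem for homology. For a prime $p$,
\[
H_3(\sg{n};\mathbb{F}_p)\cong \bigl(H_3(\sg{n};\Z)\otimes_\Z \mathbb{F}_p\bigr)\oplus \mathrm{Tor}_1^\Z\bigl(H_2(\sg{n};\Z),\mathbb{F}_p\bigr).
\]
Since $n\geq 6\geq 4$ we have $H_2(\sg{n};\Z)=\Z/2\Z$, so $\mathrm{Tor}_1^\Z(H_2(\sg{n};\Z),\mathbb{F}_2)=\mathbb{F}_2$ while $\mathrm{Tor}_1^\Z(H_2(\sg{n};\Z),\mathbb{F}_3)=0$. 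Feeding in $H_3(\sg{n};\mathbb{F}_2)=\mathbb{F}_2^4$ and $\dim_{\mathbb{F}_3} H_3(\sg{n};\mathbb{F}_3)\geq 1$ from Theorem~\ref{th:cyclesmodp}, I conclude $\dim_{\mathbb{F}_2}\bigl(H_3(\sg{n};\Z)\otimes\mathbb{F}_2\bigr)=3$ and $H_3(\sg{n};\Z)\otimes\mathbb{F}_3\neq 0$.

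Next I would use Corollary~\ref{co:H3upperbound}, which presents $H_3(\sg{n})$ as a finite abelian group all of whose torsion is $2$- or $3$-primary; write $H_3(\sg{n})=A_2\oplus A_3$ for its primary decomposition, where $A_2$ is generated by the three classes of $2$-power order, namely $[\bar c^{31}_1]$, $[\bar c^{33}_{135}]$, and $[\bar c^{37}_1+\bar c^{32}_{13}]$, and $A_3$ is generated by $[\bar c^{36}_1]$. For a finite abelian $p$-group $G$ the minimal number of generators equals $\dim_{\mathbb{F}_p}(G\otimes\mathbb{F}_p)$, so the computation above forces $A_2$ to require exactly $3$ generators and $A_3$ to require exactly $1$. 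A generating set whose size equals this minimum is a minimal generating set, and a minimal generating set of a nontrivial group contains no zero element, since deleting a zero element would leave a strictly smaller generating set. Hence all three of $[\bar c^{31}_1]$, $[\bar c^{33}_{135}]$, $[\bar c^{37}_1+\bar c^{32}_{13}]$ are nonzero in $A_2$, and $[\bar c^{36}_1]$ is nonzero in $A_3$; this is exactly the assertion of the corollary.

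No genuine obstacle remains: the real work has already been absorbed into Corollary~\ref{co:H3upperbound} (the prism-operator estimates bounding $H_3(\sg{n})$ from above) and Theorem~\ref{th:cyclesmodp} (the Adem--Milgram detection machinery). The only points demanding care are the universal-coefficients bookkeeping and the elementary fact about minimal generating sets; as a byproduct one also reads off that $A_3=\Z/3\Z$ and that $A_2$ is either $(\Z/2\Z)^2\oplus(\Z/4\Z)$ or $(\Z/2\Z)^3$, so that identifying $H_3(\sg{n})$ on the nose requires only the extra input distinguishing these two possibilities.
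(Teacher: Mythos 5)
Your proposal is correct and follows essentially the same route as the paper: both derive $\dim_{\mathbb{F}_2}(H_3(\sg{n})\otimes\mathbb{F}_2)=3$ and $H_3(\sg{n})\otimes\mathbb{F}_3\neq 0$ from Theorem~\ref{th:cyclesmodp} via universal coefficients, and then conclude that none of the generators supplied by Corollary~\ref{co:H3upperbound} can vanish, since otherwise the relevant primary part would need fewer generators than its $\mathbb{F}_p$-rank allows. Your write-up merely makes explicit the primary decomposition and the minimal-generating-set bookkeeping that the paper leaves implicit.
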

\begin{proof}
We derive this from Theorem~\ref{th:cyclesmodp} using the universal coefficients theorem.
This states:
\[0\to H_3(\sg{n})\otimes_\Z \mathbb{F}_p \to H_3(\sg{n}; \mathbb{F}_p)\to \mathrm{Tor}^\Z_1(H_2(\sg{n}),\mathbb{F}_p)\to 0.\]
We know $H_2(\sg{n})=\Z/2\Z$, so for $p=2$, the $\mathrm{Tor}$-term is $\mathbb{F}_2$.
This implies that $H_3(\sg{n})\otimes_\Z\mathbb{F}_2$ is $\mathbb{F}_2^3$.
If any of $[\bar c^{31}_1]$, $[\bar c^{33}_{135}]$, or $[\bar c^{37}_1+\bar c^{32}_{13}]$ were trivial, this would imply that the dimension of this module is less than three, a contradiction.

For $p=3$, the $\mathrm{Tor}$-term is trivial, which implies that $H_3(\sg{n})\otimes_\Z\mathbb{F}_3$ is nontrivial.
This implies that $[\bar c^{36}_1]$ is nontrivial.
\end{proof}

This leaves unresolved the question of whether $[\bar c^{37}_1+\bar c^{32}_{13}]$ has order $2$ or order $4$.
We answer this by showing that $H_3(\sg{n})$ must have an element of order $4$, using a transfer map and an embedded copy of the dihedral group $D_8$ in $\sg{n}$.
We recall that $D_8$ is the group of order $8$:
\[\langle r,s\,|\,r^4=s^2=(rs)^2=1\rangle.\]
If $n\geq 4$, then this embeds in $\sg{n}$ by $r\mapsto (1234)$ and $s\mapsto (24)$.
It is a consequence of a theorem of Handel~\cite{Handel}, that $H_3(D_8)$ contains an element of order $4$.
We will show this explicitly, by building a cycle and a cocycle that we will relate to $\sg{n}$.

As in section~\ref{section: resolutions}, we let $\bar F_*(G)$ denote the normalized bar resolution of a group $G$.
Our usual way for representing cycles for $H_3(D_8;\Z)$ is using $\bar F_*(D_8)\otimes_{D_8}\Z$ as a chain complex.
This is the same as the coinvariants $(\bar F_*(D_8))_{D_8}$.
Let $c\in \bar F_3(D_8)$ denote the $3$-chain
\[c=[r|r|r]+[r|r^2|r]+[r|r^3|r].\]
A quick computation, using that $r^4=1$ and that we are in the normalized chain complex, shows that
\[\partial c=(r-1)([r|r]+[r^2|r]+[r^3|r]).\]
So the image $\bar c\in (\bar F_*(D_8))_{D_8}$ is a cycle.
In fact, we will show that $[\bar c]\in H_3(D_8)$ has order $4$, and so does its image in $H_3(\sg{n})$.

Likewise, our usual way for representing cocycles for $H^3(D_8;\Z/4\Z)$ is using $\hom_{D_8}(\bar F_*(D_8),\Z/4\Z)$ as a cochain complex.
We define the following cochain $\chi\co \bar F_3(D_8)\to \Z/4\Z$ by
\[
\chi([r^{a_1}s^{b_1}|r^{a_2}s^{b_2}|r^{a_3}s^{b_3}])=
\left\{
\begin{array}{cl}
(-1)^{b_1+b_2}a_1 & \text{ if $a_2+(-1)^{b_2}a_3\notin\{0,1,2,3\},$} \\
0 & \text{otherwise,}
\end{array}
\right.
\]
and extend by $\Z G$-linearity.
Here we think of the $a_i$ as integers and demand that $a_i\in\{0,1,2,3\}$ for $i=1,2,3$.
Since this cochain is $0$ if any of $r^{a_i}s^{b_i}$ is trivial, it follows that this is a normalized cochain, and is well defined.
Intuitively, this cochain checks if there is a carry digit modulo $4$ in evaluating the power of $r$ in the normal form of $r^{a_2}s^{b_2}r^{a_3}s^{b_3}$, and outputs a signed multiple of $a_1$ if there is a carry digit.

\begin{lemma}
The cochain $\chi$ is a cocycle.
\end{lemma}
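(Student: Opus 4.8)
The cleanest route is to recognize $\chi$ as (minus) a cup product of two cocycles valued in a sign–twisted module whose self–tensor product is untwisted, so that $\delta\chi=0$ follows from the Leibniz rule. Write $G=D_8$, and let $\sigma\co G\to\{\pm1\}$ be the homomorphism $r^as^b\mapsto(-1)^b$; let $\Z(\sigma)$ denote $\Z$ with $G$ acting through $\sigma$, and $(\Z/4\Z)(\sigma)$ its reduction mod $4$. Since $\sigma^2$ is trivial, $(\Z/4\Z)(\sigma)\otimes_\Z(\Z/4\Z)(\sigma)\cong\Z/4\Z$ with the \emph{trivial} action, via $x\otimes y\mapsto xy$. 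Define the normalized $1$-cochain $\nu\in C^1(G;\Z(\sigma))=\hom_{\Z G}(\bar F_1,\Z(\sigma))$ by $\nu(r^as^b)=a$, insisting $a\in\{0,1,2,3\}$, and the normalized $2$-cochain $K\in C^2(G;\Z(\sigma))$ by letting $K(g_1,g_2)=\tfrac14\bigl(\overline{a_1+(-1)^{b_1}a_2}-(a_1+(-1)^{b_1}a_2)\bigr)$ be the carry incurred in reducing the $r$-exponent of $g_1g_2=r^{a_1+(-1)^{b_1}a_2}s^{b_1+b_2}$ to $\{0,1,2,3\}$ (here $\overline m$ is the representative of $m$ in $\{0,1,2,3\}$). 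Both are normalized (they vanish when some $g_i$ is trivial), as required to live on $\bar F_*$.

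\textbf{The two factors are cocycles.} First I would verify the purely formal identity $\delta\nu=-4K$ in $C^2(G;\Z(\sigma))$: expanding $(\delta\nu)(g_1,g_2)=g_1\cdot\nu(g_2)-\nu(g_1g_2)+\nu(g_1)=(-1)^{b_1}a_2-\overline{a_1+(-1)^{b_1}a_2}+a_1=(a_1+(-1)^{b_1}a_2)-\overline{a_1+(-1)^{b_1}a_2}=-4K(g_1,g_2)$. Reducing mod $4$ shows $\bar\nu\in C^1(G;(\Z/4\Z)(\sigma))$ is a cocycle. Moreover $0=\delta^2\nu=-4\,\delta K$ in $C^3(G;\Z(\sigma))$, and that cochain group is a product of copies of $\Z$, hence torsion-free, so $\delta K=0$; thus $\bar K\in C^2(G;(\Z/4\Z)(\sigma))$ is a cocycle as well. (Equivalently, $K$ is a factor set for the extension $1\to\Z(\sigma)\to D_\infty\to D_8\to1$, where $D_\infty=\genby{r,s\mid s^2=(rs)^2=1}$ and the section is $r^as^b\mapsto r^as^b$ with $a\in\{0,1,2,3\}$; this already exhibits $K$ as a cocycle.)

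\textbf{Identifying $\chi$ with the cup product, and conclusion.} Using the standard cup product on the bar resolution and the identification above, for $g_i=r^{a_i}s^{b_i}$ one gets $(\bar\nu\smile\bar K)([g_1\mid g_2\mid g_3])=\nu(g_1)\cdot\bigl(g_1\cdot K(g_2,g_3)\bigr)=(-1)^{b_1}a_1\,K(g_2,g_3)\bmod4$. A two-line case check on whether $a_2+(-1)^{b_2}a_3\in\{0,1,2,3\}$ shows $K(g_2,g_3)=-(-1)^{b_2}$ when this fails (distinguishing $b_2=0$, where $a_2+a_3\ge4$, from $b_2=1$, where $a_2-a_3<0$) and $K(g_2,g_3)=0$ otherwise; hence $(\bar\nu\smile\bar K)([g_1\mid g_2\mid g_3])=-(-1)^{b_1+b_2}a_1$ exactly in the case where $\chi([g_1\mid g_2\mid g_3])=(-1)^{b_1+b_2}a_1$, and both vanish in the complementary case. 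So $\chi=-(\bar\nu\smile\bar K)$ \emph{as cochains}, and the Leibniz rule $\delta(\bar\nu\smile\bar K)=\delta\bar\nu\smile\bar K-\bar\nu\smile\delta\bar K=0$ gives $\delta\chi=0$. The only delicate point is the sign and module bookkeeping needed to make this an equality of cochains rather than merely a cohomology statement — keeping straight the sign character, the two possible ``directions'' of a carry, and the normalization to $\{0,1,2,3\}$. A reader who prefers to avoid cup products can instead expand $\delta\chi([g_1\mid g_2\mid g_3\mid g_4])$ straight from the definition and check vanishing by a case analysis on which carries occur among $g_1g_2$, $g_2g_3$, $g_3g_4$; the cup-product packaging is just a convenient way to organize that computation.
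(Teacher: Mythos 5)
Your proof is correct, but it takes a genuinely different route from the paper. The paper proves the lemma by brute force: it expands $\chi$ on the boundary of a generic $3$-simplex $[r^{a_1}s^{b_1}|\dotsm|r^{a_4}s^{b_4}]$, observes that the $a_2$-contributions of $\sigma_0$ and $\sigma_1$ cancel because those two simplices trigger $\chi$ simultaneously, and then disposes of the $a_1$-contributions by a $256$-row case check generated in Mathematica (Table~\ref{ta:cocyclechecks}). You instead factor $\chi$ structurally: writing $\sigma$ for the sign character $r^as^b\mapsto(-1)^b$, you exhibit $\chi=-(\bar\nu\smile\bar K)$, where $\bar\nu\in C^1(D_8;(\Z/4\Z)(\sigma))$ records the $r$-exponent and $\bar K\in C^2(D_8;(\Z/4\Z)(\sigma))$ is the carry cocycle satisfying $\delta\nu=-4K$ over $\Z(\sigma)$; cocycleness of both factors is formal ($\delta\bar\nu\equiv0\bmod 4$, and $\delta K=0$ because $-4\,\delta K=\delta^2\nu=0$ in a torsion-free cochain group), and the Leibniz rule finishes the argument. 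I have checked the bookkeeping you flag as delicate: $(\Z/4\Z)(\sigma)\otimes(\Z/4\Z)(\sigma)$ is indeed untwisted, both cochains are normalized so they live on $\bar F_*$, and $K(g_2,g_3)=-(-1)^{b_2}$ exactly when $a_2+(-1)^{b_2}a_3\notin\{0,1,2,3\}$, which matches the sign $(-1)^{b_1+b_2}a_1$ in the definition of $\chi$ up to the overall minus you include. What your approach buys is the elimination of the machine-verified table and an explanation of where $\chi$ comes from ($\bar K$ is the factor set of the extension $1\to\Z(\sigma)\to D_\infty\to D_8\to1$, so $\chi$ is visibly tied to the order-$4$ phenomenon the paper is after); what the paper's approach buys is that it requires no cup-product or twisted-coefficient machinery and is self-contained at the level of the bar resolution. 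Either proof is acceptable.
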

\begin{proof}
We consider evaluating $\chi$ on a generic boundary.
Let $r^{a_i}s^{b_i}\in D_8$ be in normal form for $i=1,2,3,4$, meaning that $0\leq a_i<4$ and $0\leq b_i<2$.
Define the following simplices:
\[ \sigma_0=r^{a_1}s^{b_1}[r^{a_2}s^{b_2}|r^{a_3}s^{b_3}|r^{a_4}s^{b_4}],\quad
\sigma_1=[r^{a_1+(-1)^{b_1}a_2}s^{b_1+b_2}|r^{a_3}s^{b_3}|r^{a_4}s^{b_4}],\]
\[\sigma_2=[r^{a_1}s^{b_1}|r^{a_2+(-1)^{b_2}a_3}s^{b_2+b_3}|r^{a_4}s^{b_4}],\quad
\sigma_3=[r^{a_1}s^{b_1}|r^{a_2}s^{b_2}|r^{a_3+(-1)^{b_3}a_4}s^{b_3+b_4}],\]
\[\sigma_4=[r^{a_1}s^{b_1}|r^{a_2}s^{b_2}|r^{a_3}s^{b_3}].\]

Then $\partial[r^{a_1}s^{b_1}|r^{a_2}s^{b_2}|r^{a_3}s^{b_3}|r^{a_4}s^{b_4}]=\sigma_0-\sigma_1+\sigma_2-\sigma_3+\sigma_4$.
Consider evaluating $\chi$ on this boundary.
Please note, it is important to reduce the exponents on $r$ modulo $4$ before plugging into the definition of $\chi$.
Say that a particular simplex \emph{triggers} $\chi$ if it produces a carry digit in the product of its second and third terms; in other words, if we use the first piece in the piecewise definition of $\chi$.
Note that the $\sigma_0$ triggers $\chi$ if and only if the $\sigma_1$ triggers $\chi$, since their second and third entries are the same.
This means that these two simplices contribute cancelling copies of $a_2$:
if they both trigger, they both contribute $\pm (-1)^{b_2+b_3}a_2$, and if neither one triggers, they both contribute $0a_2$.
So there is no $a_2$-term in $\chi(\sigma_0-\sigma_1)$; since the remaining simplices cannot output a multiple of $a_2$ under $\chi$, we deduce that there is no $a_2$-term in the output of this boundary under $\chi$.

For counting the $a_1$-term, notice that any of $\sigma_1$ through $\sigma_4$, if they trigger, will contribute $\pm a_1$.
Verifying that the output is trivial, then, comes down to tabulating when the simplices trigger $\chi$, in cases depending on the $b_i$.
Since there are $4$ choices for each of $a_2$, $a_3$, and $a_4$, and $2$ choices for each of $b_2$ and $b_3$, there are $256$ cases to check.
These checks appear in Table~\ref{ta:cocyclechecks}.
This data was produced using the Wolfram Mathematica command:
\begin{verbatim}
Table[
  {a2,b2,a3,b3,a4, 
  If[Quotient[a3+((-1)^b3)a4,4]!=0,(-1)^(1+b2+b3),0], 
  If[Quotient[Mod[a2+((-1)^b2)a3,4]+((-1)^(b2+b3))a4,4]!=0,
    (-1)^(b2 + b3),0], 
  If[Quotient[a2+((-1)^b2)Mod[a3+((-1)^b3)a4,4],4]!=0,
    (-1)^(1+b2),0], 
  If[Quotient[a2+((-1)^b2)a3,4]!=0,(-1)^b2,0]}, 
{a2,0,3},{b2,0,1},{a3,0,3},{b3,0,1},{a4,0,3}]
\end{verbatim}
\end{proof}

\begin{table}
\begingroup
\setlength{\tabcolsep}{0.5pt} 
\tiny
\begin{tabular}{|ccccc|cccc|}
\hline
$a_2$ & $b_2$ & $a_3$ & $b_3$ & $a_4$ &  &  &  &  \\
\hline
 0 & 0 & 0 & 0 & 0 & 0 & 0 & 0 & 0 \\
 0 & 0 & 0 & 0 & 1 & 0 & 0 & 0 & 0 \\
 0 & 0 & 0 & 0 & 2 & 0 & 0 & 0 & 0 \\
 0 & 0 & 0 & 0 & 3 & 0 & 0 & 0 & 0 \\
 0 & 0 & 0 & 1 & 0 & 0 & 0 & 0 & 0 \\
 0 & 0 & 0 & 1 & 1 & 1 & -1 & 0 & 0 \\
 0 & 0 & 0 & 1 & 2 & 1 & -1 & 0 & 0 \\
 0 & 0 & 0 & 1 & 3 & 1 & -1 & 0 & 0 \\
 0 & 0 & 1 & 0 & 0 & 0 & 0 & 0 & 0 \\
 0 & 0 & 1 & 0 & 1 & 0 & 0 & 0 & 0 \\
 0 & 0 & 1 & 0 & 2 & 0 & 0 & 0 & 0 \\
 0 & 0 & 1 & 0 & 3 & -1 & 1 & 0 & 0 \\
 0 & 0 & 1 & 1 & 0 & 0 & 0 & 0 & 0 \\
 0 & 0 & 1 & 1 & 1 & 0 & 0 & 0 & 0 \\
 0 & 0 & 1 & 1 & 2 & 1 & -1 & 0 & 0 \\
 0 & 0 & 1 & 1 & 3 & 1 & -1 & 0 & 0 \\
 0 & 0 & 2 & 0 & 0 & 0 & 0 & 0 & 0 \\
 0 & 0 & 2 & 0 & 1 & 0 & 0 & 0 & 0 \\
 0 & 0 & 2 & 0 & 2 & -1 & 1 & 0 & 0 \\
 0 & 0 & 2 & 0 & 3 & -1 & 1 & 0 & 0 \\
 0 & 0 & 2 & 1 & 0 & 0 & 0 & 0 & 0 \\
 0 & 0 & 2 & 1 & 1 & 0 & 0 & 0 & 0 \\
 0 & 0 & 2 & 1 & 2 & 0 & 0 & 0 & 0 \\
 0 & 0 & 2 & 1 & 3 & 1 & -1 & 0 & 0 \\
 0 & 0 & 3 & 0 & 0 & 0 & 0 & 0 & 0 \\
 0 & 0 & 3 & 0 & 1 & -1 & 1 & 0 & 0 \\
 0 & 0 & 3 & 0 & 2 & -1 & 1 & 0 & 0 \\
 0 & 0 & 3 & 0 & 3 & -1 & 1 & 0 & 0 \\
 0 & 0 & 3 & 1 & 0 & 0 & 0 & 0 & 0 \\
 0 & 0 & 3 & 1 & 1 & 0 & 0 & 0 & 0 \\
 0 & 0 & 3 & 1 & 2 & 0 & 0 & 0 & 0 \\
 0 & 0 & 3 & 1 & 3 & 0 & 0 & 0 & 0 \\
 0 & 1 & 0 & 0 & 0 & 0 & 0 & 0 & 0 \\
 0 & 1 & 0 & 0 & 1 & 0 & -1 & 1 & 0 \\
 0 & 1 & 0 & 0 & 2 & 0 & -1 & 1 & 0 \\
 0 & 1 & 0 & 0 & 3 & 0 & -1 & 1 & 0 \\
 0 & 1 & 0 & 1 & 0 & 0 & 0 & 0 & 0 \\
 0 & 1 & 0 & 1 & 1 & -1 & 0 & 1 & 0 \\
 0 & 1 & 0 & 1 & 2 & -1 & 0 & 1 & 0 \\
 0 & 1 & 0 & 1 & 3 & -1 & 0 & 1 & 0 \\
 0 & 1 & 1 & 0 & 0 & 0 & 0 & 1 & -1 \\
 0 & 1 & 1 & 0 & 1 & 0 & 0 & 1 & -1 \\
 0 & 1 & 1 & 0 & 2 & 0 & 0 & 1 & -1 \\
 0 & 1 & 1 & 0 & 3 & 1 & 0 & 0 & -1 \\
 0 & 1 & 1 & 1 & 0 & 0 & 0 & 1 & -1 \\
 0 & 1 & 1 & 1 & 1 & 0 & 1 & 0 & -1 \\
 0 & 1 & 1 & 1 & 2 & -1 & 1 & 1 & -1 \\
 0 & 1 & 1 & 1 & 3 & -1 & 1 & 1 & -1 \\
 0 & 1 & 2 & 0 & 0 & 0 & 0 & 1 & -1 \\
 0 & 1 & 2 & 0 & 1 & 0 & 0 & 1 & -1 \\
 0 & 1 & 2 & 0 & 2 & 1 & 0 & 0 & -1 \\
 0 & 1 & 2 & 0 & 3 & 1 & -1 & 1 & -1 \\
 0 & 1 & 2 & 1 & 0 & 0 & 0 & 1 & -1 \\
 0 & 1 & 2 & 1 & 1 & 0 & 0 & 1 & -1 \\
 0 & 1 & 2 & 1 & 2 & 0 & 1 & 0 & -1 \\
 0 & 1 & 2 & 1 & 3 & -1 & 1 & 1 & -1 \\
 0 & 1 & 3 & 0 & 0 & 0 & 0 & 1 & -1 \\
 0 & 1 & 3 & 0 & 1 & 1 & 0 & 0 & -1 \\
 0 & 1 & 3 & 0 & 2 & 1 & -1 & 1 & -1 \\
 0 & 1 & 3 & 0 & 3 & 1 & -1 & 1 & -1 \\
 0 & 1 & 3 & 1 & 0 & 0 & 0 & 1 & -1 \\
 0 & 1 & 3 & 1 & 1 & 0 & 0 & 1 & -1 \\
 0 & 1 & 3 & 1 & 2 & 0 & 0 & 1 & -1 \\
 0 & 1 & 3 & 1 & 3 & 0 & 1 & 0 & -1 \\
\hline
\end{tabular}
\begin{tabular}{|ccccc|cccc|}
\hline
$a_2$ & $b_2$ & $a_3$ & $b_3$ & $a_4$ &  &  &  &  \\
\hline
 1 & 0 & 0 & 0 & 0 & 0 & 0 & 0 & 0 \\
 1 & 0 & 0 & 0 & 1 & 0 & 0 & 0 & 0 \\
 1 & 0 & 0 & 0 & 2 & 0 & 0 & 0 & 0 \\
 1 & 0 & 0 & 0 & 3 & 0 & 1 & -1 & 0 \\
 1 & 0 & 0 & 1 & 0 & 0 & 0 & 0 & 0 \\
 1 & 0 & 0 & 1 & 1 & 1 & 0 & -1 & 0 \\
 1 & 0 & 0 & 1 & 2 & 1 & -1 & 0 & 0 \\
 1 & 0 & 0 & 1 & 3 & 1 & -1 & 0 & 0 \\
 1 & 0 & 1 & 0 & 0 & 0 & 0 & 0 & 0 \\
 1 & 0 & 1 & 0 & 1 & 0 & 0 & 0 & 0 \\
 1 & 0 & 1 & 0 & 2 & 0 & 1 & -1 & 0 \\
 1 & 0 & 1 & 0 & 3 & -1 & 1 & 0 & 0 \\
 1 & 0 & 1 & 1 & 0 & 0 & 0 & 0 & 0 \\
 1 & 0 & 1 & 1 & 1 & 0 & 0 & 0 & 0 \\
 1 & 0 & 1 & 1 & 2 & 1 & 0 & -1 & 0 \\
 1 & 0 & 1 & 1 & 3 & 1 & -1 & 0 & 0 \\
 1 & 0 & 2 & 0 & 0 & 0 & 0 & 0 & 0 \\
 1 & 0 & 2 & 0 & 1 & 0 & 1 & -1 & 0 \\
 1 & 0 & 2 & 0 & 2 & -1 & 1 & 0 & 0 \\
 1 & 0 & 2 & 0 & 3 & -1 & 1 & 0 & 0 \\
 1 & 0 & 2 & 1 & 0 & 0 & 0 & 0 & 0 \\
 1 & 0 & 2 & 1 & 1 & 0 & 0 & 0 & 0 \\
 1 & 0 & 2 & 1 & 2 & 0 & 0 & 0 & 0 \\
 1 & 0 & 2 & 1 & 3 & 1 & 0 & -1 & 0 \\
 1 & 0 & 3 & 0 & 0 & 0 & 0 & -1 & 1 \\
 1 & 0 & 3 & 0 & 1 & -1 & 0 & 0 & 1 \\
 1 & 0 & 3 & 0 & 2 & -1 & 0 & 0 & 1 \\
 1 & 0 & 3 & 0 & 3 & -1 & 0 & 0 & 1 \\
 1 & 0 & 3 & 1 & 0 & 0 & 0 & -1 & 1 \\
 1 & 0 & 3 & 1 & 1 & 0 & -1 & 0 & 1 \\
 1 & 0 & 3 & 1 & 2 & 0 & -1 & 0 & 1 \\
 1 & 0 & 3 & 1 & 3 & 0 & -1 & 0 & 1 \\
 1 & 1 & 0 & 0 & 0 & 0 & 0 & 0 & 0 \\
 1 & 1 & 0 & 0 & 1 & 0 & 0 & 0 & 0 \\
 1 & 1 & 0 & 0 & 2 & 0 & -1 & 1 & 0 \\
 1 & 1 & 0 & 0 & 3 & 0 & -1 & 1 & 0 \\
 1 & 1 & 0 & 1 & 0 & 0 & 0 & 0 & 0 \\
 1 & 1 & 0 & 1 & 1 & -1 & 0 & 1 & 0 \\
 1 & 1 & 0 & 1 & 2 & -1 & 0 & 1 & 0 \\
 1 & 1 & 0 & 1 & 3 & -1 & 1 & 0 & 0 \\
 1 & 1 & 1 & 0 & 0 & 0 & 0 & 0 & 0 \\
 1 & 1 & 1 & 0 & 1 & 0 & -1 & 1 & 0 \\
 1 & 1 & 1 & 0 & 2 & 0 & -1 & 1 & 0 \\
 1 & 1 & 1 & 0 & 3 & 1 & -1 & 0 & 0 \\
 1 & 1 & 1 & 1 & 0 & 0 & 0 & 0 & 0 \\
 1 & 1 & 1 & 1 & 1 & 0 & 0 & 0 & 0 \\
 1 & 1 & 1 & 1 & 2 & -1 & 0 & 1 & 0 \\
 1 & 1 & 1 & 1 & 3 & -1 & 0 & 1 & 0 \\
 1 & 1 & 2 & 0 & 0 & 0 & 0 & 1 & -1 \\
 1 & 1 & 2 & 0 & 1 & 0 & 0 & 1 & -1 \\
 1 & 1 & 2 & 0 & 2 & 1 & 0 & 0 & -1 \\
 1 & 1 & 2 & 0 & 3 & 1 & 0 & 0 & -1 \\
 1 & 1 & 2 & 1 & 0 & 0 & 0 & 1 & -1 \\
 1 & 1 & 2 & 1 & 1 & 0 & 1 & 0 & -1 \\
 1 & 1 & 2 & 1 & 2 & 0 & 1 & 0 & -1 \\
 1 & 1 & 2 & 1 & 3 & -1 & 1 & 1 & -1 \\
 1 & 1 & 3 & 0 & 0 & 0 & 0 & 1 & -1 \\
 1 & 1 & 3 & 0 & 1 & 1 & 0 & 0 & -1 \\
 1 & 1 & 3 & 0 & 2 & 1 & 0 & 0 & -1 \\
 1 & 1 & 3 & 0 & 3 & 1 & -1 & 1 & -1 \\
 1 & 1 & 3 & 1 & 0 & 0 & 0 & 1 & -1 \\
 1 & 1 & 3 & 1 & 1 & 0 & 0 & 1 & -1 \\
 1 & 1 & 3 & 1 & 2 & 0 & 1 & 0 & -1 \\
 1 & 1 & 3 & 1 & 3 & 0 & 1 & 0 & -1 \\
\hline
\end{tabular}
\begin{tabular}{|ccccc|cccc|}
\hline
$a_2$ & $b_2$ & $a_3$ & $b_3$ & $a_4$ &  &  &  &  \\
\hline
2 & 0 & 0 & 0 & 0 & 0 & 0 & 0 & 0 \\
 2 & 0 & 0 & 0 & 1 & 0 & 0 & 0 & 0 \\
 2 & 0 & 0 & 0 & 2 & 0 & 1 & -1 & 0 \\
 2 & 0 & 0 & 0 & 3 & 0 & 1 & -1 & 0 \\
 2 & 0 & 0 & 1 & 0 & 0 & 0 & 0 & 0 \\
 2 & 0 & 0 & 1 & 1 & 1 & 0 & -1 & 0 \\
 2 & 0 & 0 & 1 & 2 & 1 & 0 & -1 & 0 \\
 2 & 0 & 0 & 1 & 3 & 1 & -1 & 0 & 0 \\
 2 & 0 & 1 & 0 & 0 & 0 & 0 & 0 & 0 \\
 2 & 0 & 1 & 0 & 1 & 0 & 1 & -1 & 0 \\
 2 & 0 & 1 & 0 & 2 & 0 & 1 & -1 & 0 \\
 2 & 0 & 1 & 0 & 3 & -1 & 1 & 0 & 0 \\
 2 & 0 & 1 & 1 & 0 & 0 & 0 & 0 & 0 \\
 2 & 0 & 1 & 1 & 1 & 0 & 0 & 0 & 0 \\
 2 & 0 & 1 & 1 & 2 & 1 & 0 & -1 & 0 \\
 2 & 0 & 1 & 1 & 3 & 1 & 0 & -1 & 0 \\
 2 & 0 & 2 & 0 & 0 & 0 & 0 & -1 & 1 \\
 2 & 0 & 2 & 0 & 1 & 0 & 0 & -1 & 1 \\
 2 & 0 & 2 & 0 & 2 & -1 & 0 & 0 & 1 \\
 2 & 0 & 2 & 0 & 3 & -1 & 0 & 0 & 1 \\
 2 & 0 & 2 & 1 & 0 & 0 & 0 & -1 & 1 \\
 2 & 0 & 2 & 1 & 1 & 0 & -1 & 0 & 1 \\
 2 & 0 & 2 & 1 & 2 & 0 & -1 & 0 & 1 \\
 2 & 0 & 2 & 1 & 3 & 1 & -1 & -1 & 1 \\
 2 & 0 & 3 & 0 & 0 & 0 & 0 & -1 & 1 \\
 2 & 0 & 3 & 0 & 1 & -1 & 0 & 0 & 1 \\
 2 & 0 & 3 & 0 & 2 & -1 & 0 & 0 & 1 \\
 2 & 0 & 3 & 0 & 3 & -1 & 1 & -1 & 1 \\
 2 & 0 & 3 & 1 & 0 & 0 & 0 & -1 & 1 \\
 2 & 0 & 3 & 1 & 1 & 0 & 0 & -1 & 1 \\
 2 & 0 & 3 & 1 & 2 & 0 & -1 & 0 & 1 \\
 2 & 0 & 3 & 1 & 3 & 0 & -1 & 0 & 1 \\
 2 & 1 & 0 & 0 & 0 & 0 & 0 & 0 & 0 \\
 2 & 1 & 0 & 0 & 1 & 0 & 0 & 0 & 0 \\
 2 & 1 & 0 & 0 & 2 & 0 & 0 & 0 & 0 \\
 2 & 1 & 0 & 0 & 3 & 0 & -1 & 1 & 0 \\
 2 & 1 & 0 & 1 & 0 & 0 & 0 & 0 & 0 \\
 2 & 1 & 0 & 1 & 1 & -1 & 0 & 1 & 0 \\
 2 & 1 & 0 & 1 & 2 & -1 & 1 & 0 & 0 \\
 2 & 1 & 0 & 1 & 3 & -1 & 1 & 0 & 0 \\
 2 & 1 & 1 & 0 & 0 & 0 & 0 & 0 & 0 \\
 2 & 1 & 1 & 0 & 1 & 0 & 0 & 0 & 0 \\
 2 & 1 & 1 & 0 & 2 & 0 & -1 & 1 & 0 \\
 2 & 1 & 1 & 0 & 3 & 1 & -1 & 0 & 0 \\
 2 & 1 & 1 & 1 & 0 & 0 & 0 & 0 & 0 \\
 2 & 1 & 1 & 1 & 1 & 0 & 0 & 0 & 0 \\
 2 & 1 & 1 & 1 & 2 & -1 & 0 & 1 & 0 \\
 2 & 1 & 1 & 1 & 3 & -1 & 1 & 0 & 0 \\
 2 & 1 & 2 & 0 & 0 & 0 & 0 & 0 & 0 \\
 2 & 1 & 2 & 0 & 1 & 0 & -1 & 1 & 0 \\
 2 & 1 & 2 & 0 & 2 & 1 & -1 & 0 & 0 \\
 2 & 1 & 2 & 0 & 3 & 1 & -1 & 0 & 0 \\
 2 & 1 & 2 & 1 & 0 & 0 & 0 & 0 & 0 \\
 2 & 1 & 2 & 1 & 1 & 0 & 0 & 0 & 0 \\
 2 & 1 & 2 & 1 & 2 & 0 & 0 & 0 & 0 \\
 2 & 1 & 2 & 1 & 3 & -1 & 0 & 1 & 0 \\
 2 & 1 & 3 & 0 & 0 & 0 & 0 & 1 & -1 \\
 2 & 1 & 3 & 0 & 1 & 1 & 0 & 0 & -1 \\
 2 & 1 & 3 & 0 & 2 & 1 & 0 & 0 & -1 \\
 2 & 1 & 3 & 0 & 3 & 1 & 0 & 0 & -1 \\
 2 & 1 & 3 & 1 & 0 & 0 & 0 & 1 & -1 \\
 2 & 1 & 3 & 1 & 1 & 0 & 1 & 0 & -1 \\
 2 & 1 & 3 & 1 & 2 & 0 & 1 & 0 & -1 \\
 2 & 1 & 3 & 1 & 3 & 0 & 1 & 0 & -1 \\
\hline
\end{tabular}
\begin{tabular}{|ccccc|cccc|}
\hline
$a_2$ & $b_2$ & $a_3$ & $b_3$ & $a_4$ &  &  &  &  \\
\hline
 3 & 0 & 0 & 0 & 0 & 0 & 0 & 0 & 0 \\
 3 & 0 & 0 & 0 & 1 & 0 & 1 & -1 & 0 \\
 3 & 0 & 0 & 0 & 2 & 0 & 1 & -1 & 0 \\
 3 & 0 & 0 & 0 & 3 & 0 & 1 & -1 & 0 \\
 3 & 0 & 0 & 1 & 0 & 0 & 0 & 0 & 0 \\
 3 & 0 & 0 & 1 & 1 & 1 & 0 & -1 & 0 \\
 3 & 0 & 0 & 1 & 2 & 1 & 0 & -1 & 0 \\
 3 & 0 & 0 & 1 & 3 & 1 & 0 & -1 & 0 \\
 3 & 0 & 1 & 0 & 0 & 0 & 0 & -1 & 1 \\
 3 & 0 & 1 & 0 & 1 & 0 & 0 & -1 & 1 \\
 3 & 0 & 1 & 0 & 2 & 0 & 0 & -1 & 1 \\
 3 & 0 & 1 & 0 & 3 & -1 & 0 & 0 & 1 \\
 3 & 0 & 1 & 1 & 0 & 0 & 0 & -1 & 1 \\
 3 & 0 & 1 & 1 & 1 & 0 & -1 & 0 & 1 \\
 3 & 0 & 1 & 1 & 2 & 1 & -1 & -1 & 1 \\
 3 & 0 & 1 & 1 & 3 & 1 & -1 & -1 & 1 \\
 3 & 0 & 2 & 0 & 0 & 0 & 0 & -1 & 1 \\
 3 & 0 & 2 & 0 & 1 & 0 & 0 & -1 & 1 \\
 3 & 0 & 2 & 0 & 2 & -1 & 0 & 0 & 1 \\
 3 & 0 & 2 & 0 & 3 & -1 & 1 & -1 & 1 \\
 3 & 0 & 2 & 1 & 0 & 0 & 0 & -1 & 1 \\
 3 & 0 & 2 & 1 & 1 & 0 & 0 & -1 & 1 \\
 3 & 0 & 2 & 1 & 2 & 0 & -1 & 0 & 1 \\
 3 & 0 & 2 & 1 & 3 & 1 & -1 & -1 & 1 \\
 3 & 0 & 3 & 0 & 0 & 0 & 0 & -1 & 1 \\
 3 & 0 & 3 & 0 & 1 & -1 & 0 & 0 & 1 \\
 3 & 0 & 3 & 0 & 2 & -1 & 1 & -1 & 1 \\
 3 & 0 & 3 & 0 & 3 & -1 & 1 & -1 & 1 \\
 3 & 0 & 3 & 1 & 0 & 0 & 0 & -1 & 1 \\
 3 & 0 & 3 & 1 & 1 & 0 & 0 & -1 & 1 \\
 3 & 0 & 3 & 1 & 2 & 0 & 0 & -1 & 1 \\
 3 & 0 & 3 & 1 & 3 & 0 & -1 & 0 & 1 \\
 3 & 1 & 0 & 0 & 0 & 0 & 0 & 0 & 0 \\
 3 & 1 & 0 & 0 & 1 & 0 & 0 & 0 & 0 \\
 3 & 1 & 0 & 0 & 2 & 0 & 0 & 0 & 0 \\
 3 & 1 & 0 & 0 & 3 & 0 & 0 & 0 & 0 \\
 3 & 1 & 0 & 1 & 0 & 0 & 0 & 0 & 0 \\
 3 & 1 & 0 & 1 & 1 & -1 & 1 & 0 & 0 \\
 3 & 1 & 0 & 1 & 2 & -1 & 1 & 0 & 0 \\
 3 & 1 & 0 & 1 & 3 & -1 & 1 & 0 & 0 \\
 3 & 1 & 1 & 0 & 0 & 0 & 0 & 0 & 0 \\
 3 & 1 & 1 & 0 & 1 & 0 & 0 & 0 & 0 \\
 3 & 1 & 1 & 0 & 2 & 0 & 0 & 0 & 0 \\
 3 & 1 & 1 & 0 & 3 & 1 & -1 & 0 & 0 \\
 3 & 1 & 1 & 1 & 0 & 0 & 0 & 0 & 0 \\
 3 & 1 & 1 & 1 & 1 & 0 & 0 & 0 & 0 \\
 3 & 1 & 1 & 1 & 2 & -1 & 1 & 0 & 0 \\
 3 & 1 & 1 & 1 & 3 & -1 & 1 & 0 & 0 \\
 3 & 1 & 2 & 0 & 0 & 0 & 0 & 0 & 0 \\
 3 & 1 & 2 & 0 & 1 & 0 & 0 & 0 & 0 \\
 3 & 1 & 2 & 0 & 2 & 1 & -1 & 0 & 0 \\
 3 & 1 & 2 & 0 & 3 & 1 & -1 & 0 & 0 \\
 3 & 1 & 2 & 1 & 0 & 0 & 0 & 0 & 0 \\
 3 & 1 & 2 & 1 & 1 & 0 & 0 & 0 & 0 \\
 3 & 1 & 2 & 1 & 2 & 0 & 0 & 0 & 0 \\
 3 & 1 & 2 & 1 & 3 & -1 & 1 & 0 & 0 \\
 3 & 1 & 3 & 0 & 0 & 0 & 0 & 0 & 0 \\
 3 & 1 & 3 & 0 & 1 & 1 & -1 & 0 & 0 \\
 3 & 1 & 3 & 0 & 2 & 1 & -1 & 0 & 0 \\
 3 & 1 & 3 & 0 & 3 & 1 & -1 & 0 & 0 \\
 3 & 1 & 3 & 1 & 0 & 0 & 0 & 0 & 0 \\
 3 & 1 & 3 & 1 & 1 & 0 & 0 & 0 & 0 \\
 3 & 1 & 3 & 1 & 2 & 0 & 0 & 0 & 0 \\
 3 & 1 & 3 & 1 & 3 & 0 & 0 & 0 & 0 \\
\hline
\end{tabular}
\endgroup
\caption{The $256$ cases for the inputs to $\chi$ on a boundary.
The unlabeled output columns are $\frac{\chi(\sigma_0-\sigma_1)}{(-1)^{b_2}a_1}$, $\frac{\chi(\sigma_2)}{(-1)^{b_2}a_1}$, $\frac{-\chi(\sigma_3)}{(-1)^{b_2}a_1}$, and $\frac{\chi(\sigma_4)}{(-1)^{b_2}a_1}$.
The output of $\chi$ is $0$ on this boundary because all output rows sum to $0$.  }\label{ta:cocyclechecks}
\end{table}

\begin{remark}\label{re:evalcocycle}
From this, we already recover that $H_3(D_8)$ has an element of order $4$, since $[\chi]\in H^3(D_8,\Z/4\Z)$ determines an element of $\hom(H_3(D_8),\Z/4\Z)$ under the evaluation pairing, and this element sends $[\bar c]$ to $1$.
This is because $[r|r|r]$ and $[r|r^2|r]$ do not trigger $\chi$, but $\chi([r|r^3|r])=1$.
\end{remark}

\begin{proposition}\label{pr:transfertrick}
Let $n\geq4$, and let $H<\sg{n}$ be $D_8\times \sg{n-4}$, where $D_8$ embeds in the standard way acting on $\{1,2,3,4\}$, and $\sg{n-4}$ acts on $\{5,6,\dotsc,n\}$.
Let $\mathrm{tr}\co H_3(\sg{n})\to H_3(H)$ be the transfer map.
Let $\iota\co D_8\to \sg{n}$ be the standard inclusion, and let $\pi_1\co H\to D_8$ be the coordinate projection.
Then $[\chi]$ sends $\pi_{1*}\mathrm{tr}\iota_*[\bar c]$ to $\pm1$.
\end{proposition}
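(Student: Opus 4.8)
The plan is to reduce the evaluation to a computation inside $D_8$ and then run the Mackey double coset formula. Write $\langle -,-\rangle$ for the Kronecker pairing with $\Z/4\Z$-coefficients. Naturality of the pairing under $\pi_1$, followed by the adjunction between the homology transfer and the cohomology corestriction, and then naturality under $\iota$, gives
\[
\langle[\chi],\pi_{1*}\mathrm{tr}\,\iota_*[\bar c]\rangle
=\langle\pi_1^*[\chi],\mathrm{tr}\,\iota_*[\bar c]\rangle
=\langle\mathrm{cor}^{\sg{n}}_H\pi_1^*[\chi],\iota_*[\bar c]\rangle
=\langle\mathrm{res}^{\sg{n}}_{D_8}\mathrm{cor}^{\sg{n}}_H\pi_1^*[\chi],[\bar c]\rangle .
\]
So it suffices to evaluate the class $\mathrm{res}^{\sg{n}}_{D_8}\mathrm{cor}^{\sg{n}}_H\pi_1^*[\chi]\in H^3(D_8;\Z/4\Z)$ against $[\bar c]$.

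I would then apply the Mackey double coset formula to $\mathrm{res}^{\sg{n}}_{D_8}\circ\mathrm{cor}^{\sg{n}}_H$, obtaining a sum over double cosets in $D_8\backslash\sg{n}/H$ whose term for a representative $x$ has the form $\mathrm{cor}^{D_8}_{K_x}(\gamma_x)$, where $K_x=D_8\cap xHx^{-1}$ and $\gamma_x\in H^3(K_x;\Z/4\Z)$ is a conjugate of the restriction of $\pi_1^*[\chi]$ from $H$ to $x^{-1}D_8x\cap H$. Pairing the $x$-term against $[\bar c]$ and using the corestriction--transfer adjunction inside $D_8$, its contribution equals $\langle\gamma_x,\mathrm{tr}^{D_8}_{K_x}[\bar c]\rangle$. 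The identity double coset $D_8\cdot1\cdot H=H$ contributes $\langle[\chi],[\bar c]\rangle$, which equals $1$ by Remark~\ref{re:evalcocycle} (here one uses that $\pi_1$ restricted to the first factor $D_8\leq H$ is the identity, so that $\mathrm{res}^H_{D_8}\pi_1^*[\chi]=[\chi]$). The crux is then to show that every \emph{other} double coset contributes an even element of $\Z/4\Z$; for then the total is $1+(\text{even})\in\{1,3\}$, a unit, hence $\pm1$.

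For the remaining double cosets I would separate two cases according to $K_x$. First, $K_x=D_8$ forces $x^{-1}D_8x\subseteq H$; since $D_8$ is a self-normalizing Sylow $2$-subgroup of $\sg{\{1,2,3,4\}}$ that acts transitively on $\{1,2,3,4\}$, a short orbit argument shows this happens only for $x\in H$ (the identity coset, already treated) or for $x$ with $x^{-1}\{1,2,3,4\}\subseteq\{5,\dots,n\}$ (possible only when $n\geq 8$). In the latter situation $x^{-1}D_8x\subseteq\sg{n-4}=\ker\pi_1$, so $\pi_1$ is trivial on $x^{-1}D_8x$ and therefore $\gamma_x=0$, so the contribution vanishes. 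In all other double cosets $K_x$ is a \emph{proper} subgroup of $D_8$, hence either has exponent at most $2$ or equals the cyclic subgroup $\langle r\rangle\cong\Z/4\Z$. If $K_x$ has exponent at most $2$, then $H_3(K_x)$ is $2$-torsion, so $\mathrm{tr}^{D_8}_{K_x}[\bar c]$ is $2$-torsion and $\langle\gamma_x,\mathrm{tr}^{D_8}_{K_x}[\bar c]\rangle$ is even. If $K_x=\langle r\rangle$, I would use that $\bar c=[r|r|r]+[r|r^2|r]+[r|r^3|r]$ is supported entirely on powers of $r$, so $[\bar c]=j_*[\bar c']$ where $j\co\langle r\rangle\hookrightarrow D_8$ and $[\bar c']$ is the standard generator of $H_3(\langle r\rangle)\cong\Z/4\Z$; then the double coset formula for $\langle r\rangle\backslash D_8/\langle r\rangle$ (two cosets, since $\langle r\rangle\trianglelefteq D_8$), together with the fact that conjugation by the reflection $s$ acts on $H_3(\langle r\rangle)$ by $(-1)^2=1$, yields $\mathrm{tr}^{D_8}_{\langle r\rangle}[\bar c]=\mathrm{tr}^{D_8}_{\langle r\rangle}j_*[\bar c']=2[\bar c']$, which is again $2$-torsion, so the contribution is even.

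Assembling these pieces, $\langle[\chi],\pi_{1*}\mathrm{tr}\,\iota_*[\bar c]\rangle=1+(\text{a sum of even elements of }\Z/4\Z)$, which is $1$ or $3$, that is, $\pm1$. The main obstacle is the double coset bookkeeping: pinning down exactly which double cosets have $K_x=D_8$ and checking those are harmless, and carrying out the transfer computation $\mathrm{tr}^{D_8}_{\langle r\rangle}[\bar c]=2[\bar c']$ that neutralizes the one proper subgroup of $D_8$ with $4$-torsion in its third homology. Everything else is a formal manipulation of the Mackey formula and the Kronecker pairing.
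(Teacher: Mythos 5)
Your argument is correct, but it is organized quite differently from the paper's. The paper works entirely at the chain level: it fixes an explicit $\Z H$-chain map $\rho\co \bar F_*(\sg{n})\to\bar F_*(H)$, writes $\mathrm{tr}(\iota_*c)=\sum_{g\in H\backslash\sg{n}}\rho(g\iota_*c)$, and partitions the right cosets $Hg$ according to the size ($1$, $2$, or $4$) of their orbit under right multiplication by $\langle\iota(r)\rangle$. Size-$4$ orbits contribute $0$ because $(1+r+r^2+r^3)c$ is an explicit boundary, size-$2$ orbits contribute even elements by an averaging trick with $r^2$, and among the size-$1$ orbits only $g=1$ contributes (namely $\chi(c)=1$), the others dying because the conjugated $4$-cycle lands in $\ker\pi_1$. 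Your route instead stays at the level of (co)homology classes: you convert everything to $\langle\mathrm{res}^{\sg{n}}_{D_8}\mathrm{cor}^{\sg{n}}_H\pi_1^*[\chi],[\bar c]\rangle$ by adjunction and apply the Mackey double coset formula, so that the identity double coset gives $1$ and every other double coset gives an even contribution for structural reasons ($\gamma_x=0$ when $x^{-1}D_8x\subseteq\ker\pi_1$; $H_3(K_x)$ is $2$-torsion when $K_x$ has exponent $\le 2$; and $\mathrm{tr}^{D_8}_{\langle r\rangle}[\bar c]=2[\bar c']$ because inversion acts trivially on $H_3(\Z/4\Z)$). The two decompositions are closely related — the paper's orbit partition of $H\backslash\sg{n}$ is essentially the chain-level shadow of a double coset decomposition with respect to $\langle r\rangle$ — but your version buys conceptual transparency and avoids the map $\rho$ entirely, at the cost of invoking the Mackey formula, the self-normalization of $D_8$ in $\sg{4}$, and the standard facts about $H_3$ of small $2$-groups and the action of $t\mapsto t^{-1}$ on $H_3(\Z/m\Z)$; the paper's version is longer but self-contained and never leaves the explicit cocycle $\chi$. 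Your double coset bookkeeping does check out: $K_x=D_8$ forces $x\in H$ or $x^{-1}\{1,2,3,4\}\subseteq\{5,\dotsc,n\}$ (the latter only for $n\ge 8$, and then $\gamma_x=0$), and the only proper subgroup of $D_8$ with $4$-torsion in $H_3$ is $\langle r\rangle$, which your transfer computation neutralizes.
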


\begin{proof}
The tricky part is evaluating $\mathrm{tr}$ on $\iota_*[\bar c]$.
The complex $\bar F_*(\sg{n})$ is a free resolution of $\Z$ both as a $\Z \sg{n}$-module and as a $\Z H$-module.
We can easily express the transfer using this resolution for $H$, but to finish our computation, we need to move our chains over to $\bar F_*(H)$ using an augmentation-preserving map of $\Z H$-complexes $\bar F_*(\sg{n})\to \bar F_*(H)$.
Let $\rho\co \bar F_*(\sg{n})\to \bar F_*(H)$ be such a map.
Then for a cycle $[b]\in \bar F_*(\sg{n})$, the transfer of $[b]$ is
\[\mathrm{tr}([b])=[\rho(\sum_{g\in H\backslash \sg{n}} gb)].\]
Here $H\backslash \sg{n}$ denotes the set of right cosets of $H$ in $\sg{n}$, and the sum is over one representative from each coset.
It is standard that this is well defined; see for example Brown~\cite{Brown}, section~III.9.
To make this more explicit, we pick a specific map $\rho$.
Following Brown, we define $\rho\co \sg{n}\to H$ by picking a coset representative $\bar g$ for each coset $Hg$, and define $\rho(g)=g\bar g^{-1}$.
Then for a homogeneous simplex $\sigma=(g_0,g_1,\dotsc,g_m)\in \bar F_m(\sg{n})$, define $\rho(\sigma)=(\rho(g_0),\rho(g_1),\dotsc,\rho(g_m))$.
Since $H=D_8\times \sg{n-4}$ is such a nice subgroup of $\sg{n}$, it is possible to pick coset representatives explicitly; however, we do not need to do this.
It is enough to simply demand that, for $g\in H$, we have $\bar g=1$, so that $\rho$ restricts to the identity on $\bar F_*(H)\subseteq \bar F_*(\sg{n})$.

To proceed, we analyze the sum of the translates $gc$ as $g$ runs over the cosets of $H$.
Intuitively, the transfer of a cycle is the total preimage of that cycle under a covering map.
The cycle $\iota_*c$ is represented by a map from a lens space with fundamental group $\Z/4\Z$.
Each component of this preimage is a cover of that lens space.
We break the sum up into sum up into three parts, where the first part consists of components where the cover is a sphere, the second part consists of components where the cover is an $\R P^3$, and the third part consists of components where the cover is the lens space itself.

\begin{claim*}
For any $g\in \sg{n}$, we have $\chi(\pi_{1*}\rho(g\iota_*(1+r+r^2+r^3)c))=0$.
\end{claim*}

The point is that $(1+r+r^2+r^3)c$ is a boundary in $\bar F_*(D_8)$.
In fact, $(1+r+r^2+r^3)c=\partial b$, where
\[b=\sum_{i=1}^3\sum_{j=1}^3[r^i|r|r^2|r].\]
So
\[
\begin{split}
\chi(\pi_{1*}\rho(g\iota_*(1+r+r^2+r^3)c))&= \chi(\pi_{1*}\rho(g\iota_*\partial b)) = \chi(\pi_{1*}\rho(\partial g\iota_*b)) \\
&= \chi(\partial \pi_{1*}\rho(g \iota_*b))=0.
\end{split}
\]

\begin{claim*}
If $g\in\sg{n}$ with $g\iota(r^2)g^{-1}\in H$, then $2\chi(\pi_{1*}\rho(g\iota_*(1+r)c))=0$.
\end{claim*}
This is a consequence of the previous claim.
Let $a=g\iota(r^2)g^{-1}\in H$.
Since $\chi$ is a cocycle to a module with the trivial action, $\chi(\pi_1(a) x)=\chi(x)$ for any chain $x$.
Since $\rho$ is a chain map between $\Z H$-complexes, we know $\rho(a x)=a\rho(x)$ for any $x$.
Then
\[
\begin{split}
\chi(\pi_{1*}\rho(g\iota_*(r^2+r^3)c))&=\chi(\pi_{1*}\rho(g\iota(r^2)\iota_*(1+r)c))=\chi(\pi_{1*}\rho(ag\iota_*(1+r)c)) \\
&=\chi(\pi_1(a)\pi_{1*}\rho(g\iota_*(1+r)c))=\chi(\pi_{1*}\rho(g\iota_*(1+r)c)).
\end{split}
\]
So
\[
\begin{split}
2\chi(\pi_{1*}\rho(g\iota_*(1+r)c))&=\chi(\pi_{1*}\rho(g\iota_*(1+r)c))+\chi(\pi_{1*}\rho(g\iota_*(r^2+r^3)c))\\
&=\chi(\pi_{1*}\rho(g\iota_*(1+r+r^2+r^3)c))=0.
\end{split}
\]

\begin{claim*}
If $g\in \sg{n}$ with $g\iota(r)g^{-1}\in H$, then $\rho(g\iota_*c)=(\alpha_g)_*\iota_*c$, where $\alpha_g\co \sg{n}\to\sg{n}$ is $x\mapsto gxg^{-1}$.
\end{claim*}

Since $\rho$ is $H$-equivariant and $\chi$ is $H$-invariant, we may assume without loss of generality that $g$ is one of the coset representatives used in the definition of $\rho$.
Since $g\iota(r)g^{-1}\in H$, we have that, for any $k$, 
\[Hg\iota(r^k)=H(g\iota(r^k)g^{-1})g=Hg,\]
so that $g$ is also the coset representative for $g\iota(r^k)$.
Then for any $k$, we have
\[\rho(g\iota(r^k))=g\iota(r^k)\overline{g\iota(r^k)}^{-1}=g\iota(r^k)g^{-1}=\alpha_g(\iota(r^k)).\]
Then 
\[
\begin{split}
\rho(g\iota_*c)&=\rho(g([\iota(r)|\iota(r)|\iota(r)]+[\iota(r)|\iota(r^2)|\iota(r)]+[\iota(r)|\iota(r^3)|\iota(r)])) \\
&=\rho(g((1,\iota(r),\iota(r^2),\iota(r^3))+(1,\iota(r),\iota(r^3),1)+(1,\iota(r),1,\iota(r))))\\
&=(\rho(g),\rho(g\iota(r)),\rho(g\iota(r^2)),\rho(g\iota(r^3)))+(\rho(g),\rho(g\iota(r)),\rho(g\iota(r^3)),\rho(g)) \\
&\quad +(\rho(g),\rho(g\iota(r)),\rho(g),\rho(g\iota(r))) \\
&=(1,\alpha_g(\iota(r))),\alpha_g(\iota(r^2)),\alpha_g(\iota(r^3)))+(1,\alpha_g(\iota(r)),\alpha_g(\iota(r^3)),1) \\
&\quad +(1,\alpha_g(\iota(r)),1,\alpha_g(\iota(r))) \\
&=(\alpha_g)_*\iota_*c.
\end{split}
\]

\begin{claim*}
If $g\in \sg{n}$ with $g\iota(r)g^{-1}\in H$, but $g\notin H$, then $\chi(\pi_{1*}\rho(g\iota_*c))=0$.
\end{claim*}

Let $a=g\iota(r)g^{-1}$.
Since $a$ is conjugate to $\iota(r)$ in $\sg{n}$, $a$ is a $4$-cycle, so we can write it as $a=(a_1,a_2,a_3,a_4)$.
If $a\in \iota(D_8)$, then $a=f\iota(r)f^{-1}$ for some $f\in \iota(D_8)$.
(Simply put, $a_1,a_2,a_3,a_4$ are $1,2,3,4$, but up to some dihedral permutation of their order.)
Then $gf^{-1}$ is in the centralizer in $\sg{n}$ of $\iota(r)$, which is $\iota(r)\times \sg{n-4}<H$.
This implies that $g\in H$, contrary to our hypothesis.
So $a\notin \iota(D_8)$, but $a\in H$.
Since $H=\iota(D_8)\times \sg{n-4}$, and $4$-cycles only appear in the factors, we have that $a\in \sg{n-4}$.
In particular, $\pi_1(a)=1\in D_8$.
Then
\[
\begin{split}
\pi_{1*}(\alpha_g)_*\iota_*c &= \pi_{1*}([a|a|a]+[a|a^2|a]+[a|a^3|a])\\
&=[1|1|1]+[1|1|1]+[1|1|1].
\end{split}
\]
In particular, $\chi(\pi_{1*}\rho(g\iota_*c))=0$.

Now we can finish the proof of the proposition.
We have that $\langle \iota(r)\rangle$ acts on $H\backslash\sg{n}$ by multiplication on the right.
Every orbit has size $1$, $2$, or $4$, since its size divides the order of $\iota(r)=(1,2,3,4)$.
Notice that $Hg$ is in an orbit of size $1$ if and only if $g\iota(r)g^{-1}\in H$, and it is in an orbit of size $2$ if and only if $g\iota(r)g^{-1}\notin H$ but $g\iota(r^2)g^{-1}\in H$.
Let $S_i$ be the set of coset representatives for orbits of size $i$, for $i\in\{1,2,4\}$.
Then
\[\mathrm{tr}(\iota_*c)=\sum_{g\in S_1}\rho(g\iota_*c)+\sum_{g\in S_2}\rho(g\iota_*c)+\sum_{g\in S_4}\rho(g\iota_*c).\]
We partition $S_4$ into $\langle\iota(r)\rangle$-orbits of size $4$;
then $\sum_{g\in S_4}\rho(g\iota_*c)$ is a sum of pieces of the form
\[\rho(g\iota_*c)+\rho(gr\iota_*c)+ \rho(gr^2\iota_*c)+\rho(gr^2\iota_*c).\]
But this is $\rho(g(1+r+r^2+r^3)\iota_*c)$, and by one of the claims, $\chi\circ \pi_{1*}$ sends this to $0$.
So $\chi(\pi_{1*}\sum_{g\in S_4}\rho(g\iota_*c))=0$.

Similarly, we partition $S_2$ into $\langle\iota(r)\rangle$-orbits of size $2$.
So $\sum_{g\in S_4}\rho(g\iota_*c)$ is a sum of pieces of the form
\[\rho(g\iota_*c)+\rho(gr\iota_*c).\]
Then another claim tells us that $2\chi(\pi_{1*}\sum_{g\in S_2}\rho(g\iota_*c))=0$.
This means that $\chi(\pi_{1*}\sum_{g\in S_2}\rho(g\iota_*c))$ is a multiple of $2$ in $\Z/4\Z$.

Now we look at $\sum_{g\in S_1}\rho(g\iota_*c)$.
If $g=1$, then 
\[\chi(\pi_{1*}\rho(g\iota_*c))=\chi(\pi_{1*}(\alpha_1)_*\iota_*c)=\chi(c)=1,\]
as explained in Remark~\ref{re:evalcocycle}.
If $g\neq 1$, then $g\iota(r)g^{-1}\in H$, but $g\notin H$, so $\chi(\pi_{1*}\rho(g\iota_*c))=0$.
Putting this together, we see that 
\[
\begin{split}
\chi(\pi_{1*}\mathrm{tr}\iota_*c) &= \chi(\pi_{1*}(\sum_{g\in S_1}\rho(g\iota_*c)+\sum_{g\in S_2}\rho(g\iota_*c)+\sum_{g\in S_4}\rho(g\iota_*c))) \\
&= 1+0 +\chi(\pi_{1*}(\sum_{g\in S_2}\rho(g\iota_*c)))+0=\pm1,
\end{split}
\]
since the unevaluated term is a multiple of $2$.
\end{proof}

\begin{corollary}\label{co:tightH3}
Let $n\geq 6$.
The group $H_3(\sg{n})$ is isomorphic to $(\Z/2\Z)^2\oplus (\Z/3\Z)\oplus (\Z/4\Z)$.
It is generated by $[\bar c^{31}_1]$ and $[\bar c^{33}_{135}]$, which have order $2$, and $[\bar c^{36}_1]$, which has order $3$, and  $[\bar c^{37}_1+\bar c^{32}_{13}]$, which has order $4$.
\end{corollary}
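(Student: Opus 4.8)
The plan is to deduce the corollary by assembling three ingredients already established in the excerpt: the upper bound of Corollary~\ref{co:H3upperbound}, the nontriviality statements of Corollary~\ref{co:gensnontriv}, and the order-four element produced by Proposition~\ref{pr:transfertrick}, with the bookkeeping organized by primary decomposition. By Corollary~\ref{co:H3upperbound} there is a surjection $q\colon (\Z/2\Z)^2\oplus(\Z/3\Z)\oplus(\Z/4\Z)\to H_3(\sg{n})$ carrying the four standard generators of the source to $[\bar c^{31}_1]$, $[\bar c^{33}_{135}]$, $[\bar c^{36}_1]$, and $[\bar c^{37}_1+\bar c^{32}_{13}]$ respectively, so it suffices to prove $q$ is injective. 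Writing $H_3(\sg{n})=B_2\oplus B_3$ for its $2$- and $3$-primary parts, the map $q$ respects this splitting, so $B_3$ is a quotient of $\Z/3\Z$ and $B_2$ is a quotient of $(\Z/2\Z)^2\oplus(\Z/4\Z)$; it is enough to show $B_3\cong\Z/3\Z$ and $B_2\cong(\Z/2\Z)^2\oplus(\Z/4\Z)$.

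The $3$-primary part is immediate: $[\bar c^{36}_1]$ has order dividing $3$ by Corollary~\ref{co:H3upperbound} and is nontrivial by Corollary~\ref{co:gensnontriv}, so it has order exactly $3$ and $B_3\cong\Z/3\Z$. For the $2$-primary part I would first count minimal generators. Exactly as in the proof of Corollary~\ref{co:gensnontriv}, the universal coefficients sequence together with $H_3(\sg{n};\mathbb{F}_2)=\mathbb{F}_2^4$ (Theorem~\ref{th:cyclesmodp}) and $H_2(\sg{n})=\Z/2\Z$ gives $\dim_{\mathbb{F}_2}\big(H_3(\sg{n})\otimes_\Z\mathbb{F}_2\big)=3$, hence $\dim_{\mathbb{F}_2}(B_2/2B_2)=3$; so $B_2$ requires three generators, and since $(\Z/2\Z)^2\oplus(\Z/4\Z)$ also requires exactly three, the images of the three generating classes $[\bar c^{31}_1],[\bar c^{33}_{135}],[\bar c^{37}_1+\bar c^{32}_{13}]$ form a minimal generating set of $B_2$. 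Next, Proposition~\ref{pr:transfertrick} exhibits a homomorphism $[\chi]\circ\pi_{1*}\circ\mathrm{tr}\colon H_3(\sg{n})\to\Z/4\Z$ sending $\iota_*[\bar c]$ to $\pm1$, so $\iota_*[\bar c]$ has order divisible by $4$, and therefore $B_2$ contains an element of order at least $4$. But a three-generated quotient of $(\Z/2\Z)^2\oplus(\Z/4\Z)$ that has an element of order $\geq4$ must be $(\Z/2\Z)^2\oplus(\Z/4\Z)$ itself: the only other three-generated abelian $2$-group of order at most $16$ is $(\Z/2\Z)^3$, which has exponent $2$. Hence $B_2\cong(\Z/2\Z)^2\oplus(\Z/4\Z)$, so $q$ is an isomorphism and $H_3(\sg{n})\cong(\Z/2\Z)^2\oplus(\Z/3\Z)\oplus(\Z/4\Z)$.

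Finally I would read off the orders of the named generators: $[\bar c^{31}_1]$ and $[\bar c^{33}_{135}]$ have order dividing $2$ by Corollary~\ref{co:H3upperbound} and are nontrivial by Corollary~\ref{co:gensnontriv}, hence have order $2$; $[\bar c^{36}_1]$ has order $3$ as above; and $[\bar c^{37}_1+\bar c^{32}_{13}]$ belongs to a minimal generating set of $(\Z/2\Z)^2\oplus(\Z/4\Z)$ whose other two members have order $2$, so it cannot have order dividing $2$ (otherwise the group would have exponent $2$), and therefore has order exactly $4$.

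The serious work — the transfer computation of Proposition~\ref{pr:transfertrick} and the mod-$p$ input of Theorem~\ref{th:cyclesmodp} — is already carried out, so the only real obstacle here is the elementary but slightly delicate group-theoretic step: verifying that the three constraints ``$B_2$ is a quotient of $(\Z/2\Z)^2\oplus(\Z/4\Z)$'', ``$B_2$ is three-generated'', and ``$B_2$ has an element of order $\geq4$'' together pin down the isomorphism type of $B_2$ uniquely, and confirming that each of these constraints is genuinely supplied by the cited results.
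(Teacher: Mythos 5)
Your proposal is correct and follows essentially the same route as the paper: it combines the upper bound of Corollary~\ref{co:H3upperbound}, the nontriviality statements of Corollary~\ref{co:gensnontriv}, and the order-$4$ element from Proposition~\ref{pr:transfertrick}. The only difference is presentational — you make explicit the count $\dim_{\mathbb{F}_2}\bigl(H_3(\sg{n})\otimes\mathbb{F}_2\bigr)=3$ and the resulting classification of three-generated quotients of $(\Z/2\Z)^2\oplus(\Z/4\Z)$, a step the paper leaves implicit (it appears only inside the proof of Corollary~\ref{co:gensnontriv}), so your write-up is if anything slightly more careful.
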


\begin{proof}
We have shown that the upper bound for $H_3(\sg{n})$ in Corollary~\ref{co:H3upperbound} is tight.
Corollary~\ref{co:gensnontriv} shows that the generators are nontrivial, so that the orders are correct for generators of prime order.
Proposition~\ref{pr:transfertrick} shows that $H_3(\sg{n})$ contains at least one element of order $4$.
If there were no generators of order $4$, then this would be impossible, so we deduce that $[\bar c^{37}_1+\bar c^{32}_{13}]$ has order $4$.
\end{proof}

\begin{corollary}
Every class in $H_3(\sg{n})$ has a representative that is the image of the fundamental class of an orientable $3$-manifold.
\end{corollary}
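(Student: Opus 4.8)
The plan is to reduce the statement to finding explicit $3$-manifolds representing the four generators of $H_3(\sg{n})$ listed in Corollary~\ref{co:tightH3}, and then to observe that the collection of classes of the form $f_*[M]$, with $M$ a closed oriented $3$-manifold and $f\co M\to K(\sg{n},1)$, is closed under $\Z$-linear combinations. Indeed, reversing orientation produces $-f_*[M]$, the disjoint union $M_1\sqcup M_2$ (again a closed oriented $3$-manifold) with $f_1\sqcup f_2$ produces $f_{1*}[M_1]+f_{2*}[M_2]$, and iterating yields integer multiples (and $0$ is represented by a constant map on $S^3$). If one insists on a connected representative, one replaces $M_1\sqcup M_2$ by the connected sum $M_1\# M_2$ with $f_1$ and $f_2$ glued along a ball, which is possible since $K(\sg{n},1)$ is path connected. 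So it suffices to realize $[\bar c^{31}_1]$, $[\bar c^{33}_{135}]$, $[\bar c^{36}_1]$, and $[\bar c^{37}_1+\bar c^{32}_{13}]$ by $3$-manifolds.

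For the first three I would exploit that certain subcomplexes of $P_*$ are truncated resolutions for abelian groups and for $\sg{3}$, mapped in along the evident subgroup inclusions. The $\Z\sg{n}$-span of $*,e_1,c_1,c^{31}_1$ is exactly the degree-$\le 3$ part of the standard minimal free resolution of $\Z$ over $\Z[\langle s_1\rangle]\cong\Z[\Z/2\Z]$, with $c^{31}_1$ the degree-$3$ generator; hence $[\bar c^{31}_1]$ is the image of the fundamental class of $\mathbb{R}P^3=L(2,1)$ under $\mathbb{R}P^3\to B\langle s_1\rangle\to K(\sg{n},1)$. Likewise, for $n\ge 6$ the span of the cells supported on $\{s_1,s_3,s_5\}$ is the degree-$\le 3$ part of the cellular chain complex of $(\mathbb{R}P^\infty)^3\simeq B(\Z/2\Z)^3$ (the cell counts $1,3,6,10,\dots$ match), with $c^{33}_{135}$ corresponding to $e_1\otimes e_3\otimes e_5$; hence $[\bar c^{33}_{135}]$ is the image of the fundamental class of the $3$-torus $T^3=(\mathbb{R}P^1)^3\to(\mathbb{R}P^\infty)^3\to K(\sg{n},1)$. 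Finally, the cells supported on $\{s_1,s_2\}$ form a truncated free resolution for $\langle s_1,s_2\rangle\cong\sg{3}$ — this is precisely Theorem~\ref{th:main} in the case $n=3$ — so $[\bar c^{36}_1]$ lies in the image of $H_3(\sg{3})\to H_3(\sg{n})$; and $H_3(\sg{3})\cong\Z/6\Z$ is generated by the images of $[\mathbb{R}P^3]$ and $[L(3,1)]$ coming from the subgroups $\Z/2\Z$ and $\Z/3\Z$ of $\sg{3}$ (one checks these images generate the respective primary parts using the transfer, which on the $2$- and $3$-primary parts is multiplication by the corresponding index), so every class of $H_3(\sg{3})$, in particular $[\bar c^{36}_1]$, is realized by a $3$-manifold.

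For the order-$4$ generator I would reuse the computation in the proof of Proposition~\ref{pr:transfertrick}: the cycle $c=[r|r|r]+[r|r^2|r]+[r|r^3|r]$ lies in $\bar F_3(\langle r\rangle)$ with $\langle r\rangle\cong\Z/4\Z$ and is the standard degree-$3$ generator of $H_3(\Z/4\Z)\cong\Z/4\Z$, i.e.\ the image of the fundamental class of the lens space $L(4,1)$; Proposition~\ref{pr:transfertrick} shows that its image $\iota_*[\bar c]$ in $H_3(\sg{n})$ has order $4$, so, using the direct sum decomposition in Corollary~\ref{co:tightH3}, replacing $[\bar c^{37}_1+\bar c^{32}_{13}]$ by $\iota_*[\bar c]$ still yields a generating set of $H_3(\sg{n})$. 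Combined with the first paragraph, this proves the corollary. The main obstacle is purely bookkeeping: verifying on the nose that the indicated subcomplexes of $P_*$ match the truncated resolutions generator-by-generator and boundary-by-boundary, and that the named bar-resolution $3$-cycles are carried by the stated lens spaces. This is reassuring rather than surprising in light of Thom's theorem, which already guarantees in complete generality that $\Omega^{SO}_3(Y)\to H_3(Y;\Z)$ is surjective (indeed an isomorphism, as $\Omega^{SO}_i=0$ for $1\le i\le 3$) for every space $Y$; applied to $Y=K(\sg{n},1)$ this gives an immediate abstract proof, which I would include as the short argument while keeping the explicit realizations above as a remark in the concrete spirit of the paper.
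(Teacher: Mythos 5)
Your proposal is correct, and for the explicit realizations it follows essentially the same route as the paper: $[\bar c^{31}_1]$ from $\R P^3$, $[\bar c^{33}_{135}]$ from the $3$-torus, $[\bar c^{36}_1]$ from $L(3,1)$, and the order-$4$ class from $L(4,1)$ --- indeed the paper itself offers $\iota_*c$ as an alternative order-$4$ generator exactly as you do, and your check that swapping it in still gives a generating set of $(\Z/2\Z)^2\oplus(\Z/3\Z)\oplus(\Z/4\Z)$ is the point that makes this substitution legitimate. The two genuine differences are these. First, for the order-$3$ generator you detour through $H_3(\sg{3})\cong\Z/6\Z$ and a transfer argument, whereas the paper asserts directly that $\bar c^{36}_1$ is carried by $L(3,1)$ via $s_1s_2$; your version trades a direct (if unproved-in-detail) identification of cycles for a soft argument that only needs the $3$-primary part of $H_3(\sg{3})$ to come from the cyclic subgroup, which is arguably more robust. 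Second, your appeal to Thom's theorem --- $\Omega^{SO}_3(Y)\to H_3(Y;\Z)$ is an isomorphism for every CW complex $Y$ because $\Omega^{SO}_i=0$ for $1\leq i\leq 3$, by the Atiyah--Hirzebruch spectral sequence --- is a genuinely different and strictly more general proof that the paper does not use; it settles the corollary in one line for any group, at the cost of saying nothing about which manifold carries which generator. Keeping the abstract argument as the proof and the explicit lens-space/torus realizations as the content is a reasonable inversion of the paper's emphasis. The only caveats are cosmetic: the corollary implicitly wants closed (and presumably connected) manifolds, which your connected-sum remark handles, and the claimed cell-by-cell matches between subcomplexes of $P_*$ and the standard resolutions for $\Z/2\Z$, $(\Z/2\Z)^3$, and $\sg{3}$ do need the boundary formulas checked, not just the cell counts --- but this is the same bookkeeping the paper waves at with ``it is almost obvious.''
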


\begin{proof}
It is an exercise to adapt this for smaller $n$, so we assume that $n\geq 6$.
It is enough to show this for generators, since representative $3$-manifolds can be combined by connected sums.
It is almost obvious that $[\bar c^{31}_1]$ is the image of the fundamental class of the projective space $\R P^3$ under the map sending the generator to $s_1$.
Likewise, it is almost obvious that $[\bar c^{33}_{135}]$ is the image of the fundamental class of the $3$-torus $(S^1)^3$ under the map sending a basis for the fundamental group to $s_1$, $s_3$, and $s_5$.
It takes a little more work, but it is straightforward to show that $[\bar c^{36}_1]$ is the image of the fundamental class of a lens space $L(3,1)$ with fundamental group $\Z/3\Z$, under the map that sends a generator to $s_1 s_2$.
Similarly, one can show that, up to classes of order $2$, $[\bar c^{37}_1+\bar c^{32}_{13}]$ is the image of the fundamental class of a lens space $L(4,1)$ with fundamental group $\Z/4\Z$ under the map sending a generator to $s_1 s_2 s_3$.
Alternatively, one can use $\iota_*c$ as an order-$4$ generator, and explain that it is also the image of the fundamental class of $L(4,1)$ under this map. 
\end{proof}

\bibliography{3sbib}
\bibliographystyle{acm}

\vspace{0.2cm}
\small
 \noindent \textbf{Matthew B. Day,} {\sc Department of Mathematical Sciences, 309 SCEN, University of Arkansas, Fayetteville, AR 72701, U.S.A.} \\
\noindent  e-mail: {\tt matthewd@uark.edu}
\\
\\
 \noindent \textbf{Trevor Nakamura}, {\sc formerly of the Department of Mathematical Sciences, University of Arkansas}
\\
\noindent  e-mail: {\tt trevornakamura1994@gmail.com}

\end{document}